\documentclass[reqno]{amsart}

\usepackage[a4paper,margin=22mm]{geometry}

\usepackage[T1]{fontenc}
\usepackage[utf8]{inputenc}
\usepackage{microtype}
\usepackage{amssymb}
\usepackage{mathtools}
\usepackage{mathrsfs}
\usepackage{dsfont}
\usepackage{nicefrac}
\usepackage{enumitem}
\usepackage{csquotes}

\allowdisplaybreaks[3]
\numberwithin{equation}{section}

\makeatletter
\def\paragraph{\@startsection{paragraph}{4}%
  \z@\z@{-\fontdimen2\font}%
  {\normalfont\itshape}}
\makeatother

\usepackage[
  style=numeric,
  sorting=nyt,
  url=false,
  doi=true,
  isbn=false,
  maxbibnames=9,
  minbibnames=3,
  maxcitenames=2,
  mincitenames=1,
  sortcites=true,
  backend=biber,
  giveninits=true,
  abbreviate=false,
  dateabbrev=false
]{biblatex}
\renewbibmacro{in:}{%
  \ifentrytype{article}{}{\printtext{\bibstring{in}\intitlepunct}}%
}
\DeclareFieldFormat{journaltitle}{\mkbibemph{#1\isdot}}
\renewbibmacro*{volume+number+eid}{%
  \printfield{volume}%
  \iffieldundef{number}{}{%
    \setunit*{:}\printfield{number}%
  }%
  \iffieldundef{eid}{}{%
    \setunit{\addcomma\space}\printfield{eid}%
  }%
}
\DeclareFieldFormat{doi}{%
  \mkbibacro{DOI}\addcolon\space
  \ifhyperref
    {\href{https://doi.org/#1}{\nolinkurl{#1}}}
    {\nolinkurl{#1}}%
}

\theoremstyle{plain}
\newtheorem{theorem}{Theorem}[section]
\newtheorem{lemma}[theorem]{Lemma}
\newtheorem{proposition}[theorem]{Proposition}
\newtheorem{corollary}[theorem]{Corollary}

\theoremstyle{definition}
\newtheorem{definition}[theorem]{Definition}

\theoremstyle{remark}
\newtheorem{remark}[theorem]{Remark}

\theoremstyle{plain}
\newtheorem{maintheorem}{Theorem}
\renewcommand*{\themaintheorem}{\Alph{maintheorem}}

\usepackage[hidelinks]{hyperref}
\hypersetup{
  unicode=true,
  bookmarksnumbered=true,
  bookmarksopen=true,
  bookmarksopenlevel=2,
  breaklinks=true,
  pdftitle={Local structure at the maximum and sharp persistence asymptotics of rough fractional Brownian motion},
  pdfauthor={Christian M\"onch},
  pdfsubject={Local structure at the maximum and sharp persistence asymptotics of rough fractional Brownian motion},
  pdfkeywords={fractional Brownian motion, localisability, persistence, hard-wall law, tangent process}
}
\usepackage[nameinlink,noabbrev,capitalize]{cleveref}

\crefname{equation}{equation}{equations}
\Crefname{equation}{Equation}{Equations}
\crefname{maintheorem}{Theorem}{Theorems}
\Crefname{maintheorem}{Theorem}{Theorems}

\renewcommand{\P}{\mathbb{P}}
\newcommand{\PP}{\mathbb{P}}
\newcommand{\EE}{\mathbb{E}}

\newcommand{\deq}{\overset{d}{=}}

\title[Local structure at the maximum]{%
  Local structure at the maximum and sharp persistence asymptotics of rough
  fractional Brownian motion}
\author{Christian M\"onch}
\thanks{ORCID: \href{https://orcid.org/0000-0002-6531-6482}%
  {0000-0002-6531-6482}.}
\email{cmoench25@gmail.com}
\urladdr{https://orcid.org/0000-0002-6531-6482}
\date{August 19, 2026}

\subjclass[2020]{60G18, 60G15, 60F17, 60E15}
\keywords{tangent process, fractional Brownian motion, local maximum,
  scale-invariance, persistence probability, hard-wall law,
  convex Gaussian conditioning, path-space weak convergence}

\begin{document}

\begin{abstract}
We consider a fractional Brownian motion \(B\) with Hurst index \(0<H<1/2\), and
its maximiser \(\tau\) on \([0,1]\).  We show that the rescaled process
\(a^H(B_{\tau+\,\cdot\,/a}-B_\tau)\) converges in
\(C_{\mathrm{loc}}(\mathbb R)\) to a limiting tangent law that is
\(H\)-self-similar, supported on nonpositive paths pinned at zero, and
rerooting-rescaling invariant: rerooting the limit process at
its maximum on any fixed compact interval separated from zero and rescaling
again asymptotically reproduces the same law. We also identify the tangent
law as the limit of two-sided finite-grid hard-wall laws as the mesh vanishes
and both horizons diverge, which can informally be interpreted as conditioning fractional
Brownian motion on a
nonpositive path. As an application, we consider persistence
probabilities for fractional Brownian motion: a tilted variant of $B$ yields
a different tangent law with a finite left horizon and an infinite right
horizon and we show that
\[
  \mathbb P(B_t\leq1\text{ for all }0\leq t\leq T)
  = \big(C+o(1)\big)\,T^{-(1-H)},\quad \text{as }T\to\infty,
\]
where the leading order coefficient \(C\in(0,\infty)\) has an explicit representation in terms of the expected maximum and the tilted tangent law.
\end{abstract}

\maketitle

\tableofcontents

\clearpage

\section{Introduction}
\label{sec:introduction}

\paragraph{Localisability and tangent processes.} Let \(B=(B_t)_{t\in\mathbb R}\) be two-sided fractional Brownian
motion with Hurst index \(0<H<1/2\), normalised such that \(B_0=0\) and
\begin{equation}
  \EE(B_t-B_s)^2=|t-s|^{2H}.
  \label{eq:fbm-increments}
\end{equation}
At every deterministic base point \(u\in\mathbb R\), stationary increments
and self-similarity give the exact scaling identity
\begin{equation}
  \bigl(r^{-H}(B_{u+rt}-B_u)\bigr)_{t\in\mathbb R}
  \deq (B_t)_{t\in\mathbb R},
  \qquad r>0.
  \label{eq:deterministic-fbm-localisability}
\end{equation}
Thus fractional Brownian motion is its own \emph{tangent process} at a deterministic
point.  More generally, tangent processes describe distributional limits of rescaled increments and are used to formalise the local structure, or \emph{localisability}, of stochastic processes.  Early Gaussian constructions
based on prescribed local scalings include
\cite{benassiJaffardRoux1997elliptic} and the general concept was introduced by Falconer \cite{falconer2002tangentFields, falconer2003local}, cf.\ \cite{falconerLeGuevelLevyVehel2009localizable} for an extension to stable and multistable moving-average processes.

The term tangent process is motivated by the analogous concept of tangent measures in geometric measure theory, which was first discussed in the context of
measure-theoretic blow-ups introduced by \textcite{preiss1987geometry}.  A
tangent measure is a weak limit of renormalised magnifications of a Radon
measure about a base point.  The probabilistic object relevant to repeated
zooming is not an individual tangent measure but a distribution of sceneries
across shrinking scales.  In that setting,
\textcite{moertersPreiss1998tangent} proved that, for every measure on
Euclidean space and every dimension parameter in their construction, at
almost every base point all tangent-measure distributions are Palm
distributions.  Informally, the Palm property expresses consistency between
the view from a typical marked point and an appropriately mass-biased
re-rooting inside the scenery.

In Falconer's deterministic-base-point framework, an essentially unique
tangent process is self-similar with stationary increments.  In particular,
Falconer \cite{falconer2003local} proves that the tangent process \(Y\) in
this setting satisfies, for every \(s\in\mathbb R\) and \(a>0\),
\begin{equation}
  \bigl(a^H\{Y(s+t/a)-Y(s)\}\bigr)_{t\in\mathbb R}
  \deq (Y(t))_{t\in\mathbb R}.
  \label{eq:ordinary-tangent-rerooting-invariance}
\end{equation}

\paragraph{Localisability at the maximum.}
The situation changes when the base point is selected by the sample path.
Let \(\tau\) be the almost surely unique maximiser of \(B\) on \([0,1]\), and
consider the process seen from this maximiser at scale \(a\):
\begin{equation}
  \Xi_a(t):=a^H\bigl(B_{\tau+t/a}-B_\tau\bigr),
  \qquad t\in\mathbb R.
  \label{eq:tangent-zoom}
\end{equation}

This is a path-selected local question rather than an application of
\eqref{eq:deterministic-fbm-localisability}: choosing \(\tau\) retains the
global constraint that the recentred path stay below zero on two expanding
horizons.  For ordinary Brownian motion, the path decomposition at its
maximum is classical: after the appropriate rescaling, the two pieces are
independent Brownian meanders \parencite{denisov1984maximum}, and the meander
is absolutely continuous with respect to the three-dimensional Bessel
process \parencite{imhof1984density}.  Rough fractional Brownian motion has
neither independent increments nor the Markov property underlying this
description.

Here, we show that the maximum-centred zoom nevertheless has a canonical
limit in \(C_{\mathrm{loc}}(\mathbb R)\), which we call the \emph{tangent law
at the maximum}.  Its paths are pinned at zero and strictly negative
elsewhere, and its law is \(H\)-self-similar.  The limit also has a
rerooting-rescaling invariance mirroring
\eqref{eq:ordinary-tangent-rerooting-invariance}: If \(X\) has
the tangent law at the maximum and \(S\) is its maximiser on any fixed compact interval
separated from zero, then \(S\) is almost surely unique and interior, and
the laws of the paths
\(a^H\{X(S+\,\cdot\,/a)-X(S)\}\) converge weakly back to the tangent law.

\paragraph{Persistence of the maximum of fractional Brownian motion.}
Our motivation for studying the tangent law at the maximum comes from the
relation between invariance properties of path-dependent shifts of a process
and persistence probabilities; see
\cite{moench2022localtimePersistence,Moench2024}.  Define
\begin{equation}
  p_H(T):=\PP(B_t\leq1\text{ for every }0\leq t\leq T),
  \qquad T>0.
  \label{eq:introduction-persistence-probability}
\end{equation}
Writing \(M=\max_{0\leq t\leq1}B_t\), self-similarity gives the exact identity
\begin{equation}
  p_H(T)=\PP(M\leq T^{-H}).
  \label{eq:persistence-maximum-lower-tail}
\end{equation}
The study of long no-crossing events for (stationary) Gaussian processes goes
back at least to \textcite{KacSlepian1959largeExcursions}, \textcite{Slepian1961firstPassageTime}, and \textcite{newellRosenblatt1962zerocrossing}.  The
maximum-distribution problem for fractional Brownian motion was raised in
particular by \textcite{sinai1997maximum} and solved at the level of the exact
decay exponent in a fundamental subsequent paper by
\textcite{molchan1999maximum}.  Molchan connected the problem to a reciprocal
exponential functional and identified the persistence exponent as \(1-H\).  More
precisely, he proved, for some \(k_H<\infty\),
\[
  T^{-(1-H)}e^{-k_H\sqrt{\log T}}
  \leq p_H(T)
  \leq T^{-(1-H)}e^{k_H\sqrt{\log T}}
\]
for all sufficiently large \(T\).  \Textcite{aurzada2011onesided} replaced
these stretched-logarithmic losses by powers of \(\log T\): for every
\(\varepsilon>0\),
\[
  c_{H,\varepsilon}T^{-(1-H)}
      (\log T)^{-1/(2H)-\varepsilon}
  \leq p_H(T)
  \leq C_{H,\varepsilon}T^{-(1-H)}
      (\log T)^{(2-H)/H+\varepsilon}.
\]
The same circle of ideas was subsequently adapted to logarithmically moving
boundaries \cite{aurzadaBaumgarten2013moving} and extended to broader classes
of processes
\cite{aurzadaGuillotinPlantard2015reversible,aurzadaGuillotinPlantardPene2018stationary,aurzadaMoench2019hermite}.
For the exact continuous level-one probability in
\eqref{eq:introduction-persistence-probability}, the strongest bounds before
the present work combined the lower bound of
\textcite{aurzadaGuillotinPlantardPene2018stationary} with the upper bound of
\textcite{pengRao2023smallIncrements}.  Writing \(\log_k\) for the \(k\)-fold
iterated logarithm, these are
\[
  \begin{aligned}
    c_H T^{-(1-H)}(\log T)^{-1/(2H)}
    &\leq p_H(T)\\
    &\leq C_{H,N,\varepsilon}T^{-(1-H)}
        (\log T)^{1/H-1}(\log_2T)^{3/(2H)}\\
    &\qquad{}\times(\log_3T\,\log_4T)^{1/H}
        (\log_5T)^{1/(2H)}(\log_NT)^\varepsilon.
  \end{aligned}
\]
This holds for \(0<H<1/2\), every integer \(N\geq6\), and every
\(\varepsilon>0\).  In the smoother, positively correlated range,
\textcite{aurzadaGuillotinPlantardPene2018stationary} proved both bounds
\[
  c_H T^{-(1-H)}(\log T)^{-1/(2H)}
  \leq p_H(T)
  \leq C_H T^{-(1-H)},
  \qquad \tfrac12<H<1.
\]
Note that at \(H=1/2\), the reflection principle gives the exact Brownian formula
\[
  p_{1/2}(T)=2\Phi(T^{-1/2})-1
  \sim \sqrt{\frac{2}{\pi}}\,T^{-1/2},
\]
where \(\Phi\) is the standard normal distribution function.

Here, we prove sharp asymptotics in the rough range \(0<H<1/2\) and show
that there is a constant \(\mathsf C_H\in(0,\infty)\) such that
\begin{equation}
  \label{eq:persasymp}
  p_H(T)
  = \big(\mathsf C_H+o(1)\big)\,T^{-(1-H)}.
\end{equation}

A parallel sharp result was already known for the local time at zero,
\[
  \ell_H(0,T]
  :=\lim_{\varepsilon\downarrow0}\frac{1}{2\varepsilon}
      \int_0^T\mathbf 1_{\{|B_t|<\varepsilon\}}\,\mathrm dt.
\]
It was shown in \cite{moench2022localtimePersistence} that, for any $H\in (0,1)$,
\[
  \PP\left(\ell_H(0,T]\leq1\right)
  = \big(\mathsf C_H^{\mathrm{LT}}+o(1)\big)\,T^{-(1-H)},
\]
where \(\mathsf C_H^{\mathrm{LT}}\) admits a representation as the intensity,
on the local-time scale, of macroscopic excursions from $0$.

In contrast, to identify the constant in \eqref{eq:persasymp}, we study a sequence of Laplace tilts generated by \(M\), which forces the maximum height to scale as \(a^{-H}\) and its location to
scale as \(a^{-1}\) from the left endpoint.  The resulting local law is not
the ordinary two-sided tangent law. Instead, it is a tilted mixture of hard-wall laws with
a finite left horizon and an infinite right horizon.  We identify this mixture
and prove that
\begin{equation}
    \mathsf C_H=\frac{H\,\EE[M]}{\Gamma(1/H)D_H}\in(0,\infty),
  \label{eq:introduction-sharp-persistence}
\end{equation}
where \(D_H\) is defined by the limiting tilted finite-left-horizon law.

\paragraph{Organisation of the paper.}
Section~\ref{sec:main-results} states the principal results in full detail.
Section~\ref{sec:tangent-proofs} imports the rough finite-wall results needed
from the companion paper \cite{moench2026horizonProblem}, retaining the
primary-source Green-kernel statement that is also used independently later,
and then proves the ordinary tangent law and its rerooting-rescaling
invariance.
Section~\ref{sec:rough-persistence} proves the sharp persistence theorem and
develops the required tilted finite-left-horizon law.

\section{Main results}
\label{sec:main-results}
\label{sec:setup}

Throughout the remainder of the paper we always work in the rough range \(0<H<1/2\).
We write \(\mathbb R_+:=[0,\infty)\).

\subsection{The tangent law at the maximum}
\label{subsec:main-tangent-result}

Let
\begin{equation}
  \mathcal X:=C_{\mathrm{loc}}(\mathbb R)
\end{equation}
be the space of continuous real-valued functions on \(\mathbb R\), equipped
with the topology of uniform convergence on compact sets, metrised for example
by
\begin{equation}
  d_{\mathrm{loc}}(f,g)
  :=\sum_{R=1}^\infty 2^{-R}
       \bigl(1\wedge\|f-g\|_{C([-R,R])}\bigr).
  \label{eq:local-uniform-metric}
\end{equation}
For a nondegenerate compact interval \(I\subset\mathbb R\) and
\(f\in\mathcal X\), let
\begin{equation}
  S_I(f):=\operatorname*{argmax}_{s\in I}f(s)
  \label{eq:rough-window-maximizer}
\end{equation}
whenever this maximiser is unique.  For \(a>0\), define
\begin{equation}
  (\mathcal R_a^If)(t)
  :=a^H\bigl(f(S_I(f)+t/a)-f(S_I(f))\bigr),
  \qquad t\in\mathbb R.
  \label{eq:rough-window-rerooting-map}
\end{equation}

\begin{definition}[Rerooting-rescaling invariance]
\label{def:asymptotic-window-rerooting-invariance}
A probability law \(\mu\) on \(\mathcal X\) is called
\emph{rerooting-rescaling invariant at its maximum in \(I\) with exponent
\(H\)} if a path \(X\sim\mu\) has an almost surely unique maximiser on
\(I\), this maximiser belongs to \(\operatorname{int}I\) almost surely, and
\begin{equation}
  \mathcal L(\mathcal R_a^IX)\Longrightarrow\mu
  \qquad\text{on }\mathcal X
  \quad\text{as }a\to\infty.
  \label{eq:rough-window-rerooting-limit}
\end{equation}
\end{definition}

In view of \eqref{eq:ordinary-tangent-rerooting-invariance}, this is the
selected-point, asymptotic analogue of the deterministic
rerooting-rescaling invariance encoded by stationary increments and
self-similarity.

We call the horizontal line \(\mathbb R\times\{0\}\) the zero wall.  By a
dyadic mesh we mean \(h=2^{-m}\) with \(m\in\mathbb Z\).  For such an \(h\)
and \(L^-,L^+>0\) satisfying \(L^-/h,L^+/h\in\mathbb N\), define the
constraint grid
\begin{equation}
  \Lambda(h,L^-,L^+)
  :=h\mathbb Z\cap[-L^-,L^+]\setminus\{0\}
  \label{eq:wall-grid}
\end{equation}
and the finite-grid hard-wall path law
\begin{equation}
  Q_{h,L^-,L^+}
  :=\mathcal L\left(
       B\,\middle|\,
       B_t\leq0\text{ for every }t\in\Lambda(h,L^-,L^+)
     \right).
  \label{eq:pure-wall-law}
\end{equation}
The conditioning event has positive probability; this follows, for example,
from Lemma~\ref{lem:fbm-compact-positivity-support} below.

\begin{maintheorem}
\label{thm:main-tangent-process}
Assume \(0<H<1/2\).  The following assertions hold.
\begin{enumerate}[
  label=\textup{(\themaintheorem.\arabic*)},
  ref=\themaintheorem.\arabic*,
  font=\normalfont\bfseries,
  leftmargin=*
]
\item\label{thm:main-tangent-limit}
\emph{Tangent limit.}
If \(\tau\) is the unique maximiser of \(B\) on \([0,1]\), then there is a
probability law \(Q\) on \(\mathcal X\) such that the zooms \(\Xi_a\)
defined in \eqref{eq:tangent-zoom} satisfy
\begin{equation}
  \mathcal L(\Xi_a)\Longrightarrow Q
  \qquad\text{on }\mathcal X
  \quad\text{as }a\to\infty.
  \label{eq:main-tangent-convergence}
\end{equation}
The limiting law is \(H\)-self-similar: if \(X\) has law \(Q\), then, for
every \(c>0\),
\begin{equation}
  \bigl(c^H X(t/c)\bigr)_{t\in\mathbb R}
  \stackrel d= X.
  \label{eq:limit-selfsimilarity}
\end{equation}

\item\label{thm:main-conditional-limit}
\emph{Two-sided hard-wall limit.}
The tangent law \(Q\) in \ref{thm:main-tangent-limit} is also the path-space
limit of the finite-grid hard-wall laws from \eqref{eq:pure-wall-law}:
\begin{equation}
  Q_{h,L^-,L^+}\Longrightarrow Q
  \label{eq:pure-wall-path-limit}
\end{equation}
whenever \(h\downarrow0\) through dyadic values and
\(L^-\to\infty\), \(L^+\to\infty\), without any restriction on the ratio of
the two horizons. Moreover,
\begin{equation}
  Q\bigl(
    f(0)=0,
    f(t)<0\text{ for every }t\in\mathbb R\setminus\{0\}
  \bigr)=1.
  \label{eq:limit-support}
\end{equation}

\item\label{thm:rough-window-rerooting}
\emph{Rerooting-rescaling invariance.}
For each fixed nondegenerate compact interval
\(I\subset\mathbb R\) with \(0\notin I\), the tangent law \(Q\) is
rerooting-rescaling invariant at its maximum in \(I\) with exponent \(H\).
\end{enumerate}
\end{maintheorem}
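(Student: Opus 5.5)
The plan is to construct \(Q\) first as the limit of the hard-wall laws in \ref{thm:main-conditional-limit}, and then to transfer it to the zooms \(\Xi_a\) and to the rerooted limit paths.

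\textbf{Step 1: the hard-wall limit.} I would import from the companion paper \cite{moench2026horizonProblem} the finite-wall results for the rough range. These give tightness of \(Q_{h,L^-,L^+}\) in \(\mathcal X\), through moment bounds on increments of conditioned fBm that are uniform in \(h,L^\pm\). They also give a horizon-decoupling estimate: the law restricted to \([-R,R]\) changes by at most \(\varepsilon(R,L_0)\) in total variation once \(L^\pm\geq L_0\), uniformly in the ratio \(L^-/L^+\). For monotonicity in the mesh I would use dyadic nesting. Refining \(h\) adds constraints, and since the event is convex and downward-closed, Gaussian correlation/FKG-type comparison for convex conditioning gives stochastic domination in the appropriate sense. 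Combined with tightness, this yields a unique limit \(Q\). Self-similarity \eqref{eq:limit-selfsimilarity} is then immediate: by scaling, \(c^H f(\cdot/c)\) maps \(Q_{h,L^-,L^+}\) to \(Q_{ch,cL^-,cL^+}\), and the limit does not depend on the particular sequence. For the support statement \eqref{eq:limit-support}, pinning \(f(0)=0\) is built in and \(f\le0\) follows by continuity. For strict negativity I would use a Green-kernel/entropic repulsion lower bound: the probability that a conditioned path comes within \(\delta\) of \(0\) at some \(t\) in a compact set separated from \(0\) tends to \(0\) as \(\delta\to0\), uniformly in the parameters.

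\textbf{Step 2: the tangent limit \ref{thm:main-tangent-limit}.} I would disintegrate according to \(\tau\). Conditionally on \(\tau=u\in(0,1)\), the path satisfies \(B_t\le B_u\) on \([0,1]\). After zooming at \(u\) at scale \(a\), and using \eqref{eq:deterministic-fbm-localisability}, this is the law of fBm conditioned to stay below \(0\) on \([-au,a(1-u)]\) in the continuum. The delicate point is that \(\{\tau=u\}\) has probability zero, so I would discretise. Replace \(\tau\) by the maximiser \(\tau_h\) over the grid \(h a^{-1}\mathbb Z\cap[0,1]\). Then \(\{\tau_h=u\}\) is exactly a grid-wall event, and the conditional law of the zoom is \(Q_{h,au,a(1-u)}\), up to a Radon--Nikodym weight that only involves the behaviour near the endpoints. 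Since \(L^\pm\to\infty\) for \(u\) in the bulk, Step 1 gives convergence uniformly in \(u\in[\eta,1-\eta]\), and \(\PP(\tau\notin[\eta,1-\eta])\to0\). Finally I would let \(h\downarrow0\) and control \(|\tau_h-\tau|\) on scale \(1/a\), using the uniform modulus of continuity from Step 1.

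\textbf{Step 3: rerooting invariance \ref{thm:rough-window-rerooting}.} Uniqueness and interiority of \(S_I\) follow from the Gaussian-type absolute continuity of \(Q\) restricted to \(I\) relative to a conditioned fBm. To obtain \eqref{eq:rough-window-rerooting-limit}, I would approximate \(Q\) by \(Q_{h,L^-,L^+}\) and note that, for a path with this law, the maximum on \(I\) behaves locally like an fBm maximum, because the wall at distance \(\geq\operatorname{dist}(0,I)\) is macroscopic. Repeating the argument of Step 2 with base interval \(I\) in place of \([0,1]\) then shows that the zoom at \(S_I\) converges to \(Q\).

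\textbf{Main obstacle.} The hardest part is the uniform horizon-decoupling of Step 1, which is what allows the limit to be independent of the ratio \(L^-/L^+\). The interchange of the limits \(h\to0\) and \(a\to\infty\) in Step 2 also requires care, and I expect the uniform estimates from the companion paper to be essential there.
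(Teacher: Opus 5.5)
Your Step~2 matches the paper's proof of \ref{thm:main-tangent-limit}: grid maximiser, an exact mixture of hard-wall components indexed by the selected grid point, and negligible mass on short-horizon components. Step~3, however, has a genuine gap, in two places.

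\emph{Uniqueness and interiority.} Absolute continuity of \(Q\) on \(I\) with respect to a Gaussian law is not established anywhere. \(Q\) is a weak limit of conditionings on events of vanishing probability, and the paper deliberately proves uniqueness without it. It writes the wall event as a truncation \(\{Z\geq\ell_F(Y)\}\) of an anchor coordinate \(Z=-B_p\), \(p>v\), with \(Y\) independent of \(Z\). The difference of the minima of \(-B\) over two separated subintervals of \(I\) is then strictly decreasing in \(Z\), because \(t\mapsto K_H(t,p)\) increases on \((0,p)\). This gives anti-concentration uniformly in the finite wall. Interiority is proved separately, by showing that fixed-time germs of \(Q\) are fractional Brownian via Lemma~\ref{lem:rough-vanishing-exterior-prediction}.

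\emph{The rerooting limit.} More seriously, Step~2 cannot be repeated with \(I\) in place of \([0,1]\). There the disintegration is exact because the base process has stationary increments: the zoom at any grid point is again standard fBm, and the selection event is exactly a hard wall. Under \(\mathcal L(B\mid A_F)\), selecting the window maximiser \(s\) and zooming at scale \(b\) gives the two-root event \eqref{eq:reroot-two-root-event}. This requires \(G^{b,s}\geq0\) on the window grid and \(G^{b,s}\geq G^{b,s}(v_{b,s})\) on the old grid, where the old pin sits at \(v_{b,s}=-bs\) and the level \(G^{b,s}(v_{b,s})=b^HB_s\leq0\) is random. These components are not of the form \(Q_{h,L^-,L^+}\), so Step~1 says nothing about them. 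You would have to show that the old-root constraints become invisible uniformly over the fine grids, with no lower bound on the selection probabilities, which vanish with the mesh. ``The wall is macroscopic'' is the heuristic, not the argument. In the paper this is Proposition~\ref{prop:rough-selected-germ-invisibility}. Time inversion moves the old root to \(\rho\) with \(|\rho|\asymp b^{-1}\). Lemma~\ref{lem:rough-selected-germ-factorization} then gives a Harnack-type factorisation of the continuation weights and pin densities, uniform in grid cardinality. Separate mean and centred moduli give tightness, and a diagonal in \((b,m,\eta)\) fixes the order of limits.

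\emph{Step~1.} Convexity and downward-closedness of the wall event do not give stochastic monotonicity under added constraints. A wall on a coordinate that is negatively correlated with the observed one, after conditioning on the other walls, pushes it the wrong way. What is needed is that every orthant-restricted finite marginal is MTP\(_2\), i.e.\ has Stieltjes precision. For \(0<H<1/2\) this comes from identifying \(K_H\) with the killed stable Green kernel (Lemma~\ref{lem:finite-wall-mtp2-closure}).

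With that monotonicity, extending a horizon is also just adding constraint sites. The finite-dimensional laws therefore form a monotone net over all finite dyadic constraint sets. A uniform one-coordinate tail bound makes this net tight, and every cofinal sequence has the same limit whatever the rates of mesh and horizons. The total-variation horizon decoupling you single out as the main obstacle is therefore unnecessary. Path tightness is proved in the paper itself, via Harg\'e's convex domination for the centred part and a positive boundary-flux representation of the conditional mean.

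\emph{Smaller points.} Your scaling argument for \eqref{eq:limit-selfsimilarity} only covers dyadic \(c\), since otherwise \(ch\) is not an admissible mesh. All \(c>0\) follow from the identity \(\Xi_{ca}(t)=c^H\Xi_a(t/c)\) once the tangent limit is known, which is how the paper argues. Finally, the grid disintegration in Step~2 is exact: there is no Radon--Nikodym weight to control.
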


\begin{remark}
\label{rem:rough-window-rerooting-scope}
For intervals \(I\) with \(0\in\operatorname{int}I\), rerooting-rescaling
invariance follows immediately from self-similarity and
\eqref{eq:limit-support}; the substantive case is that of intervals separated
from zero.  If \(0\in\partial I\), the same scaling identity holds about the
unique maximiser zero, but Definition~\ref{def:asymptotic-window-rerooting-invariance}
does not apply because that maximiser is not interior.
\end{remark}

\subsection{The tilted finite-left-horizon law and sharp persistence}
\label{subsec:main-persistence-result}

Recall that
\begin{equation}
  M:=\max_{0\leq t\leq1}B_t
  \quad\text{and}\quad
  \tau:=\operatorname*{argmax}_{0\leq t\leq1}B_t.
  \label{eq:persistence-maximum-time}
\end{equation}
The maximiser is almost surely unique by Lemma~\ref{lem:unique-interior-maximizer}
below.  For \(a>0\), define the tilted law
\begin{equation}
  \frac{\mathrm d\widehat{\PP}_a}{\mathrm d\PP}
  :=\frac{e^{-a^HM}}{Z_a},
  \qquad
  Z_a:=\EE e^{-a^HM},
  \label{eq:rough-persistence-tilt}
\end{equation}
and the rescaled maximum time and height
\begin{equation}
  U_a:=a\tau,
  \qquad
  V_a:=a^HM.
  \label{eq:rough-persistence-marks}
\end{equation}
The nonnegative deficit arms are
\begin{equation}
\begin{aligned}
  L_a(s)&:=a^H\bigl(M-B_{\tau-s/a}\bigr),
    \qquad 0\leq s\leq U_a,\\
  R_a(s)&:=a^H\bigl(M-B_{\tau+s/a}\bigr),
    \qquad 0\leq s\leq a-U_a.
\end{aligned}
  \label{eq:rough-persistence-deficit-arms}
\end{equation}
We extend the left arm constantly after \(U_a\), and the right arm
constantly after \(a-U_a\); the extensions are denoted by
\(\overline L_a\) and \(\overline R_a\).  Finally, put
\begin{equation}
  J_a
  :=\int_0^{U_a}e^{-L_a(s)}\,\mathrm ds
    +\int_0^{a-U_a}e^{-R_a(s)}\,\mathrm ds.
  \label{eq:rough-persistence-functional}
\end{equation}

\begin{maintheorem}
\label{thm:rough-sharp-persistence}
There is a probability law \(\mu_H\) on
\begin{equation}
  \mathcal Y
  :=\mathbb R_+^2\times C_{\mathrm{loc}}(\mathbb R_+)^2
  \label{eq:rough-marked-space}
\end{equation}
equipped with its product topology, with local uniform convergence in each
path coordinate,
such that
\begin{equation}
  \mathcal L_{\widehat{\PP}_a}
  (U_a,V_a,\overline L_a,\overline R_a)
  \Longrightarrow \mu_H.
  \label{eq:rough-tilted-marked-convergence}
\end{equation}
Let \((U,V,\overline L,R)\) denote the coordinate random element under
\(\mu_H\).
The left horizon \(U\) is finite almost surely, whereas \(R\) is defined on
the whole half-line.  The path \(\overline L\) is constant on
\([U,\infty)\) almost surely.  Write \(L\) for the restriction of the
constant-extended path \(\overline L\) to \([0,U]\).  Moreover,
\begin{equation}
  J_\infty
  :=\int_0^Ue^{-L(s)}\,\mathrm ds
    +\int_0^\infty e^{-R(s)}\,\mathrm ds
  \label{eq:rough-limit-functional}
\end{equation}
satisfies \(0<J_\infty<\infty\) almost surely and
\begin{equation}
  J_a\Longrightarrow J_\infty,
  \qquad
  \widehat{\EE}_a[J_a^{-1}]
  \longrightarrow
  D_H:=\EE_{\mu_H}[J_\infty^{-1}]\in(0,\infty).
  \label{eq:rough-reciprocal-limit}
\end{equation}
Finally, the persistence probability satisfies
\begin{equation}
  \lim_{T\to\infty}{T^{1-H}}{\PP\bigl(B_t\leq1\text{ for every }0\leq t\leq T\bigr)} = \mathsf C_H,
  \label{eq:rough-sharp-persistence-asymptotic}
\end{equation}
where
\begin{equation}
  \mathsf C_H
  =\frac{H\,\EE[M]}{\Gamma(1/H)D_H}.
  \label{eq:rough-persistence-constant}
\end{equation}
\end{maintheorem}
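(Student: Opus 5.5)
The plan is to reduce the persistence statement to an exact-in-distribution identity for the Laplace transform \(Z_a=\EE e^{-a^HM}\), and then to evaluate that identity with the tilted local limit. Since \(p_H(T)=\PP(M\leq T^{-H})\) by \eqref{eq:persistence-maximum-lower-tail}, and \(x\mapsto\PP(M\leq x)\) is monotone, Karamata's Tauberian theorem applies. It shows that \eqref{eq:rough-sharp-persistence-asymptotic} is equivalent to
\[
Z_a=\bigl(\mathsf C_H\Gamma(1/H)+o(1)\bigr)a^{-(1-H)}.
\]
Here \(\Gamma(1+(1-H)/H)=\Gamma(1/H)\). With the claimed value of \(\mathsf C_H\), this becomes \(Z_a\sim H\,\EE[M]\,a^{-(1-H)}/D_H\). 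So it suffices to prove two things: the exact-type asymptotic \(\EE[e^{-a^HM}J_a^{-1}]\sim H\,\EE[M]\,a^{-(1-H)}\), and \(\widehat\EE_a[J_a^{-1}]=\EE[e^{-a^HM}J_a^{-1}]/Z_a\to D_H\).

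\emph{Step 1 (Molchan-type identity).} Substituting \(t=as\) gives \(J_a=a\int_0^1e^{-a^H(M-B_u)}\,\mathrm du\). Hence \(e^{-a^HM}J_a^{-1}=1/\int_0^ae^{B_t}\,\mathrm dt\) in distribution, by self-similarity. Put \(I_T:=\int_0^Te^{B_t}\,\mathrm dt\) and \(g(T):=\EE\log I_T\). Then \(g'(T)=\EE[e^{B_T}/I_T]\). Reversing time, \(t\mapsto B_{T-t}-B_T\), which has the same law by stationary increments, gives \(g'(T)=\EE[1/I_T]\). This quantity is nonincreasing in \(T\), so \(g\) is concave. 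On the other hand, self-similarity gives
\[
g(T)=\log T+\EE\log\int_0^1e^{T^HB_u}\,\mathrm du=\log T+T^H\EE[M]+o(T^H),
\]
where the error term is controlled by Laplace's method, using small-ball lower bounds near the maximiser. Concavity then upgrades this to the derivative asymptotic \(g'(T)\sim H\,\EE[M]\,T^{H-1}\). Taking \(T=a\) yields \(\EE[e^{-a^HM}J_a^{-1}]\sim H\,\EE[M]\,a^{-(1-H)}\).

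\emph{Step 2 (tilted local limit).} Next I prove \eqref{eq:rough-tilted-marked-convergence}. The tilt \(e^{-a^HM}\) forces \(V_a=O(1)\). Because of the finite-interval constraint near the left endpoint, it also forces \(U_a=O(1)\), while the right arm sees an expanding horizon \(a-U_a\to\infty\). I would first establish tightness of \((U_a,V_a)\) under \(\widehat\PP_a\). Then, conditionally on the marks, I would identify the arms with a one-sided-finite version of the hard-wall laws \(Q_{h,L^-,L^+}\): left horizon \(L^-=u\) fixed, \(L^+\to\infty\), with an extra tilt coming from the wall height. The finite-wall machinery imported from \cite{moench2026horizonProblem} and used in the proof of Theorem~\ref{thm:main-tangent-process}\ref{thm:main-conditional-limit} gives this identification, and \(\mu_H\) is defined as the resulting mixture over \((u,v)\). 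The continuous mapping theorem then gives \(J_a\Rightarrow J_\infty\) on truncated windows. \(J_\infty>0\) is immediate. \(J_\infty<\infty\) follows because \(R(s)\) grows like \(s^{H}\), as in the tangent law, so \(\int_0^\infty e^{-R}<\infty\).

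\emph{Step 3 (main obstacle: uniform integrability).} The hard part is upgrading \(J_a\Rightarrow J_\infty\) to \(\widehat\EE_a[J_a^{-1}]\to D_H\), together with convergence of the tails \(\int_K^{a-U_a}e^{-R_a}\). For the reciprocal, I would bound \(J_a\geq\int_0^{\delta}e^{-R_a(s)}\,\mathrm ds\) and control \(\widehat\PP_a(\sup_{[0,\delta]}R_a>x)\) uniformly in \(a\). This uses the hard-wall Green-kernel estimate from Section~\ref{sec:tangent-proofs}, which gives uniform moments of order \(1+\eta\) for \(J_a^{-1}\). For the upper tail of \(J_a\), the natural approach is a uniform-in-\(a\) estimate of \(\widehat\PP_a(R_a(s)\leq c\,s^{H-\epsilon})\) for large \(s\), obtained from the same wall estimates via a dyadic decomposition in \(s\). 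Finally, \(D_H<\infty\) follows from the uniform integrability, and \(D_H>0\) from \(J_\infty<\infty\) almost surely. Combining Steps 1–3 with the Tauberian reduction gives \eqref{eq:rough-persistence-constant}.
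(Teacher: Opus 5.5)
Your Tauberian reduction is right, and Step~1 is correct: the time-reversal identity $g'(T)=\EE[e^{B_T}/I_T]=\EE[1/I_T]$, concavity of $g$, and the Laplace-method expansion of $g(T)$ reproduce Molchan's endpoint-differentiation argument, which the paper simply cites in Proposition~\ref{prop:exact-persistence-compiler}.

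The genuine gap is in Step~2. Tightness of $(U_a,V_a)$ and an identification of the conditional arm laws give at best \emph{subsequential} limits $\int K_u\,\pi(\mathrm du)$, where $\pi$ is some subsequential limit of $\mathcal L_{\widehat\PP_a}(U_a)$. Your mixture ``over $(u,v)$'' likewise presupposes a limit of the joint law of $(U_a,V_a)$. Nothing in your plan shows the mixing law is independent of the subsequence, yet the sharp constant needs exactly this. Otherwise $\widehat\EE_a[J_a^{-1}]$ may have several limit points, $a^{1-H}Z_a$ need not converge, and Karamata's theorem cannot be applied.

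The paper's missing idea is the exponential-barrier picture. Adjoin an independent $\mathrm{Exp}(1)$ overshoot and set $E_T=M_T+D$; then $\widehat\PP_T$ becomes $(B,E)$ conditioned on $\{M_T\le E\}$ (Lemma~\ref{lem:exponential-barrier-disintegration}). The barrier laws are stochastically increasing in $T$ (Corollary~\ref{cor:monotone-exponential-barriers}). This uses the monotone orthant ratios of Lemma~\ref{lem:nested-diagonal-orthant-ratios}, which rest on the diagonally dominant $M$-matrix structure of fBm precisions. The conditional deficit kernels are monotone in both $T$ and the barrier, so $(E_T,E_T-B)$ converges to a unique $\Gamma_H$ (Proposition~\ref{prop:infinite-barrier-deficit-law}). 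A continuous reconstruction map carries each subsequential marked limit, tensored with $\mathrm{Exp}(1)$, onto $\Gamma_H$. That map is almost surely invertible because entrance paths vanish only at the pin (Lemma~\ref{lem:entrance-unique-zero}). This identifies $\pi_H$ (Theorem~\ref{thm:unique-entrance-mixture}).

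The other ingredients of Step~2 are asserted rather than proved.
\begin{itemize}
\item Tightness of $U_a$ needs a quantitative uniform estimate, not the heuristic from $V_a=O(1)$. The paper combines the barrier tail from the doubling bound $\mathcal Z(2T)\geq p_*\mathcal Z(T)$ with the uniform localisation $\mathbb P_{T,x}(\sigma_T>R)\le C_HxR^{-H}$. The latter is proved with dyadic blocks, association and Gaussian supremum anti-concentration (Proposition~\ref{prop:uniform-hard-wall-localization}, Theorem~\ref{thm:tilted-scalar-compact-containment}).
\item The Theorem~A machinery handles only two diverging horizons. With a finite left horizon you must construct $Q^u$, control moving grid endpoints, and prove weak continuity of $u\mapsto K_u$ on all of $[0,\infty)$. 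This includes $u\downarrow0$, where the left wall collapses into the pin and the zero-germ estimate (Theorem~\ref{thm:zero-germ}) is required. Without this continuity and uniform convergence of the finite components, $\int K_u\,\pi_n(\mathrm du)$ cannot be passed to the limit.
\end{itemize}

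Smaller points in Step~3:
\begin{itemize}
\item The bound $J_a\geq\int_0^\delta e^{-R_a}$ fails when $a-U_a<\delta$. Use whichever arm has length at least $1/2$, and absorb the endpoint tilt by association (Lemma~\ref{lem:endpoint-tilted-unit-arm-moment}).
\item The claim that $R$ ``grows like $s^H$ as in the tangent law'' is not available from Theorem~A. It is precisely what your dyadic block-repulsion estimate must deliver.
\end{itemize}
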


The law \(\mu_H\) is distinct from the ordinary two-sided tangent law \(Q\).
The likelihood in \eqref{eq:rough-persistence-tilt} selects maxima of height
\(O(a^{-H})\), and the left horizon \(a\tau\) has a finite limit under this
singular tilt.  This finite horizon is intrinsic to the persistence problem.

\begin{remark}[Connection to Molchan's exponential functional]
\label{subsec:molchan-functional-interpretation}

For fractional Brownian motion,
\textcite[Statement~1, equation~(8)]{molchan1999maximum} connected
persistence to the reciprocal exponential functional.  His
endpoint-differentiation argument, using stationary increments and
self-similarity, gives
\[
  I(a):=\int_0^a e^{B_s}\,\mathrm ds,
  \qquad
  a^{1-H}\EE[I(a)^{-1}]\longrightarrow H\,\EE[M].
\]
Proposition~\ref{prop:exact-persistence-compiler} gives this observable the
tangent-coordinate factorisation
\[
\begin{aligned}
  J_a
  &=\int_{-U_a}^{a-U_a}e^{\Xi_a(t)}\,\mathrm dt
    =a e^{-a^HM}\int_0^1e^{a^HB_u}\,\mathrm du,\\
  \EE[I(a)^{-1}]
  &=Z_a\,\widehat{\EE}_a[J_a^{-1}].
\end{aligned}
\]
Thus \(Z_a\) is a smooth persistence weight and \(J_a^{-1}\) records the
local width of the near-maximal region.  The reciprocal limit
\eqref{eq:rough-reciprocal-limit} makes this shape correction asymptotically
constant: it contributes \(D_H^{-1}\) to
\eqref{eq:rough-persistence-constant}, while the Tauberian passage from
\(Z_a\) to the lower tail of \(M\) contributes \(\Gamma(1/H)^{-1}\).
\end{remark}
\section{Localisability at the maximum: proof of Theorem~\ref{thm:main-tangent-process}}
\label{sec:tangent-proofs}

The proof has four stages.  We first establish that conditioning on a finite set of points yields a consistent hard-wall limit in the sense of finite-dimensional distributions.  Bounds on centred fluctuations
and conditional means lift this limit to a nonpositive, reflection-invariant
law \(Q\) on full trajectories.  We then identify \(Q\) with
the tangent law at the maximiser, which shows self-similarity.  Finally, a
compact-window rerooting argument proves strict negativity, uniqueness and
interiority of the window maximiser, and the rerooting-rescaling limit.

\subsection{Finite-dimensional hard-wall convergence}
\label{sec:wall-laws}
\label{sec:finite-grid-source}
\label{sec:finite-dimensional-wall-laws}

Our first goal is to prove the finite-dimensional hard-wall limit in
Theorem~\ref{thm:finite-dimensional-entrance-law}.  Its proof needs several
facts established in \cite{moench2026horizonProblem}: nondegeneracy,
Stieltjes precision and MTP\(_2\), monotonicity under additional wall
constraints, and a uniform one-coordinate tail bound.  We collect these
statements, together with the local results that connect them, before giving
the proof.

\subsubsection{Preliminary results}

Write
\begin{equation}
  \mathbb D:=\mathbb Z[1/2]\setminus\{0\}
\end{equation}
for the nonzero dyadic rationals.  The covariance kernel of \(B\) is
\begin{equation}
  K_H(s,t)
  :=\operatorname{Cov}(B_s,B_t)
  =\frac12\bigl(|s|^{2H}+|t|^{2H}-|s-t|^{2H}\bigr).
  \label{eq:fbm-covariance}
\end{equation}

\begin{lemma}[{\cite[Lemma~4.1]{moench2026horizonProblem}}]
\label{lem:fbm-compact-positivity-support}
Let \(K\subset\mathbb R\setminus\{0\}\) be compact.  Then
\begin{equation}
  \PP(B_s>0\text{ for every }s\in K)>0.
  \label{eq:fbm-compact-strict-positivity}
\end{equation}
Fix \(t\neq0\), and let
\begin{equation}
  \xi_s^{(t)}
  :=B_s-\frac{K_H(s,t)}{|t|^{2H}}B_t
  \label{eq:fbm-bridge-residual-support}
\end{equation}
be the centred fractional Brownian bridge obtained by conditioning at \(t\).
If in addition
\(t\notin K\), then
\begin{equation}
  \PP(\xi_s^{(t)}>0\text{ for every }s\in K)>0.
  \label{eq:fbm-bridge-compact-strict-positivity}
\end{equation}
The same assertions hold with all signs reversed.
\end{lemma}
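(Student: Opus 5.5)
The plan is to use the Cameron--Martin theorem together with the support theorem for Gaussian measures on \(C(K)\). By symmetry of \(B\) it suffices to prove the two positivity statements; the sign-reversed versions follow by applying them to \(-B\) and \(-\xi^{(t)}\).

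\emph{First assertion.} Since \(K\) is compact and \(0\notin K\), we may choose \(\delta>0\) with \(K\subset\{|s|\geq\delta\}\). The topological support of the law of \(B\) restricted to \(C(K)\) is the closure of the Cameron--Martin space \(\mathcal H\), so it suffices to exhibit \(g\in\mathcal H\) with \(\min_K g>0\). We take
\[
g(s):=\int K_H(s,u)\,\varphi(u)\,\mathrm du ,
\]
and aim for \(\varphi\) a nonnegative smooth bump. A cleaner choice is a finite combination \(g=\sum_i c_iK_H(\cdot,u_i)\). The function \(K_H(s,u)\) is positive whenever \(s\) and \(u\) have the same sign, because for \(H<1/2\) the map \(x\mapsto x^{2H}\) is subadditive, so \(|s-u|^{2H}<|s|^{2H}+|u|^{2H}\). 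Hence
\[
g(s)=K_H(s,1)+K_H(s,-1)
\]
is strictly positive on \(\mathbb R\setminus\{0\}\), and by continuity \(\min_K g=:\eta>0\). The event \(\{\|B-g\|_{C(K)}<\eta\}\) has positive probability by the support theorem: Cameron--Martin shifts preserve positivity of ball probabilities, and small balls around \(0\) have positive mass for any centred Gaussian measure on a separable Banach space. On that event \(B_s>0\) for every \(s\in K\).

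\emph{Bridge assertion.} The residual \(\xi^{(t)}\) is a centred Gaussian process whose Cameron--Martin space is \(\{g\in\mathcal H: g(t)=0\}\). Equivalently, its elements are \(\Pi g=g-\frac{K_H(\cdot,t)}{|t|^{2H}}g(t)\) for \(g\in\mathcal H\). So we need some \(g\in\mathcal H\) with \(g(t)=0\) and \(g>0\) on \(K\).

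Since \(t\notin K\), we can set \(\rho:=\operatorname{dist}(t,K\cup\{0\})>0\). We take \(g=g_0-\psi\), where \(g_0\) is the positive function from the first step and \(\psi\in\mathcal H\) is a function with \(\psi(t)=g_0(t)\) and \(|\psi|<\eta/2\) on \(K\).

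\emph{Main obstacle.} The key step is constructing \(\psi\), that is, a Cameron--Martin function localised near \(t\). Point evaluations are continuous on \(\mathcal H\) and \(\mathcal H\) is dense in continuous functions vanishing at \(0\) on compacts. So we can approximate in \(C(K\cup\{t\})\) a continuous function equal to \(g_0(t)\) at \(t\) and \(0\) on \(K\), which is possible since \(t\) is isolated from \(K\). An approximation within \(\epsilon\) gives \(\psi(t)\approx g_0(t)\). We then correct the exact value at \(t\) by adding a small multiple of \(K_H(\cdot,t)\); this is bounded on \(K\) and nonzero at \(t\) because \(K_H(t,t)=|t|^{2H}>0\), so the correction keeps \(|\psi|<\eta/2\) on \(K\) for \(\epsilon\) small.

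Density of \(\mathcal H\) in \(C_0\) of a compact set in this sense is itself the support theorem: the support of \(B\) on \(C(K\cup\{t\})\) is full among functions vanishing at \(0\). This holds because the finite-dimensional marginals of fBm at distinct nonzero points are nondegenerate. After that, the same ball-probability argument applied to \(\xi^{(t)}\) around \(\Pi g\) gives the claim.
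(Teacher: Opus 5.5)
Your first assertion is correct. For $H<1/2$ you have $|s-u|^{2H}\le(|s|+|u|)^{2H}<|s|^{2H}+|u|^{2H}$ for \emph{all} nonzero $s,u$, not only same-sign pairs. You need this for the second summand of $g$ as well, and with it the Cameron--Martin/support-theorem argument goes through. Your reduction of the bridge assertion is also correct: the bridge Cameron--Martin space is $\{g\in\mathcal H:g(t)=0\}$, so it suffices to find such a $g$ with $\min_K g>0$. The final correction by a multiple of $K_H(\cdot,t)$ is fine too.

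The gap is the step you flag as the main obstacle. There you need $\mathcal H|_{K\cup\{t\}}$ to be dense in $C(K\cup\{t\})$, and you justify this by nondegeneracy of the finite-dimensional marginals. That does not follow. Nondegeneracy only lets you interpolate prescribed values at finitely many points, with no control in between. Density is equivalent to the statement that no nonzero signed measure $\nu$ on $K\cup\{t\}$ satisfies $\int B\,\mathrm d\nu=0$ almost surely, which is a much stronger requirement. For example, $X_s=W_s-\int_0^1W_r\,\mathrm dr$ has nondegenerate finite-dimensional laws on $[0,1]$, yet its support lies in the proper closed subspace $\{f:\int_0^1f=0\}$. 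For fBm the density is true, but it needs a kernel-specific argument. One option is the spectral identity
\[
  \operatorname{Var}\Bigl(\int B\,\mathrm d\nu\Bigr)
  =c_H\int_{\mathbb R}\bigl|\hat\nu(\lambda)-\hat\nu(0)\bigr|^2
   |\lambda|^{-1-2H}\,\mathrm d\lambda ,
\]
which vanishes only when $\nu$ is a multiple of $\delta_0$, hence $\nu=0$ on $K\cup\{t\}$. Another is the fact that every $\varphi\in C_c^\infty(\mathbb R)$ with $\varphi(0)=0$ lies in $\mathcal H$. This holds because $\varphi(s)=\frac1{2\pi}\int(e^{is\lambda}-1)\hat\varphi(\lambda)\,\mathrm d\lambda$ and $|\lambda|^{H+1/2}\hat\varphi\in L^2$.

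The second fact is the route the paper relies on; it describes this Cameron--Martin argument when it reuses the cited Lemma~4.1 for the selected-germ estimates. It also makes your two-step $g_0-\psi$ construction unnecessary. Take a smooth bump $\varphi$ equal to $2$ on $K$ and supported in a neighbourhood of $K$ avoiding $0$ and $t$. Then $\varphi\in\mathcal H$ and $\varphi(t)=0$, so $\varphi$ lies in the bridge Cameron--Martin space. Hence $\PP(\|\xi^{(t)}-\varphi\|_{C(K)}<1)>0$, and on this event $\xi^{(t)}>1$ on $K$. Dropping the condition at $t$ gives the first assertion by the same bump. With either fix in place, the rest of your proposal stands.
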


For \(\alpha\in(1,2)\), let \(Z\) be the symmetric \(\alpha\)-stable process
normalised by
\begin{equation}
  \EE_0 e^{i\xi Z_t}=e^{-t|\xi|^\alpha},
  \label{eq:stable-normalization}
\end{equation}
and let \(T_0=\inf\{t>0:Z_t=0\}\).

\begin{lemma}
\label{lem:stable-green-killed-zero}
Set
\begin{equation}
  c_\alpha
  :=\frac1\pi\int_0^\infty\frac{1-\cos u}{u^\alpha}\,\mathrm du
  =-\frac{1}{2\Gamma(\alpha)\cos(\pi\alpha/2)}>0.
  \label{eq:stable-potential-constant}
\end{equation}
With respect to Lebesgue measure on \(\mathbb R\setminus\{0\}\), the process
\(Z\) killed at \(T_0\) has Green density
\begin{equation}
  g_\alpha(x,y)
  =c_\alpha\bigl(
       |x|^{\alpha-1}+|y|^{\alpha-1}-|x-y|^{\alpha-1}
     \bigr),
  \qquad x,y\neq0.
  \label{eq:stable-killed-green}
\end{equation}
\end{lemma}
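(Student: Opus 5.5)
We compute the Green density of the symmetric \(\alpha\)-stable process killed at zero through the potential-theoretic decomposition of the \(\lambda\)-resolvent, \(u^\lambda(x,y)\), of the free process. The strong Markov property at \(T_0\) gives, for \(\lambda>0\),
\[
  g^\lambda(x,y)=u^\lambda(y-x)-\EE_x\bigl[e^{-\lambda T_0}\bigr]\,u^\lambda(y),
  \qquad
  \EE_x\bigl[e^{-\lambda T_0}\bigr]=\frac{u^\lambda(x)}{u^\lambda(0)}.
\]
Here
\[
  u^\lambda(z)=\frac1{2\pi}\int_{\mathbb R}\frac{e^{i\xi z}}{\lambda+|\xi|^\alpha}\,\mathrm d\xi=\frac1\pi\int_0^\infty\frac{\cos(\xi z)}{\lambda+\xi^\alpha}\,\mathrm d\xi .
\]
Since \(\alpha>1\), we have \(u^\lambda(0)<\infty\), so points are regular and the hitting formula holds. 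The plan is therefore to write
\[
  g^\lambda(x,y)=\frac{u^\lambda(0)^2-\bigl(u^\lambda(0)-u^\lambda(x)\bigr)\bigl(u^\lambda(0)-u^\lambda(y)\bigr)-u^\lambda(0)\bigl(u^\lambda(0)-u^\lambda(x-y)\bigr)}{u^\lambda(0)}\cdot\frac{1}{1}
\]
in a form built from the differences \(u^\lambda(0)-u^\lambda(z)\), and then let \(\lambda\downarrow0\).

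\textbf{Step 1 (algebra).} Put \(\phi_\lambda(z):=u^\lambda(0)-u^\lambda(z)\). Direct expansion of \(u^\lambda(y-x)-u^\lambda(x)u^\lambda(y)/u^\lambda(0)\) gives
\[
  g^\lambda(x,y)=\phi_\lambda(x)+\phi_\lambda(y)-\phi_\lambda(x-y)-\frac{\phi_\lambda(x)\phi_\lambda(y)}{u^\lambda(0)}.
\]

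\textbf{Step 2 (limits).} First, \(u^\lambda(0)=\pi^{-1}\int_0^\infty(\lambda+\xi^\alpha)^{-1}\,\mathrm d\xi\), which equals a constant times \(\lambda^{1/\alpha-1}\) and tends to \(\infty\) as \(\lambda\downarrow0\), because \(\alpha>1\). Second,
\[
  \phi_\lambda(z)=\frac1\pi\int_0^\infty\frac{1-\cos(\xi z)}{\lambda+\xi^\alpha}\,\mathrm d\xi
  \;\uparrow\;\frac1\pi\int_0^\infty\frac{1-\cos(\xi z)}{\xi^\alpha}\,\mathrm d\xi
\]
by monotone convergence. This limit is finite since \(1<\alpha<3\), and the substitution \(u=\xi|z|\) identifies it as \(c_\alpha|z|^{\alpha-1}\). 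Consequently the product term \(\phi_\lambda(x)\phi_\lambda(y)/u^\lambda(0)\) tends to \(0\). Since \(g^\lambda\uparrow g\) as \(\lambda\downarrow0\) (monotone convergence in the time integral), this yields \eqref{eq:stable-killed-green}.

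\textbf{Step 3 (the constant).} To obtain the closed form of \(c_\alpha\), integrate by parts twice, reducing it to \(\int_0^\infty u^{-\alpha+1}\sin u\,\mathrm du\) (up to the factor \(1/(\alpha-1)\)). Then apply the standard formula \(\int_0^\infty u^{s-1}\sin u\,\mathrm du=\Gamma(s)\sin(\pi s/2)\) with \(s=2-\alpha\in(0,1)\). The result simplifies by the reflection formula \(\Gamma(2-\alpha)\Gamma(\alpha-1)=\pi/\sin(\pi(\alpha-1))\) together with the double-angle identity, giving \(-1/(2\Gamma(\alpha)\cos(\pi\alpha/2))\). Positivity follows because \(\cos(\pi\alpha/2)<0\) for \(\alpha\in(1,2)\).

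\textbf{Main obstacle.} The delicate point is the justification of the hitting identity \(\EE_x e^{-\lambda T_0}=u^\lambda(x)/u^\lambda(0)\) and of the identification of the killed resolvent with a continuous density. Both follow from the existence of jointly continuous resolvent densities when \(\alpha>1\) (standard, as in Bertoin's book), and I would cite this rather than reprove it. The rest is a routine Fourier computation.
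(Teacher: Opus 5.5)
Your proposal is correct and is essentially the paper's argument made self-contained. The paper simply cites Eisenbaum's killed-potential identity \(g_\alpha(x,y)=a_\alpha(x)+a_\alpha(y)-a_\alpha(x-y)\) with \(a_\alpha(z)=\frac1\pi\int_0^\infty(1-\cos(z\xi))\xi^{-\alpha}\,\mathrm d\xi=c_\alpha|z|^{\alpha-1}\); that identity is exactly the \(\lambda\downarrow0\) limit of your Step~1 formula \(\phi_\lambda(x)+\phi_\lambda(y)-\phi_\lambda(x-y)-\phi_\lambda(x)\phi_\lambda(y)/u^\lambda(0)\), and you additionally verify the closed form of \(c_\alpha\), which the paper only asserts.

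The only slips are cosmetic. The display preceding Step~1 has the wrong numerator: it should read \(u^\lambda(0)\{\phi_\lambda(x)+\phi_\lambda(y)-\phi_\lambda(x-y)\}-\phi_\lambda(x)\phi_\lambda(y)\), as your Step~1 correctly states. In Step~3, a single integration by parts already gives \(c_\alpha=\frac{1}{\pi(\alpha-1)}\int_0^\infty u^{1-\alpha}\sin u\,\mathrm du\).
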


\begin{proof}
The symmetric killed-potential identity in
\parencite[proof of Proposition~5.2, pp.~390--391]
{eisenbaum2003squaredGaussian} gives
\(g_\alpha(x,y)=a_\alpha(x)+a_\alpha(y)-a_\alpha(x-y)\), where under
\eqref{eq:stable-normalization}
\[
  a_\alpha(z)
  =\frac1\pi\int_0^\infty\frac{1-\cos(z\xi)}{\xi^\alpha}\,\mathrm d\xi
  =c_\alpha|z|^{\alpha-1}.
\]
Substitution gives \eqref{eq:stable-killed-green}.
\end{proof}

Taking \(\alpha=1+2H\) in
Lemma~\ref{lem:stable-green-killed-zero} and comparing
\eqref{eq:stable-killed-green} with \eqref{eq:fbm-covariance} gives the
covariance--Green identity
\begin{equation}
  g_\alpha(x,y)=2c_\alpha K_H(x,y),
  \qquad x,y\neq0.
  \label{eq:fbm-stable-green-identification}
\end{equation}

\begin{lemma}[{\cite[Theorem~4.3 and Lemma~4.4]
  {moench2026horizonProblem}}]
\label{lem:finite-wall-mtp2-closure}
Let \(F\subset\mathbb R\setminus\{0\}\) be finite and nonempty.  Then
\(B_F\) is nondegenerate and its precision matrix is Stieltjes: it is
symmetric positive definite with nonpositive off-diagonal entries.  Let
\(a\in\mathbb R^F\).  Here a coordinate conditioning means a normalised
density section obtained by fixing any subset of the current coordinates,
and a coordinate rectangle is a product of intervals in the current
coordinate space.  Any law obtained from \(a+B_F\) or \(a-B_F\) by finitely
many such conditionings, coordinate-rectangle restrictions, and coordinate
marginalisations is MTP\(_2\), provided every required section or restriction
normaliser is finite and strictly positive.  Every such law is therefore
associated, i.e., if \(X\) has one of these laws and \(f,g\) are bounded
coordinatewise nondecreasing functions, then
\begin{equation}
  \EE[f(X)g(X)]\geq \EE f(X)\,\EE g(X).
  \label{eq:association}
\end{equation}
Whenever the procedure uses only coordinate conditionings and
marginalisations and at least one coordinate remains, its result is a
nondegenerate Gaussian law with Stieltjes precision.  For a split into
retained and exposed blocks, write this precision as
\[
  P=\begin{pmatrix}P_{RR}&P_{RE}\\P_{ER}&P_{EE}\end{pmatrix}.
\]
Then the regression matrix of the retained block on the exposed block is
\(-P_{RR}^{-1}P_{RE}\), which is entrywise nonnegative because
\(P_{RR}^{-1}\geq0\) and \(P_{RE}\leq0\) entrywise.
\end{lemma}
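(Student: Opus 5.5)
The statement is Lemma~\ref{lem:finite-wall-mtp2-closure}, quoted from the companion paper. I would prove it from the covariance--Green identity \eqref{eq:fbm-stable-green-identification} together with standard closure properties of MTP\(_2\) densities. The plan has four steps.

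\emph{Step 1: nondegeneracy and the Stieltjes property.} By \eqref{eq:fbm-stable-green-identification}, the covariance matrix of \(B_F\) is \((2c_\alpha)^{-1}\) times the Green matrix \((g_\alpha(x,y))_{x,y\in F}\) of the killed stable process \(Z\), with \(\alpha=1+2H\in(1,2)\). This restricted Green matrix is the Green matrix of the trace chain obtained by time-changing the killed process to its visits to \(F\).

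Concretely, let \(\Pi\) be the sub-stochastic hitting kernel on \(F\): from \(x\in F\), \(\Pi(x,\cdot)\) is the law of the next distinct point of \(F\) visited before \(T_0\). Then, up to positive diagonal scaling, \(G_F=(I-\Pi)^{-1}D\) with \(D\) diagonal and positive. Its inverse \(D^{-1}(I-\Pi)\) is therefore a Z-matrix. The matrix \(I-\Pi\) is invertible because killing at \(0\) occurs with positive probability from every point; this holds since \(\alpha>1\) makes points polar-free, so \(T_0<\infty\) is reachable.

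Symmetry of \(g_\alpha\) makes the precision matrix symmetric. It is positive definite because it is the inverse of a covariance that is nonsingular; nonsingularity follows from the Green matrix being the inverse of a nonsingular M-matrix. This yields the Stieltjes claim. Nondegeneracy could alternatively be derived from local nondeterminism of fBm, but the Green route gives both claims at once.

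\emph{Step 2: MTP\(_2\) of the basic Gaussian laws.} A Gaussian density with Stieltjes precision \(P\) has \(\log\)-density \(-\tfrac12 x^\top P x+\text{linear}\). This is supermodular exactly when \(P_{ij}\le0\) for \(i\neq j\), so the density is MTP\(_2\). Translation by \(a\) preserves this. For \(a-B_F\) the covariance is the same, so the same precision applies.

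\emph{Step 3: closure under the three operations.}
\begin{itemize}
\item \emph{Conditioning.} Fixing coordinates of an MTP\(_2\) density gives a section that is again MTP\(_2\), since the defining inequality restricts to the sub-lattice. Normalising by a finite, positive constant does not affect this.
\item \emph{Rectangle restriction.} Multiplying by an indicator of a product of intervals preserves MTP\(_2\), because such an indicator is itself MTP\(_2\) (products of intervals form a sublattice).
\item \emph{Marginalisation.} Marginals of MTP\(_2\) densities are MTP\(_2\) by the Karlin--Rinott theorem, which is a consequence of the Ahlswede--Daykin four-functions inequality.
\end{itemize}
Association of MTP\(_2\) laws is the FKG inequality, which gives \eqref{eq:association}.

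\emph{Step 4: the Gaussian subcase and the regression matrix.} If only conditionings and marginalisations are used, the result stays Gaussian. Marginalising a block turns the precision into the Schur complement \(P_{RR}-P_{RE}P_{EE}^{-1}P_{ER}\). Because \(P_{EE}^{-1}\ge0\) entrywise (the inverse of a Stieltjes matrix is nonnegative) and \(P_{RE}\le0\), the subtracted term is entrywise nonnegative. The off-diagonal entries of the Schur complement therefore remain nonpositive, and positive definiteness is inherited. Conditioning leaves \(P_{RR}\) as the precision, and \(P_{RR}\) is a principal submatrix, hence still Stieltjes.

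The regression formula \(-P_{RR}^{-1}P_{RE}\) is the standard Gaussian conditional mean. Its entrywise nonnegativity follows from \(P_{RR}^{-1}\ge0\) and \(P_{RE}\le0\).

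\emph{Main obstacle.} I expect the hardest part to be making Step 1 rigorous: justifying the trace-chain representation of the restricted Green matrix and showing that the killed stable process reaches \(0\) from every point with positive probability. If that route proves awkward, I would fall back on a direct argument. The idea is to show that \(G_F^{-1}\) has nonpositive off-diagonal entries using the fact that \(g_\alpha(\cdot,y)\) is harmonic off \(\{0,y\}\) for the killed process, combined with a maximum principle.
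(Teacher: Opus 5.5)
Your proposal is correct and follows the route the paper relies on. The paper turns the covariance--Green identity \eqref{eq:fbm-stable-green-identification} into an $M$-matrix statement by citing Eisenbaum--Kaspi in the proof of Lemma~\ref{lem:submarkov-fbm-precision}; your first-passage decomposition $G_F=(I-\Pi)^{-1}D$ reproduces that step, and the rest is the standard Karlin--Rinott closure of MTP$_2$ densities plus the Schur-complement sign argument.

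The one loose step is the invertibility of $I-\Pi$. Non-polarity of $0$ gives $T_0<\infty$, but not killing before $F\setminus\{x\}$ is hit. The fix is easy: iterating $G_F=D+\Pi G_F$ gives $\sum_{n<N}\Pi^nD\le G_F$ entrywise, so finiteness of $g_\alpha$ already forces $\rho(\Pi)<1$.
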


Lemma~\ref{lem:finite-wall-mtp2-closure} is the key order and precision
interface used below.

\paragraph{Grid selection.}
\label{subsec:exact-grid-disintegration}

We now record the exact finite representation that will be used in the proof of part (A.1) of Theorem A.

\begin{lemma}[{\cite[Lemma~3.1 and Proposition~3.2]
  {moench2026horizonProblem}}]
\label{lem:unique-interior-maximizer}
Almost surely, \(B\) has a unique maximiser \(\tau\) on \([0,1]\), and
\begin{equation}
  0<\tau<1.
\end{equation}
For each fixed finite set of distinct deterministic times, the maximiser is
also unique almost surely.
\end{lemma}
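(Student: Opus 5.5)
We split the statement into three parts: uniqueness of the maximiser on $[0,1]$, interiority of the maximiser, and uniqueness of the maximiser over a fixed finite set of times.

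\emph{Finite sets.} For a fixed finite set $F=\{t_1,\dots,t_n\}$ of distinct times, uniqueness of the maximiser fails only if $B_{t_i}=B_{t_j}$ for some $i\neq j$. Each difference $B_{t_i}-B_{t_j}$ is a centred Gaussian variable with variance $|t_i-t_j|^{2H}>0$, by \eqref{eq:fbm-increments}. Hence each such event has probability zero, and a finite union bound finishes this part. Nondegeneracy of $B_F$ from Lemma~\ref{lem:finite-wall-mtp2-closure} gives the same conclusion directly when $0\notin F$.

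\emph{Uniqueness on $[0,1]$.} We use the classical criterion for continuous Gaussian processes: the maximiser is unique almost surely provided $\operatorname{Var}(B_s-B_t)>0$ for all $s\neq t$. Here that variance is $|s-t|^{2H}>0$.

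To keep the argument self-contained, we argue on pairs of disjoint rational intervals $I,J\subset[0,1]$ with $\sup I<\inf J$. It suffices to show
\[
  \PP\bigl(\max_I B=\max_J B\bigr)=0,
\]
and then take a countable union over all such pairs. Condition on the path on $I$ and write the process on $J$ as a regression part plus an independent residual. The distribution of $\max_J B$ given the $\sigma$-field of $I$ should have no atoms. Continuous Gaussian suprema have atomless laws away from their essential infimum (Tsirelson/Ylvisaker). Since $J$ contains points where the conditional variance is positive, the conditional supremum is not a.s. equal to its left endpoint. Because the law is atomless, it cannot charge the $I$-measurable value $\max_I B$.

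\emph{Interiority.} We must rule out $\tau=0$ and $\tau=1$.
\begin{itemize}
\item \emph{$\tau=0$.} Since $B_0=0$, $\tau=0$ would force $B_t\le0$ on $[0,1]$. The law of the iterated logarithm, or the $0$--$1$ law at the origin combined with self-similarity, shows that $\limsup_{t\downarrow0}B_t/t^H>0$ a.s. Concretely, the events $\{\sup_{[0,\varepsilon]}B>0\}$ have probability independent of $\varepsilon$ by scaling. That common probability is positive by Lemma~\ref{lem:fbm-compact-positivity-support}. The events decrease to a germ event, so the Blumenthal-type $0$--$1$ law for the Gaussian germ field (via the Mandelbrot--van Ness representation) makes its probability $1$.
\item \emph{$\tau=1$.} Apply the same argument to the time-reversed process $B_{1-t}-B_1$, which is again fractional Brownian motion by stationary increments and symmetry.
\end{itemize}

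\emph{Main obstacle.} The delicate step is the atomlessness and conditioning argument in the uniqueness part, together with the germ $0$--$1$ law. If that route proves awkward, the fallback is Kim--Pollard's criterion for unique maximisers of Gaussian processes with nondegenerate increments, which applies verbatim here.
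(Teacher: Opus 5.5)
The paper does not prove this lemma here; it is imported from the companion paper, so I can only judge your plan on its own terms. The finite-set part is correct. So is the reduction of $\tau=1$ to $\tau=0$ via $t\mapsto B_{1-t}-B_1$, and so is the Kim--Pollard fallback (Gaussian process, continuous paths, $\operatorname{Var}(B_s-B_t)=|s-t|^{2H}>0$).

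\emph{First gap: the self-contained uniqueness argument.} The step from positive conditional variance to an atomless conditional supremum is a non sequitur. Tsirelson's theorem allows an atom at the left endpoint $r_0$ of the support, of any mass. Knowing that $J$ contains \emph{some} points of positive conditional variance only rules out a point mass at $r_0$; it does not make the law atomless. For example, $X_t=t\xi$ on $[0,1]$ has $\operatorname{Var}X_t>0$ for $t>0$, yet $\sup X=\xi^+$ has an atom of mass $1/2$ at $r_0=0$. To close the step you need positive conditional variance at \emph{every} $t\in J$. This holds here because $\operatorname{dist}(I,J)>0$ and fBm is strongly locally nondeterministic. You can then apply Ylvisaker's theorem, which excludes all atoms for a continuous Gaussian process with continuous mean and everywhere positive variance on a compact set.

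A cleaner repair avoids conditioning on all of $I$; it is the device the paper itself uses in Lemma~\ref{lem:rough-window-separated-anticoncentration}. Fix $p>1$ and write $B_t=Y_t+r_p(t)B_p$ with $r_p(t)=K_H(t,p)/p^{2H}$. Then $r_p$ is strictly increasing on $[0,1]$ and $Y$ is independent of $B_p$. Given $Y$, the difference $\max_JB-\max_IB$ is strictly increasing in $B_p$, with slope at least $\inf_Jr_p-\sup_Ir_p>0$. Hence it vanishes with conditional probability zero.

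\emph{Second gap: the germ $0$--$1$ law.} The Mandelbrot--van Ness representation does not give it. It only shows that $\bigcap_{\varepsilon>0}\sigma(B_s:s\le\varepsilon)$ is contained in the germ $\sigma$-field at time $0$ of the two-sided driving noise. That $\sigma$-field contains the entire past of the noise, so it is far from trivial. The conclusion is true, but it needs a different justification; any one of the following works.
\begin{itemize}
\item Use the Molchan--Golosov representation $B_t=\int_0^t k_H(t,s)\,\mathrm dW_s$, whose filtration coincides with that of $B$, so Blumenthal's law for $W$ applies.
\item Use ergodicity of the stationary Lamperti transform $e^{-Hs}B_{e^s}$, whose covariance tends to zero; your germ event is scaling invariant, hence shift invariant for this process.
\item Rely on the local law of the iterated logarithm for fBm, which you already mention.
\end{itemize}
With these two repairs your argument proves the lemma.
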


We use a grid whose mesh in tangent coordinates is dyadic even when the zoom
scale is not.

\begin{proposition}
\label{prop:dyadic-grid-diagonal}
Let \(a_n\to\infty\) be any deterministic sequence of positive real numbers.
There exist deterministic dyadic numbers \(h_n\downarrow0\), with
\(h_n\leq a_n\), such that, with
\begin{equation}
  \Delta_n:=\frac{h_n}{a_n},\qquad
  M_n:=\left\lfloor\frac{a_n}{h_n}\right\rfloor,
  \qquad
  \mathcal G_n:=\{j\Delta_n:0\leq j\leq M_n\},
  \label{eq:physical-grid}
\end{equation}
the following holds.  If \(K_n\) is the maximising index of \(B\) on
\(\mathcal G_n\), \(\tau_n=K_n\Delta_n\), and
\begin{equation}
  \Xi_n^{\mathrm{grid}}(t)
  :=a_n^H\bigl(B_{\tau_n+t/a_n}-B_{\tau_n}\bigr),
  \label{eq:grid-centered-zoom}
\end{equation}
then
\begin{equation}
  d_{\mathrm{loc}}(\Xi_n^{\mathrm{grid}},\Xi_{a_n})\longrightarrow0
  \quad\text{almost surely}.
  \label{eq:grid-zoom-coupling}
\end{equation}
Moreover, writing \(T_n=M_n\Delta_n\),
\begin{equation}
  a_n\tau_n\longrightarrow\infty,
  \qquad
  a_n(T_n-\tau_n)\longrightarrow\infty
  \quad\text{almost surely}.
  \label{eq:selected-horizons-diverge}
\end{equation}
\end{proposition}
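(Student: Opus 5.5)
We choose the dyadic meshes \(h_n\) so that the physical grid spacing \(\Delta_n=h_n/a_n\) goes to zero fast relative to the zoom scale, while \(h_n\to0\) in tangent coordinates. Concretely, we take \(h_n\) to be the largest dyadic number not exceeding \(\min\{a_n,\,\epsilon_n\}\), where \(\epsilon_n\downarrow0\) is a deterministic sequence still to be fixed. Then \(h_n\leq a_n\), \(h_n\downarrow0\) after passing to a monotone envelope, and \(\Delta_n\leq\epsilon_n/a_n\to0\). Two a.s. facts drive everything. The first is the uniform H\"older modulus of \(B\) on \([-1,2]\): for every \(\gamma<H\), \(|B_s-B_t|\leq C_\gamma|s-t|^\gamma\). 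The second is Lemma~\ref{lem:unique-interior-maximizer}, which gives a unique interior maximiser \(\tau\in(0,1)\).

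\textbf{Step 1: \(\tau_n\to\tau\) quantitatively in the zoom scale.} Since the grid is \(\Delta_n\)-dense in \([0,T_n]\) and \(T_n\geq1-\Delta_n\), continuity and uniqueness of the maximiser give \(\tau_n\to\tau\) almost surely. This is not enough, because we need \(a_n|\tau_n-\tau|\to0\).

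\textbf{Step 2: the main obstacle.} This step requires excluding near-maxima of \(B\) at distance \(\gg\Delta_n\) but \(\lesssim 1/a_n\) from \(\tau\). The plan uses a lower envelope at the maximum. Almost surely, \(B_\tau-B_{\tau+u}\geq c|u|^{\beta}\) for small \(|u|\), with some random \(c>0\) and a fixed \(\beta>H\) (any \(\beta\in(H,1)\)); this kind of ``no flat maximum'' law for rough fBm should follow from a chaining and union bound over dyadic boxes, using the Gaussian tails and the local nondeterminism of fBm. Meanwhile the grid maximum satisfies \(B_{\tau}-B_{\tau_n}\leq C_\gamma\Delta_n^\gamma\), because some grid point lies within \(\Delta_n\) of \(\tau\). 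Combining the two bounds gives \(c|\tau_n-\tau|^\beta\leq C_\gamma\Delta_n^\gamma\), that is, \(|\tau_n-\tau|\leq C\Delta_n^{\gamma/\beta}\). We therefore need \(a_n\Delta_n^{\gamma/\beta}\to0\), i.e. \(\epsilon_n\leq a_n^{1-\beta/\gamma}\delta_n\) with \(\delta_n\to0\). This is always achievable, since \(\epsilon_n\) is ours to choose, and choosing \(\epsilon_n\) very small is exactly the point of the diagonal selection. If the envelope bound is hard to obtain pathwise, we fall back on a probabilistic version: show \(\PP(a_n|\tau_n-\tau|>\eta)\) is summable for a sufficiently fast choice of \(\epsilon_n\), and conclude by Borel--Cantelli.

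\textbf{Step 3: uniform closeness on \([-R,R]\).} We write
\[
\Xi_n^{\mathrm{grid}}(t)-\Xi_{a_n}(t)=a_n^H\bigl(B_{\tau_n+t/a_n}-B_{\tau+t/a_n}\bigr)-a_n^H\bigl(B_{\tau_n}-B_\tau\bigr).
\]
By the H\"older bound, both terms are at most \(C_\gamma a_n^H|\tau_n-\tau|^\gamma\) uniformly in \(|t|\leq R\), once \(n\) is large enough that \(R/a_n<\min(\tau,1-\tau)/2\). Step 2, with \(\epsilon_n\) chosen smaller still (so that \(a_n^{H}(C\Delta_n^{\gamma/\beta})^{\gamma}\to0\)), makes this bound tend to \(0\) for each \(R\). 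Summing over \(R\) in \(d_{\mathrm{loc}}\) then gives \eqref{eq:grid-zoom-coupling}.

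\textbf{Step 4: divergent horizons.} We have \(a_n\tau_n\geq a_n(\tau-|\tau_n-\tau|)\to\infty\) because \(\tau>0\). Also \(T_n\to1\), since \(T_n\in(1-\Delta_n,1]\), and \(\tau<1\), so \(a_n(T_n-\tau_n)\to\infty\) as well. This gives \eqref{eq:selected-horizons-diverge}.
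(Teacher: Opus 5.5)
Your Step 2 has a genuine gap. You assert a pathwise lower envelope $B_\tau-B_{\tau+u}\geq c|u|^\beta$ at the random maximiser but do not prove it, and it does not follow routinely from chaining and local nondeterminism. A naive union bound over the $\asymp\rho^{-1}$ pairs of $\rho$-boxes at mutual distance $\asymp\rho$ pays a near-tie probability of order $\rho^{\beta-H}$ per pair. That gives a total of $\rho^{\beta-H-1}\to\infty$.

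To close the argument you would need a near-tie estimate conditional on a box carrying the \emph{global} maximum. That is a quantitative anticontact bound for a two-horizon hard-wall conditioning, uniform over all scales. This is the territory of the companion paper's anticontact estimate, which only comes with the rate $\exp\{-c\sqrt{\log(1/x)}\}$; Gaussian chaining does not give it. As written, your main line does not establish the proposition.

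The obstacle is self-inflicted, because $h_n$ is chosen \emph{after} $a_n$, so no uniform control of near-maxima is needed. Run your Step 1 argument with $n$ (hence $a_n$) fixed and the dyadic mesh refining. Density of the grids plus uniqueness of $\tau$ then gives grid maximisers converging to $\tau$ almost surely, hence in probability. So for each $n$ you can pick a dyadic $h_n\leq\min\{2^{-n},a_n\}$, decreasing in $n$, with $\PP(|\tau_n-\tau|>a_n^{-\kappa})\leq 2^{-n}$. Borel--Cantelli then gives $|\tau_n-\tau|\leq a_n^{-\kappa}$ eventually, almost surely. This is your fallback, and it is exactly the paper's proof.

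Your fallback as stated, with threshold $\eta/a_n$, is too weak for Step 3. With H\"older exponent $\gamma<H$ it only bounds the zoom discrepancy by $C\eta^\gamma a_n^{H-\gamma}$, which diverges. You need $\kappa>H/\gamma$, so that $a_n^H|\tau_n-\tau|^\gamma\leq a_n^{H-\kappa\gamma}\to0$. With that choice, and almost sure uniqueness of all the countably many grid maximisers from the finite-set clause of Lemma~\ref{lem:unique-interior-maximizer}, your Steps 3 and 4 go through as written.
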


\begin{proof}
Fix \(\alpha\in(0,H)\) and \(\gamma>H/\alpha\).  For fixed \(n\) and
integer \(r\geq0\), put
\begin{equation}
  \mathcal G_{n,r}:=
  \left\{\frac{j2^{-r}}{a_n}:
  0\leq j\leq\lfloor a_n2^r\rfloor\right\},
  \label{eq:finer-fixed-scale-grids}
\end{equation}
and let \(\tau_{n,r}\) be its leftmost maximising time.  The grids become
dense in \([0,1]\) as \(r\to\infty\), and their right endpoints tend to
\(1\).  A grid point can therefore be chosen to converge to \(\tau\).  Its
value converges to \(B_\tau\); hence every subsequential limit of the grid
maximising times is also a maximiser of \(B\) on \([0,1]\), and so equals
\(\tau\) by Lemma~\ref{lem:unique-interior-maximizer}.  Thus, for every fixed
\(n\),
\begin{equation}
  \tau_{n,r}\longrightarrow\tau
  \quad\text{almost surely, hence in probability, as }r\to\infty.
  \label{eq:fixed-row-grid-argmax}
\end{equation}
The finite-set clause of the same lemma makes all these countably many grid
maximisers unique on one probability-one event.

Set \(r_0:=-1\).  By \eqref{eq:fixed-row-grid-argmax}, choose deterministic
integers \(r_n\) recursively so large that
\begin{equation}
  r_n>r_{n-1},\qquad
  h_n:=2^{-r_n}\leq\min\{2^{-n},a_n\},
  \qquad
  \PP(|\tau_{n,r_n}-\tau|>a_n^{-\gamma})\leq2^{-n}.
  \label{eq:grid-diagonal-choice}
\end{equation}
For the resulting grids in \eqref{eq:physical-grid},
\(\tau_n=\tau_{n,r_n}\) almost surely.  Borel--Cantelli gives
\(|\tau_n-\tau|\leq a_n^{-\gamma}\) eventually almost surely.

For every \(q\geq2\), Gaussian moments and \eqref{eq:fbm-increments} give
\(\EE|B_t-B_s|^q=C_q|t-s|^{qH}\).  Choosing \(q\) with
\(q(H-\alpha)>1\), the Kolmogorov continuity theorem and uniqueness of
continuous modifications show that the fixed continuous version has locally
\(\alpha\)-Hölder paths almost surely.  Work
on the intersection of this event with the preceding Borel--Cantelli event.
For each integer \(R\geq1\), eventually \(R/a_n\leq1\), so all four time
arguments below lie in \([-1,2]\).  If \(C_\alpha<\infty\) is a random
\(\alpha\)-Hölder constant on that interval, then
\begin{equation}
\begin{split}
  \sup_{|t|\leq R}
  |\Xi_n^{\mathrm{grid}}(t)-\Xi_{a_n}(t)|
  &\leq2C_\alpha a_n^H|\tau_n-\tau|^\alpha\\
  &\leq2C_\alpha a_n^{H-\gamma\alpha}\longrightarrow0.
\end{split}
\end{equation}
This holds for every integer \(R\) on the same probability-one event;
dominated convergence in the series \eqref{eq:local-uniform-metric} proves
\eqref{eq:grid-zoom-coupling}.  Finally,
\begin{equation}
  0\leq1-T_n<\Delta_n=\frac{h_n}{a_n}
  \leq\frac{2^{-n}}{a_n}\longrightarrow0,
\end{equation}
so \(\tau_n\to\tau\) and \(T_n-\tau_n\to1-\tau\).  Since
\(0<\tau<1\), eventually
\(\tau_n\geq\tau/2\) and
\(T_n-\tau_n\geq(1-\tau)/2\).  Multiplication by \(a_n\to\infty\)
proves both limits in \eqref{eq:selected-horizons-diverge}.
\end{proof}

\begin{proposition}
\label{prop:exact-pure-wall-mixture}
For the grids of Proposition~\ref{prop:dyadic-grid-diagonal}, define
\begin{equation}
  \Lambda_{n,k}:=\{h_n(j-k):0\leq j\leq M_n\},
  \qquad 0\leq k\leq M_n,
  \label{eq:relative-tangent-grid}
\end{equation}
and
\begin{equation}
  Q_{n,k}:=\mathcal L\left(
    B\,\middle|\, B_t\leq0\text{ for every }t\in\Lambda_{n,k}
  \right),
  \qquad
  p_{n,k}:=\PP(B_t\leq0:t\in\Lambda_{n,k}).
  \label{eq:mixture-components}
\end{equation}
Then
\(p_{n,k}=\PP(K_n=k)>0\),
\(\sum_{k=0}^{M_n}p_{n,k}=1\), and
\begin{equation}
  \mathcal L(\Xi_n^{\mathrm{grid}})
  =\sum_{k=0}^{M_n}p_{n,k}Q_{n,k}.
  \label{eq:exact-mixture}
\end{equation}
Write
\begin{equation}
  L_{n,k}^-=h_nk,
  \qquad L_{n,k}^+=h_n(M_n-k).
  \label{eq:component-horizons}
\end{equation}
For \(0<k<M_n\), the component \(Q_{n,k}\) is
\(Q_{h_n,L_{n,k}^-,L_{n,k}^+}\) from \eqref{eq:pure-wall-law}.  The endpoint
components \(k=0,M_n\) retain their direct conditional definitions in
\eqref{eq:mixture-components}; they are full-path laws constrained on only
one side of the pin and are not covered by the positive-two-horizon
definition \eqref{eq:pure-wall-law}.
\end{proposition}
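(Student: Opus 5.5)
The plan is to decompose according to the value of the grid argmax \(K_n\) and to convert each piece into a wall law by exact stationarity of increments and self-similarity. By Lemma~\ref{lem:unique-interior-maximizer}, the finite-set clause, the grid maximiser on \(\mathcal G_n\) is unique almost surely. Hence the events \(\{K_n=k\}\), \(0\leq k\leq M_n\), partition the probability space up to a null set, and
\[
  \{K_n=k\}=\{B_{j\Delta_n}-B_{k\Delta_n}\leq0\text{ for every }0\leq j\leq M_n\}
\]
almost surely; ties occur with probability zero, so the nonstrict inequality does not matter. On this event \(\tau_n=k\Delta_n\), and therefore \(\Xi_n^{\mathrm{grid}}=\Theta_k\), where
\[
  \Theta_k(t):=a_n^H\bigl(B_{k\Delta_n+t/a_n}-B_{k\Delta_n}\bigr).
\]
For any bounded measurable \(F\) on \(\mathcal X\) this gives
\[
  \EE F(\Xi_n^{\mathrm{grid}})=\sum_{k}\EE\bigl[F(\Theta_k)\mathbf 1\{K_n=k\}\bigr].
\]

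The key identity is that the joint law of \(\Theta_k\) and the event \(\{K_n=k\}\) matches that of \(B\) and the wall event on \(\Lambda_{n,k}\). The map \(B\mapsto\Theta_k\) is, by \eqref{eq:deterministic-fbm-localisability} with \(u=k\Delta_n\) and \(r=1/a_n\), a transformation preserving the law of \(B\) as a process on \(\mathbb R\). Since \(j\Delta_n=k\Delta_n+h_n(j-k)/a_n\), the event \(\{K_n=k\}\) equals \(\{\Theta_k(h_n(j-k))\leq0\text{ for all }j\}=\{\Theta_k(t)\leq0,\ t\in\Lambda_{n,k}\}\). Note that \(t=0\) is included but imposes nothing, because \(\Theta_k(0)=0\). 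It follows that
\[
  \EE\bigl[F(\Theta_k)\mathbf 1\{K_n=k\}\bigr]=\EE\bigl[F(B)\mathbf 1\{B_t\leq0,\ t\in\Lambda_{n,k}\}\bigr].
\]
Taking \(F\equiv1\) yields \(p_{n,k}=\PP(K_n=k)\), and summing over \(k\) yields \(\sum_k p_{n,k}=1\). Once positivity is established, dividing by \(p_{n,k}\) gives \eqref{eq:exact-mixture}.

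Positivity of \(p_{n,k}\) follows from the sign-reversed form of Lemma~\ref{lem:fbm-compact-positivity-support} applied to the finite, hence compact, set \(K=\Lambda_{n,k}\setminus\{0\}\subset\mathbb R\setminus\{0\}\). That lemma gives \(\PP(B_s<0\text{ for all }s\in K)>0\), which implies \(p_{n,k}>0\). The endpoint components need care. For \(k=0\) the set \(\Lambda_{n,0}\setminus\{0\}\) is nonempty provided \(M_n\geq1\), which holds since \(h_n\leq a_n\); so the same argument applies, and likewise for \(k=M_n\).

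For \(0<k<M_n\) it remains to match \(\Lambda_{n,k}\setminus\{0\}\) with \(\Lambda(h_n,L^-_{n,k},L^+_{n,k})\). Both are \(h_n\mathbb Z\cap[-h_nk,h_n(M_n-k)]\setminus\{0\}\). The horizons are positive multiples of \(h_n\), and \(h_n\) is dyadic, so \eqref{eq:pure-wall-law} applies. The only point needing care is the measure-theoretic one: the identification of \(\{K_n=k\}\) with the wall event up to null sets uses the almost sure uniqueness of grid maximisers. Apart from that, the argument is a bookkeeping exercise.
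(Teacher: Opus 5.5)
Your proof is correct, and the final bookkeeping is the same as the paper's. The paper gets the disintegration by citing Proposition~3.3 of \cite{moench2026horizonProblem}, then identifies the relative wall $\{a_n(t_j-t_k):j\neq k\}=\Lambda_{n,k}\setminus\{0\}$, drops the deterministic zero constraint, and reads off the horizons $-h_nk$, $h_n(M_n-k)$. You instead derive the cited identity directly: you disintegrate over $K_n$ and rebase at $k\Delta_n$ via \eqref{eq:deterministic-fbm-localisability}, which makes the step self-contained.
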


\begin{proof}
Apply {\cite[Proposition~3.3]{moench2026horizonProblem}} with \(a=a_n\) and
\(\mathcal G=\mathcal G_n\).  Since its grid points are
\(t_j=j\Delta_n\), its relative wall is
\[
  \{a_n(t_j-t_k):j\neq k\}
  =\Lambda_{n,k}\setminus\{0\}.
\]
The omitted zero constraint is deterministic, so the cited component laws,
weights, and mixture are exactly \eqref{eq:mixture-components} and
\eqref{eq:exact-mixture}.  The extreme relative wall points are \(-h_nk\)
and \(h_n(M_n-k)\), which gives \eqref{eq:component-horizons} and the stated
interior and endpoint identifications.
\end{proof}

\paragraph{Finite-wall conditioning: order properties and tail bounds.}
\label{subsec:wall-order-structure}

For finite \(F\subset\mathbb D\), put
\begin{equation}
  A_F:=\{B_t\leq0:t\in F\}.
  \label{eq:reflected-wall-event}
\end{equation}
If \(\varnothing\neq T\subseteq F\subset\mathbb D\) are finite, write
\begin{equation}
  \nu_F^T:=\mathcal L(-B_T\mid A_F).
  \label{eq:reflected-coarse-law}
\end{equation}
For probability measures \(\mu,\nu\) on \([0,\infty)^T\), write
\(\mu\leq_{\mathrm{st}}\nu\) if
\(\int f\,\mathrm d\mu\leq\int f\,\mathrm d\nu\) for every bounded
coordinatewise nondecreasing Borel function \(f\).

\begin{proposition}[{\cite[Proposition~4.5]{moench2026horizonProblem}}]
\label{prop:wall-monotonicity}
If \(\varnothing\neq T\subseteq F\subseteq F'\subset\mathbb D\) are finite,
then
\begin{equation}
  \nu_F^T\leq_{\mathrm{st}}\nu_{F'}^T
  \label{eq:wall-stochastic-order}
\end{equation}
in the coordinatewise order on \([0,\infty)^T\).
\end{proposition}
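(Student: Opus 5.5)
The plan is to prove \eqref{eq:wall-stochastic-order} by adding the extra wall constraints one at a time and checking that each addition pushes the retained coordinates stochastically upward. By transitivity of \(\leq_{\mathrm{st}}\) it suffices to treat \(F'=F\cup\{u\}\) with a single \(u\in\mathbb D\setminus F\). Write \(Y:=-B_{F'}\). This is a nondegenerate Gaussian vector whose precision matrix is Stieltjes by Lemma~\ref{lem:finite-wall-mtp2-closure}, applied with \(a=0\) to \(a-B_{F'}\).

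First restrict \(Y\) to the coordinate rectangle \(\{y_t\geq0:t\in F\}\). This is a law of the kind covered by Lemma~\ref{lem:finite-wall-mtp2-closure}, and its normaliser \(\PP(A_F)\) is strictly positive by Lemma~\ref{lem:fbm-compact-positivity-support}, applied with signs reversed to the compact set \(F\subset\mathbb R\setminus\{0\}\). Call the resulting MTP\(_2\) law \(\rho\), and note that it is the law of \(-B_{F'}\) given \(A_F\). Then
\[
  \nu_F^T=\mathcal L_\rho(Y_T),\qquad
  \nu_{F'}^T=\mathcal L_\rho\bigl(Y_T\,\big|\,Y_u\geq0\bigr).
\]
The second conditioning is again a coordinate-rectangle restriction, and its normaliser \(\PP(A_{F'})/\PP(A_F)\) is positive by the same positivity lemma.

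Now fix a bounded coordinatewise nondecreasing \(f\) on \([0,\infty)^T\), and put \(g:=\mathbf 1_{\{y_u\geq0\}}\), which is bounded and nondecreasing in \(y\). Extend \(f\) to all of \(\mathbb R^T\) in a nondecreasing way, for example by \(f(\max(y,0))\) taken coordinatewise; this extension does not change anything on the support of \(\rho\) in the \(F\)-coordinates. Since \(\rho\) is MTP\(_2\), it is associated by Lemma~\ref{lem:finite-wall-mtp2-closure}, so \eqref{eq:association} gives
\[
  \EE_\rho[f(Y_T)g(Y)]\geq\EE_\rho f(Y_T)\,\EE_\rho g(Y).
\]
Dividing by \(\EE_\rho g=\rho(Y_u\geq0)>0\) yields \(\int f\,\mathrm d\nu_{F'}^T\geq\int f\,\mathrm d\nu_F^T\), which is the single-step claim. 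Induction over the points of \(F'\setminus F\) then finishes the proof.

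The only delicate points are bookkeeping. Association must be applied to the restricted law on the full index set \(F'\) rather than to a marginal, because we need to condition on the coordinate \(u\) afterwards. Second, strict positivity of every normaliser has to be checked at each step, which Lemma~\ref{lem:fbm-compact-positivity-support} provides, since finite sets are compact and avoid \(0\). Finally, Lemma~\ref{lem:finite-wall-mtp2-closure} must permit a closed half-line restriction \(\{y_u\geq0\}\); this is a coordinate rectangle \([0,\infty)\), so I expect no real obstacle there.
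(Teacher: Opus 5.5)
Your proof is correct and is the standard argument. The paper imports this statement from the companion work without reproducing a proof. The step you use is the same one the paper uses in Lemma~\ref{lem:fixed-reference-mean} and Lemma~\ref{lem:entrance-unique-zero}: association of the orthant-restricted law from Lemma~\ref{lem:finite-wall-mtp2-closure}, applied to an increasing test and the increasing indicator of the added walls, followed by division by the positive normaliser.

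The one-point induction is harmless but unnecessary. The event $\{Y_s\geq0:s\in F'\setminus F\}$ is itself increasing, so all added walls can be imposed in a single step.
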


For \(N\geq1\), define the nested cofinal constraint grids
\begin{equation}
  E_N:=\bigl(2^{-N}\mathbb Z\cap[-2^N,2^N]\bigr)
       \setminus\{0\}.
  \label{eq:canonical-walls}
\end{equation}
Then \(E_N\subseteq E_{N+1}\), \(\bigcup_NE_N=\mathbb D\), and every
finite subset of \(\mathbb D\) is contained in \(E_N\) for all sufficiently
large \(N\).

\begin{lemma}
\label{lem:cofinal-tightness}
Fix finite nonempty \(T\subset\mathbb D\), and choose \(N_T\) so that
\(T\subseteq E_N\) for every \(N\geq N_T\).  If
\((\nu_{E_N}^T)_{N\geq N_T}\) is tight, then, with
\begin{equation}
  \mathcal I_T
  :=\{F\subset\mathbb D:F\text{ finite and }T\subseteq F\}
\end{equation}
ordered by inclusion, the full net \((\nu_F^T)_{F\in\mathcal I_T}\) is tight
and has a unique weak accumulation point \(\nu^T\).  In particular,
if \((F_n)\) is any sequence of finite subsets of \(\mathbb D\) such that
every finite subset of \(\mathbb D\) is eventually contained in \(F_n\), then
\(T\subseteq F_n\) eventually and, after discarding the preceding terms,
\begin{equation}
  \nu_{F_n}^T\Longrightarrow\nu^T.
  \label{eq:cofinal-wall-limit}
\end{equation}
\end{lemma}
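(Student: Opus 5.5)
The plan is to combine the monotonicity of Proposition~\ref{prop:wall-monotonicity} with the cofinal tightness hypothesis.

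\emph{Step 1: tightness of the full net.} Fix \(F\in\mathcal I_T\). Since \(F\) is finite, \(F\subseteq E_N\) for some \(N\geq N_T\). Proposition~\ref{prop:wall-monotonicity} then gives \(\nu_F^T\leq_{\mathrm{st}}\nu_{E_N}^T\). For \(R>0\), the indicator of \(\{x\in[0,\infty)^T:\max_{t}x_t>R\}\) is bounded and coordinatewise nondecreasing. Hence
\[
  \nu_F^T\bigl(\max_t x_t>R\bigr)\leq\sup_{N\geq N_T}\nu_{E_N}^T\bigl(\max_t x_t>R\bigr),
\]
and the right-hand side is small for large \(R\) by the assumed tightness. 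All measures live on \([0,\infty)^T\), so this proves tightness of the whole net.

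\emph{Step 2: uniqueness of the accumulation point.} For a bounded nondecreasing function \(f\), the map \(F\mapsto\int f\,\mathrm d\nu_F^T\) is nondecreasing along \(\mathcal I_T\) by Proposition~\ref{prop:wall-monotonicity}. It is bounded by \(\|f\|_\infty\), so it converges along the directed set to its supremum \(\ell(f)\). Every weak accumulation point \(\mu\) of the net is a limit along some subnet, so \(\int f\,\mathrm d\mu=\ell(f)\) whenever \(f\) is additionally continuous. Two accumulation points therefore agree on all bounded continuous nondecreasing functions.

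The main obstacle is checking that this class determines laws on \([0,\infty)^T\). I would use the functions \(x\mapsto\prod_{t\in T}\phi_k(x_t-c_t)\), where \(\phi_k\) is a continuous nondecreasing approximation of \(\mathbf 1_{(0,\infty)}\). By monotone convergence, these recover the upper-orthant probabilities \(\mu(x_t>c_t\ \forall t)\). Upper orthants form a \(\pi\)-system generating the Borel \(\sigma\)-field, so the two accumulation points coincide. Call this common law \(\nu^T\). Because the net is tight with a unique accumulation point, the net itself converges weakly to \(\nu^T\).

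\emph{Step 3: sequences.} If \((F_n)\) is eventually cofinal, then \(T\subseteq F_n\) eventually. Moreover, for every \(G\in\mathcal I_T\) there is \(n_G\) with \(F_n\supseteq G\) for \(n\geq n_G\). So the sequence is eventually beyond every element of the directed set, and convergence of the net gives \(\nu_{F_n}^T\Rightarrow\nu^T\).

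A simpler route for Step 3 avoids nets altogether. The sequence is tight by Step 1. Along any subsequence with limit \(\mu\), for continuous nondecreasing \(f\) we have \(\int f\,\mathrm d\nu_{F_n}^T\to\ell(f)\): it is eventually at least \(\int f\,\mathrm d\nu_G^T\) for every \(G\), and it is always at most the supremum. Hence \(\mu=\nu^T\) by Step 2, which proves \eqref{eq:cofinal-wall-limit}.
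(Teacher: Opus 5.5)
Your proposal is correct and takes essentially the same route as the paper. Tightness comes from dominating each $\nu_F^T$ by a canonical $\nu_{E_N}^T$ via Proposition~\ref{prop:wall-monotonicity}. Accumulation points are then identified through the increasing bounded nets $\int f\,\mathrm d\nu_F^T$, using product test functions that increase to upper-orthant indicators. For sequences, the paper uses exactly your ``simpler route'' of sandwiching $\int f\,\mathrm d\nu_{F_n}^T$ between $\int f\,\mathrm d\nu_G^T$ and the supremum. Your net-based version of Step~3 is also valid, because an eventually cofinal sequence eventually lies beyond any fixed index of the convergent net.
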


\begin{proof}
The index set \(\mathcal I_T\) is directed because it contains
\(F_1\cup F_2\) whenever it contains \(F_1\) and \(F_2\).  Put
\(S_T:=[0,\infty)^T\).  For \(R>0\), the set
\begin{equation}
  U_R:=\{x\in S_T:\max_{t\in T}x_t>R\}
\end{equation}
is increasing.  Tightness of the canonical tail implies
\[
  \sup_{M\geq N_T}\nu_{E_M}^T(U_R)\longrightarrow0
  \qquad(R\to\infty).
\]
Indeed, a compact set supplied by tightness is contained in
\([0,R]^T\) for some \(R\).  Given \(F\in\mathcal I_T\), canonical
cofinality gives an \(N\geq N_T\) with \(F\subseteq E_N\).
Proposition~\ref{prop:wall-monotonicity} therefore gives
\begin{equation}
  \nu_F^T(U_R)\leq\nu_{E_N}^T(U_R)
  \leq\sup_{M\geq N_T}\nu_{E_M}^T(U_R).
\end{equation}
This proves tightness of the full family.

If \(f\) is bounded, continuous, and increasing, then
\[
  a_F(f):=\int f\,\mathrm d\nu_F^T
\]
is an increasing bounded net by
Proposition~\ref{prop:wall-monotonicity}.  Consequently
\(a_F(f)\to a(f):=\sup_{F\in\mathcal I_T}a_F(f)\).
The space \(S_T\) is Polish, so Prokhorov's theorem and the tightness just
proved make the weak closure of the family compact.  In particular, weakly
convergent subnets exist.  Every such subnet has a limit \(\rho\) satisfying
\[
  \int f\,\mathrm d\rho=a(f)
\]
for every bounded continuous increasing \(f\).

These tests determine the probability measure.  Indeed, for
\(a=(a_t)_{t\in T}\in\mathbb R^T\), the functions
\[
  \phi_{m,a}(x)
  :=\prod_{t\in T}\bigl(1\wedge m(x_t-a_t)_+\bigr),
  \qquad m\geq1,
\]
are bounded, continuous, and increasing, and increase pointwise to the
indicator of \(\prod_{t\in T}(a_t,\infty)\).  Thus any two possible subnet
limits agree on every upper orthant.  These orthants form a \(\pi\)-system that
generates the Borel \(\sigma\)-field of \(S_T\), so the limits are equal.  Denote
the unique possible limit by \(\nu^T\).  Compactness now forces the full net
to converge to \(\nu^T\): otherwise a subnet outside some weak neighbourhood
of \(\nu^T\) would have a further convergent subnet whose limit is not
\(\nu^T\), a contradiction.

Finally, let \((F_n)\) satisfy the stated cofinality condition and discard
its finite prefix before \(F_n\in\mathcal I_T\).  For every
\(F\in\mathcal I_T\) and all sufficiently large \(n\),
\begin{equation}
  a_F(f)\leq a_{F_n}(f)\leq a(f).
\end{equation}
Taking the lower limit and then the supremum over \(F\) proves
\(a_{F_n}(f)\to a(f)\).  This sequence is tight because it belongs to the
full tight family.  Hence every subsequence has a weakly convergent
subsubsequence, and the preceding integral limit and determining-class
argument identify its limit as \(\nu^T\).  Were the whole sequence not to
converge to \(\nu^T\), a subsequence staying outside some weak neighbourhood
of \(\nu^T\) would contradict this conclusion.  This proves
\eqref{eq:cofinal-wall-limit}.
\end{proof}

\begin{lemma}[{\cite[Corollary~4.13]{moench2026horizonProblem}}]
\label{lem:uniform-anchor-tail}
For every \(t\in\mathbb D\), there are constants \(c_t>0\) and
\(M_t<\infty\) such that, for every finite \(F\subset\mathbb D\) containing
\(t\),
\begin{equation}
  \PP(-B_t>x\mid A_F)\leq e^{-c_tx},
  \qquad x\geq M_t.
  \label{eq:uniform-anchor-tail}
\end{equation}
\end{lemma}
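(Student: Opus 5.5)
The approach is to disintegrate the conditioned law of \(B_t\) along the anchor coordinate and control the resulting likelihood ratio. Write \(B_s=\xi^{(t)}_s+\kappa_t(s)B_t\) with \(\kappa_t(s):=K_H(s,t)/|t|^{2H}\), as in \eqref{eq:fbm-bridge-residual-support}. Here \(\xi^{(t)}\) is independent of \(B_t\).

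Since \(x\mapsto x^{2H}\) is concave and subadditive for \(H<1/2\), we have \(\kappa_t(s)\geq0\) for every \(s\). It is also bounded above, uniformly in \(s\), by a constant depending only on \(t\). On \(\{B_t=-y\}\) the wall event becomes \(\{\xi^{(t)}_s\leq y\kappa_t(s):s\in F\setminus\{t\}\}\). Hence \(g_F(y):=\PP(A_F\mid B_t=-y)\) is nondecreasing in \(y\geq0\), and
\[
  \PP(-B_t>x\mid A_F)
  =\frac{\int_x^\infty e^{-y^2/(2|t|^{2H})}g_F(y)\,\mathrm dy}
         {\int_0^\infty e^{-y^2/(2|t|^{2H})}g_F(y)\,\mathrm dy}.
\]
The Gaussian factor supplies a super-exponential tail. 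It therefore suffices to prove a uniform growth bound
\[
  g_F(y)\leq e^{C_t(1+y)}\,g_F(1),\qquad y\geq1,
\]
with \(C_t\) independent of \(F\). The denominator is bounded below by \(c\,g_F(1)\) by restricting to \(y\in[1,2]\), and monotonicity of \(g_F\) gives the rest. This bound in fact yields a Gaussian-type tail, which is stronger than \eqref{eq:uniform-anchor-tail}, so some slack is available.

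The main obstacle is the uniform growth bound, because \(g_F(1)\) may be astronomically small when \(F\) is large. The bound must therefore be a ratio estimate, not an absolute one. My first attempt is a change of measure on the bridge \(\xi^{(t)}\). Passing from level \(1\) to level \(y\) relaxes each constraint by \((y-1)\kappa_t(s)\). I would realise a comparable relaxation by a Cameron--Martin shift \(\xi\mapsto\xi-(y-1)\phi\), where \(\phi\) lies in the reproducing kernel space of \(\xi^{(t)}\) and satisfies \(\phi\geq\kappa_t\) on all of \(\mathbb D\). Such a \(\phi\) could be taken, for instance, as a suitable multiple of \(K_H(\cdot,t')-\kappa_t(\cdot)K_H(t,t')\) for a nearby point \(t'\), modified near \(0\) and near \(t\).

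The Radon--Nikodym factor is at most \(\exp\{(y-1)\langle\phi,\xi\rangle-\tfrac12(y-1)^2\|\phi\|^2\}\). Under the conditioning, \(\langle\phi,\xi\rangle\) should be controlled through association (Lemma~\ref{lem:finite-wall-mtp2-closure}), because the conditioned vector is MTP\(_2\) and \(\phi\) is a nonnegative combination of coordinates. The constraint \(\phi\geq\kappa_t\) is delicate near \(s=0\), where \(\kappa_t\to0\), and near \(s=t\). If no global \(\phi\) exists, I would instead split \(F\) into a bounded neighbourhood of \(t\) and its complement. For the complement I would use Proposition~\ref{prop:wall-monotonicity} together with MTP\(_2\)/FKG to decouple the far constraints, which are increasing events in \(-B\). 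The near part involves only finitely many effective directions, where a direct Gaussian computation gives the factor \(e^{C_t y}\).

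Finally, the constants are collected. Fixing \(M_t\) large and \(c_t\) small absorbs the polynomial prefactors and the lower bound on the denominator. Because every estimate depends on \(t\) only through \(\kappa_t\), \(\|\phi\|\) and the near-neighbourhood size, and never on \(F\), the bound \eqref{eq:uniform-anchor-tail} holds uniformly over all finite \(F\ni t\).
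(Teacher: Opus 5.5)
\paragraph{Verdict: genuine gap.} Your reduction is sound. Disintegrating along $B_t$, the monotonicity of $g_F$, and the deduction of an (even Gaussian) tail from a ratio bound $g_F(y)\le e^{C_t(1+y)}g_F(1)$ uniform in $F$ are all correct. The gap is that this ratio bound is the whole content of the lemma, and neither mechanism you propose establishes it.

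\paragraph{The Cameron--Martin route.} A global shift cannot exist. Every $\phi$ in the Cameron--Martin space of $\xi^{(t)}$ satisfies $|\phi(s)|\le\|\phi\|\,\|\xi^{(t)}_s\|_2\le\|\phi\|\,|s-t|^H$, whereas $\kappa_t(s)\to1$ as $s\to t$. So $\phi\ge\kappa_t$ on $F\setminus\{t\}$ forces $\|\phi\|\to\infty$ as $F$ acquires points near $t$.

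More seriously, wherever a shift with $\phi\ge\kappa_t$ is available, it gives $g_F(y)\le g_F(1)\,\EE[e^{(y-1)X}\mid C_1]\,e^{-(y-1)^2\|\phi\|^2/2}$, with $X=-\langle\phi,\xi\rangle$ and $C_1=\{\xi_s\le\kappa_t(s):s\in F\setminus\{t\}\}$. Lemma~\ref{lem:harge-convex-domination} cancels the quadratic terms exactly, and what remains is $\exp\{(y-1)\EE[X\mid C_1]\}$. So you need an upper bound, uniform in $F$, on a conditional mean of the pinned hard-wall law.

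Association cannot supply this. MTP$_2$ only shows that conditioning on increasing constraints pushes $-\xi$ upward, so it gives $\EE[X\mid C_1]\ge0$, a lower bound. The wall constraints themselves bound $\langle\phi,\xi\rangle$ only from above, i.e.\ $X$ only from below. Upper bounds on such means are exactly the uniformity problem. The paper imports this lemma from the companion rather than proving it, and its uniform mean bounds (Lemma~\ref{lem:fixed-reference-mean} and \eqref{eq:pin-mean-bound}) are derived \emph{from} this tail estimate. Invoking them here would be circular.

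\paragraph{The near/far fallback.} This does not close the gap either. The part of $F$ near $t$ need not be finite-dimensional (take $F$ a fine grid), though that piece is harmless. Since $\kappa_t\ge\frac12$ near $t$, the level-one near constraints have probability at least some $p_0>0$ uniformly in $F$. FKG then gives $g_F(y)/g_F(1)\le p_0^{-1}\PP(O_y)/\PP(O_1)$ for the remaining constraints $O$.

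But this leaves $\PP(O_y)/\PP(O_1)$, where $O$ may contain arbitrarily many sites at all scales near $0$ and near $\infty$ and $\PP(O_1)$ is arbitrarily small. That is the original problem. Wall monotonicity and FKG only produce inequalities in the unhelpful direction.

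\paragraph{The missing ingredient.} You need a multiscale telescoping, as in Lemma~\ref{lem:zero-germ-scalar-harnack}.
\begin{enumerate}
\item Split $O$ into dyadic shells $S_j$ at scales $\rho_j$ around $0$ and around $\infty$, plus boundedly many intermediate blocks.
\item Raise the level from $1$ to $y$ one shell at a time. Association under restriction to the weaker rectangle gives $\PP(N^j_y\cap D)/\PP(N^j_1\cap D)\le\PP(N^j_y)/\PP(N^j_1)$ for every increasing event $D$ built from the other constraints (cf.\ \eqref{eq:zero-germ-conditioned-shell-ratio}).
\item With $d(s):=\|\xi^{(t)}_s\|_2$, each unconditional shell ratio is at most $1+\PP(0<\max_{S_j}\xi_s/d(s)\le\eta_j)/\PP(\max_{S_j}\xi_s/d(s)\le0)$. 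The standardised relaxation satisfies $\eta_j\lesssim y\rho_j^{H}$ near $0$ and $\eta_j\lesssim y\rho_j^{-H}$ near $\infty$.
\item Gaussian-supremum anti-concentration \eqref{eq:gaussian-supremum-anticoncentration}, together with a scale-uniform positive baseline (the role of the companion's uniform shell estimate), makes the shells with $\eta_j\le1$ contribute a bounded total factor.
\item The $O(\log(2+y))$ shells with $\eta_j>1$, and the boundedly many intermediate blocks, each contribute at most a fixed constant.
\end{enumerate}
This yields $g_F(y)\le C_t(1+y)^{K_t}g_F(1)$, which is what your reduction needs. Without this or an equivalent quantitative input, the proposal does not prove the lemma.
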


\subsubsection{Proof of the finite-dimensional hard-wall limit}

The preceding results now give the finite-dimensional entrance law.

\begin{theorem}
\label{thm:finite-dimensional-entrance-law}
For every finite nonempty \(T\subset\mathbb D\), the full family
\(\{\nu_F^T:F\in\mathcal I_T\}\) is tight, and the directed net
\((\nu_F^T)_{F\in\mathcal I_T}\) converges to a unique law \(\nu^T\) as
\(F\uparrow\mathbb D\).  The family \((\nu^T)\) is projectively consistent.
Consequently, if
\begin{equation}
  h_n\downarrow0,
  \qquad L_n^-\to\infty,
  \qquad L_n^+\to\infty
  \label{eq:wall-parameter-limit}
\end{equation}
with dyadic, grid-aligned parameters, then
\begin{equation}
  \mathcal L_{Q_{h_n,L_n^-,L_n^+}}(B_T)
  \Longrightarrow\mu^T:=(-\operatorname{id})_*\nu^T.
  \label{eq:finite-dyadic-limit}
\end{equation}
The limit is independent of the relative rates in
\eqref{eq:wall-parameter-limit}.
\end{theorem}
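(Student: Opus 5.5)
The proof reduces to Lemma~\ref{lem:cofinal-tightness}: once the canonical sequence \((\nu_{E_N}^T)_{N\geq N_T}\) is tight, that lemma gives tightness of the full family, convergence of the net to a unique limit \(\nu^T\), and convergence along every cofinal sequence of walls. So the first step is tightness. Since \(\nu_{E_N}^T\) lives on \([0,\infty)^T\), it suffices to bound each coordinate's upper tail uniformly in \(N\). Lemma~\ref{lem:uniform-anchor-tail} does exactly this: for each \(t\in T\) and every \(N\geq N_T\),
\[
  \nu_{E_N}^T(x_t>x)=\PP(-B_t>x\mid A_{E_N})\leq e^{-c_tx},\qquad x\geq M_t .
\]
A union bound over the finitely many \(t\in T\) then places all \(\nu_{E_N}^T\) in a common box \([0,R]^T\), up to arbitrarily small mass. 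Hence the canonical sequence is tight, and Lemma~\ref{lem:cofinal-tightness} yields the first two assertions.

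For projective consistency, take \(T\subseteq T'\) finite. For every \(F\in\mathcal I_{T'}\subseteq\mathcal I_T\), the measure \(\nu_F^T\) is the image of \(\nu_F^{T'}\) under the coordinate projection \(\pi\), because both are marginals of \(\mathcal L(-B\mid A_F)\). Take the cofinal sequence \(F=E_N\). Lemma~\ref{lem:cofinal-tightness} gives \(\nu_{E_N}^{T'}\Rightarrow\nu^{T'}\) and \(\nu_{E_N}^T\Rightarrow\nu^T\). The continuous mapping theorem then gives \(\pi_*\nu^{T'}=\nu^T\).

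For the grid statement, set \(F_n:=\Lambda(h_n,L_n^-,L_n^+)\). This is a finite subset of \(\mathbb D\) because \(h_n\) is dyadic and the horizons are multiples of \(h_n\). Now fix a finite \(G\subset\mathbb D\). Choose \(m\) with \(G\subset2^{-m}\mathbb Z\) and \(R\) with \(G\subset[-R,R]\). Since \(h_n\downarrow0\) through dyadic values, \(h_n\leq2^{-m}\) eventually, and then \(2^{-m}\mathbb Z\subseteq h_n\mathbb Z\). Since \(L_n^\pm\to\infty\), eventually \(L_n^\pm\geq R\). Together these give \(G\subseteq F_n\) eventually, so \((F_n)\) satisfies the cofinality hypothesis of Lemma~\ref{lem:cofinal-tightness}. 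That lemma gives \(\nu_{F_n}^T\Rightarrow\nu^T\). By definition \(\mathcal L_{Q_{h_n,L_n^-,L_n^+}}(B_T)\) is the pushforward of \(\nu_{F_n}^T\) under \(x\mapsto-x\). Continuous mapping therefore gives \eqref{eq:finite-dyadic-limit}.

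Independence from the relative rates is automatic. The limit \(\nu^T\) is the limit of the whole directed net, and it was identified through the increasing tests \(\sup_F\int f\,\mathrm d\nu_F^T\). No choice of sequence enters this identification. The whole argument is then an assembly of the cited results. The one step needing care is cofinality of the grid walls, which relies on \(h_n\) being dyadic so that the grids \(h_n\mathbb Z\) are nested as \(h_n\) decreases.
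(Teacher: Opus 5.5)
Your proposal is correct and follows the paper's proof essentially step for step. The uniform anchor tail with a union bound gives tightness of the canonical sequence, and Lemma~\ref{lem:cofinal-tightness} supplies the net limit and the cofinal-sequence convergence. Consistency comes from projecting along \(E_N\), and the grid walls are cofinal because dyadic meshes eventually refine \(2^{-m}\mathbb Z\).

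The paper goes one step further and makes rate-independence precise as a joint uniformity over all grid-aligned triples with large mesh exponent and large horizons. It proves this by a violating-sequence contradiction. Your net-convergence remark gives the same uniformity directly: every grid \(\Lambda(2^{-m},L^-,L^+)\) with \(m\) and \(L^\pm\) large enough contains any finite wall set that witnesses the net convergence at a given tolerance.
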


\begin{proof}
Fix finite nonempty \(T\subset\mathbb D\), and choose \(N_T\) so large that
\(T\subset E_N\) for every \(N\geq N_T\).  By
Lemma~\ref{lem:uniform-anchor-tail}, for
\(R\geq\max_{t\in T}M_t\),
\[
  \sup_{N\geq N_T}
  \nu_{E_N}^T\{y:\max_{t\in T}y_t>R\}
  \leq\sum_{t\in T}e^{-c_tR}
  \longrightarrow0
  \qquad(R\to\infty).
\]
Thus the canonical tail is tight.  Lemma~\ref{lem:cofinal-tightness} now
gives tightness of the full family and the unique directed limit \(\nu^T\).

If \(T\subset T'\), choose a canonical tail containing \(T'\).  On that
tail, coordinate projection \(\pi_{T',T}:[0,\infty)^{T'}\to[0,\infty)^T\)
satisfies
\[
  (\pi_{T',T})_*\nu_{E_N}^{T'}=\nu_{E_N}^T.
\]
Both canonical sequences converge to their directed limits, so the
continuous mapping theorem gives
\((\pi_{T',T})_*\nu^{T'}=\nu^T\).  This proves projective consistency.

Finally, for a parameter sequence satisfying
\eqref{eq:wall-parameter-limit}, put
\(F_n=\Lambda(h_n,L_n^-,L_n^+)\).  Write \(h_n=2^{-m_n}\).  Since
\(h_n\downarrow0\), one has \(m_n\to\infty\).  If
\(s=k2^{-q}\in\mathbb D\), then \(s/h_n=k2^{m_n-q}\in\mathbb Z\) for all
large \(n\); the appropriate horizon also eventually contains \(s\).
Hence every finite subset of \(\mathbb D\) is eventually contained in
\(F_n\).  In particular, eventually \(T\subset F_n\) and
\[
  \mathcal L_{Q_{h_n,L_n^-,L_n^+}}(-B_T)=\nu_{F_n}^T.
\]
The cofinal-sequence conclusion of Lemma~\ref{lem:cofinal-tightness}, followed
by the continuous sign change, proves \eqref{eq:finite-dyadic-limit}.

For precision, let \(d_{\mathrm{BL}}\) be any bounded-Lipschitz metric for
weak convergence on \(\mathbb R^T\).  The final uniformity statement means
that, for every \(\varepsilon>0\), there are \(m_0\) and \(R_0<\infty\) such
that
\[
  d_{\mathrm{BL}}\!\left(
    \mathcal L_{Q_{h,L^-,L^+}}(B_T),\mu^T\right)<\varepsilon
\]
whenever \(h=2^{-m}\) with \(m\geq m_0\),
\(L^-,L^+\geq R_0\), and \(L^-/h,L^+/h\in\mathbb N\).  If this failed,
fix an integer \(r_0\) and recursively choose a violating triple with mesh
exponent \(r_n\geq\max\{n,r_{n-1}+1\}\) and
\(L_n^-,L_n^+\geq n\).  Then \(2^{-r_n}\downarrow0\), so these triples form
an admissible sequence and contradict the convergence just proved.  The
converse implication is immediate, so this is exactly sequential uniformity
and imposes no restriction on the relative rates.
\end{proof}

\subsection{Path-space tightness}
\label{sec:path-tightness}

We now
lift the finite-dimensional convergence to locally uniform convergence.
Fix an arbitrary parameter sequence satisfying
\eqref{eq:wall-parameter-limit}, and abbreviate
\begin{equation}
  Q_n:=Q_{h_n,L_n^-,L_n^+},
  \qquad
  m_n(t):=\EE_{Q_n}B_t,
  \qquad
  Z_n(t):=B_t-m_n(t).
  \label{eq:centered-wall-process}
\end{equation}
We control \(Z_n\) and \(m_n\) separately.

\subsubsection{Centred fluctuations}

\begin{lemma}
\label{lem:harge-convex-domination}
Let \(Y\) have a centred, possibly singular, Gaussian law on
\(\mathbb R^d\), let \(C\subseteq\mathbb R^d\) be Borel and convex with
\(\PP(Y\in C)>0\), and let
\(b=\EE(Y\mid Y\in C)\).  For every nonnegative convex Borel function
\(\varphi:\mathbb R^d\to[0,\infty)\),
\begin{equation}
  \EE\left[\varphi(Y-b)\,\middle|\,Y\in C\right]
  \leq \EE\varphi(Y),
  \label{eq:harge-full-convex-domination}
\end{equation}
whenever the right-hand side is finite.  In particular, for every linear
functional \(\ell\) and every \(\lambda\in\mathbb R\),
\begin{equation}
  \EE\left[
    e^{\lambda\ell(Y-b)}\,\middle|\,Y\in C
  \right]
  \leq
  \exp\left(\frac{\lambda^2}{2}
  \operatorname{Var}(\ell(Y))\right).
  \label{eq:harge-mgf-bound}
\end{equation}
\end{lemma}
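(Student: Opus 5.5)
This is Hargé's inequality, and the plan is to reduce it to his theorem for nondegenerate Gaussians. Recall the statement. Let \(\gamma\) be a nondegenerate centred Gaussian measure on \(\mathbb R^k\), let \(\rho\) be a log-concave probability density relative to \(\gamma\), and let \(\bar x=\int x\,\rho\,\mathrm d\gamma\). Then for every convex \(\varphi\),
\[
  \int\varphi(x-\bar x)\rho(x)\,\gamma(\mathrm dx)\leq\int\varphi\,\mathrm d\gamma .
\]
Hargé proves this with semigroup or heat-flow interpolation, and I would cite it directly rather than reprove it.

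\textbf{Step 1: nondegenerate case.} Suppose \(Y\) has covariance \(\Sigma\) with range \(V\), and write \(Y=AG\) with \(G\) standard Gaussian on \(\mathbb R^k\), where \(k=\dim V\) and \(A:\mathbb R^k\to V\) is a linear bijection. The set \(C':=A^{-1}(C\cap V)\) is convex, and \(\PP(G\in C')=\PP(Y\in C)>0\). Conditioning \(G\) on \(C'\) gives the density \(\rho=\mathbf 1_{C'}/\PP(G\in C')\) with respect to \(\gamma_k\). This \(\rho\) is log-concave, since the indicator of a convex set is log-concave; Borel measurability is all we need. Set \(\psi:=\varphi\circ A\), which is convex and nonnegative, and note \(b=A\bar g\) with \(\bar g=\EE(G\mid G\in C')\). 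Then
\[
  \EE[\varphi(Y-b)\mid Y\in C]=\EE[\psi(G-\bar g)\mid G\in C']\leq\EE\psi(G)=\EE\varphi(Y).
\]
The mean \(\bar g\) is finite because conditioning on a positive-probability event preserves integrability.

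\textbf{Step 2: extension to nonnegative convex \(\varphi\) with finite Gaussian expectation.} Hargé's statement may require integrability or a growth condition. If so, I would approximate \(\varphi\) from below by an increasing sequence of convex functions of controlled growth: the maxima of finitely many affine minorants, capped in a convex way, or Moreau envelopes. Monotone convergence on both sides then finishes. The left side is handled by monotone convergence because \(\varphi\geq0\).

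\textbf{Step 3: the mgf bound.} Apply the inequality to \(\varphi(y)=e^{\lambda\ell(y)}\), which is nonnegative and convex. The right side is \(\EE e^{\lambda\ell(Y)}=\exp\bigl(\tfrac{\lambda^2}{2}\operatorname{Var}\ell(Y)\bigr)\), which is finite.

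The main obstacle is checking that the cited form of Hargé's theorem covers log-concave perturbations given by indicators, which are not smooth and not strictly positive, and arbitrary convex \(\varphi\). If the reference only treats smooth, strictly positive log-concave \(\rho\), I would approximate \(\mathbf 1_{C'}\) by \(\rho_\varepsilon=\exp(-d(x,C')^2/\varepsilon)\). This is log-concave, and as \(\varepsilon\to0\) it decreases to \(\mathbf 1_{\overline{C'}}\). Since \(\partial C'\) is \(\gamma\)-null for a convex set with nonempty interior in the relevant affine hull, dominated convergence passes to the limit in both the normalised means and the integrals. The degenerate-interior case is handled by restricting to the affine hull of \(C'\) and, if needed, shifting by a deterministic mean. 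This shift is harmless because the inequality is invariant under translating the conditioning set.
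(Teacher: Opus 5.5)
Your argument is correct, but it is organised differently from the paper's. The paper invokes Harg\'e's Theorem~1.1 in the generality in which it is stated there, with \(\gamma\) possibly singular and \(f\geq0\) merely log-concave. It therefore takes \(\gamma=\mathcal L(Y)\) and \(f=\mathds1_C\) directly, using the extended-log convention for log-concavity of the indicator. No change of variables and no smoothing of \(\mathds1_C\) is needed there.

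Your Step~1 instead reduces the singular case to a nondegenerate standard Gaussian via \(Y=AG\) and \(C'=A^{-1}(C\cap V)\). Your fallback then replaces \(\mathds1_{C'}\) by \(e^{-d(x,C')^2/\varepsilon}\). This costs a few extra limit passages. In return, the proof survives if only the nondegenerate version of Harg\'e's inequality is available, or even only the version for smooth, strictly positive log-concave densities.

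Your Step~2 is the same device as the paper's. The paper uses the explicit Fenchel--Moreau truncations \(\varphi_k(x)=\max\{0,\sup_{|p|\leq k}(\langle p,x\rangle-\varphi^*(p))\}\leq\varphi(0)+k|x|\). These are capped suprema of affine minorants with linear growth, followed by monotone convergence on both sides.

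One small correction to your last paragraph: after Step~1 the ``degenerate-interior'' case cannot occur. A convex \(C'\subseteq\mathbb R^k\) with empty interior lies in an affine hyperplane, which is \(\gamma_k\)-null. That contradicts \(\PP(G\in C')>0\). So \(\partial C'\) is automatically null, and the appeal to shifting the conditioning set can be dropped. It is not needed, and it is not an invariance of the inequality in general.
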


\begin{proof}
Theorem~1.1 of \textcite{harge2004convexGaussian} allows \(\gamma\) to be
singular and states that, if \(f\geq0\) is log-concave and \(g\) is convex,
then
\begin{equation}
  \frac{\int g(x+l_\gamma-m_f)f(x)\,\mathrm d\gamma(x)}
       {\int f\,\mathrm d\gamma}
  \leq\int g\,\mathrm d\gamma.
  \label{eq:harge-cited-theorem}
\end{equation}
Here
\begin{equation}
  l_\gamma:=\int x\,\mathrm d\gamma(x),
  \qquad
  m_f:=\frac{\int xf(x)\,\mathrm d\gamma(x)}{\int f\,\mathrm d\gamma},
\end{equation}
and all displayed integrals must be well defined.

Take \(\gamma=\mathcal L(Y)\) and \(f=\mathds1_C\).  In the extended-log
convention, \(\log\mathds1_C\) is zero on \(C\) and \(-\infty\) outside;
convexity of \(C\) therefore makes \(f\) log-concave.  Moreover,
\(\int f\,\mathrm d\gamma=\PP(Y\in C)>0\), the Gaussian first moment makes
\(b\) finite, and \(l_\gamma=0\), \(m_f=b\).  Hence
\eqref{eq:harge-cited-theorem} gives
\eqref{eq:harge-full-convex-domination} whenever \(\varphi\) has at most
linear growth.

For the general test in the statement, let
\[
  \varphi^*(p):=\sup_x\{\langle p,x\rangle-\varphi(x)\}
\]
and define
\[
  \varphi_k(x):=max\left\{0,
    \sup_{|p|\leq k}\{\langle p,x\rangle-\varphi^*(p)\}\right\}.
\]
Fenchel--Moreau and nonnegativity of \(\varphi\) give
\(0\leq\varphi_k\uparrow\varphi\).  Each \(\varphi_k\) is convex and
\[
  \varphi_k(x)\leq \varphi(0)+k|x|,
\]
because \(\varphi^*(p)\geq-\varphi(0)\).  Thus all integrals in Harg\'e's
theorem are finite for \(g=\varphi_k\), both under \(\gamma\) and under the
conditioned translate, and
\[
  \EE[\varphi_k(Y-b)\mid Y\in C]\leq\EE\varphi_k(Y).
\]
Monotone convergence proves \eqref{eq:harge-full-convex-domination}; its
right-hand side is finite by hypothesis.

Finally take \(\varphi(x)=e^{\lambda\ell(x)}\).  This is nonnegative and
convex for every \(\lambda\in\mathbb R\), and centred Gaussianity gives
\[
  \EE e^{\lambda\ell(Y)}
  =\exp\left(\frac{\lambda^2}{2}\operatorname{Var}(\ell(Y))\right),
\]
which is \eqref{eq:harge-mgf-bound}, also when the variance is zero.
\end{proof}

\begin{lemma}
\label{lem:uniform-centered-modulus}
For every \(R<\infty\) and \(\varepsilon>0\),
\begin{equation}
  \lim_{\delta\downarrow0}\sup_n
  Q_n\left(
    \sup_{\substack{s,t\in[-R,R]\\|s-t|\leq\delta}}
    |Z_n(t)-Z_n(s)|>\varepsilon
  \right)=0.
  \label{eq:centered-modulus}
\end{equation}
\end{lemma}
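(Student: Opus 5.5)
The plan is to reduce the uniform modulus bound for \(Z_n\) to the corresponding bound for unconditioned fractional Brownian motion, using Lemma~\ref{lem:harge-convex-domination}. The conditioning event \(A_n=\{B_t\le 0:t\in\Lambda(h_n,L_n^-,L_n^+)\}\) involves only finitely many coordinates, so it is convex. It is not, however, a finite-dimensional event in the full path. I therefore work with finite-dimensional marginals. Fix \(R\) and a finite set \(G\subset[-R,R]\), for example a dyadic grid of mesh \(2^{-j}\) intersected with \([-R,R]\). Apply the lemma to the centred Gaussian vector \(Y=(B_s)_{s\in G\cup\Lambda_n}\), with \(C=\{y:y_t\le0,\ t\in\Lambda_n\}\). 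Then \(b\) is exactly the vector of conditional means \(m_n\), and \(Y-b\) restricted to \(G\) is \(Z_n|_G\) under \(Q_n\). This gives, uniformly in \(n\) and for all \(s,t\in G\) and \(\lambda\in\mathbb R\),
\[
  \EE_{Q_n}e^{\lambda(Z_n(t)-Z_n(s))}\le \exp\bigl(\tfrac{\lambda^2}{2}|t-s|^{2H}\bigr).
\]
So the increments of \(Z_n\) are sub-Gaussian with variance proxy \(|t-s|^{2H}\), and the constant does not depend on \(n\).

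Next I convert this into moment bounds, \(\EE_{Q_n}|Z_n(t)-Z_n(s)|^q\le C_q|t-s|^{qH}\), valid for all dyadic \(s,t\). Alternatively, the lemma can be applied directly to the convex function \(\varphi(x)=|x_t-x_s|^q\), which gives \(\EE|B_t-B_s|^q\) as the bound. I then run a chaining argument, either Kolmogorov--Chentsov or Garsia--Rodemich--Rumsey, on the dyadic points of \([-R,R]\). Choose \(q\) with \(qH>1\) and \(\alpha\in(0,H-1/q)\). Chaining controls the dyadic modulus by a random Hölder constant whose \(q\)-th moment is bounded by a constant depending only on \(q,H,\alpha,R\). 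Since the moment bounds are uniform in \(n\), this yields
\[
  Q_n\Bigl(\sup_{s,t\in\mathbb D\cap[-R,R],\,|s-t|\le\delta}|Z_n(t)-Z_n(s)|>\varepsilon\Bigr)\le C\varepsilon^{-q}\delta^{q\alpha},
\]
uniformly in \(n\). The bound passes to finite dyadic sets and then to the countable supremum by monotone convergence.

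The main obstacle is passing from dyadic points to the full supremum over \([-R,R]\). This needs continuity of \(Z_n\), that is, continuity of \(m_n\). Paths of \(B\) are continuous under \(Q_n\), because \(Q_n\) is absolutely continuous with respect to \(\PP\). Continuity of \(m_n(t)=\EE[B_t\mathbf 1_{A_n}]/\PP(A_n)\) in \(t\) follows from dominated convergence, using Gaussian integrability and \(\PP(A_n)>0\). The same conclusion also follows because \(m_n\) is a finite linear combination of the continuous kernels \(K_H(\cdot,u)\), \(u\in\Lambda_n\), obtained by regressing on \(B_{\Lambda_n}\). Hence \(Z_n\) has continuous paths, the dyadic supremum equals the full supremum, and letting \(\delta\downarrow0\) in the uniform bound gives \eqref{eq:centered-modulus}. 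A minor point is that Lemma~\ref{lem:harge-convex-domination} requires the right-hand side to be finite, which is clear for Gaussian exponential and polynomial moments. The possible singularity of \(Y\), when \(G\) and \(\Lambda_n\) overlap, is explicitly allowed by the lemma.
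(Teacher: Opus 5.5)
Your proposal is correct and follows essentially the paper's route. You apply Harg\'e's convex domination (Lemma~\ref{lem:harge-convex-domination}) to the finite vector $(B_u)_{u\in F_n\cup\{s,t\}}$ to get sub-Gaussian increments of $Z_n$ with proxy $|t-s|^{2H}$ uniformly in $n$, then chain over dyadic points and use continuity of $m_n$ to reach the full supremum. The only difference is in the chaining step: you chain through $q$-th moments (Kolmogorov--Chentsov or Garsia--Rodemich--Rumsey), whereas the paper bounds the expected dyadic edge maxima by $\EE_{Q_n}M_{n,k}\le C_R2^{-Hk}\sqrt{k+1}$ via a union bound and applies Markov's inequality to the telescoped sum $2\sum_{k\geq m}M_{n,k}$.
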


\begin{proof}
Put \(F_n:=\Lambda(h_n,L_n^-,L_n^+)\) and
\[
  A_n:=\{B_u\leq0:u\in F_n\}.
\]
This event has positive probability by the observation following
\eqref{eq:pure-wall-law}.  For fixed \(s,t\), form the centred, possibly
singular, Gaussian vector
\[
  Y=(B_u)_{u\in F_n\cup\{s,t\}}.
\]
In this coordinate space, \(A_n=\{Y\in C_n\}\), where \(C_n\) is the closed
convex Borel set imposing \(x_u\leq0\) for \(u\in F_n\).  The conditional
barycentre of \(Y\), evaluated by the linear functional
\(\ell(x)=x_t-x_s\), is \(m_n(t)-m_n(s)\).  Thus
Lemma~\ref{lem:harge-convex-domination} gives
\begin{equation*}
  \EE_{Q_n}e^{\lambda(Z_n(t)-Z_n(s))}
  \leq e^{\lambda^2|t-s|^{2H}/2},
  \qquad \lambda\in\mathbb R.
\end{equation*}
Here the right-hand variance is exactly
\(\operatorname{Var}(B_t-B_s)=|t-s|^{2H}\).  If \(s\neq t\), Chernoff's
bound with \(\lambda=u/|t-s|^{2H}\) applied to the increment and its negative
yields
\begin{equation}
  Q_n(|Z_n(t)-Z_n(s)|>u)
  \leq2\exp\left(-\frac{u^2}{2|t-s|^{2H}}\right),
  \qquad u>0,\quad s\neq t.
  \label{eq:centered-increment-tail}
\end{equation}
For \(s=t\), the probability on the left is zero, so the zero-variance branch
requires no division.

For \(k\geq0\), let
\begin{equation}
  D_k:=2^{-k}\mathbb Z\cap[-R-1,R+1]
\end{equation}
and define
\[
  M_{n,k}:=\max\left\{
    |Z_n(v)-Z_n(u)|:u,v\in D_k,\ v-u=2^{-k}
  \right\}.
\]
Choose \(c_R\geq1\) so that there are at most \(c_R2^k\) such edges, and put
\(\sigma_k=2^{-Hk}\) and
\[
  a_k:=\sigma_k\sqrt{2\log(2c_R2^k)}.
\]
The union bound and \eqref{eq:centered-increment-tail} give
\[
  Q_n(M_{n,k}>u)
  \leq\min\left\{1,
    2c_R2^k e^{-u^2/(2\sigma_k^2)}
  \right\}.
\]
Since, for \(a>0\),
\[
  \int_a^\infty e^{-u^2/(2\sigma^2)}\,\mathrm du
  \leq\frac{\sigma^2}{a}e^{-a^2/(2\sigma^2)},
\]
integration at the threshold \(a_k\) yields
\[
  \EE_{Q_n}M_{n,k}
  \leq a_k+\frac{\sigma_k^2}{a_k}.
\]
Consequently, after enlarging a constant depending only on \(R\) and \(H\),
\begin{equation}
  \EE_{Q_n}M_{n,k}\leq C_R2^{-Hk}\sqrt{k+1}.
  \label{eq:dyadic-edge-maximum}
\end{equation}

For completeness, \(Z_n\) has continuous paths.  The paths of \(B\) are
continuous, while, for fixed \(n\),
\begin{equation}
  |m_n(t)-m_n(s)|
  \leq\EE_{Q_n}|B_t-B_s|
  \leq\frac{|t-s|^H}{\PP(A_n)^{1/2}},
\end{equation}
by Cauchy--Schwarz under the unconditioned law.  The positive quantity
\(\PP(A_n)\) may depend on \(n\); this bound is used only for continuity, not
uniformly in \(n\).

Let \(\pi_k(r)=2^{-k}\lfloor2^kr\rfloor\).  Continuity and telescoping over
dyadic ancestors give
\begin{equation}
  |Z_n(r)-Z_n(\pi_m(r))|
  \leq\sum_{k=m+1}^\infty M_{n,k}.
\end{equation}
Indeed,
\(\pi_{k+1}(r)-\pi_k(r)\in\{0,2^{-(k+1)}\}\), and the one-unit padding in
\(D_k\) contains every ancestor of every \(r\in[-R,R]\).  If
\(|s-t|\leq2^{-m}\), the integers indexing \(\pi_m(s)\) and \(\pi_m(t)\)
differ by at most one.  The two ancestor tails and this possible level-\(m\)
edge therefore give
\begin{equation}
  \sup_{\substack{s,t\in[-R,R]\\|s-t|\leq2^{-m}}}
  |Z_n(t)-Z_n(s)|
  \leq2\sum_{k=m}^\infty M_{n,k}.
\end{equation}
By \eqref{eq:dyadic-edge-maximum}, Tonelli's theorem, and Markov's
inequality, the probability of this modulus exceeding \(\varepsilon\) is at
most
\begin{equation}
  \frac{C_R}{\varepsilon}
  \sum_{k=m}^\infty2^{-Hk}\sqrt{k+1},
\end{equation}
which tends to zero as \(m\to\infty\).  Monotonicity in \(\delta\) proves
\eqref{eq:centered-modulus}.  The proof used no feature of \(A_n\) beyond
being a positive-probability Borel convex constraint on finitely many
coordinates.  Its constants are therefore uniform over every such finite
conditioning, including the shifted orthants and rescaled finite walls used
below, after centring under the corresponding conditional law.
\end{proof}

\subsubsection{Conditional means}

For finite \(F\subset\mathbb D\), retain the notation \(A_F\) from
\eqref{eq:reflected-wall-event} and put
\begin{equation}
  m_F(t):=\EE(B_t\mid A_F).
  \label{eq:reflected-conditional-mean}
\end{equation}
For the constraint set defining \(Q_n\), one has \(m_n=m_F\).

\begin{lemma}
\label{lem:positive-boundary-flux}
Assume \(0<H<1/2\).  For every finite nonempty
\(F\subset\mathbb D\), there are coefficients
\(\lambda_{F,u}\geq0\), \(u\in F\), such that
\begin{equation}
  -m_F(t)=\sum_{u\in F}\lambda_{F,u}K_H(t,u),
  \qquad t\in\mathbb R.
  \label{eq:positive-covariance-mixture}
\end{equation}
In particular, \(m_F(t)\leq0\).
\end{lemma}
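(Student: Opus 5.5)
The plan is to combine Gaussian regression on the constrained coordinates with a Gaussian integration-by-parts identity on the orthant. Write \(\Sigma:=(K_H(u,v))_{u,v\in F}\). By Lemma~\ref{lem:finite-wall-mtp2-closure}, \(B_F\) is nondegenerate, so \(P:=\Sigma^{-1}\) exists. Since \(A_F\) is measurable with respect to \(B_F\), the tower property and Gaussian regression give
\[
  m_F(t)=\EE\bigl[\EE(B_t\mid B_F)\,\big|\,A_F\bigr]
  =\sum_{u,v\in F}K_H(t,u)P_{uv}\,\EE(B_v\mid A_F).
\]
So \eqref{eq:positive-covariance-mixture} holds with the vector
\(\lambda_F:=-P\,\EE(B_F\mid A_F)\in\mathbb R^F\). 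It remains to show that \(\lambda_F\) is entrywise nonnegative. Note that this does not follow from the sign pattern alone: \(\EE(B_F\mid A_F)\leq0\), but the off-diagonal entries of \(P\) are nonpositive, so \(P\) applied to a nonpositive vector can have either sign.

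The sign of \(\lambda_F\) comes from a boundary-flux identity. Let \(\varphi(x)=c\,e^{-x^\top Px/2}\) be the density of \(B_F\), and let \(C=(-\infty,0]^F\). Since \(\nabla\varphi=-Px\,\varphi\),
\[
  \bigl(P\,\EE(B_F\mid A_F)\bigr)_u
  =\frac{1}{\PP(A_F)}\int_C (Px)_u\varphi(x)\,\mathrm dx
  =-\frac{1}{\PP(A_F)}\int_C\partial_u\varphi(x)\,\mathrm dx .
\]
Apply Fubini and integrate in \(x_u\) over \((-\infty,0]\). Gaussian decay kills the contribution at \(-\infty\), and the boundary value at \(x_u=0\) remains. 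This gives
\[
  \lambda_{F,u}=\frac{1}{\PP(A_F)}\int_{(-\infty,0]^{F\setminus\{u\}}}\varphi(x)\big|_{x_u=0}\,\mathrm dx_{F\setminus\{u\}}\;\geq0 .
\]
In words, \(\lambda_{F,u}\) is the face density of \(B_F\) on \(\{x_u=0\}\cap C\), normalised by \(\PP(A_F)>0\). The integrability needed for Fubini is immediate from the Gaussian tails. If \(|F|=1\), the outer integral is vacuous and the same computation applies directly.

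Finally, to get \(m_F(t)\leq0\), it suffices that \(K_H\geq0\) everywhere. For \(0<H<1/2\), the map \(x\mapsto x^{2H}\) is concave with value \(0\) at \(0\), hence subadditive. Therefore \(|s-t|^{2H}\leq(|s|+|t|)^{2H}\leq|s|^{2H}+|t|^{2H}\), so \(K_H(s,t)\geq0\), and \(m_F(t)\leq0\) follows from \eqref{eq:positive-covariance-mixture}.

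The main obstacle is the sign of \(\lambda_F\): the naive sign argument fails, and the integration by parts must be set up correctly on the orthant. Once that identity is in place, the remaining steps are routine.
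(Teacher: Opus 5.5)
Your proof is correct and follows essentially the same route as the paper: regress \(B_t\) on the wall coordinates, use the orthant integration-by-parts identity \(\partial_u\varphi=-(Px)_u\varphi\) to identify the coefficient vector as a normalised face density (hence nonnegative), and then use subadditivity of \(x\mapsto x^{2H}\) to get \(K_H\geq0\). The only difference is a sign convention: the paper works with \(X=-B_F\) on \([0,\infty)^F\) and sets \(\lambda_F=P\,\EE(X\mid X\geq0)\), which is exactly your \(-P\,\EE(B_F\mid A_F)\).
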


\begin{proof}
Let \(X=-B_F\).  By Lemma~\ref{lem:finite-wall-mtp2-closure}, its covariance
\(C\) is positive definite.  Thus \(P=C^{-1}\) exists, and \(X\) has an
everywhere positive smooth Gaussian density \(p\).  In particular,
\[
  q_F:=\PP(X\geq0)=\PP(A_F)>0,
  \qquad \bar x:=\EE(X\mid X\geq0)
\]
is finite.  The density satisfies
\(\partial_i p(x)=-(Px)_ip(x)\).  Moreover,
\(|(Px)_i|p(x)\) is integrable, and, for fixed \(x_{-i}\),
\(p(x_i,x_{-i})\to0\) as \(x_i\to\infty\).  Fubini's theorem and the
one-dimensional fundamental theorem of calculus therefore give
\begin{equation}
\begin{split}
  (P\bar x)_i
  &=\frac1{q_F}
    \int_{[0,\infty)^F}(Px)_ip(x)\,\mathrm dx\\
  &=-\frac1{q_F}
    \int_{[0,\infty)^{F\setminus\{i\}}}
      \int_0^\infty\partial_i p(x_i,x_{-i})
      \,\mathrm dx_i\,\mathrm dx_{-i}\\
  &=\frac1{q_F}
    \int_{[0,\infty)^{F\setminus\{i\}}}
      p(0,x_{-i})\,\mathrm dx_{-i}>0.
  \label{eq:positive-boundary-flux}
\end{split}
\end{equation}
The strict inequality also covers singleton \(F\), with the usual
zero-dimensional integral convention.

Set \(\lambda_F=P\bar x\).  Since \(A_F=\{X\geq0\}\) and
\[
  \operatorname{Cov}(-B_t,X)
  =\operatorname{Cov}(-B_t,-B_F)=K_H(t,F),
\]
Gaussian regression gives
\[
  \EE(-B_t\mid X)=K_H(t,F)PX.
\]
Averaging this identity over \(X\geq0\) yields
\begin{equation}
  -m_F(t)=K_H(t,F)P\bar x=K_H(t,F)\lambda_F,
\end{equation}
which is \eqref{eq:positive-covariance-mixture}.  Finally, with
\(\alpha=2H\in(0,1)\), subadditivity gives, for all real \(t,u\),
\[
  |t-u|^\alpha
  \leq(|t|+|u|)^\alpha
  \leq|t|^\alpha+|u|^\alpha.
\]
Hence \(K_H(t,u)\geq0\), and the representation proves \(m_F(t)\leq0\).
In fact, strict subadditivity and \(\lambda_{F,u}>0\) give
\(m_F(t)<0\) whenever \(t\neq0\), although only the weak form is used below.
\end{proof}

\begin{lemma}
\label{lem:fixed-reference-mean}
For every fixed \(r\in\mathbb D\),
\begin{equation}
  \sup\{-m_F(r):F\subset\mathbb D\text{ finite}\}<\infty.
  \label{eq:fixed-reference-mean-bound}
\end{equation}
\end{lemma}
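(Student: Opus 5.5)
The bound should follow from monotonicity in the wall set combined with the uniform one-coordinate tail bound. Fix \(r\in\mathbb D\). For finite \(F\subset\mathbb D\), put \(F':=F\cup\{r\}\).

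\emph{Step 1: reduce to walls containing \(r\).} We want \(-m_F(r)\leq-m_{F'}(r)\). Proposition~\ref{prop:wall-monotonicity} compares only coordinates lying inside the smaller wall set, so it does not apply directly when \(r\notin F\). Instead we argue through association. Conditioning \(B\) on \(A_F\) gives a law of \(-B_{F'}\) obtained from \(-B_{F'}\) by a coordinate-rectangle restriction, which is MTP\(_2\) by Lemma~\ref{lem:finite-wall-mtp2-closure}. Further conditioning on \(\{-B_r\geq0\}\) is conditioning on an increasing event. Association \eqref{eq:association}, applied with the increasing test \(f(x)=x_r\wedge K\) and the increasing indicator \(g=\mathds1_{\{x_r\geq0\}}\), then gives
\[
\EE[(-B_r)\wedge K\mid A_{F'}]\geq\EE[(-B_r)\wedge K\mid A_F].
\]
Letting \(K\to\infty\) by monotone convergence yields \(-m_F(r)\leq-m_{F'}(r)\). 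The integrability needed here is immediate, since \(\PP(A_F)>0\) makes \(|B_r|\) integrable under the conditional law.

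\emph{Step 2: uniform bound for \(r\in F'\).} Now \(r\in F'\), and Lemma~\ref{lem:uniform-anchor-tail} gives constants \(c_r>0\) and \(M_r<\infty\), independent of \(F'\), such that
\[
\PP(-B_r>x\mid A_{F'})\leq e^{-c_rx},\qquad x\geq M_r.
\]
Since \(-B_r\geq0\) on \(A_{F'}\), the layer-cake formula gives
\[
-m_{F'}(r)=\int_0^\infty\PP(-B_r>x\mid A_{F'})\,\mathrm dx\leq M_r+\frac{1}{c_r}.
\]
Combining with Step~1, \(-m_F(r)\leq M_r+c_r^{-1}\) uniformly in \(F\).

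\emph{Step 3: empty and trivial cases.} If \(F=\varnothing\), then \(m_F(r)=\EE B_r=0\), so the bound holds trivially; Lemma~\ref{lem:positive-boundary-flux} also shows \(-m_F\geq0\) in general.

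The main point to check is Step~1, namely that adding the point \(r\) to the wall set only increases \(-m(r)\). The safest route is the association argument above rather than Proposition~\ref{prop:wall-monotonicity}, because the latter requires \(T\subseteq F\). Alternatively, one can apply Proposition~\ref{prop:wall-monotonicity} with \(T=F\) and then use a Gaussian regression of \(-B_r\) on \(-B_F\) with the nonnegative regression coefficients supplied by Lemma~\ref{lem:finite-wall-mtp2-closure}. That route is messier, so I would try association first.
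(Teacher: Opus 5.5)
Your proof is correct and follows the paper's argument essentially step for step. Both use association under the MTP$_2$ law $\mathcal L(-B_{F\cup\{r\}}\mid A_F)$ to show that adding $r$ to the wall can only raise $-m(r)$, and then bound $-m_{F\cup\{r\}}(r)$ with the uniform anchor tail and the layer-cake formula. One small fix: \eqref{eq:association} is stated for bounded tests, and $x_r\wedge K$ is unbounded below when $r\notin F$, so first apply it to the two-sided clip $\max\{-N,\min\{x_r,K\}\}$ (as the paper does) and then pass to the limit by dominated convergence.
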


\begin{proof}
Fix finite \(F\subset\mathbb D\), put \(G=F\cup\{r\}\), and let
\(Y=-B_r\).  Every nonempty finite wall event has positive probability by
Lemma~\ref{lem:finite-wall-mtp2-closure} and positivity of its Gaussian density, while
\(\PP(A_\varnothing)=1\).  Thus
\[
  \rho_F:=\mathcal L(-B_G\mid A_F)
\]
is well defined and is MTP\(_2\), hence associated, by
Lemma~\ref{lem:finite-wall-mtp2-closure}.  For
\(C=\{Y\geq0\}\),
\[
  \rho_F(C)=\frac{\PP(A_G)}{\PP(A_F)}>0,
  \qquad A_F\cap C=A_G.
\]
Put \(\phi_M(y)=\max\{-M,\min\{y,M\}\}\).  Apply association to the
bounded increasing function \(\phi_M\) and the increasing indicator
\(\mathds1_C\).  Since \(|\phi_M(Y)|\leq|Y|\), conditional Gaussian
first-moment integrability and dominated convergence under \(\rho_F\) and
\(\rho_F(\,\cdot\mid C)\) give
\begin{equation}
  -m_F(r)=\EE_{\rho_F}Y
  \leq\EE_{\rho_F}(Y\mid C)
  =-m_G(r).
\end{equation}
Lemma~\ref{lem:uniform-anchor-tail} supplies \(c_r>0\) and
\(M_r<\infty\), independent of \(G\), such that
\(\PP(-B_r>x\mid A_G)\leq e^{-c_rx}\) for \(x\geq M_r\).
Since \(-B_r\geq0\) under \(A_G\), the layer-cake formula gives
\[
  -m_F(r)\leq-m_G(r)
  \leq M_r+\int_{M_r}^{\infty}e^{-c_rx}\,\mathrm dx<\infty.
\]
The final bound is independent of \(F\), proving
\eqref{eq:fixed-reference-mean-bound}.
\end{proof}

\begin{lemma}
\label{lem:covariance-kernel-ratio}
Fix \(0<a<R<\infty\) and \(r\in(0,a)\).  There is
\(C=C(H,a,R,r)<\infty\) such that, for all \(s,t\in[a,R]\) with
\(|s-t|\leq1\) and all \(u\neq0\),
\begin{equation}
  |K_H(t,u)-K_H(s,u)|
  \leq C|t-s|^{2H}K_H(r,u).
  \label{eq:annular-kernel-ratio}
\end{equation}
The reflected statement holds on \([-R,-a]\) with a fixed
\(r\in(-a,0)\).
\end{lemma}

\begin{proof}
Put \(\beta=2H\in(0,1)\).  For \(u\neq0\), the triangle inequality and
strict subadditivity give
\[
  |r-u|^\beta\leq(r+|u|)^\beta<r^\beta+|u|^\beta.
\]
Consequently,
\begin{equation}
  K_H(r,u)>0,\qquad u\neq0.
  \label{eq:strict-positive-reference-kernel}
\end{equation}

Choose \(\eta\in(0,\min\{r/2,a/2,1\})\) sufficiently small.  The
mean-value theorem on \([r/2,3r/2]\) gives
\(r^\beta-|r-u|^\beta=O(|u|)\) for \(|u|\leq\eta\).  Hence, since
\(|u|=o(|u|^\beta)\) as \(u\to0\), shrinking \(\eta\) if necessary gives
\begin{equation}
  K_H(r,u)=\frac12|u|^\beta+O(|u|),
  \qquad K_H(r,u)\geq c_0|u|^\beta,
  \qquad 0<|u|\leq\eta.
  \label{eq:reference-kernel-near-zero}
\end{equation}
For such \(u\), put \(f_u(x)=x^\beta-(x-u)^\beta\) on \([a,R]\); the choice
of \(\eta\) ensures \(x-u>0\).  Since
\[
  f_u'(x)=\beta\bigl(x^{\beta-1}-(x-u)^{\beta-1}\bigr)
\]
and the derivative of \(z\mapsto z^{\beta-1}\) is bounded on
\([a/2,R+a/2]\), one has
\[
  \sup_{x\in[a,R]}|f_u'(x)|\leq C_0|u|.
\]
The common
\(|u|^\beta\) term cancels from the kernel difference.  Thus, if
\(d=|t-s|\leq1\), then \(d\leq d^\beta\) and
\(|u|\leq|u|^\beta\), so
\begin{equation}
\begin{split}
  |K_H(t,u)-K_H(s,u)|
  &=\frac12|f_u(t)-f_u(s)|\leq C_0d|u|\\
  &\leq C_0d^\beta|u|^\beta
  \leq C_1d^\beta K_H(r,u),
  \qquad 0<|u|\leq\eta.
  \label{eq:kernel-ratio-near-zero}
\end{split}
\end{equation}

As \(u\to+\infty\), the difference
\(u^\beta-(u-r)^\beta\) tends to zero; as \(u\to-\infty\), writing
\(v=-u\), the difference \(v^\beta-(v+r)^\beta\) also tends to zero.
Therefore
\[
  K_H(r,u)\longrightarrow\frac12r^\beta
  \qquad(u\to+\infty\text{ or }u\to-\infty).
\]
Together with continuity and \eqref{eq:strict-positive-reference-kernel},
this gives
\[
  c_\eta:=\inf_{|u|\geq\eta}K_H(r,u)>0.
\]
The elementary power inequality
\begin{equation}
  \bigl||x|^\beta-|y|^\beta\bigr|\leq|x-y|^\beta
  \label{eq:power-holder}
\end{equation}
follows directly from subadditivity and the reverse triangle inequality.
It remains valid when \(u=s\) or \(u=t\), so it covers the moving cusp.
For every \(|u|\geq\eta\), it gives
\begin{equation}
  |K_H(t,u)-K_H(s,u)|
  \leq\frac12(d^\beta+d^\beta)
  \leq c_\eta^{-1}d^\beta K_H(r,u).
\end{equation}
The near-zero and away-from-zero estimates prove
\eqref{eq:annular-kernel-ratio}.  Finally,
\(K_H(-x,-y)=K_H(x,y)\).  For negative-annulus points, set
\(s'=-s\), \(t'=-t\), \(r'=-r\), and \(u'=-u\), and apply the positive-
annulus estimate to \((s',t',r',u')\).  This proves the reflected statement.
\end{proof}

\begin{proposition}
\label{prop:conditional-mean-modulus}
For every \(0<R<\infty\),
\begin{equation}
  \lim_{\delta\downarrow0}
  \sup_{\substack{F\subset\mathbb D\\F\text{ finite}}}\;
  \sup_{\substack{s,t\in[-R,R]\\|s-t|\leq\delta}}
  |m_F(t)-m_F(s)|=0.
  \label{eq:conditional-mean-modulus}
\end{equation}
In particular, the same conclusion holds with the outer supremum replaced
by \(\sup_n\) and \(m_F\) replaced by \(m_n\) for every sequence satisfying
\eqref{eq:wall-parameter-limit}.
\end{proposition}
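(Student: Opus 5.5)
The plan is to work throughout with the positive covariance-mixture representation of Lemma~\ref{lem:positive-boundary-flux}, namely \(-m_F(t)=\sum_{u\in F}\lambda_{F,u}K_H(t,u)\) with \(\lambda_{F,u}\geq0\). The idea is to compare \(K_H(t,\cdot)\) with \(K_H(r,\cdot)\) for a fixed reference point \(r\). Since the weights are nonnegative, any kernel inequality \(|K_H(t,u)-K_H(s,u)|\leq c\,K_H(r,u)\), valid for all \(u\neq0\), transfers to \(|m_F(t)-m_F(s)|\leq c\,(-m_F(r))\). The factor \(-m_F(r)\) is then bounded uniformly in \(F\) by Lemma~\ref{lem:fixed-reference-mean}. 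I split \([-R,R]\) into a small window \([-a,a]\) around the pin and two annuli \([a,R]\) and \([-R,-a]\).

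\emph{Annuli.} Fix \(a\), say dyadic, and choose a dyadic \(r\in(0,a)\) (respectively \(r\in(-a,0)\)). For \(s,t\in[a,R]\) with \(|s-t|\leq1\), Lemma~\ref{lem:covariance-kernel-ratio} combined with the mixture representation gives
\[
  |m_F(t)-m_F(s)|\leq C|t-s|^{2H}\bigl(-m_F(r)\bigr)\leq C'|t-s|^{2H}.
\]
Here \(C'=C\sup_F(-m_F(r))<\infty\) by Lemma~\ref{lem:fixed-reference-mean}, and the negative annulus is handled the same way. This yields a uniform modulus on each annulus.

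\emph{Near zero (main obstacle).} Here I aim to show \(\sup_F(-m_F(t))\leq C_0|t|^H\) for \(0<|t|\leq1\). Lemma~\ref{lem:covariance-kernel-ratio} cannot be applied directly, because wall points \(u\) close to \(0\) carry weights that are not controlled in \(t\).
\begin{enumerate}
\item First take a dyadic point \(t=\pm2^{-k}\). The association argument in the proof of Lemma~\ref{lem:fixed-reference-mean} gives \(-m_F(t)\leq-m_{F\cup\{t\}}(t)\).
\item Self-similarity, \((B_{2^{-k}x})_x\deq(2^{-kH}B_x)_x\), maps the wall \(F\cup\{t\}\) to \(2^k(F\cup\{t\})\), which is still a finite subset of \(\mathbb D\). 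Hence \(-m_{F\cup\{t\}}(t)=2^{-kH}\bigl(-m_{G}(\pm1)\bigr)\) with \(G=2^k(F\cup\{t\})\).
\item By Lemma~\ref{lem:fixed-reference-mean} at \(r=\pm1\), this gives \(-m_F(\pm2^{-k})\leq C2^{-kH}\) uniformly in \(F\) and \(k\geq0\).
\item For general \(t\in[2^{-k},2^{-k+1}]\), use that \(K_H\) is jointly \(2H\)-homogeneous. Rescale Lemma~\ref{lem:covariance-kernel-ratio}, applied on \([1,2]\) with reference point \(1/2\), by the factor \(2^{-k}\). This gives \(|K_H(t,u)-K_H(2^{-k},u)|\leq C\,K_H(2^{-k-1},u)\) for all \(u\neq0\), with a constant independent of \(k\).
\item Summing against \(\lambda_{F,u}\) gives \(-m_F(t)\leq-m_F(2^{-k})+C(-m_F(2^{-k-1}))\leq C_0|t|^H\). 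Negative \(t\) are handled by reflection.
\end{enumerate}
Since also \(m_F(0)=0\), this gives the bound on the whole window. I expect this step to be the delicate one, in particular checking that the rescaled walls remain in \(\mathbb D\) and that the constants are genuinely \(k\)-independent.

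\emph{Combination.} Given \(\varepsilon>0\), choose a dyadic \(a\in(0,1/4]\) with \(2C_0(2a)^H<\varepsilon\), and take \(\delta<a\). If \(|s-t|\leq\delta\) and one of the two points lies in \([-a,a]\), then both lie in \([-2a,2a]\), so \(|m_F(t)-m_F(s)|\leq2C_0(2a)^H<\varepsilon\). Otherwise \(|s|,|t|>a\), and since \(|s-t|<a\) the points cannot lie on opposite sides of \(0\). So both lie in the same annulus, and the annulus bound \(C'\delta^{2H}\) applies, which is small for small \(\delta\). Every bound is uniform over finite \(F\subset\mathbb D\), which proves \eqref{eq:conditional-mean-modulus}. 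The statement for \(m_n\) follows because \(m_n=m_{F_n}\) with \(F_n=\Lambda(h_n,L_n^-,L_n^+)\subset\mathbb D\).
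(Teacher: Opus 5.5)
Your proof is correct and follows essentially the paper's argument. Both use the positive covariance-mixture and kernel-ratio comparison with a fixed dyadic reference on the annuli, a self-similarity-based pin bound \(|m_F(t)|\le C|t|^H\), and the same two-region combination. The only difference is the order of the pin step: the paper first bounds \(-m_F\) uniformly on \([1,2]\), then rescales dyadic \(t\) and extends to real \(t\) by continuity, whereas you rescale the reference points \(\pm2^{-k}\) and interpolate via the \(2H\)-homogeneity of \(K_H\). Your association step (1) is harmless but unnecessary, since Lemma~\ref{lem:fixed-reference-mean} already takes the supremum over all finite \(F\).
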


\begin{proof}
First fix \(0<a<R\), and choose once and for all a positive dyadic
\(r<a\).  For nonempty finite \(F\),
Lemmas~\ref{lem:positive-boundary-flux},~\ref{lem:fixed-reference-mean},
and~\ref{lem:covariance-kernel-ratio} give, whenever
\(s,t\in[a,R]\) and \(d:=|t-s|\leq1\),
\begin{equation}
\begin{split}
  |m_F(t)-m_F(s)|
  &\leq\sum_{u\in F}\lambda_{F,u}
       |K_H(t,u)-K_H(s,u)|\\
  &\leq C|t-s|^{2H}
       \sum_{u\in F}\lambda_{F,u}K_H(r,u)\\
  &=C|t-s|^{2H}(-m_F(r))
  \leq C_{H,a,R}|t-s|^{2H}.
  \label{eq:annular-mean-modulus}
\end{split}
\end{equation}
The constant is independent of \(F\), and the empty-wall case is the zero
function.  If \(d>1\), partition \([s,t]\) into at most
\(\lceil R-a\rceil+1\) subintervals of length at most one (with the
endpoints ordered if necessary).  Summing the
local estimate and using \(d^{2H}\geq1\), then increasing
\(C_{H,a,R}\), proves \eqref{eq:annular-mean-modulus} for every
\(s,t\in[a,R]\).  Applying the reflected part of
Lemma~\ref{lem:covariance-kernel-ratio} with reference \(-r\), followed by the
same finite-chain argument, gives the same bound on \([-R,-a]\), after taking
the maximum of the two constants.

We next control the pin.  Specialise the preceding construction to
\([1,2]\), with reference \(r=1/2\), and use the base point \(1\).
Put
\(M_1:=\sup\{-m_F(1):F\subset\mathbb D\text{ finite}\}<\infty\) by
Lemma~\ref{lem:fixed-reference-mean}.  This bound and
\eqref{eq:annular-mean-modulus} give, uniformly over finite \(F\),
\[
  0\leq-m_F(t)
  \leq-m_F(1)+|m_F(t)-m_F(1)|
  \leq M_1+C_{H,1,2},
  \qquad t\in[1,2],
\]
where the empty-wall case is again immediate.  Thus, with a constant
depending only on \(H\),
\begin{equation}
  \sup_{\substack{F\subset\mathbb D\\F\text{ finite}}}\;
  \sup_{t\in[1,2]}(-m_F(t))\leq B_H<\infty.
  \label{eq:unit-annulus-mean-bound}
\end{equation}
Let \(t\in\mathbb D\cap(0,\infty)\) and choose \(k\in\mathbb Z\) such that
\(\theta=2^{-k}t\in[1,2)\); put \(q=2^k\).  Self-similarity gives the exact
joint-vector identity
\[
  (B_{qx})_{x\in\{\theta\}\cup q^{-1}F}
  \deq q^H(B_x)_{x\in\{\theta\}\cup q^{-1}F}.
\]
Since multiplication by \(q^H>0\) preserves the wall inequalities,
the event \(A_F\) on the left corresponds exactly to
\(A_{q^{-1}F}\) on the right.  Hence
\begin{equation}
  m_F(t)=q^Hm_{q^{-1}F}(\theta).
  \label{eq:conditional-mean-scaling}
\end{equation}
Because \(q\) is a power of two, the set \(q^{-1}F\) is finite and dyadic.
Moreover, \(q=t/\theta\leq t\), since \(\theta\geq1\).  Therefore
\begin{equation}
  0\leq-m_F(t)\leq B_Ht^H.
  \label{eq:positive-pin-mean-bound}
\end{equation}
Time reflection sends the wall as well as the target:
\[
  m_F(t)=m_{-F}(-t),
  \qquad t<0.
\]
This proves the same bound for negative dyadic \(t\).  For nonempty \(F\),
formula \eqref{eq:positive-covariance-mixture} makes \(m_F\) a finite sum of
continuous kernels; for \(F=\varnothing\), it is identically zero.  Dyadic
approximation, together with \(m_F(0)=0\), therefore extends the estimate to
every real \(t\):
\begin{equation}
  |m_F(t)|\leq C_H|t|^H.
  \label{eq:pin-mean-bound}
\end{equation}

It remains to prove the full-family modulus.  Fix \(R>0\) and
\(\varepsilon>0\).  Choose
\(a\in(0,\min\{R,1\})\) so small that
\(2C_H(2a)^H<\varepsilon/2\).  Let \(C^{\mathrm{ann}}_{H,a,R}\) be the maximum
of the positive- and negative-annulus constants above, and choose
\(\delta_0\in(0,a)\) such that
\(C^{\mathrm{ann}}_{H,a,R}\delta_0^{2H}<\varepsilon/2\).

Fix any finite \(F\), \(0<\delta\leq\delta_0\), and
\(s,t\in[-R,R]\) with \(|s-t|\leq\delta\).  If
\(\min\{|s|,|t|\}\leq a\), then the triangle inequality puts both points
in \([-2a,2a]\), and \eqref{eq:pin-mean-bound} gives
\[
  |m_F(t)-m_F(s)|
  \leq |m_F(t)|+|m_F(s)|
  \leq2C_H(2a)^H<\varepsilon/2.
\]
Otherwise \(|s|,|t|>a\).  They cannot have opposite signs, because then
\(|s-t|=|s|+|t|>2a>\delta\).  Thus they lie together in either
\([a,R]\) or \([-R,-a]\), and
\eqref{eq:annular-mean-modulus} gives
\[
  |m_F(t)-m_F(s)|
  \leq C^{\mathrm{ann}}_{H,a,R}\delta^{2H}<\varepsilon/2.
\]
Both estimates are independent of \(F\), proving
\eqref{eq:conditional-mean-modulus}.  Since every \(m_n\) is one of the
functions \(m_F\), the stated sequential consequence follows as well.
\end{proof}

\begin{theorem}
\label{thm:path-space-entrance-law}
There is a unique probability law \(Q\) on \(\mathcal X\) whose marginal on
every finite nonempty \(T\subset\mathbb D\) is \(\mu^T\) from
Theorem~\ref{thm:finite-dimensional-entrance-law}.  For every sequence
satisfying \eqref{eq:wall-parameter-limit}, the laws \((Q_n)\) are tight in
\(\mathcal X\) and converge weakly to \(Q\).  The convergence is uniform in
the sequential sense over all dyadic, grid-aligned parameters.  Moreover,
\begin{equation}
  Q\bigl(f(0)=0,\ f(t)\leq0\text{ for every }t\in\mathbb R\bigr)=1.
  \label{eq:limit-nonpositive-support}
\end{equation}
\end{theorem}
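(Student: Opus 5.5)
We combine the two ingredients already in hand: the finite-dimensional convergence of Theorem~\ref{thm:finite-dimensional-entrance-law} and the two uniform moduli, namely Lemma~\ref{lem:uniform-centered-modulus} for the fluctuations \(Z_n\) and Proposition~\ref{prop:conditional-mean-modulus} for the means \(m_n\).

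\emph{Tightness.} Fix a sequence satisfying \eqref{eq:wall-parameter-limit}. Every path satisfies \(B_0=0\), so the one-point condition of the Arzel\`a--Ascoli tightness criterion on \(C([-R,R])\) is automatic. Since \(B=Z_n+m_n\) under \(Q_n\), the modulus of \(B\) on \([-R,R]\) is at most the modulus of \(Z_n\) plus the deterministic modulus of \(m_n\). The first is uniformly small in probability by \eqref{eq:centered-modulus}. The second is uniformly small by \eqref{eq:conditional-mean-modulus}, after choosing \(\delta\) so that it lies below \(\varepsilon/2\). This gives tightness on every \([-R,R]\), and hence tightness in \(\mathcal X\), because tightness in \(C_{\mathrm{loc}}\) reduces to tightness of the restrictions to each compact.

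\emph{Identification and uniqueness.} Any subsequential limit \(Q'\) has marginals \(\mu^T\) on every finite \(T\subset\mathbb D\), by \eqref{eq:finite-dyadic-limit} and the continuity of coordinate projections. Since \(\mathbb D\) is dense and paths are continuous, the dyadic finite-dimensional marginals determine a law on \(\mathcal X\): the cylinder \(\sigma\)-field over a countable dense set coincides with the Borel \(\sigma\)-field of \(C_{\mathrm{loc}}\). Two such limits therefore agree. Existence of \(Q\) follows from taking any subsequential limit of the tight family, and uniqueness from the same determining argument. Because every subsequence has a further subsequence converging to \(Q\), the whole sequence converges. This holds for an arbitrary admissible sequence, and the sequential-uniformity statement follows by the same contradiction argument used at the end of the proof of Theorem~\ref{thm:finite-dimensional-entrance-law}: a sequence of violating triples is itself admissible.

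\emph{Support.} Every \(Q_n\) gives probability one to \(f(0)=0\). For a fixed \(t\in\mathbb D\), eventually \(t\in\Lambda(h_n,L_n^-,L_n^+)\), so \(Q_n(f(t)\leq 0)=1\) for large \(n\). The set \(\{f(t)\leq0\}\) is closed, so the portmanteau theorem gives \(Q(f(t)\leq0)=1\). Intersecting over the countable set \(\mathbb D\) and using continuity and density yields \eqref{eq:limit-nonpositive-support}.

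The only delicate point is the uniform modulus of \(m_n\) near the pin, but it is already supplied by Proposition~\ref{prop:conditional-mean-modulus}. The remaining steps are standard.
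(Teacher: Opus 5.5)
Your proposal is correct and follows the paper's proof essentially step for step. Tightness comes from \(B=m_n+Z_n\) with the two uniform moduli and the pin \(B_0=0\); subsequential limits are identified through the dyadic marginals, using that the dyadic cylinder \(\sigma\)-field equals the Borel \(\sigma\)-field; the subsequence and violating-triple arguments give full and sequentially uniform convergence; and Portmanteau on closed coordinate half-spaces gives the support, with the paper only writing out the Arzel\`a--Ascoli compact sets and the Polish structure of \(\mathcal X\) more explicitly.
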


\begin{proof}
For \(R\geq1\), write
\[
  \omega_R(f,\delta)
  :=\sup_{\substack{s,t\in[-R,R]\\|s-t|\leq\delta}}|f(t)-f(s)|.
\]
Lemma~\ref{lem:uniform-centered-modulus},
Proposition~\ref{prop:conditional-mean-modulus}, and
\(B=m_n+Z_n\) give, for every \(\varepsilon>0\),
\[
  \lim_{\delta\downarrow0}\sup_n
  Q_n\bigl(\omega_R(B,\delta)>\varepsilon\bigr)=0.
\]
Indeed, first make the deterministic quantity
\(\sup_n\omega_R(m_n,\delta)\) at most \(\varepsilon/2\), and then apply the
centred estimate at threshold \(\varepsilon/2\).

We verify the compact-interval tightness criterion explicitly.  Fix
\(\eta>0\), and choose \(\delta_k\downarrow0\) so that
\[
  \sup_n Q_n\bigl(\omega_R(B,\delta_k)>k^{-1}\bigr)
  \leq\eta2^{-k},
  \qquad k\geq1.
\]
The set
\[
  K_{R,\eta}:=\left\{
    f\in C([-R,R]):f(0)=0,\quad
    \omega_R(f,\delta_k)\leq k^{-1}\text{ for every }k
  \right\}
\]
is closed, because every displayed modulus is continuous under uniform
convergence, and is uniformly equicontinuous.  Its \(k=1\) condition, applied
along a finite \(\delta_1\)-chain from zero, also makes it uniformly bounded.
Hence \(K_{R,\eta}\) is compact by Arzelà--Ascoli.  Since \(B_0=0\) under
every \(Q_n\), the union bound gives
\[
  \inf_n Q_n\{B|_{[-R,R]}\in K_{R,\eta}\}\geq1-\eta.
\]
Thus the restriction laws are tight in \(C([-R,R])\).

This restriction statement yields one compact set in the local-uniform path
space, as follows.  Given \(\eta>0\), for each integer \(R\geq1\) choose a
compact \(K_R\subset C([-R,R])\) whose complement has \(Q_n\)-probability at
most \(\eta2^{-R}\), uniformly in \(n\), and put
\[
  K:=\bigcap_{R=1}^{\infty}
  \{f\in\mathcal X:f|_{[-R,R]}\in K_R\}.
\]
Every sequence in \(K\) has, by successive restriction to
\([-1,1],[-2,2],\ldots\), a diagonal subsequence converging uniformly on
every compact interval to a compatible limit that still belongs to every
\(K_R\).  Hence \(K\) is compact in \(\mathcal X\), and
\[
  \inf_nQ_n(K)\geq1-\sum_{R=1}^{\infty}\eta2^{-R}=1-\eta.
\]
This proves tightness in \(\mathcal X\).

The metric space \((\mathcal X,d_{\mathrm{loc}})\) is Polish.  Indeed, a
\(d_{\mathrm{loc}}\)-Cauchy sequence is uniformly Cauchy on each
\([-R,R]\), and its compatible interval limits define a member of
\(\mathcal X\); piecewise-linear functions with finitely many rational
breakpoints and rational values, extended constantly beyond their extreme
breakpoints, form a countable dense subset.  Prokhorov's theorem therefore
gives weakly convergent subsequences.

Suppose \(Q_{n_j}\Rightarrow Q'\).  For every finite nonempty
\(T\subset\mathbb D\), the evaluation map
\(e_T(f)=(f(t))_{t\in T}\) is continuous.  The continuous mapping theorem
and Theorem~\ref{thm:finite-dimensional-entrance-law}, whose cofinality
argument shows that \(T\) is eventually contained in the constraint grid,
give
\[
  (e_T)_*Q'=\mu^T.
\]

These marginals determine a law on \(\mathcal X\), not merely an individual
path.  Let \(\mathcal G=\sigma(e_t:t\in\mathbb D)\).  Continuity of each
evaluation gives \(\mathcal G\subseteq\mathcal B(\mathcal X)\).  Conversely,
for every \(f,g\in\mathcal X\) and integer \(R\geq1\), density of
\(\mathbb D\) and continuity give
\[
  \|f-g\|_{C([-R,R])}
  =\sup_{t\in\mathbb D\cap[-R,R]}|f(t)-g(t)|.
\]
Thus \(f\mapsto d_{\mathrm{loc}}(f,g)\) is \(\mathcal G\)-measurable.  Since
\(\mathcal X\) is separable, it has a countable base of such metric balls,
so \(\mathcal B(\mathcal X)\subseteq\mathcal G\).  Equality of every finite
dyadic marginal therefore makes two probability laws agree on the cylinder
\(\pi\)-system generating \(\mathcal G=\mathcal B(\mathcal X)\), and hence agree
on all Borel sets.

Let \(d_{\mathrm{BL}}\) be any bounded-Lipschitz metric for weak convergence
on \(\mathcal X\).
Tightness and Prokhorov give at least one subsequential limit.  The preceding
identification shows that every possible limit, from the fixed parameter
sequence or from any other admissible sequence, is the same law \(Q\).  If
the original sequence did not converge to \(Q\), a subsequence would stay a
positive bounded-Lipschitz distance from \(Q\); tightness would give it a
further convergent subsequence, whose limit would have to be \(Q\), a
contradiction.  This proves \eqref{eq:pure-wall-path-limit} for every
admissible sequence.

For precision, sequential uniformity means that, for every
\(\varepsilon>0\), there are \(r_0\) and \(R_0<\infty\) such that
\[
  d_{\mathrm{BL}}(Q_{h,L^-,L^+},Q)<\varepsilon
\]
whenever \(h=2^{-r}\) with \(r\geq r_0\),
\(L^-,L^+\geq R_0\), and \(L^-/h,L^+/h\in\mathbb N\).  If this failed, fix
an integer \(q_0\) and, for \(n\geq1\), recursively choose a violating triple
with exponent \(q_n\geq\max\{n,q_{n-1}+1\}\) and
\(L_n^-,L_n^+\geq n\).  These triples
would satisfy \eqref{eq:wall-parameter-limit} but fail to converge to \(Q\),
contradicting what was just proved.

Finally fix one admissible sequence converging to \(Q\).  For
\(t\in\mathbb D\), eventual wall membership gives
\(Q_n\{f:f(t)\leq0\}=1\) for all large \(n\).  This coordinate half-space is
closed in \(\mathcal X\), so Portmanteau gives
\[
  1=\limsup_{n\to\infty}Q_n\{f:f(t)\leq0\}
  \leq Q\{f:f(t)\leq0\}.
\]
The closed pin set is treated identically using \(B_0=0\) under every
\(Q_n\).  Intersecting over the countable set \(\mathbb D\), and then using
continuity at every real time, proves
\eqref{eq:limit-nonpositive-support}.
\end{proof}

\begin{proof}[Proof of Theorem~\ref{thm:main-conditional-limit}]
The convergence, its sequential uniformity, and nonpositive support are
Theorem~\ref{thm:path-space-entrance-law}.

Let \((\Theta f)(t):=f(-t)\), a continuous map on \(\mathcal X\), and take the
admissible symmetric sequence
\[
  h_m=2^{-m},\qquad L_m^-=L_m^+=2^m.
\]
The centred Gaussian law of \(B\) is invariant under \(\Theta\), and both the
grid \(\Lambda(h_m,2^m,2^m)\) and its wall event are preserved by reflection.
Consequently
\(\Theta_*Q_{h_m,2^m,2^m}=Q_{h_m,2^m,2^m}\).  The path-space hard-wall
limit theorem gives \(Q_{h_m,2^m,2^m}\Rightarrow Q\), while the continuous
mapping theorem gives
\(\Theta_*Q_{h_m,2^m,2^m}\Rightarrow\Theta_*Q\).  Uniqueness of weak limits
therefore yields
\begin{equation}
  \Theta X\deq X.
  \label{eq:rough-tangent-reflection-invariance}
\end{equation}
This establishes the hard-wall convergence in
Theorem~\ref{thm:main-conditional-limit} and the reflection invariance needed
later.  The identification of \(Q\) with the tangent law follows from the
proof of part~\ref{thm:main-tangent-limit} immediately below, while the
remaining strict-support assertion \eqref{eq:limit-support} is proved at the
beginning of Section~\ref{subsec:rough-continuum-window-selection}.
\end{proof}

\subsection{Assembly at the random maximiser}
\label{sec:assembly}

\begin{proof}[Proof of Theorem~\ref{thm:main-tangent-limit}]
Theorem~\ref{thm:path-space-entrance-law} constructs a unique two-sided
hard-wall law \(Q\).  It remains to identify that law as the tangent limit at
the random maximiser and prove self-similarity.

Let \(a_n\to\infty\) be an arbitrary deterministic sequence of positive
numbers.  Choose the
dyadic tangent-grid diagonal of Proposition~\ref{prop:dyadic-grid-diagonal}
and use the exact mixture in Proposition~\ref{prop:exact-pure-wall-mixture}.
Along the selected centre, \eqref{eq:component-horizons} and
\eqref{eq:selected-horizons-diverge} give
\begin{equation}
  Y_n:=\min(L_{n,K_n}^-,L_{n,K_n}^+)
  =\min\{a_n\tau_n,a_n(T_n-\tau_n)\}\longrightarrow\infty
  \quad\text{almost surely}.
  \label{eq:random-minimum-horizon}
\end{equation}
For each integer \(j\geq1\), dominated convergence gives
\(\PP(Y_n<j)\to0\).  Choose deterministic integers
\(N_1<N_2<\cdots\), with \(N_j\geq j\), such that
\[
  \PP(Y_n<j)\leq2^{-j}\qquad(n\geq N_j),
\]
and set \(R_n=1\) for \(n<N_1\) and \(R_n=j\) when
\(N_j\leq n<N_{j+1}\).  Then \(R_n\to\infty\), and
Proposition~\ref{prop:exact-pure-wall-mixture} gives the exact identity
\begin{equation}
  \alpha_n
  :=\sum_{\substack{0\leq k\leq M_n:\\
       \min(L_{n,k}^-,L_{n,k}^+)<R_n}}p_{n,k}
  =\PP(Y_n<R_n)\longrightarrow0.
  \label{eq:short-horizon-mixture-mass}
\end{equation}
Indeed, on \(N_j\leq n<N_{j+1}\), the last probability is at most \(2^{-j}\).

Use the standard bounded-Lipschitz metric
\[
  d_{\mathrm{BL}}(\mu,\nu)
  :=\sup_{\substack{\|\varphi\|_\infty\leq1\\
        \operatorname{Lip}_{d_{\mathrm{loc}}}(\varphi)\leq1}}
      \left|\int\varphi\,\mathrm d\mu-\int\varphi\,\mathrm d\nu\right|,
\]
which metrises weak convergence on the Polish space \(\mathcal X\) and is at
most \(2\).  Since \(R_n\geq1\), every component with both horizons at least
\(R_n\) is an interior component and hence, by
Proposition~\ref{prop:exact-pure-wall-mixture}, is the two-horizon law
\(Q_{h_n,L_{n,k}^-,L_{n,k}^+}\).  Its horizons are grid-aligned.  The uniform
form of Theorem~\ref{thm:path-space-entrance-law}, together with
\(h_n\downarrow0\) through dyadic values and \(R_n\to\infty\), therefore gives
\begin{equation}
  \sup_{\substack{0\leq k\leq M_n:\\
       \min(L_{n,k}^-,L_{n,k}^+)\geq R_n}}
  d_{\mathrm{BL}}(Q_{n,k},Q)\longrightarrow0.
  \label{eq:long-horizon-uniform-limit}
\end{equation}
The supremum is over a nonempty set for all large \(n\), since otherwise
\(\alpha_n=1\), contrary to \eqref{eq:short-horizon-mixture-mass}.

For every admissible test function in the definition of
\(d_{\mathrm{BL}}\), linearity in the exact mixture
\eqref{eq:exact-mixture} and the triangle inequality apply.  Taking the
supremum over those functions gives, for all sufficiently large \(n\),
\[
\begin{split}
  d_{\mathrm{BL}}(\mathcal L(\Xi_n^{\mathrm{grid}}),Q)
  &\leq\sum_{k=0}^{M_n}p_{n,k}d_{\mathrm{BL}}(Q_{n,k},Q)\\
  &\leq2\alpha_n+
    \sup_{\substack{0\leq k\leq M_n:\\
          \min(L_{n,k}^-,L_{n,k}^+)\geq R_n}}
       d_{\mathrm{BL}}(Q_{n,k},Q).
\end{split}
\]
Consequently, \eqref{eq:short-horizon-mixture-mass} and
\eqref{eq:long-horizon-uniform-limit} imply
\begin{equation}
  d_{\mathrm{BL}}(\mathcal L(\Xi_n^{\mathrm{grid}}),Q)
  \longrightarrow0.
  \label{eq:grid-zoom-law-limit}
\end{equation}
The metric \(d_{\mathrm{loc}}\) is bounded by \(1\), so the almost-sure coupling
\eqref{eq:grid-zoom-coupling} and dominated convergence yield
\[
  d_{\mathrm{BL}}\bigl(
     \mathcal L(\Xi_n^{\mathrm{grid}}),\mathcal L(\Xi_{a_n})
  \bigr)
  \leq\EE d_{\mathrm{loc}}(\Xi_n^{\mathrm{grid}},\Xi_{a_n})
  \longrightarrow0.
\]
Together with \eqref{eq:grid-zoom-law-limit}, this proves
\(\mathcal L(\Xi_{a_n})\Rightarrow Q\).  Since every deterministic positive
sequence \(a_n\to\infty\) has this limit, the sequential criterion for the
metrised weak topology proves \eqref{eq:main-tangent-convergence}.

Finally, fix \(c>0\) and define
\((\mathcal D_cf)(t):=c^Hf(t/c)\).  This map is continuous on \(\mathcal X\):
if \(f_m\to f\) locally uniformly, then, for every \(R<\infty\),
\[
  \|\mathcal D_cf_m-\mathcal D_cf\|_{C([-R,R])}
  \leq c^H\|f_m-f\|_{C([-R/c,R/c])}\longrightarrow0.
\]
The zooms satisfy the pathwise identity
\begin{equation}
  \Xi_{ca}=\mathcal D_c\Xi_a,
  \qquad c>0.
  \label{eq:zoom-scaling-identity}
\end{equation}
As \(a\to\infty\), the continuous mapping theorem gives
\(\mathcal D_c\Xi_a\Rightarrow(\mathcal D_c)_*Q\), whereas
\(ca\to\infty\) and \eqref{eq:main-tangent-convergence} give
\(\Xi_{ca}\Rightarrow Q\).  The identity
\eqref{eq:zoom-scaling-identity} and uniqueness of weak limits therefore give
\((\mathcal D_c)_*Q=Q\), which is \eqref{eq:limit-selfsimilarity}.
\end{proof}

\subsection{Rerooting-rescaling invariance}
\label{sec:rough-window-rerooting-proof}

Throughout this section put \(\alpha=2H\in(0,1)\), and fix for now
\begin{equation}
  0<u<v,\qquad I=[u,v].
  \label{eq:rough-fixed-positive-window}
\end{equation}
Choose a fixed dyadic \(s_0\in(u,v)\).  Constants may depend on this fixed
window.  Negative windows and windows containing zero are treated at the end.
We first isolate the exact finite experiment created by a second
maximum-centred zoom.  Let \(F\subset\mathbb D\) be finite, let
\(D\subset F\cap I\) be nonempty, and work under
\begin{equation}
  \mu_F:=\mathcal L(B\mid B_u\leq0:\ u\in F).
  \label{eq:reroot-finite-wall-law}
\end{equation}
Finite Gaussian ties are null events, so the maximiser \(S_D\) on \(D\) is unique.
For \(s\in D\) and \(b>0\), define the reflected zoom
\begin{equation}
  G^{b,s}(t):=b^H\bigl(B_s-B_{s+t/b}\bigr),
  \qquad t\in\mathbb R,
  \label{eq:reroot-reflected-zoom}
\end{equation}
and the relative grids
\begin{equation}
  J_{b,s}:=b(D-s),\qquad K_{b,s}:=b(F-s),\qquad v_{b,s}:=-bs.
  \label{eq:reroot-relative-grids}
\end{equation}
Since every \(w\in D\) satisfies \(u\leq w\leq v\),
\begin{equation}
  \operatorname{dist}(v_{b,s},J_{b,s})
  =b\min_{w\in D}w
  \geq bu
  \geq\frac{u}{v}|v_{b,s}|.
  \label{eq:rough-general-window-old-root-separation}
\end{equation}
Before conditioning, \(G^{b,s}\) is a standard two-sided fractional Brownian
motion.  The event
consisting of the original wall and the selection \(S_D=s\) is exactly
\begin{equation}
 \begin{split}
  A_{b,s}
  :=\{G^{b,s}(j)&\geq0:\ j\in J_{b,s}\}\\
   &\cap
   \{G^{b,s}(k)\geq G^{b,s}(v_{b,s}):\ k\in K_{b,s}\}.
 \end{split}
 \label{eq:reroot-two-root-event}
\end{equation}
Indeed, the first wall says that \(B_s\) is the window-grid maximum, while
\begin{equation}
  G^{b,s}(v_{b,s})=b^HB_s\leq0
  \label{eq:reroot-old-root-height}
\end{equation}
and the second wall is exactly \(B_u\leq0\) for \(u\in F\).  This also
fixes the sign: the rerooted path in
\eqref{eq:rough-window-rerooting-map} is \(-G^{b,s}\).

\subsubsection{The selected-germ comparison}
\label{subsec:rough-selected-germ}

\paragraph{Finite-dimensional factorisation.}

The distant root in \eqref{eq:reroot-two-root-event} is handled by time
inversion.  For a continuous path pinned at zero, put
\begin{equation}
  (\mathcal I_Hf)(u):=|u|^\alpha f(1/u),\qquad u\neq0.
  \label{eq:rough-time-inversion-map}
\end{equation}
The covariance identity
\begin{equation}
  |uw|^\alpha K_H(1/u,1/w)=K_H(u,w)
  \label{eq:rough-time-inversion-covariance}
\end{equation}
shows that \(\mathcal I_HB\) is again a fractional Brownian motion.  Under
this map the old root
\(v=v_{b,s}\) becomes \(\rho=1/v\), and
\begin{equation}
  B_k\geq B_v
  \quad\Longleftrightarrow\quad
  \widehat B_r\geq
  \left(\frac{|r|}{|\rho|}\right)^\alpha\widehat B_\rho,
  \qquad r=1/k,
  \label{eq:rough-inverted-selected-wall}
\end{equation}
where \(\widehat B=\mathcal I_HB\).

Since the selection probability of a prescribed fine-grid point may vanish
with the mesh, we use a relative comparison.

Fix constants \(0<c_0<C_0<\infty\), a nonempty finite set
\(T=\{t_1,\ldots,t_m\}\subset\mathbb R\setminus\{0\}\), and
\(A,M\in[0,\infty)\).  For each sufficiently small \(\delta>0\), choose
\(\rho\) and a finite set \(N_\delta\) satisfying
\begin{equation}
  c_0\delta\leq|\rho|\leq C_0\delta,\qquad
  N_\delta\subset(-C_0\delta,C_0\delta)\setminus\{0\},
  \label{eq:selected-germ-geometry}
\end{equation}
with \(\rho\in N_\delta\), and let
\(P_\delta\subset\mathbb R\setminus(-c_0\delta,c_0\delta)\) be an arbitrary
finite constraint set containing \(T\) and disjoint from \(N_\delta\).  Put
\[
  E_\delta:=\{\widehat B_p\geq0:\ p\in P_\delta\setminus T\}.
\]
For \(z\in[-M\delta^H,0]\), set
\begin{equation}
 \begin{split}
  D_{\delta,z}
  :=\{\widehat B_r&\geq(|r|/|\rho|)^\alpha z:\
       r\in N_\delta\setminus\{\rho\}\},\\
  g_{\delta,z}^T(\mathbf x)
  :=\PP\bigl(&D_{\delta,z}\mid
       E_\delta,\widehat B_T=\mathbf x,\widehat B_\rho=z\bigr).
 \end{split}
 \label{eq:selected-germ-continuation}
\end{equation}
At exact values
\((\widehat B_T,\widehat B_\rho)=(\mathbf x,z)\), we use the corresponding
normalised Gaussian density section, and we write \(j_T(\mathbf x,z)\) for
the joint density of \((\widehat B_T,\widehat B_\rho)\) under \(E_\delta\).
Lemma~\ref{lem:finite-wall-mtp2-closure} makes the underlying finite Gaussian
law nondegenerate, so
\(g_{\delta,z}^T(\mathbf x)\) and \(j_T(\mathbf x,z)\) are strictly positive
throughout the domains below and all logarithmic ratios are pointwise defined.

\begin{lemma}
\label{lem:rough-selected-germ-factorization}
Under the preceding assumptions and notation, as \(\delta\downarrow0\),
\begin{equation}
 \sup_{\substack{\mathbf x,\mathbf y\in[-A,A]^m\\
                   -M\delta^H\leq z\leq0}}
 \left|
  \log\frac{g_{\delta,z}^T(\mathbf y)}
            {g_{\delta,z}^T(\mathbf x)}
 \right|\longrightarrow0
 \label{eq:selected-germ-vector-harnack}
\end{equation}
uniformly in the choices of the finite grids and their cardinalities.
Moreover,
\begin{equation}
 \sup_{\substack{\mathbf x,\mathbf y\in[-A,A]^m\\
                  |z|,|z'|\leq M\delta^H}}
 \left|
  \log\frac{j_T(\mathbf y,z)j_T(\mathbf x,z')}
            {j_T(\mathbf x,z)j_T(\mathbf y,z')}
 \right|\longrightarrow0
 \label{eq:selected-germ-pin-factorization}
\end{equation}
with the same uniformity.
\end{lemma}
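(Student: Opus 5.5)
The idea is to exploit scale separation. The germ coordinates \(\widehat B_{N_\delta}\) live at times of order \(\delta\), whereas the anchors \(\widehat B_T\) sit at fixed times away from zero. Given \(E_\delta\), the conditional influence of \(\widehat B_T\) on the germ should therefore vanish as \(\delta\downarrow0\).

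\textbf{Step 1: regression structure.} I first fix everything but \(\widehat B_T\) and \(\widehat B_\rho\), and write the germ vector as
\[
  \widehat B_r=\mathbb E[\widehat B_r\mid\widehat B_{P_\delta},\widehat B_\rho]+\zeta_r .
\]
Here \(\zeta\) is independent of the conditioning variables. By Lemma~\ref{lem:finite-wall-mtp2-closure}, the regression coefficients of the retained germ block on the exposed block \(P_\delta\cup\{\rho\}\) are entrywise nonnegative.

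The key quantitative claim concerns the coefficients on the coordinates \(T\). For \(r\in N_\delta\), the total contribution of \(\widehat B_T\) to \(\widehat B_r\) should be \(O(\delta^{\alpha})\), which is much smaller than \(\delta^H\). The comparison I would use is the Green identity \eqref{eq:fbm-stable-green-identification}. It says that \(K_H\) is the Green function of the \(\alpha'\)-stable process, \(\alpha'=1+2H\), killed at \(0\). The regression coefficient of \(\widehat B_r\) on a far anchor \(t\), given the other exposed sites, is then a harmonic-measure type quantity: the probability that the process started at \(r\) hits \(t\) before \(0\) and before the other exposed points. Hitting a point at distance of order one before returning to zero from distance \(\delta\) has probability of order \(K_H(r,t)/K_H(t,t)\lesssim|r|^{\alpha'-1}=|r|^{2H}\). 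Monotonicity in the set of exposed points, which follows from the Stieltjes precision, makes this bound uniform in the grid and in its cardinality.

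\textbf{Step 2: Harnack bound \eqref{eq:selected-germ-vector-harnack}.} Changing \(\mathbf x\) to \(\mathbf y\) in \([-A,A]^m\) shifts every germ mean by at most \(C A\delta^{2H}\). The wall levels \((|r|/|\rho|)^\alpha z\) are of order \(\delta^H\), and the germ fluctuations \(\zeta\) also have scale \(\delta^H\). To convert an additive shift of relative size \(\delta^{H}\) into a multiplicative bound on the probability, I would use the event \(E_\delta\) through the MTP\(_2\)/association structure, together with a Cameron–Martin argument. Concretely, there is a Cameron–Martin shift \(h\) on the germ coordinates with \(h_r\) equal to the mean change and with energy \(\|h\|^2_{\mathrm{CM}}=O(\delta^{2H})\) after rescaling. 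Its Radon–Nikodym factor is \(\exp(\langle h,\zeta\rangle-\|h\|^2/2)\).

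This factor does not interact badly with the remaining conditioning. The positive-orthant restriction on \(P_\delta\setminus T\) is already absorbed, because we condition on \(\widehat B_{P_\delta}\) pointwise inside the germ law and then integrate. Hence the log-ratio is bounded by \(\|h\|\) times a uniformly integrable Gaussian quantity, which tends to zero. The one point needing care is that \(h_r\leq CA\delta^{2H}\) is pointwise small and not merely small in an \(\ell^\infty\) sense. The Cameron–Martin norm involves the inverse covariance of the germ block, and that can be large when the points are clustered.

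I expect this to be the \textbf{main obstacle}: obtaining uniformity in the cardinality of \(N_\delta\). My first attempt would be to realise \(h\) as the restriction of a single smooth-at-scale-\(\delta\) function of the form \(\sum_t c_tK_H(\cdot,t)\). Its Cameron–Martin norm is bounded independently of the grid by the RKHS norm, which is \(O(1)\cdot\sup|\mathbf x-\mathbf y|\) times the conditional-variance factor. That gives a grid-free bound. If this route fails, the fallback is a sandwich argument: by monotonicity (Proposition~\ref{prop:wall-monotonicity}-type coupling), raise or lower the walls by \(\pm CA\delta^{2H}\), and compare \(\mathbb P(D_{\delta,z\pm\epsilon})\) using log-concavity in the level.

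\textbf{Step 3: pin factorisation \eqref{eq:selected-germ-pin-factorization}.} The density \(j_T(\mathbf x,z)\) factors as the conditional density of \(\widehat B_\rho\) given \(\widehat B_T=\mathbf x\) and \(E_\delta\), times the density of \(\widehat B_T\) given \(E_\delta\). The cross-ratio in \eqref{eq:selected-germ-pin-factorization} cancels the second factor. The remaining Gaussian conditional density of \(\widehat B_\rho\) given \((\widehat B_T,\widehat B_{P_\delta\setminus T})\) has mean that is linear in \(\mathbf x\) with coefficient \(O(\delta^{2H})\), as in Step 1, and variance comparable to \(\delta^{2H}\). The log cross-ratio of the Gaussian factor is therefore
\[
  O\bigl(\delta^{2H}A\cdot\delta^H/\delta^{2H}\bigr)=O(A\delta^H)\to0.
\]

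The integration over the walled coordinates of \(P_\delta\setminus T\) is a mixture. I would handle it by the same Cameron–Martin/Harnack bound applied to the event \(E_\delta\), now with \(\rho\) playing the role of the perturbation. Because \(\rho\) is near zero, its influence on coordinates in \(P_\delta\) is \(O(\delta^{2H}\cdot z/\delta^{2H})\) in normalised units, which is again small uniformly.
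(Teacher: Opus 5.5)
Your Step~1 is sound in spirit. Since \(K_H\) is a killed Green function, regression on exposed sites is balayage, and the bound \(\lesssim|r|^{2H}\) (or \(\lesssim|r-\rho|^{2H}\) near \(\rho\)) is the correct scale-adapted size.

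Step~2, however, has two genuine gaps.

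\emph{The outer walls are not ``already absorbed''.} Once you expose \(\widehat B_{P_\delta}\), you get \(g^T_{\delta,z}(\mathbf x)=\int\phi_{\mathbf x}\,\mathrm d\nu_{\mathbf x}\). Here \(\nu_{\mathbf x}\) is the law of \(\widehat B_{P_\delta\setminus T}\) given \((E_\delta,\widehat B_T=\mathbf x,\widehat B_\rho=z)\), and this mixing law moves with \(\mathbf x\): its sites at scale \(\delta\) shift as much as the germ does. A bound on \(\phi_{\mathbf y}/\phi_{\mathbf x}\) at frozen outer values says nothing about \(\int\phi_{\mathbf x}\,\mathrm d\nu_{\mathbf y}/\int\phi_{\mathbf x}\,\mathrm d\nu_{\mathbf x}\). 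Stochastic monotonicity gives only its lower bound.

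\emph{The Cameron--Martin density is averaged under the conditioned law.} It must be averaged under the law conditioned on the germ event \(C\). Up to \(\|h\|^2/2\), the log-ratio is \(\EE[\langle h,\zeta\rangle\mid C]\), and in general this is only controlled by a bound of order \(\|h\|\sqrt{2\log(1/\PP(C))}\). The germ probability has no lower bound uniform in the grid: near the pins it is a persistence event over arbitrarily many dyadic scales. So ``\(\|h\|\) times a uniformly integrable Gaussian'' is unjustified.

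In addition, your uniform shift bound \(CA\delta^{2H}\) is useless near the pins, where the fluctuation \(a^H\) can be far below \(\delta^{2H}\). The log-concavity fallback has the same defect.

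The paper avoids all of this by never exposing the outer sites:
\begin{enumerate}
\item In the two-pin residual law \(\nu\) it writes \(g(\mathbf x)=\nu(N_x\mid P_x)\) with \(N_x\subset N_y\) and \(P_x\subset P_y\). Association then gives \(g(\mathbf y)\le\nu(N_y\mid P_x)\).
\item The remaining change of germ thresholds is telescoped over a Whitney decomposition around \(0\) and \(\rho\).
\item By association, each shell ratio taken in the presence of all other increasing constraints is at most its unconditional value \(\PP(C_2)/\PP(C_1)\).
\item That value is \(1+O(\eta_J\sqrt{\log(e/\eta_J)})\) with \(\eta_J\lesssim a_J^H\). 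This uses a uniform positive-level baseline on each shell and Nazarov anti-concentration.
\item Summing over shells gives \(O(\delta^H\sqrt{\log(e/\delta)})\). No lower bound on the full selected event is ever needed.
\end{enumerate}

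In Step~3, the claim that \(\rho\) has small normalised influence on \(P_\delta\) is false. \(P_\delta\) may contain sites \(p\) with \(|p|\asymp\delta\), whose regression coefficient on \(\widehat B_\rho\) is of order one. Moving \(z\) within \([-M\delta^H,M\delta^H]\) therefore shifts them by order-\(M\) standard deviations, so \(\PP(E_\delta\mid\widehat B_T=\mathbf x,\widehat B_\rho=z)\) genuinely depends on \(z\). No Harnack bound in \(z\) can remove this factor. What must be shown is that its \(z\)-dependence is asymptotically independent of \(\mathbf x\), which is a mixed-derivative bound uniform in the outer grid.

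The paper derives that bound from
\(\partial_x\partial_z\log\PP(Y\ge0\mid X=x,Z=z)=-b_X^{\mathsf T}Q^{-1}b_Z+b_X^{\mathsf T}\operatorname{Cov}_{x,z}(Y\mid Y\ge0)b_Z\)
together with the entrywise sandwich \(0\le\operatorname{Cov}_{x,z}(Y\mid Y\ge0)\le Q^{-1}\). Association gives the lower half of the sandwich, and a translation/rectangle-monotonicity argument gives the upper half. The result is \(0\le-P_{XZ}\le\partial_x\partial_z\log j\le-S_{XZ}=O_T(1)\), where \(S\) is the grid-independent marginal precision of \((\widehat B_T,\widehat B_\rho)\). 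Integrating over a rectangle of height \(O(M\delta^H)\) then gives the cross-ratio bound.

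Your frozen-outer Gaussian computation recovers only the term \(-P_{XZ}\). The integration over the walled coordinates, which you defer to the Step~2 machinery, is exactly where the missing argument lies.
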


\begin{proof}
We first give the scalar argument and then a finite-vector extension.  Fix
\(t\neq0\), put
\begin{equation}
  c_r=(|r|/|\rho|)^\alpha,\qquad
  L_r=\widehat B_r-c_r\widehat B_\rho,
  \label{eq:selected-germ-linear-forms}
\end{equation}
and expose \(\widehat B_\rho=z\).  Time inversion gives the exact formulas
\begin{equation}
 \begin{split}
  L_r&=|r|^\alpha
       \{B_{1/r}-B_{1/\rho}\},\\
  \operatorname{Var}(L_r)
   &=\left(\frac{|r|\,|r-\rho|}{|\rho|}\right)^\alpha.
 \end{split}
 \label{eq:selected-germ-exact-variance}
\end{equation}
Moreover, after writing \(r=\delta q\) and \(\rho=\delta q_0\), Taylor
expansion at the fixed point \(t\) gives
\begin{equation}
  \frac{|\operatorname{Cov}(L_r,\widehat B_t)|}
       {|t|^\alpha\sqrt{\operatorname{Var}(L_r)}}
  \leq C_t\delta^{1-H}
  \frac{|q-q_0(|q|/|q_0|)^\alpha|}
       {|q|^H|q-q_0|^H}.
  \label{eq:selected-germ-standardized-response}
\end{equation}
The quotient extends boundedly across \(q=0,q_0\): its numerator is
\(O(|q|^\alpha)\) at zero and \(O(|q-q_0|)\) at \(q_0\).

Once the height \(z\) is fixed, every remaining constraint is a coordinate
lower wall.  Lemma~\ref{lem:finite-wall-mtp2-closure}, applied after fixing
the exposed coordinates, shows that the residual law is MTP\(_2\).  When
\(x\geq0\), fractional Brownian motion rooted at \(1/\rho\) has exposed
values
\begin{equation}
  -z/|\rho|^\alpha\geq0,\qquad
  x/|t|^\alpha-z/|\rho|^\alpha\geq0.
  \label{eq:selected-germ-positive-exposed-values}
\end{equation}
The regression matrix onto these coordinates is entrywise nonnegative, so
the conditional mean of every \(L_r\) is then nonnegative.  For arbitrary
\(x\in\mathbb R\), put
\[
 \begin{aligned}
  \mu_{x,z}(r)
   &:=\EE(\widehat B_r\mid\widehat B_t=x,\widehat B_\rho=z),\\
  \xi_r&:=\widehat B_r-\mu_{x,z}(r),\qquad
  \sigma_r:=\|\xi_r\|_2,\\
  h_{x,z}(r)&:=\frac{c_rz-\mu_{x,z}(r)}{\sigma_r}.
 \end{aligned}
\]
With
\[
 \begin{aligned}
  K_\rho(r,t)
   &:=K_H(r,t)
      -\frac{K_H(r,\rho)K_H(\rho,t)}{|\rho|^\alpha},\\
  V_\rho(t)
   &:=|t|^\alpha-\frac{K_H(t,\rho)^2}{|\rho|^\alpha},
 \end{aligned}
\]
successive Gaussian regression gives the exact identities
\[
 \begin{aligned}
  \mu_{y,z}(r)-\mu_{x,z}(r)
   &=(y-x)\frac{K_\rho(r,t)}{V_\rho(t)},\\
  \sigma_r^2
   &=|r|^\alpha-\frac{K_H(r,\rho)^2}{|\rho|^\alpha}
     -\frac{K_\rho(r,t)^2}{V_\rho(t)}.
 \end{aligned}
\]
Lemma~\ref{lem:finite-wall-mtp2-closure} makes the conditional bivariate
Gaussian law MTP\(_2\) and associated.  Applying association to bounded
truncations of its coordinate maps and then using Gaussian integrability
gives \(K_\rho(r,t)\geq0\).
The centred residual process \(\xi\) and its covariance do not depend on
\((x,z)\).  Consequently \(h_{x,z}(r)\) is nonincreasing in \(x\), and the
preceding sign check gives \(h_{x,z}(r)\leq0\) whenever \(x\geq0\).  Thus
\(D_{\delta,z}=\{\xi_r/\sigma_r\geq h_{x,z}(r):
r\in N_\delta\setminus\{\rho\}\}\) after the two values have been exposed.

Partition
\((-C_0\delta,C_0\delta)\setminus\{0,\rho\}\) into intervals \(J\).  For each
interval put \(a_J:=\operatorname{dist}(J,\{0,\rho\})\), and choose the
partition so that \(\operatorname{diam}(J)\asymp a_J\).  Near each pin, take
dyadic intervals with \(a_J\asymp2^{-j}\delta\), \(j\geq0\); cover the part
whose distance from both pins is comparable to \(\delta\) by finitely many
intervals with \(a_J\asymp\delta\).  Discard intervals that do not meet
\(N_\delta\setminus\{\rho\}\).  On an interval \(J\) of scale \(a=a_J\), the
two-pin residual \(\xi_r\) satisfies
\begin{equation}
  c a^H\leq\|\xi_r\|_2\leq Ca^H,\qquad
  \left\|\frac{\xi_r}{\|\xi_r\|_2}
          -\frac{\xi_s}{\|\xi_s\|_2}\right\|_2
  \leq C(|r-s|/a)^H.
  \label{eq:selected-germ-whitney-metric}
\end{equation}
On intervals approaching zero or \(\rho\), these estimates follow from
concavity of \(u^\alpha\): the one-pin conditional variance is comparable
to \(a^\alpha\), whereas \(K_\rho(r,t)=O(a^\alpha)\), so the final
rank-one correction above is \(O(a^{2\alpha})=o(a^\alpha)\).  On the
finitely many remaining intervals they follow by compactness after scaling
by \(\delta\).  Orthogonal projection contracts \(L^2\)-increments, and
\(|\sigma_r-\sigma_s|\leq\|\xi_r-\xi_s\|_2\); these facts and the variance
bounds give the normalised metric estimate.  The normalised bridge
covariance kernels form a compact nondegenerate family.  The
Cameron--Martin argument in the proof of
\cite[Lemma~4.1]{moench2026horizonProblem} applies to every member: in
the rescaled interval choose a smooth bump equal to two on the interval and
supported away from all exposed pins, so that it belongs to the
corresponding finite-pin bridge Cameron--Martin space.  Weak continuity of
this compact Gaussian family and
a finite covering give a uniform positive open-ball probability.  The
metric estimate in \eqref{eq:selected-germ-whitney-metric} gives a uniform
entropy bound for the expected supremum.  Consequently there are constants
\(p,C>0\), uniform over every nonempty finite intersection
\(J\cap(N_\delta\setminus\{\rho\})\), such that
\begin{equation}
  \PP\left(
    \frac{\xi_r}{\|\xi_r\|_2}\geq1:
    r\in J\cap N_\delta
  \right)\geq p,\qquad
  \EE\sup_{r\in J\cap N_\delta}\frac{-\xi_r}{\|\xi_r\|_2}\leq C.
  \label{eq:selected-germ-whitney-baseline}
\end{equation}

Suppose first that \(-A\leq x\leq y\leq A\), and put
\(x^+=\max(x,0)\).  Since the regression response to \(\widehat B_t\) is
nonnegative,
\(h_{y,z}\leq h_{x,z}\) and \(h_{x^+,z}\leq0\).  Uniformly for small
\(\delta\), \(V_\rho(t)\) is bounded away from zero.  On a pin interval
\(K_\rho(r,t)=O(a^\alpha)\), while on a remaining interval it is
\(O(\delta^\alpha)\); division by the corresponding standard deviation in
\eqref{eq:selected-germ-whitney-metric} gives
\[
 \begin{aligned}
  0\leq h_{x,z}(r)-h_{y,z}(r)&\leq\eta_J,\\
  0\leq\bigl(h_{x,z}(r)\bigr)_+
    &\leq h_{x,z}(r)-h_{x^+,z}(r)\leq\varepsilon_J,\\
  \max(\eta_J,\varepsilon_J)&\leq
  \begin{cases}
    C_Aa^H,&J\text{ approaches }0\text{ or }\rho,\\
    C_A\delta^H,&\operatorname{dist}(J,\{0,\rho\})\asymp\delta,
  \end{cases}
 \end{aligned}
\]
and both \(h_{x,z}\) and \(h_{x,z}-h_{y,z}\), after rescaling \(J\) by
\(a\), have \(H\)-Hölder modulus bounded by \(C_{A,M}\), uniformly in
\(z\in[-M\delta^H,0]\).  This deterministic modulus is needed because the
wall drift need not be constant on \(J\).

For \(\eta_J>0\), use a net whose spacing after rescaling \(J\) by \(a\) is
\(\eta_J^{2/H}\).  The deterministic modulus just recorded and
\eqref{eq:selected-germ-whitney-metric}, chaining, and the Borell--TIS
inequality
\parencite[Theorem~2.1.1]{adlerTaylor2007randomFields}
show that the probability of a net-approximation error exceeding
\(\eta_J\) is \(O(e^{-c/\eta_J^2})\).  On the net we use the
Nazarov inequality
\parencite[Proposition~3.1]{chernozhukovChetverikovKatoKoike2023bootstrap}:
for unit-variance Gaussian coordinates,
\begin{equation}
  \PP\{u<\max_{1\leq i\leq p}Z_i\leq u+\eta\}
  \leq\eta(\sqrt{2\log p}+2).
  \label{eq:selected-germ-nazarov}
\end{equation}
Thus the wall-relaxation probability on \(J\) is at most
\(C\eta_J\sqrt{\log(e/\eta_J)}\).  Uniformly in \(J\),
\(\varepsilon_J\leq C_A\delta^H\), so for small \(\delta\) the positive-level
event in \eqref{eq:selected-germ-whitney-baseline} is contained in the
stronger wall.  Its probability is therefore at least \(p\).

If \(C_1\subset C_2\) are the stronger and weaker constraints on \(J\), and
\(D\) contains every other increasing constraint, association under the
restriction to \(C_2\) gives
\begin{equation}
  \frac{\PP(C_2\cap D)}{\PP(C_1\cap D)}
  \leq\frac{\PP(C_2)}{\PP(C_1)}.
  \label{eq:selected-germ-associated-shell-ratio}
\end{equation}
The denominator on the right is bounded below by
\eqref{eq:selected-germ-whitney-baseline}.  Summing over the dyadic interval
families approaching the two pins uses
\begin{equation}
  \sum_{j\geq0}(2^{-j}\delta)^H
       \sqrt{\log(e2^j/\delta)}
  \leq C_H\delta^H\sqrt{\log(e/\delta)}.
  \label{eq:selected-germ-whitney-sum}
\end{equation}
Let \(\nu\) be the common centred conditional Gaussian residual law after
\(\widehat B_t\) and \(\widehat B_\rho\) have been exposed.  In its
coordinates, write \(N_x\) for all selected-germ walls and \(P_x\) for all
outer walls when the exposed value is \(x\).  Nonnegative regression gives
\[
  N_x\subset N_y,\qquad P_x\subset P_y,\qquad
  g_{\delta,z}^{\{t\}}(x)=\nu(N_x\mid P_x).
\]
Lemma~\ref{lem:finite-wall-mtp2-closure}, first restricting by the outer walls
and then marginalising, shows that the last conditional probability is
nondecreasing in \(x\).  Conversely, association under \(\nu(\,\cdot\mid
P_y)\) shows that further conditioning on the stronger increasing event
\(P_x\) can only raise the probability of \(N_y\).  Hence
\[
  1\leq
  \frac{g_{\delta,z}^{\{t\}}(y)}
       {g_{\delta,z}^{\{t\}}(x)}
  \leq
  \frac{\nu(N_y\mid P_x)}{\nu(N_x\mid P_x)}
  \leq
  \exp\!\left\{C_{A,M}\delta^H
                   \sqrt{\log(e/\delta)}\right\}.
\]
The last inequality is the shell telescope above, with \(P_x\) retained
among the other increasing constraints at every step.  Interchanging
\(x,y\) proves the scalar version of
\eqref{eq:selected-germ-vector-harnack}.  At no point is a lower bound used
for the probability of the full selected-minimum event.

For completeness, the integration over \(z\) uses the following mixed
log-density estimate for an orthant-conditioned Gaussian law.  If
\((X,Z,Y)\) is Gaussian with Stieltjes precision, \(X,Z\) are scalar, and
\(Y\) is conditioned to the positive orthant, write its full precision as
\(P\), let \(Q=P_{YY}\) be the conditional precision of \(Y\) given
\((X,Z)\), and let \(S\) be the marginal precision of \((X,Z)\).
The case of an empty \(Y\)-block is the unconstrained Gaussian calculation
and is immediate.  Otherwise the natural parameter of
\(Y\mid X=x,Z=z\) is \(b_Xx+b_Zz\), where
\(b_X=-P_{YX}\geq0\) and \(b_Z=-P_{YZ}\geq0\).  Differentiation gives
\begin{equation}
 \partial_x\partial_z\log\PP(Y\geq0\mid X=x,Z=z)
 =-b_X^{\mathsf T}Q^{-1}b_Z
  +b_X^{\mathsf T}\operatorname{Cov}_{x,z}(Y\mid Y\geq0)b_Z.
 \label{eq:selected-germ-orthant-mixed-derivative}
\end{equation}
Association gives the lower covariance bound.  Translating after increasing
one natural parameter and using rectangle monotonicity gives the entrywise
upper bound
\begin{equation}
  0\leq\operatorname{Cov}_{x,z}(Y\mid Y\geq0)\leq Q^{-1}.
  \label{eq:selected-germ-orthant-covariance-contraction}
\end{equation}
Indeed, increasing the \(k\)-th natural parameter by \(a>0\) translates the
untruncated Gaussian by \(aQ^{-1}e_k\geq0\); replacing the translated lower
wall by the stronger zero wall raises every coordinate.  Dividing the
resulting mean inequality by \(a\) and sending \(a\downarrow0\) gives the
upper bound, while association gives the lower one.

The Schur complement identity is
\[
  b_X^{\mathsf T}Q^{-1}b_Z=P_{XZ}-S_{XZ}.
\]
Both \(P_{XZ}\) and \(S_{XZ}\) are nonpositive.  Adding the mixed derivative
\(-S_{XZ}\) of the unconstrained marginal Gaussian density to
\eqref{eq:selected-germ-orthant-mixed-derivative} therefore yields
\begin{equation}
  0\leq -P_{XZ}
  \leq\partial_x\partial_z\log j(x,z)\leq-S_{XZ},
  \label{eq:selected-germ-density-cross-harnack}
\end{equation}
where \(j\) is the joint density of \((X,Z)\) under the outer hard-wall
event.  For \(X=\widehat B_t\), \(Z=\widehat B_\rho\), the marginal precision
is the inverse of their two-point covariance matrix, so
\[
  -S_{XZ}
  =\frac{K_H(t,\rho)}
  {|t|^\alpha|\rho|^\alpha-K_H(t,\rho)^2}
  =O_t(1).
\]
Integration over
\(|z|,|z'|\leq M\delta^H\) proves the scalar form of
\eqref{eq:selected-germ-pin-factorization}.

For a fixed vector \(T\), write \(h_{\mathbf x,z}\) for the standardised
selected wall after exposing
\((\widehat B_T,\widehat B_\rho)=(\mathbf x,z)\), and let
\(\mathbf x^+\) be the coordinatewise positive part of \(\mathbf x\).
In the process rooted at \(R=1/\rho\), the exposed values corresponding to
\((\mathbf x^+,z)\) are all nonnegative:
\[
  -\frac{z}{|\rho|^\alpha}\geq0,\qquad
  \frac{x_i^+}{|t_i|^\alpha}-\frac{z}{|\rho|^\alpha}\geq0,
  \quad 1\leq i\leq m.
\]
The nonnegative regression matrix from
Lemma~\ref{lem:finite-wall-mtp2-closure} therefore gives
\(h_{\mathbf x^+,z}\leq0\).  Near either shrinking pin, the covariance with
each additional fixed exposed coordinate is \(O(a^\alpha)\); on a remaining
interval it is \(O(\delta^\alpha)\).  Division by the residual standard
deviation and summation over the fixed set \(T\) therefore give
\[
 \sup_{r\in J}\bigl(h_{\mathbf x,z}(r)\bigr)_+
 \leq
 \begin{cases}
   C_{T,A}a^H,&J\text{ approaches }0\text{ or }\rho,\\
   C_{T,A}\delta^H,
      &\operatorname{dist}(J,\{0,\rho\})\asymp\delta.
 \end{cases}
\]
At each coordinatewise comparison, order the two exposed values.
Lemma~\ref{lem:finite-wall-mtp2-closure} then gives an entrywise nonnegative
regression response, nested walls, and the same association comparison as
in the scalar argument.  The positive-level block baseline above replaces
the former use of a nonpositive wall.

The finite-rank correction from the additional exposed coordinates is
\(O(a^{2\alpha})=o(a^\alpha)\) near either pin, while the leading residual
variance is comparable to \(a^\alpha\).  On the remaining intervals,
rescaling by \(\delta\) gives a compact nondegenerate family.  Thus the
variance, metric, deterministic-threshold modulus, and positive-baseline
estimates remain uniform, with constants depending on \(T\).  Applying the
scalar wall comparison successively to the \(m\) coordinates proves
\eqref{eq:selected-germ-vector-harnack}.

For the density factorisation, let \(C_T\) be the covariance matrix of
\(\widehat B_T\), let
\(k_\rho=(K_H(t_i,\rho))_{i=1}^m\), and let \(S\) be the marginal precision
of \((\widehat B_T,\widehat B_\rho)\).  This marginal is independent of the
outer grid.  Block inversion gives
\begin{equation}
  S_{TZ}=-\frac{C_T^{-1}k_\rho}
   {|\rho|^\alpha-k_\rho^{\mathsf T}C_T^{-1}k_\rho},
  \label{eq:selected-germ-vector-precision}
\end{equation}
where \(k_\rho=O(|\rho|^\alpha)\) and the denominator is comparable to
\(|\rho|^\alpha\).  By Lemma~\ref{lem:finite-wall-mtp2-closure}, the Gaussian
marginal has Stieltjes precision \(S\), so
\(0\leq-S_{t_iZ}\leq C(T)\) uniformly.  The vector version of
\eqref{eq:selected-germ-density-cross-harnack}, applied one coordinate at a
time, and coordinatewise integration therefore give
\[
 \left|
  \log\frac{j_T(\mathbf y,z)j_T(\mathbf x,z')}
            {j_T(\mathbf x,z)j_T(\mathbf y,z')}
 \right|
 \leq C_{T,A,M}\delta^H
\]
on the rectangle in \eqref{eq:selected-germ-pin-factorization}.  This proves
that estimate and completes the finite-vector argument.
\end{proof}

\paragraph{Path-space completion.}
\label{subsec:rough-selected-germ-path-completion}

\begin{lemma}
\label{lem:rough-two-pin-kernel-ratio}
Put
\begin{equation}
  \Gamma_\rho(t,u)
  :=K_H(t,u)-\frac{K_H(t,\rho)K_H(u,\rho)}{|\rho|^\alpha}.
  \label{eq:rough-two-pin-bridge-kernel}
\end{equation}
Fix \(0<a<R<\infty\) and \(r_*>0\).  There are
\(\rho_0,C>0\) such that, if \(0<|\rho|\leq\rho_0\),
\(s,t\in[a,R]\), and \(u\notin\{0,\rho\}\), then
\begin{equation}
  |\Gamma_\rho(t,u)-\Gamma_\rho(s,u)|
  \leq C|t-s|^\alpha\Gamma_\rho(r_*,u).
  \label{eq:rough-two-pin-kernel-ratio}
\end{equation}
If instead \(r_*<0\), the reflected estimate holds for
\(s,t\in[-R,-a]\), with the same quantifiers.

If \(v=1/\rho\) and
\begin{equation}
  \overline\Gamma_v(t,u)
  :=K_H(t,u)-\frac{K_H(t,v)K_H(u,v)}{|v|^\alpha},
  \label{eq:rough-far-pin-bridge-kernel}
\end{equation}
then, for every \(\bar r_*>0\), there are \(v_0,C>0\) such that
\[
 |\overline\Gamma_v(t,u)-\overline\Gamma_v(s,u)|
 \leq C|t-s|^\alpha\overline\Gamma_v(\bar r_*,u)
\]
whenever \(|v|\geq v_0\), \(s,t\in[a,R]\), and
\(u\notin\{0,v\}\).  The reflected far-pin estimate holds for
\(\bar r_*<0\) and \(s,t\in[-R,-a]\).  Every reference kernel on the
right is strictly positive on its displayed domain.
\end{lemma}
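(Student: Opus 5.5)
The plan is to treat the near pin by splitting the variable \(u\) into a region near the two pins and a region away from them, as in Lemma~\ref{lem:covariance-kernel-ratio}, and then to deduce the far-pin statement from the near-pin one by the time inversion \(\mathcal I_H\). Throughout I reduce to \(|t-s|\le1\) and recover general \(s,t\in[a,R]\) by the finite-chain argument of Proposition~\ref{prop:conditional-mean-modulus}. The reflected cases follow from \(K_H(-x,-y)=K_H(x,y)\). Fix \(\eta\in(0,\min\{a/2,1\})\) small and take \(\rho_0\ll\eta\).

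\emph{Positivity and the far region \(|u|\ge\eta\).} By the power Hölder inequality \eqref{eq:power-holder}, \(0\le K_H(u,\rho)\le|\rho|^\alpha\). By the mean-value step in Lemma~\ref{lem:covariance-kernel-ratio}, \(K_H(t,\rho)=\tfrac12|\rho|^\alpha+O(|\rho|)\) uniformly for \(t\in[a,R]\). Combining these,
\[
  \Gamma_\rho(r_*,u)\ge K_H(r_*,u)-K_H(r_*,\rho)\ge c_\eta-C|\rho|>0 ,
\]
using the bound \(c_\eta\) from Lemma~\ref{lem:covariance-kernel-ratio}, applied with reference \(r_*\) (the condition \(r_*<a\) there is not needed for that infimum). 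For the increment I write
\[
  |\Gamma_\rho(t,u)-\Gamma_\rho(s,u)|\le|K_H(t,u)-K_H(s,u)|+|K_H(t,\rho)-K_H(s,\rho)| .
\]
The first term is at most \(C d^\alpha\) by \eqref{eq:power-holder}, and the second is \(O(d|\rho|)\) by the same mean-value bound. Both are therefore at most \(Cd^\alpha\,\Gamma_\rho(r_*,u)\).

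\emph{Near region \(|u|\le\eta\) (main obstacle).} Write \(K_H(t,u)=\tfrac12|u|^\alpha+g_t(u)\) with \(g_t(u)=\tfrac12(t^\alpha-(t-u)^\alpha)\), which is smooth in \((t,u)\in[a/2,R+1]\times[-\eta,\eta]\) and satisfies \(g_t(0)=0\). Then \(\Gamma_\rho(t,u)=\varphi_\rho(u)+G_t(u)\), where
\[
  \varphi_\rho(u)=\tfrac14\bigl(|u|^\alpha+|u-\rho|^\alpha-|\rho|^\alpha\bigr),
  \qquad
  G_t(u)=g_t(u)-\frac{K_H(u,\rho)}{|\rho|^\alpha}\,g_t(\rho).
\]
The key point is that \(\varphi_\rho\) does not depend on \(t\). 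By subadditivity, \(\varphi_\rho(u)\asymp\min(|u|,|u-\rho|)^\alpha\) when \(|u|\lesssim|\rho|\), and \(\varphi_\rho(u)\asymp|u|^\alpha\) when \(|u|\gg|\rho|\). Both pieces vanish at \(u\in\{0,\rho\}\).

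It remains to show \(\sup_t|\partial_tG_t(u)|\le\epsilon(\eta)\,\varphi_\rho(u)\) with \(\epsilon(\eta)\to0\). Given that, the mean value theorem in \(t\) yields the increment bound, and \(\Gamma_\rho(r_*,u)\ge(1-\epsilon)\varphi_\rho(u)>0\) supplies both the reference positivity and the comparison. The bound on \(\partial_tG_t\) uses the linear structure of
\[
  \partial_tG_t(u)=\partial_tg_t(u)-\frac{K_H(u,\rho)}{|\rho|^\alpha}\,\partial_tg_t(\rho),
  \qquad \partial_tg_t(u)=b_tu+O(u^2).
\]
The leading part is \(b_t\,w(u)\) with \(w(u)=u-\rho K_H(u,\rho)/|\rho|^\alpha\), which vanishes at \(0\) and at \(\rho\). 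I would estimate it in three regimes.
\begin{itemize}
\item For \(|u|\gg|\rho|\): \(|w|\lesssim|u|\le\eta^{1-\alpha}|u|^\alpha\).
\item Near \(u=0\): \(K_H(u,\rho)=\tfrac12|u|^\alpha+O(|u||\rho|^{\alpha-1})\), so \(|w|\lesssim|\rho|^{1-\alpha}|u|^\alpha\).
\item Near \(u=\rho\): symmetric, using \(K_H(u,\rho)=|\rho|^\alpha-\tfrac12|u-\rho|^\alpha+O(|u-\rho|)\).
\end{itemize}
The quadratic remainders \(O(u^2)+O(\rho^2)K_H(u,\rho)/|\rho|^\alpha\) must be checked to vanish at \(u=\rho\) at the correct order \(|u-\rho|^\alpha\). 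I would handle this by expanding \(\partial_tG_t(u)-\partial_tG_t(\rho)\) directly, since \(\partial_tG_t(\rho)=0\). This bookkeeping near the moving pin \(\rho\) is the step I expect to need the most care.

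\emph{Far pin.} With \(\rho=1/v\), the covariance identity \eqref{eq:rough-time-inversion-covariance} gives
\[
  \overline\Gamma_v(t,u)=|tu|^\alpha\,\Gamma_\rho(1/t,1/u),
\]
because conditioning \(B\) on \(B_v=0\) is the same as conditioning \(\mathcal I_HB\) on its value at \(\rho\). Take \(\bar r_*=1/r_*\) and apply the near-pin result on \([1/R,1/a]\), where \(|1/t-1/s|\le C|t-s|\). The factor \(|u|^\alpha\) cancels. Then
\[
  \bigl|t^\alpha\Gamma_\rho(1/t,\cdot)-s^\alpha\Gamma_\rho(1/s,\cdot)\bigr|
  \le|t^\alpha-s^\alpha|\,\Gamma_\rho(1/t,\cdot)+s^\alpha\bigl|\Gamma_\rho(1/t,\cdot)-\Gamma_\rho(1/s,\cdot)\bigr| .
\]
Chaining from \(1/\bar r_*\) gives \(\Gamma_\rho(1/t,\cdot)\le C\,\Gamma_\rho(1/\bar r_*,\cdot)\), and this concludes the far-pin estimate.
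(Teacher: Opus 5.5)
Your proposal is correct and follows the paper's proof essentially step for step: the same exact split $\Gamma_\rho(t,u)=\Phi_\rho(u)+\Psi_{\rho,t}(u)$ with a $t$-independent main term, and the same near/far division in $u$, using the power inequality and a positive infimum of $K_H(r_*,\cdot)$ away from zero. Near the pins you make the same comparison of the linear and quadratic Taylor terms with $\Phi_\rho$ at both zeros and in the tail (the paper works in $q=u/|\rho|$, where your expansion about the moving pin becomes its bound on $|q^2-k_e(q)|+|q-e|k_e(q)$), and for the far pin you use the same identity $\overline\Gamma_v(t,u)=|tu|^\alpha\Gamma_{1/v}(1/t,1/u)$ with a product-rule split. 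Three small repairs are needed: choose $\eta<r_*/2$ as well, since otherwise $g_{r_*}$ has a cusp inside $[-\eta,\eta]$; obtain $\Gamma_\rho(r_*,u)\ge(1-\epsilon)\varphi_\rho(u)$ by applying your three-regime estimate to $G_{r_*}$ itself, because a bound on $\partial_tG_t$ does not imply it; and note that the far-region correction is $O(|\rho|^\alpha)$, not $O(|\rho|)$, which is harmless.
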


\begin{proof}
We prove the positive-annulus assertion first.  Choose
\(u_0<\min\{a,r_*,1\}/4\), reduce \(\rho_0\) so that
\(\rho_0\leq u_0\), put \(\epsilon=|\rho|\) and
\(e=\rho/\epsilon\in\{-1,1\}\), and write, for
\(\tau\in\{r_*\}\cup[a,R]\) and \(|x|\leq2u_0\),
\begin{equation}
  K_H(\tau,x)=\frac12|x|^\alpha+h_\tau(x),\qquad
  h_\tau(x)=\frac12\{\tau^\alpha-|\tau-x|^\alpha\}.
  \label{eq:rough-two-pin-kernel-decomposition}
\end{equation}
The exact decomposition
\begin{equation}
 \begin{split}
  \Gamma_\rho(t,u)&=\Phi_\rho(u)+\Psi_{\rho,t}(u),\\
  \Phi_\rho(u)&=\frac14\{|u|^\alpha-\epsilon^\alpha+|u-\rho|^\alpha\},\\
  \Psi_{\rho,t}(u)&=h_t(u)-h_t(\rho)
                  \frac{K_H(u,\rho)}{\epsilon^\alpha}
 \end{split}
  \label{eq:rough-two-pin-phi-psi}
\end{equation}
Strict subadditivity gives \(\Phi_\rho(u)>0\) exactly when
\(u\notin\{0,\rho\}\).  With
\(k_e(q)=K_H(q,e)\) and
\(\phi_e(q)=\{|q|^\alpha-1+|q-e|^\alpha\}/4\), direct expansion at
the two zeros and the two tails gives
\begin{equation}
 \begin{split}
  |q-ek_e(q)|
   &\leq C(1+|q|)^{1-\alpha}\phi_e(q),\\
  |q^2-k_e(q)|+|q-e|k_e(q)
   &\leq C(1+|q|)^{2-\alpha}\phi_e(q).
 \end{split}
  \label{eq:rough-two-pin-normalized-bounds}
\end{equation}
Indeed, as \(q\to0\),
\[
 \phi_e(q)=\tfrac14|q|^\alpha+O(|q|),\quad
 k_e(q)=\tfrac12|q|^\alpha+O(|q|),
\]
so \(q-ek_e(q)=-e|q|^\alpha/2+O(|q|)\),
\(q^2-k_e(q)=-|q|^\alpha/2+O(|q|)\), and
\(|q-e|k_e(q)=|q|^\alpha/2+O(|q|)\).  If \(h=q-e\to0\), then
\[
 \phi_e(e+h)=\tfrac14|h|^\alpha+O(|h|),\quad
 k_e(e+h)=1-\tfrac12|h|^\alpha+O(|h|),
\]
so the first two numerators in
\eqref{eq:rough-two-pin-normalized-bounds} are respectively
\(e|h|^\alpha/2+O(|h|)\) and
\(|h|^\alpha/2+O(|h|)\), while
\(|h|k_e(e+h)=O(|h|)=o(|h|^\alpha)\).  Finally, as
\(|q|\to\infty\),
\[
 \phi_e(q)\sim\tfrac12|q|^\alpha,\qquad
 k_e(q)=\tfrac12+O(|q|^{\alpha-1});
\]
the two weighted denominators in
\eqref{eq:rough-two-pin-normalized-bounds} are therefore of orders
\(|q|\) and \(|q|^2\), matching their numerators.  Compactness on the
remaining set proves both bounds, uniformly for \(e=\pm1\).

Taylor's formula gives
\(h_\tau(x)=d_\tau x+x^2b_\tau(x)\), with the coefficients, their
first \(\tau\)-derivatives on \([a,R]\), and the required first \(x\)- and
mixed derivatives of \(b_\tau\) bounded uniformly on the displayed compact
sets.
Substituting \(u=\epsilon q\) in
\eqref{eq:rough-two-pin-phi-psi} and using
\eqref{eq:rough-two-pin-normalized-bounds} yields, for
\(L=|u|+\epsilon\),
\begin{equation}
 \begin{split}
  |\Psi_{\rho,r_*}(u)|
   &\leq CL^{1-\alpha}\Phi_\rho(u),\\
  |\Psi_{\rho,t}(u)-\Psi_{\rho,s}(u)|
   &\leq C|t-s|L^{1-\alpha}\Phi_\rho(u).
 \end{split}
  \label{eq:rough-two-pin-psi-bounds}
\end{equation}
For clarity, the linear Taylor terms give
\(CL^{1-\alpha}\Phi_\rho\), while the quadratic terms first give
\(CL^{2-\alpha}\Phi_\rho\); the latter are absorbed because
\(L\leq u_0+\rho_0\leq1\).  Since \(1-\alpha>0\), shrinking
\(u_0,\rho_0\) makes the first line at most \(\Phi_\rho/2\).  Hence
\(\Gamma_\rho(r_*,u)\geq\Phi_\rho(u)/2\) when \(|u|\leq u_0\).
The second line and \(|t-s|\leq|t-s|^\alpha\) prove
\eqref{eq:rough-two-pin-kernel-ratio} there whenever \(|t-s|\leq1\).

For \(|u|\geq u_0\), strict positivity, continuity, and
\(K_H(r_*,u)\to r_*^\alpha/2\) at both tails give
\[
 c_*:=\inf_{|u|\geq u_0}K_H(r_*,u)>0.
\]
The power inequality and subadditivity give
\[
 0\leq K_H(u,\rho)\leq\epsilon^\alpha,\qquad
 0\leq K_H(r_*,\rho)\leq\epsilon^\alpha.
\]
Since \(K_H(r_*,\rho)\to0\), a further reduction of \(\rho_0\) gives
\[
 \Gamma_\rho(r_*,u)
 \geq K_H(r_*,u)-K_H(r_*,\rho)\geq c_*/2
 \qquad(|u|\geq u_0).
\]
Moreover,
\[
 |K_H(t,u)-K_H(s,u)|\leq C|t-s|^\alpha
\]
uniformly in \(u\).  The common \(\epsilon^\alpha/2\) term cancels from
\(K_H(t,\rho)-K_H(s,\rho)\).  For small \(\rho\), the derivative on
\([a,R]\) of \(x^\alpha-|x-\rho|^\alpha\) is bounded by
\(C\epsilon\), and hence
\[
 |K_H(t,\rho)-K_H(s,\rho)|\leq C\epsilon|t-s|.
\]
Multiplication by \(K_H(u,\rho)/\epsilon^\alpha\leq1\), followed by the
lower bound above, proves
\eqref{eq:rough-two-pin-kernel-ratio} for \(|u|\geq u_0\) and
\(|t-s|\leq1\).  A finite subdivision of the fixed interval \([a,R]\)
handles \(|t-s|>1\), with a changed constant.  This proves the complete
positive-annulus assertion.  The identity
\[
 \Gamma_\rho(t,u)=\Gamma_{-\rho}(-t,-u)
\]
proves its reflected version, including both signs of \(\rho\).

Finally,
\begin{equation}
  \overline\Gamma_v(t,u)
  =|tu|^\alpha\Gamma_{1/v}(1/t,1/u).
  \label{eq:rough-inverted-bridge-kernel}
\end{equation}
Fix \(\bar r_*>0\), put
\(q_*=1/\bar r_*\), and, for positive \(s,t\), set
\(x=1/t\), \(y=1/s\), and \(w=1/u\).  The points
\(x,y,q_*\) lie in one fixed compact positive interval.  The assertion just
proved, applied on that interval with reference \(q_*\), gives
\[
 |\Gamma_{1/v}(x,w)-\Gamma_{1/v}(y,w)|
 \leq C|x-y|^\alpha\Gamma_{1/v}(q_*,w),
 \qquad
 \Gamma_{1/v}(y,w)\leq C\Gamma_{1/v}(q_*,w).
\]
The second inequality follows by comparing \(y\) with \(q_*\); the preceding
finite-subdivision argument covers their whole compact interval.  Using
\eqref{eq:rough-inverted-bridge-kernel}, smoothness of inversion on
\([a,R]\), and
\(\bigl||t|^\alpha-|s|^\alpha\bigr|\leq|t-s|^\alpha\), we obtain
\[
\begin{split}
 |\overline\Gamma_v(t,u)-\overline\Gamma_v(s,u)|
 &\leq |u|^\alpha\bigl(
  |t|^\alpha|\Gamma_{1/v}(x,w)-\Gamma_{1/v}(y,w)|\\
 &\hspace{5em}
  +\bigl||t|^\alpha-|s|^\alpha\bigr|\Gamma_{1/v}(y,w)\bigr)\\
 &\leq C|t-s|^\alpha|u|^\alpha\Gamma_{1/v}(q_*,w)\\
 &=C|t-s|^\alpha\overline\Gamma_v(\bar r_*,u).
\end{split}
\]
This is the stated far-pin estimate.  Simultaneous reflection of all
variables proves its negative-annulus counterpart.
\end{proof}

We now formulate the path-space comparison only for the exact family needed
in the theorem.  For the canonical constraint grids \(E_m\) from
\eqref{eq:canonical-walls}, put \(D_m=E_m\cap I\).  Fix \(m_I\) so large
that \(s_0\in D_m\) for \(m\geq m_I\), let
\(p_{m,s}=\mu_{E_m}(S_{D_m}=s)\), and, for
\(0<\eta<(v-u)/2\), put \(I_\eta=[u+\eta,v-\eta]\).  Define two finite
measures on \(\mathcal X\):
\begin{equation}
 \begin{split}
  \mathsf P_{b,m,\eta}^{\mathrm{sel}}
   &:=\sum_{s\in D_m\cap I_\eta}p_{m,s}
       \mathcal L(G^{b,s}\mid A_{b,s}),\\
  \mathsf P_{b,m,\eta}^{\mathrm{out}}
   &:=\sum_{s\in D_m\cap I_\eta}p_{m,s}
       \mathcal L(G^{b,s}\mid G^{b,s}(j)\geq0:\ j\in J_{b,s}).
 \end{split}
 \label{eq:rough-selected-outer-submixtures}
\end{equation}
They have the same total mass.
For finite measures \(\nu_1,\nu_2\) of equal mass on \(\mathcal X\), let
\(d_{\mathrm{BL}}(\nu_1,\nu_2)\) denote the bounded-Lipschitz distance
obtained from test functions bounded by one and \(1\)-Lipschitz for
\(d_{\mathrm{loc}}\).

\begin{proposition}
\label{prop:rough-selected-germ-invisibility}
For every fixed \(\eta\in(0,(v-u)/2)\),
\begin{equation}
  \sup_{\substack{m\geq m_I\\2^{-m}\leq\eta/2}} d_{\mathrm{BL}}
  \bigl(\mathsf P_{b,m,\eta}^{\mathrm{sel}},
        \mathsf P_{b,m,\eta}^{\mathrm{out}}\bigr)
  \longrightarrow0,
  \qquad b\to\infty,
  \label{eq:rough-selected-germ-invisibility}
\end{equation}
\end{proposition}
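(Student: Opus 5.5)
The plan is to compare the two sub-mixtures component by component. Both are built on the same weights \(p_{m,s}\), so
\[
d_{\mathrm{BL}}\bigl(\mathsf P^{\mathrm{sel}}_{b,m,\eta},\mathsf P^{\mathrm{out}}_{b,m,\eta}\bigr)
\le\sup_{s\in D_m\cap I_\eta}d_{\mathrm{BL}}\bigl(\mathcal L(G^{b,s}\mid A_{b,s}),\ \mathcal L(G^{b,s}\mid G^{b,s}(j)\ge0:\ j\in J_{b,s})\bigr),
\]
and it suffices to make the right-hand side small uniformly in \(m\) and in \(s\in D_m\cap I_\eta\) as \(b\to\infty\).

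\textbf{Step 1: sorting the walls.} Since \(D_m=E_m\cap I\), every wall \(B_w\le0\) with \(w\in F\cap I\) is already a zero wall in \(J_{b,s}\), and it implies the corresponding selected wall \(G(j)\ge0\ge G(v_{b,s})\) by \eqref{eq:reroot-old-root-height}. The remaining selected walls come from \(F\setminus I\). Because \(s\in I_\eta\), these points lie at physical distance at least \(\eta\) from \(s\), so in \(K_{b,s}\) they satisfy \(|k|\ge b\eta\).

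\textbf{Step 2: time inversion.} Apply \(\mathcal I_H\) with \(\delta:=1/b\). The old root becomes \(\rho=1/v_{b,s}\), and \(|\rho|\in[\delta/v,\delta/u]\) by the separation \eqref{eq:rough-general-window-old-root-separation}. The far selected walls invert, via \eqref{eq:rough-inverted-selected-wall}, to slanted walls at points of \((-\delta/\eta,\delta/\eta)\); together with \(\rho\) they form the germ \(N_\delta\). The zero walls \(J_{b,s}\) invert to zero walls at \(|r|\ge 1/(b(v-u))\); these form \(P_\delta\). Test points of a fixed finite \(T\subset\mathbb R\setminus\{0\}\) (in zoom coordinates) invert to fixed points and are adjoined to \(P_\delta\). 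Hence \eqref{eq:selected-germ-geometry} holds with \(c_0=1/(v-u)\) and \(C_0=\max(1/u,1/\eta)\), uniformly in \(s,m\). The pin value is \(z=\widehat B_\rho=|\rho|^\alpha b^H B_s\), which is of order \(\delta^H\cdot b^H B_s\cdot\delta^{H}\)-scaled, i.e.\ \(z\in[-M\delta^H,0]\) on the event \(|B_s|\le M'\).

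\textbf{Step 3: tail of the pin height.} I need \(\sup_{s,m}\mu_{E_m}(-B_s>M'\mid S_{D_m}=s)\to0\) as \(M'\to\infty\), in a form compatible with the weights. The cheapest route is to bound the weighted sum \(\sum_s p_{m,s}\mu_{E_m}(-B_s>M'\mid S_{D_m}=s)\le\mu_{E_m}(-B_{s_0}>M')\). This holds because \(B_{S_{D_m}}\ge B_{s_0}\), and the right-hand side tends to zero uniformly in \(m\) by Lemma~\ref{lem:uniform-anchor-tail} (recall \(s_0\in D_m\) is dyadic). Both sub-mixtures then lose mass at most \(o_{M'}(1)\), uniformly, when restricted to \(|z|\le M\delta^H\).

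\textbf{Step 4: finite-dimensional comparison.} Write the selected conditional density of \((\widehat B_T,\widehat B_\rho)\) as proportional to \(j_T(\mathbf x,z)\,g^T_{\delta,z}(\mathbf x)\), and the outer one as proportional to \(j_T(\mathbf x,z)\). By \eqref{eq:selected-germ-vector-harnack}, the factor \(g\) is constant in \(\mathbf x\in[-A,A]^m\) up to a factor \(1+o(1)\), uniformly in \(z\). By \eqref{eq:selected-germ-pin-factorization}, the \(z\)-mixing does not distort the \(\mathbf x\)-profile. Together these give, for every box, that the ratio of the two conditional laws of \(\widehat B_T\) restricted to \([-A,A]^m\) is \(1+o(1)\) uniformly in \(s,m\). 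Tightness in \(A\) follows from the mean and fluctuation bounds, so the finite-dimensional laws of \(G^{b,s}\) on \(T\) are close in total variation, uniformly.

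\textbf{Step 5: lifting to path space.} Lemma~\ref{lem:uniform-centered-modulus}, together with its stated uniformity over arbitrary finite convex constraints, supplies uniform tightness of both component families in \(\mathcal X\). For the conditional means I would redo Proposition~\ref{prop:conditional-mean-modulus} with the two-pin kernels of Lemma~\ref{lem:rough-two-pin-kernel-ratio}. Uniform equicontinuity on \([-R,R]\) plus closeness on a fine finite net \(T\) then gives closeness in \(d_{\mathrm{BL}}\), and letting \(R\to\infty\) in the metric \(d_{\mathrm{loc}}\) gives the full bound.

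\textbf{Main obstacle.} I expect the uniform path-space tightness of the selected components to be the hardest step. The slanted walls make the constraint set convex but not a wall-type orthant, so the mean-modulus argument must go through the far-pin kernel \(\overline\Gamma_v\) estimates of Lemma~\ref{lem:rough-two-pin-kernel-ratio} rather than through Lemma~\ref{lem:covariance-kernel-ratio}. I would try this first via the flux representation of Lemma~\ref{lem:positive-boundary-flux} applied after exposing \(G(v_{b,s})\).
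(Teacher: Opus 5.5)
Your architecture is the paper's: time inversion puts the rerooted experiment into the geometry of Lemma~\ref{lem:rough-selected-germ-factorization} with \(\delta=b^{-1}\), the Harnack and pin-factorisation estimates make the two \(\widehat B_T\)-laws agree up to \(e^{o(1)}\) on compact rectangles, the selected depth is controlled through \(B_{S_{D_m}}\geq B_{s_0}\), and path tightness comes from exposing the old root and using the far-pin kernel \(\overline\Gamma_v\). Finishing with a fine finite net instead of the paper's subsequence argument is fine.

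\textbf{The reduction does not match what Step~3 proves.} The inequality \(d_{\mathrm{BL}}(\mathsf P^{\mathrm{sel}}_{b,m,\eta},\mathsf P^{\mathrm{out}}_{b,m,\eta})\leq\sup_s d_{\mathrm{BL}}(\cdot,\cdot)\) is true. But making its right side small needs componentwise control of the depth \(-B_s\) given \(S_{D_m}=s\), uniformly in \(s\) and \(m\). Your Step~3 only proves the weighted bound \(\sum_s p_{m,s}\,\mu_{E_m}(-B_s>M'\mid S_{D_m}=s)\leq\mu_{E_m}(-B_{s_0}>M')\), and nothing available gives the componentwise version. This is why the paper stresses that its comparison is a finite-measure statement, not componentwise tail control. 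The fix is to keep the weights:
\begin{itemize}
\item compare the two component laws conditioned on the rectangle \([-A,A]^m\times[-M\delta^H,M\delta^H]\), which is uniform in \(s,m\) by the lemma;
\item bound the sub-mixture distance by that uniform error plus the \(p_{m,s}\)-weighted masses outside the rectangle;
\item send \(b\to\infty\) at fixed \(M,A\), and only then let \(A,M\to\infty\).
\end{itemize}

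\textbf{Removing those masses needs two ingredients you do not supply.}

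For the outer components the \(s_0\) argument is unavailable. They are fresh fractional Brownian laws conditioned only on the zero walls \(J_{b,s}\), so \(B_s\) is neither nonpositive nor distributed as under \(\mu_{E_m}\). Their depth \(\widehat B_\rho/|\rho|^H=G(v_{b,s})/|v_{b,s}|^H\) is tight by \eqref{eq:pin-mean-bound} together with Lemma~\ref{lem:harge-convex-domination}. The bound \eqref{eq:pin-mean-bound} has to be extended to the non-dyadic walls \(b(D_m-s)\) by approximation.

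For the selected components the needed mean bound does not yet exist, because the slanted walls are not zero walls. The paper handles this as follows:
\begin{itemize}
\item expose the pin at a depth \(|z|\leq M\delta^H\);
\item raise the nonpositive selected thresholds to zero;
\item raise the pin itself to zero, and then mix it over nonnegative values.
\end{itemize}
Each step raises every coordinate by MTP\(_2\) monotonicity. The selected mean is therefore sandwiched between the bridge mean, of absolute value at most \(M|t|^H\) by Cauchy--Schwarz, and the pure-wall mean, at most \(C_H|t|^H\). This sandwich gives the tightness of \(\widehat B_T\) that your Step~4 asserts. It also bounds the boundary-flux sum at the reference point \(r_*\) in your Step~5. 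For pure walls that role was played by Lemma~\ref{lem:fixed-reference-mean}, which does not apply to the exposed-pin bridge with shifted thresholds.

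\textbf{Two smaller corrections.}
\begin{itemize}
\item \(c_0=1/(v-u)\) violates \(c_0\delta\leq|\rho|\), since \(|\rho|=1/(bs)\) can be as small as about \(1/(bv)\). Take \(c_0=1/v\).
\item The selected density carries the factor \(\mathbf 1_{\{z\leq0\}}\) coming from the wall \(B_s\leq0\), while the outer law integrates \(z\) over both signs. This is harmless only because the pin factorisation tolerates any restriction in \(z\), so you should say so.
\end{itemize}
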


\begin{proof}
For \(s\in I_\eta\), time inversion sends the old root to
\(\rho=-1/(bs)\), and
\begin{equation}
  \frac{1}{v}b^{-1}\leq|\rho|\leq\frac{1}{u}b^{-1}.
  \label{eq:rough-general-window-root-scale}
\end{equation}
If the original grid mesh is at most \(\eta/2\), the inverted left- and
right-gap radii, divided by \(|\rho|\), are bounded by \(2v/\eta\).
More explicitly, every inverted zero-wall point coming from \(D_m-s\) has
absolute value at least \(b^{-1}/(v-u)\), whereas every inverted competitor
coming from \(E_m\setminus I\) has absolute value at most \(b^{-1}/\eta\).
The two sets are disjoint on each side of the corresponding asymmetric gap.
Together with \eqref{eq:rough-general-window-root-scale}, these bounds allow
fixed constants \(0<c_0<C_0<\infty\) for which the outer set lies outside
\((-c_0b^{-1},c_0b^{-1})\), while the selected set and \(\rho\) lie inside
\((-C_0b^{-1},C_0b^{-1})\).  Thus
Lemma~\ref{lem:rough-selected-germ-factorization} applies with
\(\delta=b^{-1}\), uniformly in the displayed grid family.

We first pass from compact likelihood factorisation to arbitrary fixed
finite-dimensional laws.  Lemma~\ref{lem:rough-selected-germ-factorization}
is uniform on signed compact cubes, so it applies to every fixed observation
set whether or not its times are wall sites.

For one component, adjoin \(T\) to the outer coordinate set and omit it from
the conditioning event, as in the definition of \(E_\delta\).  If an
observation time already is an outer zero-wall site, let
\(\chi_{b,m,s,T}(\mathbf x)\) impose its nonnegative coordinate; otherwise
that factor is one.  It is common to the selected and outer laws.  For
\(X=\widehat B_T\) and \(Z=\widehat B_\rho\), the joint selected density is
proportional to
\begin{equation}
  \chi_{b,m,s,T}(\mathbf x)j_T(\mathbf x,z)
  g_{\delta,z}^T(\mathbf x)\mathds1_{\{z\leq0\}},
  \label{eq:rough-selected-joint-density}
\end{equation}
whereas the outer density is proportional to
\(\chi_{b,m,s,T}(\mathbf x)j_T(\mathbf x,z)\).
On the signed compact rectangle, the lemma's estimates write
\begin{equation}
  j_T(\mathbf x,z)=a_\delta(\mathbf x)b_\delta(z)e^{o(1)},
  \qquad
  g_{\delta,z}^T(\mathbf x)
   =g_{\delta,z}^T(\mathbf 0)e^{o(1)},
  \label{eq:rough-selected-compact-factorization}
\end{equation}
uniformly in the component; the second relation is used only for \(z\leq0\).
After integration in \(z\), the compact outer and selected \(X\)-densities,
normalised separately, are both the normalisation of
\(\chi_{b,m,s,T}(\mathbf x)a_\delta(\mathbf x)\), up to \(e^{o(1)}\).
Their two compact conditional \(X\)-laws therefore have total-variation
distance \(o(1)\), uniformly in the component.

We remove the cutoffs in a fixed order.  The pure zero-wall mean bound
\eqref{eq:pin-mean-bound} and
Lemma~\ref{lem:harge-convex-domination} apply uniformly to arbitrary finite
wall sets: approximate distinct wall coordinates by distinct dyadic ones
and pass through the nondegenerate finite Gaussian densities.  Under every
outer component they make \(X\) tight and make \(Z/\delta^H\) tight.  Under
the selected law, the exact identity
\begin{equation}
  \frac{|\widehat B_\rho|}{|\rho|^H}
  =\frac{|B_s|}{s^H}
  \label{eq:rough-selected-depth-identity}
\end{equation}
and the window-maximum inequality
\begin{equation}
  \frac{|B_s|}{s^H}\leq\frac{|B_{s_0}|}{u^H}
  \label{eq:rough-general-window-depth-bound}
\end{equation}
give the same natural-scale depth tightness after summing with the exact
weights \(p_{m,s}\).  Here \(B_s\geq B_{s_0}\) and both values are
nonpositive, while the pure-wall mean and centred bounds make
\(B_{s_0}\) tight uniformly in \(m\).

Conditional on \(|Z|\leq M\delta^H\), raise the nonpositive selected
thresholds to zero, then raise the exposed pin from \(z\leq0\) to zero, and
finally mix the pin over its nonnegative values.  Finite-wall MTP\(_2\)
monotonicity raises every remaining coordinate at each step.  Hence each
selected coordinate mean is bounded above by its ordinary pure zero-wall
mean and below by its untruncated bridge mean, whose absolute value is at
most \(M|t|^H\).  The moment-generating-function estimate
\eqref{eq:harge-mgf-bound}, applied coordinatewise to each convexly conditioned
Gaussian bridge after centring at its conditional barycentre, then makes \(X\)
tight under the selected law on this depth event.  First choose \(M\), then choose
the signed observation cutoff \(A\), and only then send \(b\to\infty\) in
\eqref{eq:rough-selected-compact-factorization}.  Letting \(A,M\to\infty\)
afterwards proves, uniformly in the exact mixture,
\[
 \left\|
  (\widehat{\operatorname{ev}}_T)_*
     \mathsf P_{b,m,\eta}^{\mathrm{sel}}
  -(\widehat{\operatorname{ev}}_T)_*
     \mathsf P_{b,m,\eta}^{\mathrm{out}}
 \right\|_{\mathrm{TV}}\longrightarrow0,
 \qquad
 \widehat{\operatorname{ev}}_T(f)
   :=\bigl(|t|^\alpha f(1/t)\bigr)_{t\in T}.
\]
The compact conditional errors are uniform in the component, while the
selected and outer tail masses vanish after weighting by \(p_{m,s}\).
Since \(\sum_sp_{m,s}\leq1\), the last display is a finite-measure, not a
claim of componentwise uniform tail control.

We next prove the conditional path modulus.  Expose
\(\widehat B_\rho=z\), \(|z|\leq M|\rho|^H\), remove the deterministic
zero coordinate and the exposed pin, and collect the distinct remaining
constraints in \(F\), with lower thresholds \(b_u(z)\).  Let \(Q\) be the
precision of the conditional bridge vector on \(F\).  Integration of
\(-\partial_up= (Q(y-\mu_F))_up\) over the lower rectangle gives
\[
 \left[Q\{\EE(Y_F\mid Y_F\geq b)-\mu_F\}\right]_u
 =\frac{1}{\PP(Y_F\geq b)}
   \int_{\{y_{F\setminus\{u\}}\geq b\}}
       p(b_u,y_{F\setminus\{u\}})\,\mathrm dy_{F\setminus\{u\}}
 \geq0.
\]
Lemma~\ref{lem:finite-wall-mtp2-closure} makes the conditional bridge vector,
after adjoining a test coordinate, MTP\(_2\) and associated.  Bounded
coordinate truncation followed by Gaussian integrability therefore makes
every conditional bridge covariance \(\Gamma_\rho(t,u)\) nonnegative.
Gaussian regression gives
\begin{equation}
 \begin{split}
  m_{\rho,z,F}(t)
  &=\mu_{\rho,z}(t)
    +\sum_{u\in F}\lambda_u\Gamma_\rho(t,u),\\
  \mu_{\rho,z}(t)&=\frac{K_H(t,\rho)}{|\rho|^\alpha}z,\qquad
  \lambda_u\geq0.
 \end{split}
 \label{eq:rough-selected-boundary-flux}
\end{equation}
This calculation permits unequal negative thresholds and uses no
zero-orthant substitution.

Raise the selected thresholds to zero, raise the pin from \(z\) to zero,
and then mix it over nonnegative values.  The same MTP\(_2\) monotonicity as
above and the pure-wall bound, extended to arbitrary finite coordinate sets
by dyadic approximation, give
\begin{equation}
  \mu_{\rho,z}(t)\leq m_{\rho,z,F}(t)\leq C_H|t|^H,
  \qquad |\mu_{\rho,z}(t)|\leq M|t|^H.
  \label{eq:rough-selected-mean-pin-bound}
\end{equation}
The last inequality is Cauchy--Schwarz:
\(|K_H(t,\rho)|\leq|t|^H|\rho|^H\).
In particular,
\(|m_{\rho,z,F}(t)|\leq C_{H,M}|t|^H\).

Fix a positive annulus \([a,R]\) and a fixed positive reference \(r_*\).
At that reference,
\[
 \sum_{u\in F}\lambda_u\Gamma_\rho(r_*,u)
 =m_{\rho,z,F}(r_*)-\mu_{\rho,z}(r_*)
 \leq C_{H,M,a,R}.
\]
Lemma~\ref{lem:rough-two-pin-kernel-ratio} therefore controls the complete
boundary-flux sum.  The pin mean must be estimated separately.  Cancellation
of the common \(|\rho|^\alpha/2\) term gives, for \(|t-s|\leq1\),
\[
 |\mu_{\rho,z}(t)-\mu_{\rho,z}(s)|
 \leq CM|\rho|^{1-H}|t-s|
 \leq CM|t-s|^\alpha.
\]
Finite subdivision handles larger separations, and simultaneous reflection
gives the negative-annulus estimate.  Hence
\begin{equation}
  |m_{\rho,z,F}(t)-m_{\rho,z,F}(s)|
  \leq C_{H,M,a,R}|t-s|^\alpha
  \label{eq:rough-selected-inverted-mean-modulus}
\end{equation}
on either annulus.

The passage through zero is now explicit.  Given a compact interval and a
modulus tolerance, first choose \(a>0\) so that
\(2C_{H,M}(2a)^H\) is below half the tolerance.  If one of two sufficiently
close points lies in \([-a,a]\), both lie in \([-2a,2a]\) and the pointwise
bound in \eqref{eq:rough-selected-mean-pin-bound} applies.  Otherwise the
points have the same sign and lie in one of the two fixed annuli, where
\eqref{eq:rough-selected-inverted-mean-modulus} applies.  This proves a
uniform mean modulus on every compact interval.

After the pin has been exposed, the remaining event is a convex translated
lower rectangle for the centred Gaussian bridge.  Applying
Lemma~\ref{lem:harge-convex-domination} at its barycentre gives
\begin{equation}
 \EE\exp\left\{\lambda\bigl[
   (\widehat B_t-m_{\rho,z,F}(t))
   -(\widehat B_s-m_{\rho,z,F}(s))\bigr]\right\}
 \leq\exp\{\lambda^2|t-s|^\alpha/2\},
 \label{eq:rough-selected-centered-modulus}
\end{equation}
because
\[
 \operatorname{Var}(\widehat B_t-\widehat B_s\mid\widehat B_\rho)
 =|t-s|^\alpha
  -\frac{\{K_H(t,\rho)-K_H(s,\rho)\}^2}{|\rho|^\alpha}
 \leq|t-s|^\alpha.
\]
The dyadic chaining proof of
Lemma~\ref{lem:uniform-centered-modulus}, combined with the mean modulus,
proves compact-depth tightness in the inverted coordinates, uniformly in
the finite wall and its cardinality.

It remains to establish tightness in the original coordinates without using
pathwise continuity of inversion at zero.  Put \(v=1/\rho\), expose
\(G(v)=x\), where \(x\leq0\) and \(|x|\leq M|v|^H\), and collect the
remaining zero and old-root lower walls in a finite set \(F\), again deleting
the deterministic zero and the exposed pin.  The same face integration and
regression give
\[
 m_{v,x,F}(t)
 =\mu_{v,x}(t)+\sum_{u\in F}\bar\lambda_u
    \overline\Gamma_v(t,u),\qquad
 \mu_{v,x}(t)=\frac{K_H(t,v)}{|v|^\alpha}x,\qquad
 \bar\lambda_u\geq0.
\]
Raising the negative thresholds and then the far pin to zero, followed by
the nonnegative-pin mixture, bounds the mean above by the ordinary pure
zero-wall mean and below by its bridge mean.  Cauchy--Schwarz therefore gives
\begin{equation}
  |m_{v,x,F}(t)|\leq C_{H,M}|t|^H.
  \label{eq:rough-selected-original-pin-bound}
\end{equation}

On a fixed annulus, the far-pin assertion of
Lemma~\ref{lem:rough-two-pin-kernel-ratio} controls the boundary-flux sum,
because at a fixed same-side reference
\[
 \sum_{u\in F}\bar\lambda_u\overline\Gamma_v(r_*,u)
 =m_{v,x,F}(r_*)-\mu_{v,x}(r_*)\leq C_{H,M,a,R}.
\]
The far-pin bridge mean is separate and satisfies, for \(|v|\geq1\),
\[
 |\mu_{v,x}(t)-\mu_{v,x}(s)|
 \leq\frac{|x|}{|v|^\alpha}
      |K_H(t,v)-K_H(s,v)|
 \leq C_{H,M,a,R}|t-s|^\alpha,
\]
using the global power inequality and
\(|x|/|v|^\alpha\leq M|v|^{-H}\).
Reflection and finite subdivision cover both annuli and all separations.
Together with \eqref{eq:rough-selected-original-pin-bound}, the same
two-region argument gives a uniform mean modulus on every compact
original-coordinate interval.

Convex Gaussian domination applies to this translated far-pin bridge as
well, and its increment variance is at most \(|t-s|^\alpha\).  The same
chaining argument proves compact-depth tightness of the selected original
paths.  Ordinary outer components are pure finite zero-wall laws; the
pure-wall mean modulus and the same centred chaining bound make them tight
uniformly, including for nondyadic finite wall locations by approximation.
Finally \eqref{eq:rough-selected-depth-identity} and
\eqref{eq:rough-general-window-depth-bound} remove the selected depth cutoff
only after the compact-depth modulus has been established.  Thus both
finite-measure families are tight in \(\mathcal X\), uniformly in the
displayed parameters.

For an arbitrary fixed finite set of nonzero original times, apply the
signed finite-dimensional comparison above at their reciprocal times and
use the exact inversion identity coordinatewise.  The value at zero is
deterministically zero.  Hence the original-coordinate finite-dimensional
pushforwards of the two finite measures converge in total variation.

To finish the uniform path-space statement, suppose it failed along some
\(b_n\to\infty\) and admissible \(m_n\).  The two tight subprobability
families have equal masses in \([0,1]\); after passage to a subsequence, both
measures converge weakly.  Their limiting finite-dimensional distributions
agree on every finite subset of a fixed countable dense set by the preceding
comparison.  Continuous evaluations generate the Borel sigma-field of
\(\mathcal X\), so the two finite limit measures are equal.  Since the
displayed bounded-Lipschitz metric metrises weak convergence for uniformly
bounded finite measures, this contradicts the assumed failure and proves
\eqref{eq:rough-selected-germ-invisibility}.
\end{proof}
\subsubsection{Continuum window selection and endpoint exclusion}
\label{subsec:rough-continuum-window-selection}

\paragraph{Strict negativity.}

The quantitative finite-wall estimate needed to exclude contact with the wall
in the limit is available from the companion paper.

\begin{proposition}[{\cite[Proposition~4.14]{moench2026horizonProblem}}]
\label{prop:rough-annular-anticontact-import}
Put \(I_0=[1/2,1]\) and \(p_0=2\).  There are constants \(c,C>0\) and
\(x_0\in(0,1)\), depending only on \(H\), such that, for every finite dyadic
\(F\subset\mathbb D\) containing \(p_0\),
\begin{equation}
 \PP\left(
  \left|\min_{s\in I_0}(-B_s)\right|\leq x\,\middle|\,A_F
 \right)
 \leq C\exp\{-c\sqrt{\log(1/x)}\},
 \qquad 0<x\leq x_0.
 \label{eq:rough-annular-anticontact-import}
\end{equation}
The reflected assertion holds with \((I_0,p_0)\) replaced by
\((-I_0,-p_0)\).
\end{proposition}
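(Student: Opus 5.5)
This proposition is imported from \cite{moench2026horizonProblem}. If I had to reprove it, I would run a multiscale discretisation argument under \(A_F\) using only tools already available here. The constants must not depend on \(F\). The uniform inputs are: the conditional-mean modulus of Proposition~\ref{prop:conditional-mean-modulus} and the pin bound \eqref{eq:pin-mean-bound}; the centred chaining estimate in the proof of Lemma~\ref{lem:uniform-centered-modulus}, which holds for every positive-probability finite convex constraint; the uniform anchor tail of Lemma~\ref{lem:uniform-anchor-tail} at \(p_0=2\); and the wall monotonicity of Proposition~\ref{prop:wall-monotonicity} together with MTP\(_2\) association from Lemma~\ref{lem:finite-wall-mtp2-closure}.

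\emph{Step 1 (discretisation).} Fix \(0<x\leq x_0\) and a mesh \(\varepsilon=2^{-k}\), to be tuned as a function of \(x\). If \(\min_{I_0}(-B)\leq x\), then one of two things happens. Either some dyadic point \(t\in I_0\cap2^{-k}\mathbb Z\) has \(-B_t\leq2x\). Or the oscillation of \(B\) on \(I_0\) at scale \(\varepsilon\) exceeds \(x\). I split \(B=m_F+Z_F\). The mean part has modulus at most \(C\varepsilon^{2H}\) on \(I_0\), uniformly in \(F\), by \eqref{eq:annular-mean-modulus}; this is \(\leq x/2\) once \(\varepsilon\leq c\,x^{1/(2H)}\). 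The centred part has sub-Gaussian increments with variance \(|t-s|^{2H}\) by Lemma~\ref{lem:harge-convex-domination}. Chaining plus Borell--TIS then bounds the oscillation probability by \(\exp\{-c\,x^2\varepsilon^{-2H}/\log(1/\varepsilon)\}\). This term is harmless even when \(\varepsilon\) is only a small power of \(x\).

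\emph{Step 2 (grid contact).} The grid term should be handled scale by scale rather than by a crude union bound. A union bound over \(\varepsilon^{-1}\) points would need one-point contact probabilities decaying faster than \(\varepsilon\), and nothing uniform of that strength is available. I would therefore use a Whitney/dyadic hierarchy on \(I_0\). At level \(j\), with \(j\) running up to \(k\), I ask whether the path on a level-\(j\) cell, after exposing the values at the cell endpoints, comes within \(2^{-jH}\)-relative distance of the wall.

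The key uniform input at each level is a lower bound \(p>0\) on the probability of staying at least \(2^{-jH}\) above the wall on a whole cell. This comes from the Cameron--Martin positivity of Lemma~\ref{lem:fbm-compact-positivity-support} applied to rescaled bridges, in the same form as \eqref{eq:selected-germ-whitney-baseline}. Association (the shell-ratio comparison \eqref{eq:selected-germ-associated-shell-ratio}) lets this lower bound survive arbitrary additional walls from \(F\). The anchor at \(p_0=2\) and Lemma~\ref{lem:uniform-anchor-tail} keep the overall depth of \(-B\) on \(I_0\) tight uniformly in \(F\). That is what allows the coarse levels to start at a height of order one rather than at the wall.

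\emph{Step 3 (summing the levels and the main obstacle).} A contact of size \(x\) at the finest scale forces the path to descend through many of the \(\approx\log_2(1/x)/H\) dyadic levels. I expect each descent to cost a factor bounded away from one, but the levels are only weakly decoupled. Decoupling them with Nazarov-type anti-concentration and the shell telescope, as in Lemma~\ref{lem:rough-selected-germ-factorization}, loses a \(\sqrt{\log}\) factor per scale. Optimising the number of effectively independent scales against these losses should produce exactly the stretched bound \(C\exp\{-c\sqrt{\log(1/x)}\}\).

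This decoupling across scales, uniformly in the finite wall \(F\), is the step I expect to be the real obstacle. My first attempt would condition successively on the coarser-level endpoint values and use MTP\(_2\) monotonicity to replace all non-local walls by the stronger zero wall, which can only help. The reflected statement follows by applying \(t\mapsto-t\), which preserves the law of \(B\) and maps \(A_F\) to \(A_{-F}\).
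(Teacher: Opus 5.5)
\textbf{There is a genuine gap, and it starts in Step~1, where you drop the absolute value.} The statement controls the \emph{window} event $\{-x\leq\min_{s\in I_0}(-B_s)\leq x\}$. The set $F$ is only required to contain $p_0=2$; it need not have a single site in $I_0$.

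You instead bound the one-sided event $\{\min_{I_0}(-B)\leq x\}$. From then on you treat $-B$ as sitting above a wall on $I_0$: grid contact, cells staying above the wall, descent to the wall through the levels. This fails already for $F=\{2\}$. There the conditional probability of $\{\min_{I_0}(-B)\leq x\}\supseteq\{\max_{I_0}B>0\}$ is bounded below uniformly in $x$. So Steps~1--2 estimate an event that is not small, and there is no wall on $I_0$ to descend to.

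The window event is also neither increasing nor decreasing. The association and wall-monotonicity devices you plan to use (shell ratios, replacing walls by stronger ones) therefore do not apply to it directly.

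Step~3 is in any case only a heuristic. The cross-scale decoupling uniformly in $F$ is exactly what is left open, and nothing in the sketch actually produces the rate $\exp\{-c\sqrt{\log(1/x)}\}$.

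A smaller slip: to make the centred oscillation at mesh $\varepsilon$ smaller than $x$ you need roughly $\varepsilon\ll x^{1/H}$. At the mean-driven mesh $\varepsilon=c\,x^{1/(2H)}$ the centred oscillation is of order $x^{1/2}\gg x$, so that term is not harmless there.

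\textbf{The missing idea is that the anchor $p_0\in F$ is the source of anti-concentration, not just a tightness device.} The paper imports this proposition without proof. The same mechanism it uses in Lemma~\ref{lem:rough-window-separated-anticoncentration} proves it in a few lines.

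Put $Z=-B_{p_0}$, $r(t)=K_H(t,p_0)/p_0^{2H}>0$ and $Y_t=-B_t-r(t)Z$. Then $Y$ is independent of $Z$ and $Y_{p_0}=0$. Hence $A_F=\{Z\geq\ell_F(Y)\}$ with $\ell_F(Y)=\max_{t\in F}(-Y_t/r(t))\geq0$.

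Define $d_Y(z):=\min_{s\in I_0}\{Y_s+r(s)z\}$, which equals $\min_{I_0}(-B)$ at $z=Z$. This map increases with slope at least $c_0:=\min_{I_0}r>0$. So $\{|d_Y(Z)|\leq x\}$ is a $Z$-interval of length at most $2x/c_0$.

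Given $(Y,A_F)$, $Z$ has the truncated $N(0,2^{2H})$ density. On $\{Z\leq M\}$ this density is bounded by $(2\pi2^{2H})^{-1/2}/\overline\Phi_{2^{2H}}(M)\leq C(1+M)e^{M^2/2^{1+2H}}$. Lemma~\ref{lem:uniform-anchor-tail} gives $\PP(Z>M\mid A_F)\leq e^{-cM}$ for $M\geq M_{p_0}$, uniformly in $F\ni p_0$. Hence
\[
  \PP\bigl(|\min_{I_0}(-B)|\leq x\,\big|\,A_F\bigr)
  \leq C(1+M)\,x\,e^{M^2/2^{1+2H}}+e^{-cM}.
\]
The choice $M=2^H\sqrt{\log(1/x)}$ makes the first term $C(1+M)x^{1/2}$. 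This gives $C\exp\{-c\sqrt{\log(1/x)}\}$ for small $x$, with constants depending only on $H$.

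The stretched-exponential rate is simply the trade-off between $1/\overline\Phi(M)$ and the exponential anchor tail; no multiscale analysis is needed. Your reflection argument for $(-I_0,-p_0)$ is fine.
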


\begin{proof}[Proof of \eqref{eq:limit-support}]
For \(f\in\mathcal X\), set
\[
 W(f):=\min_{s\in I_0}(-f(s)).
\]
This functional is continuous in the local-uniform topology.  The canonical
hard-wall laws satisfy
\(\mathcal L(B\mid A_{E_m})\Rightarrow Q\), and \(p_0\in E_m\) for all
sufficiently large \(m\).  Hence, for \(0<x\leq x_0\), the open-set
Portmanteau inequality and Proposition~
\ref{prop:rough-annular-anticontact-import} give
\begin{align*}
 Q\{f:|W(f)|<x\}
 &\leq\liminf_{m\to\infty}
   \PP\left(
    \left|\min_{s\in I_0}(-B_s)\right|<x\,\middle|\,A_{E_m}
   \right)\\
 &\leq C\exp\{-c\sqrt{\log(1/x)}\}.
\end{align*}
By \eqref{eq:limit-nonpositive-support}, \(W\geq0\) almost surely under
\(Q\).  Letting \(x\downarrow0\) therefore shows that
\begin{equation}
 Q\left\{f:\max_{s\in I_0}f(s)<0\right\}=1.
 \label{eq:rough-base-annulus-strict-negativity}
\end{equation}
For each \(k\in\mathbb Z\), self-similarity
\eqref{eq:limit-selfsimilarity}, applied with \(c=2^{-k-1}\), transports
\eqref{eq:rough-base-annulus-strict-negativity} to
\[
 Q\left\{f:\max_{s\in[2^k,2^{k+1}]}f(s)<0\right\}=1.
\]
Intersecting these countably many events gives strict negativity on
\((0,\infty)\).  Reflection invariance
\eqref{eq:rough-tangent-reflection-invariance} gives the same conclusion on
\((-\infty,0)\), while \eqref{eq:limit-nonpositive-support} includes the
deterministic pin \(f(0)=0\).  This proves \eqref{eq:limit-support} and
completes Theorem~\ref{thm:main-conditional-limit}.
\end{proof}

\paragraph{Uniqueness.}

We next prove uniqueness without assuming local absolute continuity of \(Q\).
Choose \(p\in\mathbb D\cap(v,\infty)\), and, for a finite
\(F\subset\mathbb D\) containing \(p\), retain \(A_F\) from
\eqref{eq:reflected-wall-event} and put
\begin{equation}
  r_p(t):=\frac{K_H(t,p)}{p^\alpha}.
  \label{eq:rough-window-anchor-profile}
\end{equation}
Strict subadditivity gives \(r_p(t)>0\) for \(t\neq0\), and
\begin{equation}
  r_p'(t)=\frac{\alpha}{2p^\alpha}
    \{t^{\alpha-1}+(p-t)^{\alpha-1}\}>0,\qquad 0<t<p.
  \label{eq:rough-window-anchor-monotonicity}
\end{equation}

\begin{lemma}
\label{lem:rough-window-separated-anticoncentration}
Let \(I_1,I_2\subset I\) be nonempty compact intervals with
\(\sup I_1<\inf I_2\).  There are constants
\(C_{I_1,I_2,M}<\infty\) and a nonincreasing \(\varepsilon_M\downarrow0\)
as \(M\to\infty\), both independent of the finite
\(F\subset\mathbb D\) containing \(p\), such that, for every \(M\geq0\)
and \(\epsilon>0\),
\begin{equation}
 \PP\left(
  \left|\min_{t\in I_1}(-B_t)-\min_{t\in I_2}(-B_t)\right|
  \leq\epsilon\,\middle|\,A_F
 \right)
 \leq C_{I_1,I_2,M}\epsilon+\varepsilon_M.
 \label{eq:rough-window-separated-anticoncentration}
\end{equation}
\end{lemma}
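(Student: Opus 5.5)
The plan is to disintegrate along the anchor coordinate \(B_p\) and use the strict monotonicity \eqref{eq:rough-window-anchor-monotonicity} of the profile \(r_p\) on \((0,p)\supset I\). Write
\[
  B_t=r_p(t)B_p+\xi_t^{(p)},
\]
where \(\xi^{(p)}\) is the centred bridge residual from \eqref{eq:fbm-bridge-residual-support}. It is a continuous Gaussian process independent of \(B_p\). For \(y\in\mathbb R\) and a fixed residual path, put
\[
  W_i(y):=\min_{t\in I_i}\bigl(-\xi_t^{(p)}-r_p(t)y\bigr),\qquad i=1,2,
\]
so that \(\min_{I_i}(-B_t)=W_i(B_p)\). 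Each \(W_i\) is a minimum of affine functions of \(y\), hence concave and Lipschitz. Its a.e. derivative lies in \([-\max_{I_i}r_p,-\min_{I_i}r_p]\). Hence \(D(y):=W_1(y)-W_2(y)\) has a.e. derivative at least
\[
  \kappa:=\min_{I_2}r_p-\max_{I_1}r_p>0,
\]
using \(\sup I_1<\inf I_2\), \(I\subset(0,p)\) and \eqref{eq:rough-window-anchor-monotonicity}. Consequently, for each residual path, \(\{y:|D(y)|\leq\epsilon\}\) is an interval of length at most \(2\epsilon/\kappa\).

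Next I would identify the conditional law of \(B_p\) given \(\xi^{(p)}\) and \(A_F\). Since \(r_p(u)>0\) for every \(u\neq0\), each wall constraint \(B_u\leq0\), \(u\in F\), reads \(y\leq-\xi^{(p)}_u/r_p(u)\). Thus \(A_F=\{B_p\leq y_{\max}(\xi^{(p)})\}\) with \(y_{\max}:=\min_{u\in F}(-\xi_u^{(p)}/r_p(u))\leq0\); the inequality holds because \(p\in F\) and \(\xi^{(p)}_p=0\). By independence, conditionally on \(\xi^{(p)}\) and \(A_F\), \(B_p\) is \(N(0,p^\alpha)\) truncated to \((-\infty,y_{\max}]\). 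Its density on \([-M,y_{\max}]\) is bounded by \(\varphi_\sigma(y)/\Phi(y_{\max}/\sigma)\) with \(\sigma=p^H\). When \(y_{\max}\geq-M\), the Mills-ratio bound gives at most \(C_p(1+M)\). When \(y_{\max}<-M\), the interval \([-M,y_{\max}]\) is empty.

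Then I would split on the anchor depth. Lemma~\ref{lem:uniform-anchor-tail} gives \(\PP(-B_p>M\mid A_F)\leq e^{-c_pM}\) for \(M\geq M_p\), uniformly in finite \(F\ni p\). I set \(\varepsilon_M:=\min\{1,e^{-c_pM}\}\) for \(M\geq M_p\) and \(\varepsilon_M:=1\) otherwise; this is nonincreasing and tends to zero. On \(\{-B_p\leq M\}\), integrating the density bound over the interval of length \(2\epsilon/\kappa\) and then averaging over the residual gives
\[
  \PP\bigl(|D(B_p)|\leq\epsilon,\ -B_p\leq M\,\big|\,A_F\bigr)\leq\frac{2C_p(1+M)}{\kappa}\,\epsilon .
\]
This yields \eqref{eq:rough-window-separated-anticoncentration} with \(C_{I_1,I_2,M}=2C_p(1+M)/\kappa\), which depends only on \(H\), \(p\), \(I_1\), \(I_2\) and \(M\), and not on \(F\).

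I expect the main point needing care to be the conditional-law step. One must justify that conditioning on the full continuous residual path together with the event \(A_F\) is legitimate, i.e. that a regular conditional law of \(B_p\) given \(\xi^{(p)}\) under \(A_F\) exists and equals the truncated Gaussian. This should follow from independence of \(B_p\) and \(\xi^{(p)}\) via Fubini, since \(\PP(A_F)>0\) and \(A_F\) is a measurable product-type set in \((\xi^{(p)},B_p)\). A second check is that measurability of \(W_i\) as a functional of the continuous path is unproblematic, by taking minima over rational \(t\).
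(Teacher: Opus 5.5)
Your proposal is correct and follows the paper's proof essentially step by step: you disintegrate along the anchor \(B_p\) (the paper writes \(Z=-B_p\)), you rewrite \(A_F\) as a one-sided truncation of the anchor given the independent residual, and you use the slope gap \(\min_{I_2}r_p-\max_{I_1}r_p>0\) from monotonicity of \(r_p\) to make the set of bad anchor values an interval of length at most \(2\epsilon/\kappa\). You then bound the truncated Gaussian density on the depth-\(M\) window and absorb the deep-anchor tail with Lemma~\ref{lem:uniform-anchor-tail}, exactly as the paper does. The only cosmetic difference is that your Mills-ratio density bound \(C_p(1+M)\) is sharper than the paper's \((2\pi p^\alpha)^{-1/2}/\overline\Phi_{p^\alpha}(M)\); either suffices, since the constant may depend on \(M\).
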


\begin{proof}
Let \(Z=-B_p\) and \(Y_t=-B_t-r_p(t)Z\).  The whole process \(Y\) is
independent of \(Z\).  Since \(r_p(t)>0\) for every \(t\in\mathbb D\),
while \(r_p(p)=1\) and \(Y_p=0\), conditional on \(Y\) one has
\begin{equation}
  A_F=\{Z\geq\ell_F(Y)\},
  \qquad
  \ell_F(Y)=\max_{t\in F}\frac{-Y_t}{r_p(t)}.
  \label{eq:rough-window-scalar-truncation}
\end{equation}
In particular, \(\ell_F(Y)\geq0\).  Let \(\varphi_{p^\alpha}\) be the
centred normal density of variance \(p^\alpha\), and put
\(\overline\Phi_{p^\alpha}(a)=\int_a^\infty
\varphi_{p^\alpha}(z)\,\mathrm dz\).  Thus the conditional density of
\(Z\) given \((Y,A_F)\) is exactly
\begin{equation}
 z\longmapsto
 \frac{\varphi_{p^\alpha}(z)
       \mathds1_{\{z\geq\ell_F(Y)\}}}
      {\overline\Phi_{p^\alpha}(\ell_F(Y))}.
 \label{eq:rough-window-truncated-anchor-density}
\end{equation}

For fixed \(Y\), set
\begin{equation}
  d_Y(z)=\min_{t\in I_1}\{Y_t+r_p(t)z\}
        -\min_{t\in I_2}\{Y_t+r_p(t)z\}.
\end{equation}
If \(z_2>z_1\), then
\begin{align*}
 \min_{t\in I_1}\{Y_t+r_p(t)z_2\}
 &\leq \min_{t\in I_1}\{Y_t+r_p(t)z_1\}
      +(z_2-z_1)\sup_{t\in I_1}r_p(t),\\
 \min_{t\in I_2}\{Y_t+r_p(t)z_2\}
 &\geq \min_{t\in I_2}\{Y_t+r_p(t)z_1\}
      +(z_2-z_1)\inf_{t\in I_2}r_p(t).
\end{align*}
Consequently,
\begin{equation}
  d_Y(z_2)-d_Y(z_1)
  \leq-c_{I_1,I_2}(z_2-z_1),\qquad
  c_{I_1,I_2}=\inf_{I_2}r_p-\sup_{I_1}r_p>0.
  \label{eq:rough-window-minimum-slope}
\end{equation}
Each minimum is Lipschitz in \(z\), so \(d_Y\) is continuous.  Hence the
inverse image of \([-\epsilon,\epsilon]\) has length at most
\(2\epsilon/c_{I_1,I_2}\).  If it meets
\([\ell_F(Y),M]\), then \(\ell_F(Y)\leq M\), and the normalised truncated
Gaussian density in \eqref{eq:rough-window-truncated-anchor-density} is
bounded there by
\begin{equation}
  \frac{(2\pi p^\alpha)^{-1/2}}
       {\overline\Phi_{p^\alpha}(M)}.
\end{equation}
Integration first in \(Z\) therefore bounds the event in
\eqref{eq:rough-window-separated-anticoncentration}, intersected with
\(\{Z\leq M\}\), by
\begin{equation}
 \frac{2\epsilon}{c_{I_1,I_2}}
 \frac{(2\pi p^\alpha)^{-1/2}}
      {\overline\Phi_{p^\alpha}(M)}.
 \label{eq:rough-window-truncated-small-interval}
\end{equation}
Moreover,
\begin{equation}
 \varepsilon_M
 :=\sup_{\substack{F\subset\mathbb D\text{ finite}\\p\in F}}
    \PP(Z>M\mid A_F)
 =\sup_{\substack{F\subset\mathbb D\text{ finite}\\p\in F}}
    \nu_F^{\{p\}}((M,\infty))
 \longrightarrow0.
  \label{eq:rough-window-anchor-tail}
\end{equation}
Indeed, this is Lemma~\ref{lem:uniform-anchor-tail}.  Combining
\eqref{eq:rough-window-truncated-small-interval} with
\eqref{eq:rough-window-anchor-tail} proves the claim.  No lower bound on
\(\PP(A_F)\) is used.
\end{proof}

\begin{proposition}
\label{prop:rough-unique-window-maximizer}
If \(X\sim Q\), then \(X\) has a unique maximiser on \(I\) almost surely.
Put \(D_m=E_m\cap I\).  For all sufficiently large \(m\), this set is
nonempty; let \(S_m(f)\) denote the leftmost maximiser of \(f\) on \(D_m\).
If \(x_m,x\in\mathcal X\), \(x_m\to x\) locally uniformly, and \(x\) has a
unique maximiser \(S_I(x)\) on \(I\), then
\(S_m(x_m)\to S_I(x)\).
\end{proposition}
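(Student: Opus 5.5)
The proposition has two parts, a probabilistic uniqueness claim and a deterministic continuity claim for the grid maximisers. The approach is to obtain uniqueness by combining Lemma~\ref{lem:rough-window-separated-anticoncentration} with weak convergence, and to settle the continuity claim separately.

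\emph{Uniqueness.} If \(X\) has two distinct maximisers on \(I\), they are separated by some rational gap. Then there exist disjoint compact intervals \(I_1,I_2\subset I\) with rational endpoints and \(\sup I_1<\inf I_2\) such that \(\max_{I_1}X=\max_{I_2}X\). Equivalently, \(\min_{I_1}(-X)=\min_{I_2}(-X)\).

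There are only countably many such pairs, so it suffices to show for each fixed pair that
\[
  Q\bigl\{f:\min_{I_1}(-f)=\min_{I_2}(-f)\bigr\}=0 .
\]
The functional \(\Delta(f):=\min_{I_1}(-f)-\min_{I_2}(-f)\) is continuous on \(\mathcal X\). By Theorem~\ref{thm:path-space-entrance-law}, the canonical laws \(\mathcal L(B\mid A_{E_m})\) converge weakly to \(Q\), and \(p\in E_m\) for all large \(m\). The set \(\{|\Delta|<\epsilon\}\) is open, so the Portmanteau inequality and Lemma~\ref{lem:rough-window-separated-anticoncentration} give
\[
  Q\{|\Delta|<\epsilon\}\le\liminf_m\PP(|\Delta(B)|<\epsilon\mid A_{E_m})\le C_{I_1,I_2,M}\epsilon+\varepsilon_M .
\]
First let \(\epsilon\downarrow0\), then let \(M\to\infty\). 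This yields \(Q\{\Delta=0\}=0\). A countable union over the pairs then proves almost sure uniqueness. The only point to check is that every pair of distinct maximisers \(s<s'\) is indeed separated by such rational intervals: take small rational neighbourhoods of \(s\) and \(s'\) inside \(I\), which is fine even when a maximiser sits at an endpoint of \(I\).

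\emph{Nonemptiness.} Since \(I\) is nondegenerate and the grids \(E_m\) become dense in \([-2^m,2^m]\), the set \(D_m=E_m\cap I\) is nonempty for all large \(m\), and every point of \(I\) is within \(2^{-m}\) of \(D_m\).

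\emph{Deterministic convergence.} Let \(s^*=S_I(x)\). Choose \(d_m\in D_m\) with \(d_m\to s^*\). Uniform convergence of \(x_m\to x\) on \(I\), together with continuity of \(x\), gives
\[
  x_m(S_m(x_m))\ge x_m(d_m)\to x(s^*).
\]
Let \(s'\) be any subsequential limit of \(S_m(x_m)\); it lies in \(I\) because \(I\) is compact. Uniform convergence on \(I\) combined with continuity of \(x\) gives \(x_m(S_m(x_m))\to x(s')\) along that subsequence. Hence \(x(s')\ge x(s^*)\), so \(s'\) is a maximiser of \(x\) on \(I\), and uniqueness forces \(s'=s^*\). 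Compactness of \(I\) then yields convergence of the full sequence.

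The main obstacle is the uniqueness step, but Lemma~\ref{lem:rough-window-separated-anticoncentration} already carries the analytic weight there. Its bound is uniform over finite \(F\) containing \(p\), which is exactly what is needed to pass to the weak limit with only an open-set Portmanteau inequality, without any absolute continuity of \(Q\).
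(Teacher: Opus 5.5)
Your proposal is correct and follows the paper's proof essentially step for step: open-set Portmanteau along the canonical walls \(E_m\ni p\) combined with the \(F\)-uniform anticoncentration lemma, then a countable union over separated pairs, and the same compactness argument for the grid maximisers. One small correction: when an endpoint \(u\) or \(v\) of \(I\) is irrational, a rational-endpoint interval inside \(I\) cannot contain it, so your claim that this "is fine even when a maximiser sits at an endpoint of \(I\)" fails; instead use \(I_1=[u,q_1]\) and \(I_2=[q_2,v]\) with rationals \(s<q_1<q_2<s'\), as the paper does, which keeps the family countable.
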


\begin{proof}
Start the canonical sequence after the fixed dyadic anchor \(p\) belongs to
\(E_m\).  The path-space hard-wall limit theorem gives
\(\mathcal L(-B\mid A_{E_m})\Rightarrow\mathcal L(-X)\).  For fixed
separated compact intervals \(I_1,I_2\subset I\), put
\[
 \Delta_{I_1,I_2}(f)
 :=\left|\min_{t\in I_1}f(t)-\min_{t\in I_2}f(t)\right|.
\]
This functional is continuous in the local-uniform topology.  If
\(0\leq\epsilon<\eta\), then the closed event at radius \(\epsilon\) is
contained in the open event at radius \(\eta\).  The open-set
Portmanteau inequality and
Lemma~\ref{lem:rough-window-separated-anticoncentration} therefore give,
for every fixed \(M\geq0\),
\begin{align*}
 \PP_Q\{\Delta_{I_1,I_2}(-X)\leq\epsilon\}
 &\leq \PP_Q\{\Delta_{I_1,I_2}(-X)<\eta\}\\
 &\leq\liminf_{m\to\infty}
   \PP\{\Delta_{I_1,I_2}(-B)<\eta\mid A_{E_m}\}\\
 &\leq C_{I_1,I_2,M}\eta+\varepsilon_M.
\end{align*}
Letting \(\eta\downarrow\epsilon\) yields
\begin{equation}
  \PP_Q\{\Delta_{I_1,I_2}(-X)\leq\epsilon\}
  \leq C_{I_1,I_2,M}\epsilon+\varepsilon_M.
  \label{eq:rough-window-limit-anticoncentration}
\end{equation}
Taking \(\epsilon=0\) and then \(M\to\infty\) shows that the two minima are
almost surely unequal for each fixed separated pair.  If a continuous path
had two distinct minimisers \(s<t\) on \(I\), choose rationals
\(s<q_1<q_2<t\).  The minima on \([u,q_1]\) and \([q_2,v]\) would then agree.
There are only countably many such rational pairs, so a countable union
proves uniqueness.

Finally fix deterministic \(x_m,x\in\mathcal X\) as in the statement and
write \(s=S_I(x)\), \(s_m=S_m(x_m)\).  For all sufficiently large \(m\), the
grid \(D_m\) is nonempty and every point of \(I\) lies within \(2^{-m}\) of
some point of \(D_m\).  The leftmost rule makes \(S_m\) Borel, because each
of its finitely many fibres is an intersection of evaluation inequalities.
Choose \(r_m\in D_m\) with
\(|r_m-s|\leq2^{-m}\), and put
\(\delta_m=\|x_m-x\|_{C(I)}\to0\).  Maximality of \(s_m\) gives
\[
 x(s_m)\geq x_m(s_m)-\delta_m
          \geq x_m(r_m)-\delta_m
          \geq x(r_m)-2\delta_m
          \longrightarrow x(s).
\]
Since \(x(s_m)\leq x(s)\), one has \(x(s_m)\to x(s)\).  Every subsequence of
\((s_m)\) has, by compactness of \(I\), a further subsequence converging to
some \(t\in I\); continuity then gives \(x(t)=x(s)\), and uniqueness forces
\(t=s\).  Hence the full sequence satisfies \(s_m\to s\), with no asserted
rate.
\end{proof}

\paragraph{Ordinary germs and endpoint exclusion.}
\label{subsec:rough-ordinary-germs}
\label{subsec:rough-fixed-point-germs}

To prove that the \(Q\)-maximiser on \(I=[u,v]\) is not attained at either
endpoint \(u\) or \(v\), we need ordinary local oscillation at deterministic
positive times under \(Q\).  The Gaussian input is the following exterior
prediction lemma.

\begin{lemma}
\label{lem:rough-vanishing-exterior-prediction}
Let \(B\) be a two-sided fractional Brownian motion and let \(\Pi_R\) be
orthogonal projection in its Gaussian \(L^2\)-space onto
\begin{equation}
  \mathscr E_R
  :=\overline{\operatorname{span}}^{L^2}\{B_u:\ |u|\geq R\}.
\end{equation}
For fixed \(s,t\neq0\),
\begin{equation}
  \operatorname{Cov}(\Pi_RB_s,\Pi_RB_t)\longrightarrow0,
  \qquad R\to\infty.
  \label{eq:rough-vanishing-exterior-prediction}
\end{equation}
\end{lemma}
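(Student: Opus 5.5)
The approach is to reduce the exterior prediction problem to a small-scale problem near the origin, using self-similarity, time inversion and the Green-kernel identity. First note that $\operatorname{Var}(\Pi_RB_s)$ controls the covariance by Cauchy--Schwarz,
\[
  |\operatorname{Cov}(\Pi_RB_s,\Pi_RB_t)|\leq\|\Pi_RB_s\|_2\|\Pi_RB_t\|_2,
\]
so it suffices to show $\|\Pi_RB_s\|_2\to0$ for each fixed $s\neq0$. By self-similarity, $(B_{Ru})_u\deq R^H(B_u)_u$. The projection of $B_s$ onto $\mathscr E_R$ therefore corresponds to $R^H$ times the projection of $B_{s/R}$ onto $\mathscr E_1$. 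The claim becomes
\[
  R^{H}\,\|\Pi_1B_{\epsilon}\|_2\longrightarrow0,\qquad \epsilon=s/R\to0,
\]
that is, $\|\Pi_1B_\epsilon\|_2=o(|\epsilon|^H)$.

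Next apply time inversion $\mathcal I_H$ from \eqref{eq:rough-time-inversion-map}. Then $\mathscr E_1$ becomes the span of $\widehat B_w$ with $|w|\leq1$, and $B_\epsilon$ becomes $|\epsilon|^\alpha\widehat B_{1/\epsilon}$, a far point. Rather than pursue this, I will use a direct Hilbert-space argument in the original coordinates. The variance $\operatorname{Var}(\Pi_1B_\epsilon)$ equals $|\epsilon|^\alpha-\operatorname{Var}(B_\epsilon\mid B_u,|u|\geq1)$. The conditional variance given the exterior is the variance of the process killed outside $(-1,1)$. By the covariance--Green identity \eqref{eq:fbm-stable-green-identification}, $K_H$ is (up to $2c_\alpha$) the Green function of the symmetric $(1+2H)$-stable process $Z$ killed at $0$. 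Conditioning the Gaussian field on its values on a closed set $A$ corresponds, via the Dynkin isomorphism / Gaussian-Markov property of Green-function fields, to killing additionally on hitting $A$. Hence
\[
  \operatorname{Var}(\Pi_1B_\epsilon)=\frac1{2c_\alpha}\,\EE_\epsilon\bigl[g_\alpha(Z_{T_A},\epsilon);\,T_A<T_0\bigr],\qquad A=\{|u|\geq1\}.
\]
Using $g_\alpha(y,\epsilon)\leq C|\epsilon|^{\alpha-1}\cdot|\epsilon|$ type bounds for $|y|\geq1$ is too crude, so I will instead use two bounds. For $|y|\geq1$, $g_\alpha(y,\epsilon)=c_\alpha(|y|^{\alpha-1}+|\epsilon|^{\alpha-1}-|y-\epsilon|^{\alpha-1})\leq c_\alpha|\epsilon|^{\alpha-1}+O(|\epsilon|)$. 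For $\alpha=1+2H>1$ this is $O(|\epsilon|^{2H})$, and symmetric reasoning gives $g_\alpha(\epsilon,\epsilon)\asymp|\epsilon|^{2H}$. The key input is that $\PP_\epsilon(T_A<T_0)\to0$ as $\epsilon\to0$, since points are regular for $Z$ when $\alpha>1$. The hitting probability of $0$ before exiting $(-1,1)$ tends to one from nearby starts. Combining, $\operatorname{Var}(\Pi_1B_\epsilon)\leq C|\epsilon|^{2H}\PP_\epsilon(T_A<T_0)=o(|\epsilon|^{2H})$, which is the required $o(|\epsilon|^{\alpha})$, since $\alpha=2H$ here, matching the scaling target.

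The main obstacle is justifying that conditioning on the exterior corresponds exactly to the killed Green function. I would prove this for finite exterior sets $F$ first, via the finite Markov-chain/trace identity: the trace of $Z$ on $F\cup\{\epsilon\}$ has Green matrix $K_H$ restricted to those points. Schur complement then gives conditional variance equal to the Green function killed on $F$. I would then pass to the closure by monotone limits over increasing finite $F$ dense in $A$, using hitting regularity. If this route stalls, a fallback is a direct Fourier argument in the spectral representation of $B$. There, $\mathscr E_1$ cannot predict high-frequency content of $B_\epsilon$, but quantifying that seems harder.
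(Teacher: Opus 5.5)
Your argument is correct, and its mechanism is the paper's. It rests on the identity \(K_H=\frac{1}{2c_\beta}g_D\) for the symmetric \((1+2H)\)-stable process killed at zero, the first-exit decomposition writing the prediction covariance as \(\frac{1}{2c_\beta}\int g_D(z,y)\,H_R(x,\mathrm dz)\), the bound \(0\leq g_D(z,y)\leq 2c_\beta|y|^{2H}\), and the vanishing of the escape probability.

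Your rescaling is cosmetic. With \(\sigma_R\) the exit time from \((-R,R)\), so that your \(T_A\) is \(\sigma_1\), stable scaling gives \(\PP_{s/R}(\sigma_1<T_0)=\PP_s(\sigma_R<T_0)=H_R(s,\mathbb R)\), which is the very quantity the paper sends to zero. Likewise, your Cauchy--Schwarz step merely replaces the paper's direct off-diagonal bound.

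The genuine difference is how the projection formula is justified. The paper constructs \(\int B_z\,H_R(x,\mathrm dz)\) as a Bochner integral in \(\mathscr E_R\), which needs the L\'evy-system exit-tail estimate to ensure \(\int|z|^H H_R(x,\mathrm dz)<\infty\), and then verifies orthogonality. Your finite-set route (trace chain and Schur complement, or the strong Markov property at the hitting time \(T_F\) of a finite \(F\)) needs no integrability input.

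The obstacle you single out also essentially disappears, because only an inequality is needed. For finite \(F\subset\{|u|\geq1\}\) one has \(T_F\geq\sigma_1\), so the projection of \(B_\epsilon\) onto \(\operatorname{span}\{B_u:u\in F\}\) has variance at most \(|\epsilon|^{2H}\PP_\epsilon(\sigma_1<T_0)\), uniformly in \(F\). Then \(L^2\)-continuity of \(u\mapsto B_u\) makes these projections increase to \(\Pi_1B_\epsilon\) along finite sets exhausting a countable dense subset of \(\{|u|\geq1\}\). You never need to identify the limit with your \(T_A\)-formula, nor any hitting-regularity argument at that stage.

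Two points to tighten. First, regularity of \(0\) is a statement under \(\PP_0\) and does not by itself yield \(\PP_\epsilon(\sigma_1<T_0)\to0\) without a further continuity argument. Via the scaling identity above, however, it follows directly from point recurrence (\(T_0<\infty\) almost surely, by Blumenthal--Getoor--Ray) and boundedness of c\`adl\`ag paths on \([0,T_0]\), exactly as in the paper. Second, you use \(\alpha\) both for the stable index \(1+2H\) and for \(2H\). Also, the tool you need is the strong Markov (balayage) property, not the Dynkin isomorphism.
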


\begin{proof}
Put \(\beta=1+\alpha\in(1,2)\), let \(Z\) be the symmetric
\(\beta\)-stable process of Lemma~\ref{lem:stable-green-killed-zero}, and
write \(D=\mathbb R\setminus\{0\}\).  By
\eqref{eq:fbm-stable-green-identification},
\begin{equation}
  K_H(x,y)=\frac{1}{2c_\beta}g_D(x,y).
  \label{eq:rough-green-covariance}
\end{equation}
For \(D_R=(-R,R)\setminus\{0\}\), let \(g_{D_R}\) be the Green density
killed on leaving \(D_R\), extended by zero when either argument is outside
\(D_R\).  For \(x\in D_R\), set
\begin{equation*}
  \sigma_R:=\inf\{t>0:|Z_t|\geq R\},\qquad
  \tau_R:=T_0\wedge\sigma_R,
\end{equation*}
and define the subprobability exit law on \(\{|z|\geq R\}\) by
\begin{equation*}
  H_R(x,A):=\PP_x(Z_{\sigma_R}\in A,\ \sigma_R<T_0).
\end{equation*}
Thus the mass corresponding to a hit of zero is sent to the cemetery.  The
strong Markov property at \(\tau_R\) gives
\begin{equation}
  g_D(x,y)=g_{D_R}(x,y)+\int g_D(z,y)H_R(x,\mathrm dz),
  \qquad x\in D_R, y\in D.
  \label{eq:rough-green-balayage}
\end{equation}

We next justify the Gaussian exit integral.  The L\'evy measure of \(Z\) is
of the form \(\nu_\beta(\mathrm dh)=a_\beta|h|^{-1-\beta}\,\mathrm dh\).
Until \(\tau_R\), a jump of magnitude greater than \(2R\) forces exit from
\((-R,R)\).  Hence \(\tau_R\) is bounded by the first such jump time, which
is exponential with finite mean.  In particular \(\EE_x\tau_R<\infty\).
Moreover, if \(L>2R\), an exit location of modulus greater than \(L\) must
arise from a jump of magnitude greater than \(L-R\).  The L\'evy-system
formula therefore gives
\begin{equation}
  H_R(x,\{|z|>L\})
  \leq C_\beta\EE_x\tau_R(L-R)^{-\beta},\qquad L>2R.
  \label{eq:rough-stable-exit-tail}
\end{equation}
Since \(H<\beta\), the layer-cake formula and
\eqref{eq:rough-stable-exit-tail} yield
\begin{equation*}
 \int |z|^H H_R(x,\mathrm dz)
 \leq (2R)^H
   +H\int_{2R}^{\infty}L^{H-1}H_R(x,\{|z|>L\})\,\mathrm dL
 <\infty.
\end{equation*}
The map \(z\mapsto B_z\) is continuous in Gaussian \(L^2\), because
\(\|B_z-B_w\|_2=|z-w|^H\), and \(\|B_z\|_2=|z|^H\).  Consequently the
Bochner integral
\begin{equation*}
  P_RB_x:=\int B_zH_R(x,\mathrm dz)
\end{equation*}
is well-defined.  Approximation by simple functions whose values are
exterior coordinates shows that \(P_RB_x\in\mathscr E_R\).

For every \(y\in D\), continuity of covariance as a linear functional and
\eqref{eq:rough-green-balayage} give
\begin{equation}
 \operatorname{Cov}(B_x-P_RB_x,B_y)
 =\frac{1}{2c_\beta}\left\{
   g_D(x,y)-\int g_D(z,y)H_R(x,\mathrm dz)
  \right\}
 =\frac{1}{2c_\beta}g_{D_R}(x,y).
 \label{eq:rough-exterior-residual-against-coordinate}
\end{equation}
If \(|y|\geq R\), the last member is zero.  Thus \(B_x-P_RB_x\) is
orthogonal to every generator of \(\mathscr E_R\), and hence to
\(\mathscr E_R\) itself.  Since \(P_RB_x\in\mathscr E_R\), this proves
\(P_RB_x=\Pi_RB_x\).  For \(x,y\in D_R\), the first residual is orthogonal
to \(P_RB_y\), so \eqref{eq:rough-exterior-residual-against-coordinate}
also gives
\begin{equation}
  \operatorname{Cov}(B_x-P_RB_x,B_y-P_RB_y)
  =\frac{1}{2c_\beta}g_{D_R}(x,y).
  \label{eq:rough-exterior-residual-covariance}
\end{equation}
Subtracting this residual covariance from \eqref{eq:rough-green-covariance}
gives the exact prediction covariance
\begin{equation}
 \operatorname{Cov}(\Pi_RB_x,\Pi_RB_y)
 =\frac{1}{2c_\beta}\{g_D(x,y)-g_{D_R}(x,y)\},
 \qquad x,y\in D_R.
 \label{eq:rough-exterior-projection-covariance}
\end{equation}

For fixed \(y\), subadditivity gives
\begin{equation*}
  0\leq g_D(z,y)\leq2c_\beta|y|^\alpha.
\end{equation*}
For \(\beta\in(1,2)\), the symmetric \(\beta\)-stable process hits each
point almost surely \parencite{blumenthalGetoorRay1961FirstHitsSymmetricStable};
in particular \(T_0<\infty\) almost surely.  Every c\`adl\`ag path is bounded
on the compact random interval \([0,T_0]\).  Therefore the decreasing events
\(\{\sigma_R<T_0\}\) have empty intersection almost surely, and
\begin{equation*}
  H_R(x,\mathbb R)=\PP_x(\sigma_R<T_0)\longrightarrow0.
\end{equation*}
The remainder in \eqref{eq:rough-green-balayage} is bounded by
\(2c_\beta|y|^\alpha H_R(x,\mathbb R)\), and hence vanishes.  Thus, for fixed
\(x,y\neq0\),
\begin{equation*}
  g_{D_R}(x,y)\longrightarrow g_D(x,y);
\end{equation*}
domain monotonicity also shows that this convergence is increasing.  Finally,
fixed \(s,t\neq0\) belong to \(D_R\) for all sufficiently large \(R\), so
\eqref{eq:rough-exterior-projection-covariance} proves
\eqref{eq:rough-vanishing-exterior-prediction}.
\end{proof}

\begin{proposition}
\label{prop:rough-ordinary-fixed-point-germs}
If \(X\sim Q\), then, for both \(\varepsilon\in\{-1,+1\}\),
\begin{equation}
  \delta^{-H}
  \{X(1+\varepsilon\delta t)-X(1)\}_{0\leq t\leq1}
  \Longrightarrow \{B_t\}_{0\leq t\leq1}
  \quad\text{in }C([0,1])
  \label{eq:rough-ordinary-fixed-point-germs}
\end{equation}
as \(\delta\downarrow0\).
\end{proposition}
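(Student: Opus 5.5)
We prove the statement at the level of the finite hard-wall laws, uniformly in the wall, and then pass to \(Q\) by the path-space limit of Theorem~\ref{thm:path-space-entrance-law}. Fix \(\varepsilon=+1\); the case \(\varepsilon=-1\) is identical (reflection invariance \eqref{eq:rough-tangent-reflection-invariance} is not needed, since \(1\) is a fixed interior time and both sides are handled alike). For a finite wall \(F\subset\mathbb D\), write \(B=m_F+Z_F\) under \(\mu_F=\mathcal L(B\mid A_F)\), as in \eqref{eq:centered-wall-process}.

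\textbf{The mean.} On the annulus \([1/2,2]\), \eqref{eq:annular-mean-modulus} gives \(|m_F(1+\delta t)-m_F(1)|\leq C\delta^{2H}\), uniformly in \(F\). After division by \(\delta^H\) this is \(O(\delta^{H})\to0\). So the rescaled mean is negligible uniformly in \(F\), and only the centred part matters.

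\textbf{The centred part.} We use the exterior prediction Lemma~\ref{lem:rough-vanishing-exterior-prediction}, but in the local form obtained by self-similarity and stationary increments around the time \(1\).

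\begin{enumerate}
\item Fix \(R\) large. Split \(B_{1+\delta t}-B_1\) into its projection onto the span of the coordinates \(B_w\), \(|w-1|\geq\delta R\) (together with \(B_0\)), and a residual \(\Delta^{\delta,R}_t\).
\item By stationary increments and scaling, \(\delta^{-H}(B_{1+\delta\cdot}-B_1)\) is a standard fBm \(W\). In its coordinates this split is exactly the projection \(\Pi_R\) onto \(\{W_u:|u|\geq R\}\), up to one extra coordinate \(W_{-1/\delta}\), which is far away and is absorbed.
\item Lemma~\ref{lem:rough-vanishing-exterior-prediction} shows that the predicted part has variance tending to zero as \(R\to\infty\), uniformly in \(\delta\). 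Hence the residual \(\delta^{-H}\Delta^{\delta,R}\) is close in law to \(W\) on \([0,1]\).
\item The residual is independent of the exterior coordinates. The wall constraints at points with \(|w-1|\geq\delta R\) therefore only condition the exterior part.
\item The remaining wall points with \(|w-1|<\delta R\) lie within \(O(\delta)\) of \(1\), where the path sits at depth of order one. This follows from strict negativity, Proposition~\ref{prop:rough-annular-anticontact-import}, transported to a neighbourhood of \(1\) by self-similarity. The probability that a Hölder-\(H\) fluctuation of size \(O(\delta^H)\) reaches the wall there tends to zero, uniformly in \(F\).
\end{enumerate}

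Conditioning on the near constraints therefore changes the law only by a total-variation error \(o(1)\). Combining the steps, under \(\mu_F\) the rescaled increment is \(W\) plus an error that is small in probability uniformly in \(F\). The exterior-predicted piece is controlled in sup norm by Lemma~\ref{lem:uniform-centered-modulus}-type chaining: its increment variances are small, and Harg\'e domination, Lemma~\ref{lem:harge-convex-domination}, survives conditioning.

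\textbf{Passage to \(Q\).} Take a bounded Lipschitz test function \(\varphi\) on \(C([0,1])\). The map
\[
  f\mapsto\varphi\bigl(\delta^{-H}(f(1+\delta\cdot)-f(1))\bigr)
\]
is continuous on \(\mathcal X\). Weak convergence \(\mu_{E_m}\Rightarrow Q\) then transfers the uniform-in-\(F\) estimate to \(Q\), and letting \(\delta\downarrow0\) and then \(R\to\infty\) gives \eqref{eq:rough-ordinary-fixed-point-germs}.

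\textbf{Main obstacle.} The hard step is making "near constraints are invisible" uniform in \(F\), since a fine wall may contain very many points near \(1\). I would proceed in two steps.
\begin{itemize}
\item Use the anticontact bound \eqref{eq:rough-annular-anticontact-import} to restrict to the event \(\max_{[1/2,2]}B\leq -x\).
\item On that event, compare conditioned and unconditioned laws by an FKG sandwich in the MTP\(_2\) structure of Lemma~\ref{lem:finite-wall-mtp2-closure}: dropping the near walls relaxes the conditioning, while their violation probability is at most
\[
  \PP\Bigl(\sup_{|w-1|<\delta R}|B_w-B_1|>x\Bigr)\to0 .
\]
\end{itemize}
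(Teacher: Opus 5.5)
Your architecture matches the paper's. The rescaled mean is negligible by \eqref{eq:annular-mean-modulus}. The germ is regressed on coordinates that are exterior in the rescaled frame, and Lemma~\ref{lem:rough-vanishing-exterior-prediction} kills the predicted part. The residual is independent of the outer walls, and Lemma~\ref{lem:harge-convex-domination} controls the predicted part after conditioning. The near walls are then deleted in total variation, and the limits are taken in the order: fixed $\delta$, then $m\to\infty$ along $E_m$, then $\delta\downarrow0$. Your variations here are harmless: the radius $\delta R$ with $R\to\infty$ last instead of $r_\delta=\delta^{1/2}$, and projection onto the full exterior span. The latter even gives a sup-norm decomposition that could replace the paper's separate Kolmogorov tightness step.

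The gap is in the step you call the main obstacle. Deleting the near walls is governed by
\[
  d_{\mathrm{TV}}\bigl(\mathcal L(B\mid A_{\mathrm{out}}\cap A_{\mathrm{near}}),\,\mathcal L(B\mid A_{\mathrm{out}})\bigr)
  =\PP\bigl(A_{\mathrm{near}}^c\,\big|\,A_{\mathrm{out}}\bigr),
\]
so you need the path to sit at depth of order one at $1$ under the \emph{relaxed} law. Your input does not give this.

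If the event $\{\max_{[1/2,2]}B\le -x\}$ is produced under the full wall law, association transfers it the wrong way. Conditioning on the increasing event $A_{\mathrm{near}}$ can only raise the probability of depth, so depth under $A_{\mathrm{out}}\cap A_{\mathrm{near}}$ says nothing about depth under $A_{\mathrm{out}}$. If instead you apply Proposition~\ref{prop:rough-annular-anticontact-import} to the relaxed wall, the bound is two-sided. It controls $\PP(|\max_{I_0}B|\le x)$ but does not exclude $\max_{I_0}B>x$, which a wall with a gap around $1$ permits.

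Moreover, your relaxed wall discards every site with $|w-1|<\delta R$, including $1$ itself, so the claimed uniformity in $F$ is false. For $F=\{2,w\}$ with a dyadic $w\in(1,1+\delta R)$, the relaxed law is $\mathcal L(B\mid B_2\le0)$, under which $\{B_w\le0\}$ fails with probability bounded away from zero. Strict negativity of $Q$ cannot help, since the deletion takes place at the finite level.

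The missing idea is the paper's: keep the site $1$ in the outer wall, $F_r^{\mathrm{out}}=(F\setminus(1-r,1+r))\cup\{1\}$. This costs nothing in the prediction step, because $B_1=-\delta^HW_{-1/\delta}$ is an exterior coordinate. Proposition~\ref{prop:wall-monotonicity}, started from the one-site wall, then makes $-B_1$ under $A_{F_r^{\mathrm{out}}}$ stochastically larger than a half-normal. Hence $\PP(-B_1\le a\mid A_{F_r^{\mathrm{out}}})\le Ca$ uniformly in $F\ni1$. The near walls can then fail only with probability at most $Ca$ plus the uniform oscillation term over $(1-r,1+r)$, and no anticontact estimate is needed.

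Your route can also be patched along the fine canonical grids: exclude $\max_{I_0}B>x$ using the outer sites bordering the gap together with the uniform modulus. That is strictly more work and uses the heavy imported estimate for something monotonicity gives for free.
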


\begin{proof}
Write \(G=-B\).  We first work under the finite-grid hard-wall law
conditioned on
\(A_F=\{B_s\leq0:s\in F\}=\{G_s\geq0:s\in F\}\).  Put
\(r_\delta=\delta^{1/2}\), take \(\delta<1/2\), and suppose that a finite
dyadic constraint set \(F\) satisfies
\begin{equation}
  1\in F,\qquad F\cap(1-r_\delta,1+r_\delta)=\{1\}.
  \label{eq:rough-germ-outer-wall-geometry}
\end{equation}
For a fixed finite \(T\subset(0,1]\), define
\begin{equation}
  U_{\delta,t}
  :=\delta^{-H}\{B_1-B_{1+\varepsilon\delta t}\},
  \qquad t\in T,
  \label{eq:rough-germ-vector}
\end{equation}
and let \(W=G_F=-B_F\).  Gaussian regression gives
\begin{equation}
  U_\delta=L_\delta+R_\delta,\qquad
  L_\delta=\EE(U_\delta\mid W),
  \label{eq:rough-germ-regression}
\end{equation}
where \(R_\delta\) is centred Gaussian and independent of \(W\).

Set \(\widetilde B_u=B_1-B_{1+u}\).  It has the two-sided fractional Brownian
law and
\begin{equation}
  -B_1=-\widetilde B_{-1},\qquad
  -B_s=-\widetilde B_{-1}+\widetilde B_{s-1}.
  \label{eq:rough-germ-wall-span}
\end{equation}
Because \(r_\delta<1\), the first coordinate on the right of
\eqref{eq:rough-germ-wall-span} is an exterior coordinate; every other
coordinate is a linear combination of two exterior coordinates under
\eqref{eq:rough-germ-outer-wall-geometry}.  Thus the Gaussian span of \(W\)
is contained in that of
\(\{\widetilde B_u:|u|\geq r_\delta\}\).  Joint self-similarity of the target
and exterior coordinates turns this into the exterior span of
Lemma~\ref{lem:rough-vanishing-exterior-prediction} at radius
\(r_\delta/\delta=\delta^{-1/2}\).  Projection onto the smaller finite span
cannot increase variance.  Consequently, for every fixed
\(\theta\in\mathbb R^T\),
\begin{equation}
\begin{split}
  \sup_F\operatorname{Var}(\theta^{\mathsf T}L_\delta)
  &\leq
  \operatorname{Var}\left(
    \Pi_{\delta^{-1/2}}
    \sum_{t\in T}\theta_tB_{\varepsilon t}
  \right)\\
  &=\sum_{s,t\in T}\theta_s\theta_t
    \operatorname{Cov}\left(
      \Pi_{\delta^{-1/2}}B_{\varepsilon s},
      \Pi_{\delta^{-1/2}}B_{\varepsilon t}
    \right)
  \longrightarrow0,
\end{split}
  \label{eq:rough-germ-prediction-variance}
\end{equation}
where the supremum is over the finite dyadic \(F\) satisfying
\eqref{eq:rough-germ-outer-wall-geometry}.  The final limit is entrywise
Lemma~\ref{lem:rough-vanishing-exterior-prediction}, applied only at the fixed
nonzero times \(\varepsilon t\).

Rare orthant conditioning does not amplify this small projection.  The
event \(A_F\) is the positive-orthant event for \(W\), and independence and
centering of \(R_\delta\) give
\(\EE(L_\delta\mid A_F)=\EE(U_\delta\mid A_F)\).  The annular mean estimate
therefore gives
\begin{equation}
  |\EE(L_{\delta,t}\mid A_F)|
  =|\EE(U_{\delta,t}\mid A_F)|
  \leq C\delta^{-H}(\delta t)^\alpha
  \leq C_T\delta^H.
  \label{eq:rough-germ-conditioned-mean}
\end{equation}
Apply Lemma~\ref{lem:harge-convex-domination} to the positive-orthant
truncation of \(W\) and the linear functional
\(\theta^{\mathsf T}L_\delta\).  Then
\begin{equation}
 \EE\left[
  e^{\lambda\theta^{\mathsf T}
   \{L_\delta-\EE(L_\delta\mid A_F)\}}
  \,\middle|\,A_F\right]
 \leq
 e^{\lambda^2\operatorname{Var}(\theta^{\mathsf T}L_\delta)/2}.
 \label{eq:rough-germ-projection-harge}
\end{equation}
Equations \eqref{eq:rough-germ-prediction-variance}--%
\eqref{eq:rough-germ-projection-harge} imply
\(L_\delta\to0\) in probability under the hard-wall law, uniformly in \(F\).
Indeed, the two signs of \(\lambda\) in
\eqref{eq:rough-germ-projection-harge} give sub-Gaussian tails for the
centred projection, and the variance parameter and its conditional mean both
vanish uniformly.  Before conditioning, \(U_\delta\) has exactly the centred
standard fractional-Brownian law on \(T\), for either sign of
\(\varepsilon\).  Orthogonal regression gives
\(\operatorname{Cov}(R_\delta)
=\operatorname{Cov}(U_\delta)-\operatorname{Cov}(L_\delta)\), so the
covariance of \(R_\delta\) tends uniformly to that of
\((B_t)_{t\in T}\).  Since \(R_\delta\) is independent of \(W\), conditioning
on \(A_F\in\sigma(W)\) leaves its centred Gaussian law unchanged and keeps it
independent of \(L_\delta\).  Slutsky's theorem therefore proves uniform
finite-dimensional convergence, in bounded-Lipschitz distance, for the laws
with only the outer constraints.

We next justify uniformly the deletion of the local constraints just used.
For finite dyadic \(F\ni1\) and \(0<r<1/2\), put
\begin{equation*}
  F_r^{\mathrm{out}}:=(F\setminus(1-r,1+r))\cup\{1\}.
\end{equation*}
Under the one-site hard-wall law \(G_1\mid\{G_1\geq0\}\), the variable
\(G_1\) is standard half-normal.  Adding the exterior constraint sites
stochastically raises it by Proposition~\ref{prop:wall-monotonicity}; hence,
for \(0<a\leq1\),
\begin{equation*}
  \sup_{F\ni1}\PP(G_1\leq a\mid A_{F_r^{\mathrm{out}}})\leq Ca.
\end{equation*}
On \([1/2,3/2]\), \eqref{eq:annular-mean-modulus}, the centred
moment-generating-function bound in
Lemma~\ref{lem:harge-convex-domination}, and the chaining proof of
Lemma~\ref{lem:uniform-centered-modulus} give, for every fixed \(a>0\),
\begin{equation*}
 \omega(a,r):=\sup_E\PP\left(
  \sup_{|u|\leq r}|B_{1+u}-B_1|>a\,\middle|\,A_E
 \right)\longrightarrow0
 \qquad(r\downarrow0)
\end{equation*}
over finite dyadic \(E\).  To see the uniformity directly, the deterministic
mean oscillation is at most \(C_Hr^\alpha\), and the centred chaining bound is
then applied at the remaining fixed threshold.

Let \(N_r(F)=F\setminus F_r^{\mathrm{out}}\).  The events are nested and
\(A_F=A_{F_r^{\mathrm{out}}}\cap A_{N_r(F)}\).  If \(G_1>a\) and the displayed
oscillation is at most \(a\), then \(G_s\geq0\) for every
\(s\in N_r(F)\).  Hence the exact nested-event total-variation identity gives
\begin{equation*}
\begin{split}
 &d_{\mathrm{TV}}\left(
  \mathcal L(G\mid A_F),
  \mathcal L(G\mid A_{F_r^{\mathrm{out}}})
 \right)\\
 &\qquad=
 \PP\left(A_{N_r(F)}^c\,\middle|\,A_{F_r^{\mathrm{out}}}\right)
 \leq Ca+\omega(a,r).
\end{split}
\end{equation*}
Taking first \(r\downarrow0\) and then \(a\downarrow0\) proves
\begin{equation*}
 \sup_{F\ni1}d_{\mathrm{TV}}\left(
  \mathcal L(G\mid A_F),
  \mathcal L(G\mid A_{F_r^{\mathrm{out}}})
 \right)\longrightarrow0.
\end{equation*}

We now make the order of limits explicit.  Define the continuous map
\begin{equation*}
 \Phi_{\delta,\varepsilon}(f)(t)
 :=\delta^{-H}\{f(1+\varepsilon\delta t)-f(1)\},
 \qquad 0\leq t\leq1,
\end{equation*}
from the local-uniform path space to \(C([0,1])\), and let \(\gamma_T\) be
the law of \((B_t)_{t\in T}\).  The deletion estimate at
\(r=r_\delta\) and the outer-wall regression give deterministic errors
\(\eta(\delta),\rho_T(\delta)\downarrow0\) such that, uniformly in the
canonical wall index \(m\),
\begin{equation*}
 d_{\mathrm{BL}}\left(
  \mathcal L\left(
   (\Phi_{\delta,\varepsilon}(G)(t))_{t\in T}
   \,\middle|\,A_{E_m}
  \right),\gamma_T
 \right)
 \leq\eta(\delta)+\rho_T(\delta).
\end{equation*}
For each fixed \(\delta\), Theorem~\ref{thm:path-space-entrance-law} gives
\(\mathcal L(G\mid A_{E_m})\Rightarrow\mathcal L(-X)\), and continuity of
\(\Phi_{\delta,\varepsilon}\) passes this inequality to \(m=\infty\).
Thus one first lets \(m\to\infty\), with \(\delta\) fixed, and only then lets
\(\delta\downarrow0\).  Since
\(\Phi_{\delta,\varepsilon}(-X)=-\Phi_{\delta,\varepsilon}(X)\) and
\(\gamma_T\) is symmetric, this proves the finite-dimensional form of
\eqref{eq:rough-ordinary-fixed-point-germs}.

It remains to prove path-space tightness.  Recall that
\(m_F(t)=\EE(B_t\mid A_F)\), and, under the finite-wall law, put
\(Z_F:=G+m_F\), so that \(G=-m_F+Z_F\) and \(Z_F\) is centred.  Uniformly
for \(s,t\in[0,1]\) and \(0<\delta\leq1/2\),
\begin{equation}
 \delta^{-H}
 |m_F(1+\varepsilon\delta t)
   -m_F(1+\varepsilon\delta s)|
 \leq C_H\delta^H|t-s|^\alpha,
  \label{eq:rough-germ-rescaled-mean-modulus}
\end{equation}
and the same centred moment-generating-function bound gives
\begin{equation}
 \EE\left[\exp\left\{\lambda\delta^{-H}
 [Z_F(1+\varepsilon\delta t)-Z_F(1+\varepsilon\delta s)]\right\}
 \,\middle|\,A_F\right]
 \leq e^{\lambda^2|t-s|^\alpha/2}.
  \label{eq:rough-germ-rescaled-centered-bound}
\end{equation}
Choose \(p>1/H\).  The two-sided sub-Gaussian bound gives the corresponding
absolute \(p\)-moment bound for the centred increment.  Combining it with
\eqref{eq:rough-germ-rescaled-mean-modulus}, using
\(|t-s|^{2H}\leq|t-s|^H\) on \([0,1]\), yields
\begin{equation*}
 \EE\left[
  |\Phi_{\delta,\varepsilon}(G)(t)
    -\Phi_{\delta,\varepsilon}(G)(s)|^p
  \,\middle|\,A_F
 \right]
 \leq C_{p,H}|t-s|^{pH},
\end{equation*}
uniformly in \(F\) and \(\delta\leq1/2\).  For each fixed \(\delta\), the
continuous-mapping consequence of
Theorem~\ref{thm:path-space-entrance-law} and Portmanteau for the nonnegative
continuous increment functional pass this same upper bound to
\(-\Phi_{\delta,\varepsilon}(X)\), and hence to
\(\Phi_{\delta,\varepsilon}(X)\).  These processes vanish at \(t=0\), and
\(pH>1\).  The Kolmogorov tightness criterion therefore proves tightness in
\(C([0,1])\), uniformly as \(\delta\downarrow0\).  Together with the
finite-dimensional limit, this proves
\eqref{eq:rough-ordinary-fixed-point-germs}.
\end{proof}

\begin{corollary}
\label{cor:rough-interior-window-maximizer}
The unique \(Q\)-maximiser on \(I=[u,v]\) belongs to \((u,v)\) almost
surely.
\end{corollary}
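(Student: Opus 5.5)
Since \(I=[u,v]\) with \(0<u<v\), it suffices to show that almost surely neither \(u\) nor \(v\) is the maximiser \(S_I(X)\), which is unique by Proposition~\ref{prop:rough-unique-window-maximizer}. The plan is to show that at each deterministic positive time \(w\) the path \(X\) has ordinary fractional Brownian oscillation on the inward side. Such a path is then almost surely not a one-sided local maximum at \(w\), so it cannot attain its window maximum there.

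\emph{Step 1: transport the germ statement to \(u\) and \(v\).} Proposition~\ref{prop:rough-ordinary-fixed-point-germs} is stated at time \(1\). Self-similarity \eqref{eq:limit-selfsimilarity} with \(c=1/w\) gives \(X(w\,\cdot)\deq w^{H}X(\cdot)\). Hence, for \(w\in\{u,v\}\) and \(\varepsilon=\pm1\),
\[
  \delta^{-H}\{X(w+\varepsilon\delta t)-X(w)\}_{0\leq t\leq1}\Longrightarrow w^{H}\{B_{t}\}_{0\leq t\leq 1}.
\]
Here \(\delta\) is replaced by \(\delta/w\), which is harmless as \(\delta\downarrow0\). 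At \(u\) we use \(\varepsilon=+1\), and at \(v\) we use \(\varepsilon=-1\), so that the points \(w+\varepsilon\delta t\) lie in \(I\) for small \(\delta\).

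\emph{Step 2: from germ convergence to non-maximality.} Let \(E_w\) be the event that \(S_I(X)=w\). On \(E_w\), we have \(X(w+\varepsilon\delta t)-X(w)\leq0\) for all \(t\in[0,1]\) and all small \(\delta\). Fix \(\delta_k=2^{-k}\). The functional \(f\mapsto\max_{t\in[0,1]}f(t)\) is continuous on \(C([0,1])\). By the open-set Portmanteau inequality, for each fixed \(\eta>0\),
\[
  \limsup_k \PP\Bigl(\max_{t\in[0,1]}\delta_k^{-H}\{X(w+\varepsilon\delta_k t)-X(w)\}\leq 0\Bigr)\leq \PP\Bigl(\max_{[0,1]}B< \eta\Bigr).
\]
This uses the closed event \(\{\max\leq0\}\) and upper semicontinuity, i.e.\ the closed-set Portmanteau inequality directly. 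Since \(E_w\) is contained in the displayed event for all large \(k\), we get \(\PP(E_w)\leq\PP(\max_{[0,1]}B\leq 0)\).

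\emph{Step 3: the final input.} It remains to verify that \(\PP(\max_{t\in[0,1]}B_t\leq0)=0\) for fractional Brownian motion started at \(0\). This follows from self-similarity together with Blumenthal-type reasoning: the event \(\{\max_{[0,\delta]}B\leq 0\}\) has probability independent of \(\delta\) and decreases as \(\delta\downarrow0\). Alternatively, use the standard fact that the law of iterated logarithm forces \(\limsup_{t\downarrow0}t^{-H}B_t>0\) almost surely. A cleaner route avoids zero–one laws. Apply Step 2 with the maxima over \([0,\delta']\) for all small \(\delta'\) simultaneously: the event \(E_w\) implies \(\max_{[0,s]}\leq0\) for every scale, so \(\PP(E_w)\leq\inf_{s}\PP(\max_{[0,s]}B\leq0)=\PP(\max_{[0,1]}B\leq0)\). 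Finally, \(\PP(\max_{[0,1]}B\leq 0)=p_H(\infty)\)-type persistence at level \(0\). This equals \(\lim_{x\downarrow0}\PP(M\leq x)=\PP(M=0)\), which is zero by Lemma~\ref{lem:unique-interior-maximizer} since \(\tau\in(0,1)\) almost surely.

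I expect Step 3 to need the most care, specifically the identification \(\{\max_{[0,1]}B\leq0\}\subseteq\{\tau=0\}\) up to null sets. This holds because \(B_0=0\) makes \(0\) a maximiser, and uniqueness then gives \(\tau=0\), contradicting interiority. Combining the two endpoints gives \(\PP(S_I(X)\in\{u,v\})=0\).
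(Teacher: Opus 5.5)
Your proposal is correct and follows essentially the paper's argument: transport Proposition~\ref{prop:rough-ordinary-fixed-point-germs} to $u$ (with $\varepsilon=+1$) and to $v$ (with $\varepsilon=-1$) by self-similarity, then apply the closed-set Portmanteau inequality to $\{f:\max_{[0,1]}f\le0\}$. The limiting probability vanishes by Lemma~\ref{lem:unique-interior-maximizer}, since $B_0=0$ would otherwise force $\tau=0$. One small slip, harmless because that event is a cone: the transported germ limit is exactly $B$, not $w^HB$, since $\delta^{-H}w^H=(\delta/w)^{-H}$; also, the zero--one-law and LIL detours in Step~3 are not needed.
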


\begin{proof}
For fixed \(q>0\), \(\varepsilon\in\{-1,+1\}\), and \(\delta>0\), put
\[
 Y_{q,\delta}^{\varepsilon}(t)
 :=\delta^{-H}\{X(q+\varepsilon\delta t)-X(q)\},
 \qquad 0\leq t\leq1.
\]
The self-similarity in \eqref{eq:limit-selfsimilarity} gives
\begin{equation}
 \{Y_{q,\delta}^{\varepsilon}(t)\}_{0\leq t\leq1}
 \deq
 (\delta/q)^{-H}
 \{X(1+\varepsilon(\delta/q)t)-X(1)\}_{0\leq t\leq1}.
 \label{eq:rough-general-deterministic-time-germ}
\end{equation}
Since \(\delta/q\downarrow0\),
Proposition~\ref{prop:rough-ordinary-fixed-point-germs} therefore gives
\begin{equation*}
 Y_{q,\delta}^{\varepsilon}
 \Longrightarrow \{B_t\}_{0\leq t\leq1}
 \quad\text{in }C([0,1])
 \qquad(\delta\downarrow0).
\end{equation*}

Let
\[
 \mathcal C_-:=
 \left\{f\in C([0,1]):\max_{0\leq t\leq1}f(t)\leq0\right\}.
\]
The maximum functional is continuous for the uniform norm, so
\(\mathcal C_-\) is closed.  Moreover,
\begin{equation*}
 \PP\{\{B_t\}_{0\leq t\leq1}\in\mathcal C_-\}=0.
\end{equation*}
Indeed, on this event \(B_0=0\) makes zero a maximiser on \([0,1]\), whereas
Lemma~\ref{lem:unique-interior-maximizer} says that the almost surely unique
maximiser belongs to \((0,1)\).

Put
\[
 A_u:=\left\{X(u)=\max_{r\in I}X(r)\right\}.
\]
For every \(0<\delta\leq v-u\), the inclusion
\(A_u\subseteq\{Y_{u,\delta}^{+1}\in\mathcal C_-\}\) holds.  Hence
closed-set Portmanteau gives
\begin{align*}
 \PP(A_u)
 &\leq\liminf_{\delta\downarrow0}
       \PP\{Y_{u,\delta}^{+1}\in\mathcal C_-\}\\
 &\leq\limsup_{\delta\downarrow0}
       \PP\{Y_{u,\delta}^{+1}\in\mathcal C_-\}\\
 &\leq\PP\{\{B_t\}_{0\leq t\leq1}\in\mathcal C_-\}=0.
\end{align*}
Likewise, if
\[
 A_v:=\left\{X(v)=\max_{r\in I}X(r)\right\},
\]
then \(A_v\subseteq\{Y_{v,\delta}^{-1}\in\mathcal C_-\}\) for every
\(0<\delta\leq v-u\), and the same Portmanteau chain gives
\(\PP(A_v)=0\).

Proposition~\ref{prop:rough-unique-window-maximizer} supplies an almost surely
unique maximiser on \(I\).  Since neither endpoint can be that maximiser, it
belongs to \((u,v)\) almost surely.
\end{proof}

\subsubsection{Assembly of the second zoom}
\label{subsec:rough-window-rerooting-assembly}

\begin{proof}[Proof of Theorem~\ref{thm:rough-window-rerooting}]
We first prove the assertion for the fixed positive interval
\(I=[u,v]\) in \eqref{eq:rough-fixed-positive-window}.  Uniqueness and
interiority are
Proposition~\ref{prop:rough-unique-window-maximizer} and
Corollary~\ref{cor:rough-interior-window-maximizer}.  It remains to prove
\eqref{eq:rough-window-rerooting-limit}.

Let \(b_n\to\infty\) be arbitrary.  Under the canonical finite-grid hard-wall
law \(\mu_m=\mathcal L(B\mid B_t\leq0:t\in E_m)\), let \(S_m\) be the
maximiser on \(D_m=E_m\cap I\).  For each fixed \(0<b<\infty\), the path-space
hard-wall limit theorem, the deterministic grid selection in
Proposition~\ref{prop:rough-unique-window-maximizer}, and the extended
continuous mapping theorem give
\begin{equation}
 \mathcal L_{\mu_m}\left(
  b^H\{B(S_m+\,\cdot\,/b)-B(S_m)\}\right)
 \Longrightarrow \mathcal L_Q(\mathcal R_b^IX),
 \qquad m\to\infty.
 \label{eq:rough-fixed-zoom-wall-approximation}
\end{equation}
Here is the required deterministic continuity check.  If \(x_m\to x\)
locally uniformly, \(x\) has a unique maximiser \(s=S_I(x)\), and
\(s_m=S_m(x_m)\), then \(s_m\to s\).  For each \(R<\infty\), with
\(K_R=[u-R/b,v+R/b]\),
\[
\begin{split}
 &\sup_{|t|\leq R}\left|
  b^H\{x_m(s_m+t/b)-x_m(s_m)\}
  -b^H\{x(s+t/b)-x(s)\}
 \right|\\
 &\qquad\leq
  2b^H\|x_m-x\|_{C(K_R)}
  +2b^H\omega_{x,K_R}(|s_m-s|)\longrightarrow0,
\end{split}
\]
where \(\omega_{x,K_R}\) is the uniform modulus of \(x\) on \(K_R\).
The finite-grid selectors are Borel, and the unique-maximiser set has full
\(Q\)-measure, so this is exactly the almost-sure continuity set required by
the extended mapping theorem.  The same argument without the path increment
also gives
\begin{equation}
  S_m\Longrightarrow S_I(X)
  \qquad\text{under }\mu_m.
  \label{eq:rough-grid-window-maximizer-limit}
\end{equation}

The endpoint cutoff must shrink slowly enough to remain inside the proved
geometric scope of
Proposition~\ref{prop:rough-selected-germ-invisibility}.  Put
\(\eta_k=2^{-k-3}(v-u)\), and denote by \(e_{I,k}(b)\to0\) the supremum in
\eqref{eq:rough-selected-germ-invisibility} for the fixed interior window
\([u+\eta_k,v-\eta_k]\).  Recursively choose integers
\(N_k>N_{k-1}\) so large that, for every \(n\geq N_k\),
\[
  b_n\eta_k\geq k,
  \qquad e_{I,k}(b_n)\leq k^{-1}.
\]
Taking \(k(n)=\max\{k:N_k\leq n\}\), with an arbitrary value before
\(N_1\), and \(\eta_n=\eta_{k(n)}\), gives \(k(n)\uparrow\infty\) and
\begin{equation}
  b_n\eta_n\longrightarrow\infty,\qquad
  e_{I,k(n)}(b_n)\longrightarrow0.
  \label{eq:rough-slow-boundary-diagonal}
\end{equation}
Interiority and continuity from above give
\begin{equation}
  Q\{\operatorname{dist}(S_I(X),\{u,v\})\leq2\eta_n\}
  \longrightarrow0.
  \label{eq:rough-continuum-boundary-layer}
\end{equation}

Choose a strictly increasing \(m(n)\) so large that
\begin{enumerate}[label=\textup{(\roman*)}]
\item the bounded-Lipschitz error in
  \eqref{eq:rough-fixed-zoom-wall-approximation}, with \(b=b_n\), is at most
  \(n^{-1}\);
\item \(b_n2^{-m(n)}\leq n^{-1}\);
\item the \(\mu_{m(n)}\)-probability that \(S_{m(n)}\) lies within
  \(\eta_n\) of an endpoint is at most the probability in
  \eqref{eq:rough-continuum-boundary-layer} plus \(n^{-1}\);
\item \(m(n)\geq m_I\), and hence \(s_0\in D_{m(n)}\).
\end{enumerate}
Indeed, for item~\textup{(iii)} choose a continuous
\(\chi_n:I\to[0,1]\) equal to one on the \(\eta_n\)-boundary layer and zero
outside the \(2\eta_n\)-boundary layer.  Equation
\eqref{eq:rough-grid-window-maximizer-limit} gives
\[
\begin{aligned}
 \limsup_{m\to\infty}\mu_m\{
   \operatorname{dist}(S_m,\{u,v\})\leq\eta_n\}
 &\leq \EE_Q\chi_n(S_I(X))\\
 &\leq Q\{\operatorname{dist}(S_I(X),\{u,v\})\leq2\eta_n\}.
\end{aligned}
\]
Thus all four requirements can be imposed after \(b_n\) and \(\eta_n\) have
been fixed.  Moreover, item~\textup{(ii)} and
\eqref{eq:rough-slow-boundary-diagonal} give
\[
  \frac{2^{-m(n)}}{\eta_n}
  \leq\frac{1}{n b_n\eta_n}\longrightarrow0,
\]
so the mesh restriction in
Proposition~\ref{prop:rough-selected-germ-invisibility} holds eventually.

Disintegrate the finite experiment by \(S_{m(n)}\).  Equations
\eqref{eq:reroot-two-root-event}--\eqref{eq:reroot-old-root-height} give the
exact selected two-root mixture.  Abbreviate
\[
 \mathsf P_n^{\mathrm{sel}}
 :=\mathsf P_{b_n,m(n),\eta_n}^{\mathrm{sel}},
 \qquad
 \mathsf P_n^{\mathrm{out}}
 :=\mathsf P_{b_n,m(n),\eta_n}^{\mathrm{out}},
\]
and let their common mass be
\begin{equation}
 w_n:=\sum_{s\in D_{m(n)}\cap I_{\eta_n}}p_{m(n),s}
     =\mu_{m(n)}\{S_{m(n)}\in I_{\eta_n}\}.
 \label{eq:rough-good-selected-mass}
\end{equation}
Items~\textup{(iii)} and \eqref{eq:rough-continuum-boundary-layer} give
\(w_n\to1\).  If
\begin{equation}
 \nu_n:=\sum_{s\in D_{m(n)}}p_{m(n),s}
       \mathcal L(G^{b_n,s}\mid A_{b_n,s}),
 \label{eq:rough-full-reflected-germ-mixture}
\end{equation}
then \(\nu_n-\mathsf P_n^{\mathrm{sel}}\) is the bad selected submeasure and
has mass \(1-w_n\).  Proposition
\ref{prop:rough-selected-germ-invisibility} and
\eqref{eq:rough-slow-boundary-diagonal} give
\begin{equation}
 d_{\mathrm{BL}}(\mathsf P_n^{\mathrm{sel}},
                 \mathsf P_n^{\mathrm{out}})
 \leq e_{I,k(n)}(b_n)\longrightarrow0.
 \label{eq:rough-good-selected-outer-replacement}
\end{equation}
This is an equal-mass finite-measure comparison, not a componentwise
probability-law assertion, and invokes no uniformity over unbounded inverted
geometric ratios.

Every outer component now has mesh
\(\widetilde h_n=b_n2^{-m(n)}\to0\).  For its retained centre \(s\), put
\[
 \widetilde L^-_{n,s}:=b_n(s-\min D_{m(n)}),
 \qquad
 \widetilde L^+_{n,s}:=b_n(\max D_{m(n)}-s).
\]
Since the first and last grid points lie within \(2^{-m(n)}\) of \(u\) and
\(v\), both horizons are at least
\(b_n\eta_n-b_n2^{-m(n)}\to\infty\), uniformly in the retained centre.
Sequential uniformity in
Theorem~\ref{thm:path-space-entrance-law} is stated for dyadic meshes, whereas
\(\widetilde h_n\) need not be dyadic.  We therefore make the following exact
reduction.  Write \(\widetilde Q_{h,L^-,L^+}\) for the conditional law in
\eqref{eq:pure-wall-law} with an arbitrary positive grid-aligned mesh \(h\),
and let
\[
  (\mathcal S_cf)(t):=c^Hf(t/c),\qquad c>0.
\]
Choose the dyadic \(\widehat h_n\in[\widetilde h_n,2\widetilde h_n)\) and
put \(c_n=\widehat h_n/\widetilde h_n\in[1,2)\).  Fractional-Brownian
self-similarity gives, for every outer component with horizons
\(\widetilde L^-_{n,s},\widetilde L^+_{n,s}\),
\[
  (\mathcal S_{c_n})_*
  \widetilde Q_{\widetilde h_n,\widetilde L^-_{n,s},\widetilde L^+_{n,s}}
  =Q_{\widehat h_n,c_n\widetilde L^-_{n,s},c_n\widetilde L^+_{n,s}}.
\]
The mesh on the right is dyadic and tends to zero, its horizons diverge
uniformly over the retained components, and grid alignment is preserved.
The sequentially uniform form of
Theorem~\ref{thm:path-space-entrance-law} therefore makes these scaled laws
converge uniformly to \(Q\).  If the corresponding unscaled convergence
failed, select violating retained centres \(s_n\) and a subsequence on which
\(c_n\to c\in[1,2]\).  Continuity of
\((c,f)\mapsto\mathcal S_{1/c}f\) in the local-uniform topology and
\eqref{eq:limit-selfsimilarity} would give
\[
  (\mathcal S_{1/c_n})_*
  Q_{\widehat h_n,c_n\widetilde L^-_{n,s_n},c_n\widetilde L^+_{n,s_n}}
  \Longrightarrow (\mathcal S_{1/c})_*Q=Q,
\]
a contradiction.  Thus the original arbitrary-mesh hard-wall components
also converge uniformly to \(Q\).  An outer component of
\(\mathsf P_n^{\mathrm{out}}\) is the law of \(-Y\), where
\(Y\sim\widetilde Q_{\widetilde h_n,
 \widetilde L^-_{n,s},\widetilde L^+_{n,s}}\).
Consequently there is \(\delta_n\to0\) such that, for \(X\sim Q\),
\[
 d_{\mathrm{BL}}\left(
   \mathsf P_n^{\mathrm{out}},w_n\mathcal L(-X)
 \right)\leq w_n\delta_n.
\]
Testing against functions bounded by one and Lipschitz with constant at most
one, \eqref{eq:rough-good-selected-outer-replacement} and the bad mass give
\[
 d_{\mathrm{BL}}\bigl(\nu_n,\mathcal L(-X)\bigr)
 \leq2(1-w_n)+e_{I,k(n)}(b_n)+w_n\delta_n\longrightarrow0.
\]
The rerooted path is \(-G^{b_n,s}\) by
\eqref{eq:reroot-reflected-zoom}.  Applying this continuous global sign
change to the last convergence gives
\begin{equation}
 \mathcal L_{\mu_{m(n)}}\left(
  b_n^H\{B(S_{m(n)}+\,\cdot\,/b_n)-B(S_{m(n)})\}\right)
 \Longrightarrow Q.
 \label{eq:rough-diagonal-rerooting-limit}
\end{equation}
Item~\textup{(i)} and the triangle inequality transfer
\eqref{eq:rough-diagonal-rerooting-limit} to
\(\mathcal L_Q(\mathcal R^I_{b_n}X)\).  Since \((b_n)\) was arbitrary, the
sequential characterisation of weak convergence proves
\eqref{eq:rough-window-rerooting-limit}.

We now remove the restriction to positive windows.  Let
\((\Theta f)(t):=f(-t)\).  By
\eqref{eq:rough-tangent-reflection-invariance}, pathwise
\begin{equation}
  S_{-I}(\Theta f)=-S_I(f),\qquad
  \mathcal R_b^{-I}(\Theta f)=\Theta(\mathcal R_b^If).
  \label{eq:rough-window-reflection-covariance}
\end{equation}
The positive-window result and reflection invariance therefore prove the
unique-interior-maximiser and rerooting assertions for every fixed negative
window.

Finally, \eqref{eq:limit-support} shows that if a fixed nondegenerate compact
interval \(J\) contains zero, its unique maximiser is zero.  In that case
\begin{equation}
  (\mathcal R_b^JX)(t)=b^HX(t/b),
\end{equation}
which has law \(Q\) for every \(b>0\) by
\eqref{eq:limit-selfsimilarity}.  If \(0\in\operatorname{int}J\), this proves
Definition~\ref{def:asymptotic-window-rerooting-invariance} exactly.  If
\(0\in\partial J\), the scaling identity remains exact but the definition
does not apply because the unique maximiser is not interior, as stated in
Remark~\ref{rem:rough-window-rerooting-scope}.  This completes the theorem.
\end{proof}

\section{Sharp persistence asymptotics: proof of Theorem~\ref{thm:rough-sharp-persistence}}
\label{sec:rough-persistence}

The proof separates the scalar persistence asymptotic from the construction
of the tilted path limit.  An exact factorisation reduces the sharp
asymptotic to convergence of the reciprocal exponential functional
\(\widehat{\EE}_a[J_a^{-1}]\).  Once this convergence is known, Molchan's
limit for the uncentred exponential functional determines the asymptotic
of \(Z_a\), and a Tauberian argument converts that asymptotic into the stated
persistence probability.

The main task is therefore to identify the path seen from its maximum under
the tilt \(\widehat{\PP}_a\).  We first represent its finite-grid
approximations as mixtures of endpoint-tilted hard-wall laws and establish
compact containment of the rescaled maximum time and height.  We then
construct the finite-left-horizon entrance components and identify the
unique limiting mixture \(\mu_H\).  Finally, block repulsion controls the
growing right-arm contribution to \(J_a\), while a separate reciprocal
moment bound gives uniform integrability; together these estimates yield
the required convergence of \(\widehat{\EE}_a[J_a^{-1}]\).

\subsection{Scalar reduction}

\label{subsec:rough-persistence-scalar-reduction}

Recall the definition of \(J_a\) in
\eqref{eq:rough-persistence-functional}.
For the scalar comparison, let
\begin{equation}
  I(a):=\int_0^a e^{B_s}\,\mathrm ds.
  \label{eq:molchan-exponential-functional}
\end{equation}

\begin{proposition}
\label{prop:exact-persistence-compiler}
For every \(a>0\),
\begin{equation}
  Z_a\,\widehat{\EE}_a[J_a^{-1}]
  =\EE[I(a)^{-1}].
  \label{eq:exact-persistence-factorization}
\end{equation}
Furthermore,
\begin{equation}
  a^{1-H}\EE[I(a)^{-1}]
  \longrightarrow H\,\EE[M].
  \label{eq:molchan-functional-limit}
\end{equation}
Consequently, if
\(\widehat{\EE}_a[J_a^{-1}]\longrightarrow D_H\in(0,\infty)\), then
\begin{equation}
  a^{1-H}Z_a\longrightarrow\frac{H\,\EE[M]}{D_H},
  \label{eq:rough-normalizer-sharp-limit}
\end{equation}
and then \eqref{eq:rough-sharp-persistence-asymptotic}--
\eqref{eq:rough-persistence-constant} follow.
\end{proposition}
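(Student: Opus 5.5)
The plan has three steps: an exact distributional identity giving \eqref{eq:exact-persistence-factorization}, Molchan's endpoint-differentiation argument made sharp by a monotone-derivative comparison to give \eqref{eq:molchan-functional-limit}, and Karamata's Tauberian theorem for the lower tail of \(M\).

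\emph{Exact factorisation.} First, substituting \(t=a(u-\tau)\) in \eqref{eq:rough-persistence-functional} and using \(\Xi_a(t)=a^H(B_u-M)\) gives the pathwise identities
\[
J_a=\int_{-U_a}^{a-U_a}e^{\Xi_a(t)}\,\mathrm dt=a\,e^{-a^HM}\int_0^1e^{a^HB_u}\,\mathrm du .
\]
Second, the substitution \(s=au\) together with self-similarity \eqref{eq:deterministic-fbm-localisability} at \(u=0\) gives
\[
I(a)=a\int_0^1e^{B_{au}}\,\mathrm du\deq a\int_0^1e^{a^HB_u}\,\mathrm du=e^{a^HM}J_a .
\]
Hence
\[
\EE[I(a)^{-1}]=\EE\bigl[e^{-a^HM}J_a^{-1}\bigr]=Z_a\,\widehat{\EE}_a[J_a^{-1}],
\]
by the definition \eqref{eq:rough-persistence-tilt} of \(\widehat{\PP}_a\).

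\emph{Molchan's limit.} Put \(f(a):=\EE\log I(a)\). Since \(\partial_a\log I(a)=e^{B_a}/I(a)\), we have
\[
e^{B_a}/I(a)=\Bigl(\int_0^ae^{B_s-B_a}\,\mathrm ds\Bigr)^{-1}.
\]
The process \((B_{a-r}-B_a)_{r\in[0,a]}\) is again a fractional Brownian motion by stationary increments and symmetry, so this quantity has the law of \(I(a)^{-1}\). Differentiation under the expectation (justified by dominated convergence on compact \(a\)-intervals, using Gaussian moments of \(\sup|B|\)) then gives \(f'(a)=\EE[I(a)^{-1}]\). Because \(I(a)\) increases pathwise, \(f'\) is nonincreasing.

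Next I would show \(f(a)=\log a+a^H\EE M+O(\log a)\). The upper bound is immediate from \(\int_0^1e^{a^HB_u}\,\mathrm du\le e^{a^HM}\). For the lower bound, let \(C_\alpha\) be the \(\alpha\)-Hölder constant of \(B\) on \([0,1]\), which has all moments. The set of \(u\) with \(a^H(M-B_u)\le1\) contains an interval of length at least \(\min\{1,(a^HC_\alpha)^{-1/\alpha}\}\) next to \(\tau\). This gives
\[
\EE\log\int_0^1e^{a^HB_u}\,\mathrm du\ \ge\ a^H\EE M-1-C\log a-C\,\EE\log_+C_\alpha .
\]
Since the error is \(o(a^H)\), the monotone-derivative comparison applies: for \(\theta>0\),
\[
\theta a\,f'\bigl((1+\theta)a\bigr)\ \le\ f\bigl((1+\theta)a\bigr)-f(a)\ \le\ \theta a\,f'(a).
\]
Dividing by \(a^H\), letting \(a\to\infty\) and then \(\theta\downarrow0\) yields \(a^{1-H}f'(a)\to H\,\EE M\), which is \eqref{eq:molchan-functional-limit}. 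I expect this step to be the main obstacle: the \(O(\log a)\) control of the Laplace-type lower bound must hold in expectation rather than merely pathwise, which needs integrability of \(\log_+C_\alpha\). Given the limit, dividing \eqref{eq:exact-persistence-factorization} by \(\widehat{\EE}_a[J_a^{-1}]\to D_H\) gives \eqref{eq:rough-normalizer-sharp-limit}.

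\emph{Tauberian step.} Let \(F\) be the distribution function of \(M\) and \(\lambda=a^H\). Then \eqref{eq:rough-normalizer-sharp-limit} reads
\[
\int e^{-\lambda x}\,\mathrm dF(x)\sim\frac{H\,\EE M}{D_H}\,\lambda^{-\rho},\qquad \rho=\frac{1-H}{H},\qquad \lambda\to\infty .
\]
Karamata's Tauberian theorem, which needs only monotonicity of \(F\), then gives
\[
F(x)\sim\frac{H\,\EE M}{D_H\,\Gamma(1+\rho)}\,x^{\rho},\qquad x\downarrow0 .
\]
Since \(1+\rho=1/H\), this coefficient is exactly \(\mathsf C_H\). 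Finally, \eqref{eq:persistence-maximum-lower-tail} gives \(p_H(T)=F(T^{-H})\sim\mathsf C_HT^{-(1-H)}\), which is \eqref{eq:rough-sharp-persistence-asymptotic}–\eqref{eq:rough-persistence-constant}.
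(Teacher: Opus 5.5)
Your proposal is correct. Its first and last steps coincide with the paper's: the pathwise substitution plus self-similarity for \eqref{eq:exact-persistence-factorization}, and Karamata's Tauberian theorem with \(\lambda=a^H\), \(\rho=1/H-1\).

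The genuine difference is the middle step. The paper does not prove \eqref{eq:molchan-functional-limit}. It imports it from Molchan's Statement~1, equation~(8), after checking his affine-invariance relation and the exponential integrability of \(\sup_{[0,1]}|B|\). You instead reconstruct Molchan's endpoint-differentiation argument in full:
\begin{itemize}
\item time reversal of increments gives \(f'(a)=\EE[I(a)^{-1}]\);
\item pathwise monotonicity of \(I\) makes \(f=\EE\log I\) concave;
\item the H\"older-constant Laplace bound gives \(f(a)=a^H\EE M+O(\log a)\);
\item the sandwich \(\theta a f'((1+\theta)a)\le f((1+\theta)a)-f(a)\le\theta a f'(a)\), divided by \(\theta a^H\) with \(a\to\infty\) and then \(\theta\downarrow0\), yields the limit.
\end{itemize}

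What your route buys is a self-contained proof that shows exactly which properties of \(B\) enter. These are reversibility of increments, self-similarity, and only \(o(a^H)\) accuracy for \(f\). For that accuracy, log-integrability of the H\"older constant suffices, and you correctly note that this constant has all moments. So the step you flag as the main obstacle is not a real one. The cost is length; the paper's citation is shorter but leans on an external statement.

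Two small remarks:
\begin{itemize}
\item The derivative identity needs no domination argument. Since \(\log I(b)-\log I(a)=\int_a^b e^{B_s}I(s)^{-1}\,\mathrm ds\) pathwise with a nonnegative integrand, Tonelli and your time-reversal identity give \(f(b)-f(a)=\int_a^b\EE[I(s)^{-1}]\,\mathrm ds\) directly.
\item You should record \(0<\EE M<\infty\), for example from \(M\geq\max\{0,B_1\}\) and Fernique. It is needed both for the \(O(\log a)\) bounds and for the Tauberian constant \(H\,\EE[M]/D_H\) to lie in \((0,\infty)\).
\end{itemize}
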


\begin{proof}
The substitutions \(u=\tau-s/a\) and \(u=\tau+s/a\) in the two terms of
\eqref{eq:rough-persistence-functional} give the first identity below;
self-similarity gives the second:
\[
  J_a=a e^{-a^HM}\int_0^1 e^{a^HB_u}\,\mathrm du,
  \qquad
  I(a)\deq a\int_0^1e^{a^HB_u}\,\mathrm du.
\]
Therefore
\[
  \EE[I(a)^{-1}]
  =\EE\bigl[e^{-a^HM}J_a^{-1}\bigr]
  =Z_a\widehat{\EE}_a[J_a^{-1}],
\]
which is \eqref{eq:exact-persistence-factorization}.

In Molchan's notation the covariance exponent is \(\gamma=2H\).
Fractional Brownian motion satisfies his affine-invariance relation~(2), and
\(\sup_{0\leq t\leq1}|B_t|\) has every linear exponential moment.  Thus
\textcite[Statement~1, equation~(8)]{molchan1999maximum} applies and gives
\eqref{eq:molchan-functional-limit}.  Moreover,
\(0<\EE[M]<\infty\), since \(M\geq\max\{0,B_1\}\) and the preceding
exponential moment bound controls \(M\).  Combining Molchan's limit with the
assumed reciprocal convergence proves
\eqref{eq:rough-normalizer-sharp-limit}.

It remains to pass from the Laplace transform to the lower tail of \(M\).
Put
\begin{equation}
  \rho:=\frac{1-H}{H}=\frac1H-1
  \quad\text{and}\quad
  \lambda:=a^H.
  \label{eq:rough-tauberian-exponent}
\end{equation}
Then \eqref{eq:rough-normalizer-sharp-limit} becomes
\begin{equation}
  \lambda^\rho\EE e^{-\lambda M}
  \longrightarrow\frac{H\,\EE[M]}{D_H}.
  \label{eq:rough-laplace-sharp-limit}
\end{equation}
Karamata's Laplace--Stieltjes Tauberian theorem
\parencite[Theorem~XIII.5.3]{feller1971probabilityVol2} gives
\begin{equation}
  \PP(M\leq\varepsilon)
  \sim
  \frac{H\,\EE[M]}{\Gamma(\rho+1)D_H}\,
  \varepsilon^\rho
  =\frac{H\,\EE[M]}{\Gamma(1/H)D_H}\,
  \varepsilon^{1/H-1}.
  \label{eq:rough-maximum-lower-tail}
\end{equation}
Finally, self-similarity gives
\begin{equation}
  \PP\bigl(B_t\leq1\text{ for }0\leq t\leq T\bigr)
  =\PP(M\leq T^{-H}).
  \label{eq:rough-persistence-selfsimilarity}
\end{equation}
Substitution into \eqref{eq:rough-maximum-lower-tail} proves
\eqref{eq:rough-sharp-persistence-asymptotic} and
\eqref{eq:rough-persistence-constant}.
\end{proof}

\subsection{Endpoint-tilted finite-grid hard-wall laws and scalar compact containment}
\label{subsec:rough-tilted-source}

The maximum-functional change of measure has Brownian penalisation
precedents \parencite[Theorem~3.6]{roynetteValloisYor2006maximum}, including
its interaction with scaling \parencite{panzo2019scaled}.  For fractional
Brownian motion, \textcite{aurzadaBuckKilian2020penalizing} instead penalise
one-sided negativity through Molchan's reciprocal exponential functional.
These works motivate the penalisation viewpoint, which we extend by the dependent finite-grid disintegration below.

Fix an arbitrary positive sequence \(a_n\to\infty\).  Let \(r_n\in\mathbb N\) be
auxiliary integers, to be chosen later by diagonal refinement, and set the
dyadic tangent mesh \(h_n:=2^{-r_n}\).  The corresponding physical grid is
\begin{equation}
  \Delta_n:=\frac{h_n}{a_n},
  \qquad
  m_n:=\left\lfloor\frac{a_n}{h_n}\right\rfloor,
  \qquad
  \mathcal G_n:=\{j\Delta_n:0\leq j\leq m_n\}.
  \label{eq:tilted-physical-grid}
\end{equation}
Let \(K_n\) be the almost surely unique maximising grid index and put
\begin{equation}
  \tau_n:=K_n\Delta_n,
  \qquad
  M_n^*:=B_{\tau_n}.
  \label{eq:tilted-grid-maximizer}
\end{equation}
The centred grid zoom is
\begin{equation}
  \Xi_n^{\mathrm{grid}}(t)
  :=a_n^H\bigl(B_{\tau_n+t/a_n}-B_{\tau_n}\bigr).
  \label{eq:tilted-grid-zoom}
\end{equation}
For \(0\leq k\leq m_n\), define
\begin{equation}
\begin{aligned}
  L_{n,k}^-&:=h_nk,
  &L_{n,k}^+&:=h_n(m_n-k),\\
  \Lambda_{n,k}&:=\{h_n(j-k):0\leq j\leq m_n\},
  &C_{n,k}&:=\{f:f(t)\leq0\text{ for }t\in\Lambda_{n,k}\}.
\end{aligned}
  \label{eq:tilted-component-geometry}
\end{equation}
If \(W\) is a generic standard two-sided fractional Brownian motion, let
\begin{equation}
  Q_{n,k}:=\mathcal L(W\mid W\in C_{n,k}),
  \qquad
  p_{n,k}:=\PP(W\in C_{n,k}),
  \label{eq:untilted-grid-components}
\end{equation}
and define
\begin{equation}
\begin{split}
  z_{n,k}
  &:=\EE_{Q_{n,k}}e^{W(-L_{n,k}^-)},\\
  \frac{\mathrm d\widehat Q_{n,k}}{\mathrm dQ_{n,k}}(f)
  &:=\frac{e^{f(-L_{n,k}^-)}}{z_{n,k}}.
\end{split}
  \label{eq:endpoint-tilted-components}
\end{equation}
The endpoint belongs to the constraint grid, so \(0<z_{n,k}\leq1\).

Using the path space \(\mathcal X\) and the marked space \(\mathcal Y\) from
\eqref{eq:rough-marked-space}, define for \(u,L\geq0\) the continuous map
\begin{equation}
\begin{aligned}
  \mathscr M_{u,L}:
  \{f\in\mathcal X:f(-u)\leq0\}
  &\longrightarrow\mathcal Y,\\
  \mathscr M_{u,L}(f)
  &:=\left(
    u,-f(-u),
    \bigl(-f(-(s\wedge u))\bigr)_{s\geq0},
    \bigl(-f(s\wedge L)\bigr)_{s\geq0}
  \right).
\end{aligned}
  \label{eq:tilted-grid-mark-map}
\end{equation}

\begin{proposition}
\label{prop:exact-tilted-wall-mixture}
Let \(Z_n^{\mathrm{grid}}\) and \(\widehat{\PP}_n^{\mathrm{grid}}\) be given by
\begin{equation}
  Z_n^{\mathrm{grid}}:=\EE e^{-a_n^HM_n^*},
  \qquad
  \frac{\mathrm d\widehat{\PP}_n^{\mathrm{grid}}}{\mathrm d\PP}
  :=\frac{e^{-a_n^HM_n^*}}{Z_n^{\mathrm{grid}}}.
  \label{eq:grid-tilt-normalizer}
\end{equation}
Then the global normaliser and component weights satisfy
\begin{equation}
  Z_n^{\mathrm{grid}}
  =\sum_{k=0}^{m_n}p_{n,k}z_{n,k},
  \qquad
  q_{n,k}:=\frac{p_{n,k}z_{n,k}}{Z_n^{\mathrm{grid}}},
  \label{eq:tilted-mixture-normalizer}
\end{equation}
and \((q_{n,k})_{0\leq k\leq m_n}\) is a probability vector.  Set
\begin{equation}
  Y_n^{\mathrm{grid}}
  :=\mathscr M_{L_{n,K_n}^-,L_{n,K_n}^+}
      (\Xi_n^{\mathrm{grid}})
  =\bigl(a_n\tau_n,a_n^HM_n^*,
      \overline L_n^{\mathrm{grid}},
      \overline R_n^{\mathrm{grid}}\bigr),
  \label{eq:tilted-grid-mark}
\end{equation}
where the last equality names the two constant-extended deficit arms.  The
path and marked laws have the exact disintegrations
\begin{equation}
\begin{aligned}
  \mathcal L_{\widehat{\PP}_n^{\mathrm{grid}}}
    (\Xi_n^{\mathrm{grid}})
  &=\sum_{k=0}^{m_n}q_{n,k}\widehat Q_{n,k},\\
  \mathcal L_{\widehat{\PP}_n^{\mathrm{grid}}}
    (Y_n^{\mathrm{grid}})
  &=\sum_{k=0}^{m_n}q_{n,k}
    (\mathscr M_{L_{n,k}^-,L_{n,k}^+})_*
    \widehat Q_{n,k}.
\end{aligned}
  \label{eq:exact-tilted-mixture}
\end{equation}
\end{proposition}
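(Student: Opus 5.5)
The plan is to reduce to the untilted grid disintegration of Proposition~\ref{prop:exact-pure-wall-mixture}, which holds for the present grid $\mathcal G_n$ by the same citation of \cite[Proposition~3.3]{moench2026horizonProblem}. It gives $\PP(K_n=k)=p_{n,k}$ and $\mathcal L(\Xi_n^{\mathrm{grid}}\mid K_n=k)=Q_{n,k}$, where $p_{n,k}$ is the probability of the relative wall $\Lambda_{n,k}$ for a standard fractional Brownian motion $W$. The untilted identity uses only that the maximising index is almost surely unique (Lemma~\ref{lem:unique-interior-maximizer}) and that $\{K_n=k\}$ is exactly the event $\Xi_n^{\mathrm{grid}}\in C_{n,k}$, where $\Xi^{(k)}(t):=a_n^H(B_{k\Delta_n+t/a_n}-B_{k\Delta_n})$ is distributed as $W$ by stationarity of increments and self-similarity.

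The key observation is that the tilt is a function of the zoomed path on each component. On $\{K_n=k\}$ we have $\Xi_n^{\mathrm{grid}}=\Xi^{(k)}$. Since $B_0=0$ and $0=k\Delta_n$ is the left grid endpoint,
\[
  a_n^HM_n^*=a_n^H\bigl(B_{k\Delta_n}-B_0\bigr)=-\Xi^{(k)}(-k h_n)=-\Xi_n^{\mathrm{grid}}\bigl(-L_{n,k}^-\bigr).
\]
Hence $e^{-a_n^HM_n^*}=e^{\Xi_n^{\mathrm{grid}}(-L^-_{n,K_n})}$ pathwise. For a bounded measurable $\Phi$ on $\mathcal X$ we therefore get
\[
  \EE\bigl[\Phi(\Xi_n^{\mathrm{grid}})e^{-a_n^HM_n^*}\bigr]
  =\sum_{k=0}^{m_n}p_{n,k}\,\EE_{Q_{n,k}}\bigl[\Phi(W)e^{W(-L_{n,k}^-)}\bigr]
  =\sum_{k=0}^{m_n}p_{n,k}z_{n,k}\,\EE_{\widehat Q_{n,k}}\Phi.
\]
Here $z_{n,k}\in(0,1]$ because $-L_{n,k}^-\in\Lambda_{n,k}$, so the exponent is nonpositive under $Q_{n,k}$. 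This requires no integrability care. Taking $\Phi\equiv1$ gives the normaliser identity in \eqref{eq:tilted-mixture-normalizer}. Dividing by $Z_n^{\mathrm{grid}}>0$ shows that $q_{n,k}\geq0$ sum to one and yields the path disintegration in \eqref{eq:exact-tilted-mixture}.

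For the marked law I will check that $Y_n^{\mathrm{grid}}$ is a measurable function of $(K_n,\Xi_n^{\mathrm{grid}})$ given by $\mathscr M_{L^-_{n,K_n},L^+_{n,K_n}}$. On $\{K_n=k\}$ the first mark is $a_n\tau_n=kh_n=L_{n,k}^-$, and the second is $a_n^HM_n^*=-\Xi(-L_{n,k}^-)$ by the display above. The left arm is $-\Xi(-(s\wedge U))$ with $U=L_{n,k}^-$, and the right arm is $-\Xi(s\wedge L_{n,k}^+)$, where $L_{n,k}^+=a_n(T_n-\tau_n)$. This matches the constant extensions $\overline L_n^{\mathrm{grid}},\overline R_n^{\mathrm{grid}}$ of \eqref{eq:rough-persistence-deficit-arms} on the grid horizon.

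The domain condition $f(-u)\leq0$ holds $\widehat Q_{n,k}$-almost surely, and continuity of $\mathscr M_{u,L}$ for fixed $u,L$ gives measurability. Applying the path identity with $\Phi=\psi\circ\mathscr M_{L^-_{n,k},L^+_{n,k}}$ componentwise gives the marked disintegration.

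The only delicate point is the identification $e^{-a_n^HM_n^*}=e^{\Xi(-L^-)}$, which relies on the grid containing $0$ as its left endpoint, so that the physical origin is a wall point. Everything else is bookkeeping with the untilted mixture.
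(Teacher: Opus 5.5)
Your proposal is correct and follows essentially the same route as the paper: it invokes the untilted rowwise mixture for the present grid, identifies the tilt $e^{-a_n^HM_n^*}$ with $e^{\Xi_n^{\mathrm{grid}}(-L^-_{n,K_n})}$ on each component, computes the weighted expectation componentwise, normalises, and pushes forward through the component-dependent map $\mathscr M_{L^-_{n,k},L^+_{n,k}}$. The paper also remarks separately on the endpoint components $k=0,m_n$; your uniform treatment already covers them, since the untilted disintegration includes those components.
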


\begin{proof}
The rowwise calculation in
Proposition~\ref{prop:exact-pure-wall-mixture} applies to this deterministic
grid, including its endpoint components, and gives
\[
  \PP(K_n=k)=p_{n,k}>0,
  \qquad
  \mathcal L(\Xi_n^{\mathrm{grid}}\mid K_n=k)=Q_{n,k}.
\]
On \(\{K_n=k\}\), time zero has relative coordinate
\(-L_{n,k}^-=-h_nk\), so
\begin{equation}
  \Xi_n^{\mathrm{grid}}(-L_{n,k}^-)
  =a_n^H(B_0-B_{\tau_n})
  =-a_n^HM_n^*
  \label{eq:height-left-endpoint}
\end{equation}
and \eqref{eq:tilted-grid-mark} follows.  Therefore, for every bounded
Borel \(F:\mathcal X\to\mathbb R\),
\begin{align*}
  &\EE\left[
    e^{-a_n^HM_n^*}F(\Xi_n^{\mathrm{grid}});K_n=k
  \right]
  \\
  &\qquad
  =p_{n,k}\EE_{Q_{n,k}}\left[
       e^{W(-L_{n,k}^-)}F(W)
     \right]
  =p_{n,k}z_{n,k}\EE_{\widehat Q_{n,k}}F(W).
\end{align*}
Taking \(F=1\) and summing proves the normaliser identity.  The same
calculation gives
\(\widehat{\PP}_n^{\mathrm{grid}}(K_n=k)=q_{n,k}\) and conditional path law
\(\widehat Q_{n,k}\), proving the probability-vector and path-mixture
claims.  Applying the component-dependent map
\(\mathscr M_{L_{n,k}^-,L_{n,k}^+}\) gives the marked disintegration.

At \(k=0\), the endpoint is the pin, so \(z_{n,0}=1\) and the height is
zero.  At \(k=m_n\), the right horizon is zero and the same calculation
uses the direct one-sided endpoint component of
Proposition~\ref{prop:exact-pure-wall-mixture}.  If \(m_n=0\), these are the
same sole component and all identities reduce to one.
\end{proof}

Write
\(Y_{a_n}:=(U_{a_n},V_{a_n},\overline L_{a_n},\overline R_{a_n})\) for the
continuous-maximiser counterpart of \eqref{eq:tilted-grid-mark}.
Fix a bounded metric \(d_{\mathcal Y}\leq1\) generating the product topology
of \(\mathcal Y\), and let \(d_{\mathrm{BL}}\) be the associated
bounded-Lipschitz metric on probability laws.

\begin{proposition}
\label{prop:relative-tilted-grid-diagonal}
The integers \(r_n\) can be chosen strictly increasing, with
\(h_n\leq2^{-n}\), so that
\begin{equation}
  d_{\mathrm{BL}}\left(
    \mathcal L_{\widehat{\PP}_n^{\mathrm{grid}}}(Y_n^{\mathrm{grid}}),
    \mathcal L_{\widehat{\PP}_{a_n}}(Y_{a_n})
  \right)\longrightarrow0.
  \label{eq:relative-tilted-diagonal}
\end{equation}
The refinement is chosen after the scale sequence; no assertion is made for
an unrelated mesh sequence.
\end{proposition}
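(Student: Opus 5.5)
We build the grid and the continuous-maximiser objects on a single probability space and compare the tilted laws through their densities. For each fixed \(n\), take the nested finer grids \(\mathcal G_{n,r}\) of \eqref{eq:finer-fixed-scale-grids}, with their maximisers \(\tau_{n,r}\) and grid maxima \(M_{n,r}^*\). As in the proof of Proposition~\ref{prop:dyadic-grid-diagonal}, Lemma~\ref{lem:unique-interior-maximizer} gives \(\tau_{n,r}\to\tau\) almost surely as \(r\to\infty\). Continuity of \(B\) then gives \(M_{n,r}^*\uparrow M\) and \(T_{n,r}\to1\), where \(T_{n,r}\) is the right endpoint of the grid. The plan is to prove that, with \(n\) fixed, the grid marked element converges almost surely as \(r\to\infty\) to \(Y_{a_n}\) in \(\mathcal Y\). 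We then transfer this to the tilted laws, and finally choose \(r_n\) by a diagonal argument.

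\emph{Step 1: pathwise convergence for fixed \(n\).} Since \(a_n\) is fixed, the two scalar marks converge: \(a_n\tau_{n,r}\to a_n\tau\) and \(a_n^HM_{n,r}^*\to a_n^HM\). Write the left arm as \(s\mapsto a_n^H\bigl(M_{n,r}^*-B_{\tau_{n,r}-(s\wedge a_n\tau_{n,r})/a_n}\bigr)\). It converges uniformly on \(\mathbb R_+\) to \(\overline L_{a_n}\). This follows from uniform continuity of \(B\) on \([-1,2]\) together with convergence of the stopping argument \(s\wedge a_n\tau_{n,r}\) to \(s\wedge a_n\tau\). The right arm is handled the same way, using the horizon \(a_n(T_{n,r}-\tau_{n,r})\to a_n(1-\tau)\). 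The constant extensions are continuous in the horizon parameter, so the only input is uniform continuity of \(B\). We do not need the Hölder rate from Proposition~\ref{prop:dyadic-grid-diagonal}, because \(n\) is fixed.

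\emph{Step 2: transfer to the tilted laws.} Let \(\varphi\) be bounded by one and \(1\)-Lipschitz for \(d_{\mathcal Y}\). Then
\[
\Bigl|\EE\bigl[e^{-a_n^HM_{n,r}^*}\varphi(Y_{n,r}^{\mathrm{grid}})\bigr]-\EE\bigl[e^{-a_n^HM}\varphi(Y_{a_n})\bigr]\Bigr|
\le \EE\bigl|e^{-a_n^HM_{n,r}^*}-e^{-a_n^HM}\bigr|+\EE\,d_{\mathcal Y}\bigl(Y_{n,r}^{\mathrm{grid}},Y_{a_n}\bigr).
\]
Both terms tend to zero by dominated convergence. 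Here \(M_{n,r}^*\ge B_0=0\), so the weights lie in \([0,1]\), and \(d_{\mathcal Y}\le1\). With \(\varphi\equiv1\) the same bound gives \(Z_{n,r}^{\mathrm{grid}}\to Z_{a_n}>0\). Dividing by the normalisers therefore shows that, for each fixed \(n\), the \(d_{\mathrm{BL}}\) distance in \eqref{eq:relative-tilted-diagonal} at refinement level \(r\) tends to \(0\) as \(r\to\infty\). The estimate is uniform over the test class because it involves only the two expectations above.

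\emph{Step 3: diagonal choice.} Set \(r_0:=0\). Recursively choose \(r_n>r_{n-1}\) with \(r_n\ge n\), so that \(h_n=2^{-r_n}\le2^{-n}\) and the distance from Step 2 is at most \(2^{-n}\). We must also arrange that the grid \(\mathcal G_n\) of \eqref{eq:tilted-physical-grid}, built with mesh \(h_n\), coincides with \(\mathcal G_{n,r_n}\); this holds by construction. Then \eqref{eq:relative-tilted-diagonal} follows.

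The main obstacle is Step 1 near the horizons, namely the continuity of the marking map \(\mathscr M_{u,L}\) jointly in \((u,L,f)\). The constant extension must be treated through \(s\wedge u\), which is \(1\)-Lipschitz in \(u\). Continuity of \(B\) on a neighbourhood of \([0,1]\) then makes the convergence uniform on all of \(\mathbb R_+\), not just on compact sets. Everything else is dominated convergence with bounded weights.
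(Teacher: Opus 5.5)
Your proposal is correct and follows essentially the same route as the paper. For fixed \(n\), you get almost-sure convergence of the marks from the stopped-argument form of the arms and uniform continuity of \(B\). Dominated convergence handles the bounded weights and the bounded metric \(d_{\mathcal Y}\). A normaliser-ratio estimate uniform over bounded-Lipschitz tests then controls the tilted laws, and \(r_n\) is chosen by a recursive diagonal so that the grid mesh \(h_n\) reproduces \(\mathcal G_{n,r_n}\). The only difference is cosmetic: the paper weights the distance term by \(e^{-a_n^HM}\), whereas you bound it without the weight.
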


\begin{proof}
Fix \(n\).  For \(r\in\mathbb N\), put
\[
  h_{n,r}:=2^{-r},
  \qquad
  m_{n,r}:=\lfloor a_n2^r\rfloor,
  \qquad
  T_{n,r}:=\frac{m_{n,r}2^{-r}}{a_n},
\]
instantiate the grid variables with \(h=h_{n,r}\), and denote the resulting
maximising time, maximum, and mark by
\(\tau_{n,r}\), \(M_{n,r}^*\), and \(Y_{n,r}^{\mathrm{grid}}\).
Then \(T_{n,r}\to1\), and the fixed-row tangent-grid convergence
\eqref{eq:fixed-row-grid-argmax} and continuity of \(B\) give almost surely
\[
  \tau_{n,r}\to\tau,
  \qquad
  M_{n,r}^*\to M,
  \qquad
  Y_{n,r}^{\mathrm{grid}}\to Y_{a_n}
  \quad\text{in }\mathcal Y.
\]
Indeed, the constant-extended grid arms have the explicit forms
\[
\begin{aligned}
  \overline L_{n,r}^{\mathrm{grid}}(s)
  &=a_n^H\left[
      M_{n,r}^*-B_{(\tau_{n,r}-s/a_n)\vee0}
    \right],\\
  \overline R_{n,r}^{\mathrm{grid}}(s)
  &=a_n^H\left[
      M_{n,r}^*-B_{(\tau_{n,r}+s/a_n)\wedge T_{n,r}}
    \right].
\end{aligned}
\]
The continuous arms have the same formulas with
\((\tau_{n,r},M_{n,r}^*,T_{n,r})\) replaced by \((\tau,M,1)\).
The stopped arguments converge uniformly in \(s\geq0\), so uniform
continuity of \(B\) on \([0,1]\) proves the marked convergence in the
display above.

Put
\[
  w_{n,r}:=e^{-a_n^HM_{n,r}^*},
  \qquad
  w_n:=e^{-a_n^HM},
  \qquad
  Z_{n,r}^{\mathrm{grid}}:=\EE w_{n,r}.
\]
These variables lie in \((0,1]\), and \(Z_{a_n}=\EE w_n>0\).  Dominated
convergence gives
\[
  e_{n,r}:=\EE|w_{n,r}-w_n|\to0,
  \qquad
  \delta_{n,r}:=\EE\left[
    w_n d_{\mathcal Y}(Y_{n,r}^{\mathrm{grid}},Y_{a_n})
  \right]\to0
\]
and
\(|Z_{n,r}^{\mathrm{grid}}-Z_{a_n}|\leq e_{n,r}\).

If \(F:\mathcal Y\to[-1,1]\) is one-Lipschitz for \(d_{\mathcal Y}\),
then adding and subtracting unnormalised expectations gives
\[
\begin{aligned}
  &\left|
    \frac{\EE[w_{n,r}F(Y_{n,r}^{\mathrm{grid}})]}
         {Z_{n,r}^{\mathrm{grid}}}
    -\frac{\EE[w_nF(Y_{a_n})]}{Z_{a_n}}
  \right|\\
  &\qquad\leq
  \frac{|Z_{n,r}^{\mathrm{grid}}-Z_{a_n}|+e_{n,r}+\delta_{n,r}}
       {Z_{a_n}}
  \leq\frac{2e_{n,r}+\delta_{n,r}}{Z_{a_n}}.
\end{aligned}
\]
Starting with \(r_0:=0\), choose \(r_n\) recursively so large that
\[
  r_n>r_{n-1},
  \qquad
  2^{-r_n}\leq2^{-n},
  \qquad
  e_{n,r_n}\vee\delta_{n,r_n}\leq2^{-n}Z_{a_n}.
\]
For this choice, \(Y_{n,r_n}^{\mathrm{grid}}=Y_n^{\mathrm{grid}}\) and
\(Z_{n,r_n}^{\mathrm{grid}}=Z_n^{\mathrm{grid}}\).  Taking the supremum in
the preceding bound yields
\(d_{\mathrm{BL}}\leq3\cdot2^{-n}\), proving
\eqref{eq:relative-tilted-diagonal}.
\end{proof}

\label{subsubsec:rough-scalar-compact-containment}

For a finite dyadic constraint set \(F\), put
\begin{equation}
  Q_F:=\mathcal L(W\mid W_t\leq0,\ t\in F),
  \qquad G:=-W,
  \label{eq:generic-wall-law}
\end{equation}
and, for \(e\in F\cup\{0\}\), define
\begin{equation}
  \frac{\mathrm d\widehat Q_{F,e}}{\mathrm dQ_F}
  :=\frac{e^{-G_e}}{\EE_{Q_F}e^{-G_e}}.
  \label{eq:generic-endpoint-tilt}
\end{equation}
At \(e=0\) this is the unweighted hard-wall law.

\begin{lemma}
\label{lem:endpoint-tilted-unit-arm-moment}
For every \(\lambda>0\), there is \(C_{H,\lambda}<\infty\) such that, for
every finite \(F\subset\mathbb D\), every \(e\in F\cup\{0\}\), and either
\(I=[0,1]\) or \(I=[-1,0]\),
\begin{equation}
  \EE_{\widehat Q_{F,e}}
  \exp\left(\lambda\sup_{t\in I}G_t\right)
  \leq C_{H,\lambda},
  \label{eq:endpoint-tilted-unit-arm-moment}
\end{equation}
uniformly in \(F\), \(e\), and the side.
\end{lemma}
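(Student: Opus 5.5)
The plan is to remove the endpoint tilt by association, which reduces \eqref{eq:endpoint-tilted-unit-arm-moment} to the untilted hard-wall law \(Q_F\). There the mean part is controlled by the pin bound \eqref{eq:pin-mean-bound}, and the centred part by Harg\'e's convex domination applied to a convex maximum functional.

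\emph{Step 1: reduction to finite sets.} Fix the side \(I\) and a finite dyadic \(T\subset I\). Continuity of paths gives \(\sup_{t\in I}G_t=\sup_T\max_{t\in T}G_t\) along an increasing sequence of finite dyadic sets \(T\). By monotone convergence it therefore suffices to bound
\[
  \EE_{\widehat Q_{F,e}}\exp\bigl(\lambda\max_{t\in T}G_t\bigr)
\]
uniformly in \(T\), \(F\), \(e\) and the side. The case \(e=0\) is the untilted law, so assume \(e\in F\).

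\emph{Step 2: removing the tilt.} Consider the finite vector \(G_{F\cup T}=-W_{F\cup T}\) under \(Q_F\). It is obtained from \(-W_{F\cup T}\) by a coordinate-rectangle restriction on the \(F\)-coordinates only. By Lemma~\ref{lem:finite-wall-mtp2-closure} it is MTP\(_2\), hence associated. Take
\[
  f_N:=\min\bigl\{N,\exp(\lambda\max_{t\in T}G_t)\bigr\},
\]
which is bounded and coordinatewise nondecreasing. The tilt density \(e^{-G_e}\) is bounded by one, since \(G_e\geq0\) under \(Q_F\), and it is nonincreasing. Applying \eqref{eq:association} to \(f_N\) and \(-e^{-G_e}\) gives
\[
  \EE_{Q_F}\bigl[f_N\,e^{-G_e}\bigr]\leq\EE_{Q_F}[f_N]\,\EE_{Q_F}\bigl[e^{-G_e}\bigr].
\]
Dividing by \(\EE_{Q_F}e^{-G_e}>0\) yields \(\EE_{\widehat Q_{F,e}}f_N\leq\EE_{Q_F}f_N\), and letting \(N\to\infty\) removes the truncation. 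So it remains to bound the same quantity under \(Q_F\), uniformly in \(F\).

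\emph{Step 3: untilted bound.} Write \(G_t=-m_F(t)+Z_t\), where \(Z_t:=G_t+m_F(t)\) is the fluctuation about the conditional barycentre. By \eqref{eq:pin-mean-bound},
\[
  \sup_{|t|\leq1}\bigl(-m_F(t)\bigr)\leq C_H
\]
uniformly in finite \(F\) (the empty wall gives zero). Hence
\[
  \EE_{Q_F}\exp\bigl(\lambda\max_TG_t\bigr)\leq e^{\lambda C_H}\,\EE_{Q_F}\exp\bigl(\lambda\max_TZ_t\bigr).
\]
For the last factor, apply Lemma~\ref{lem:harge-convex-domination} to the centred Gaussian vector \(Y=-W_{F\cup T}\) and the closed convex orthant constraint \(C\) on the \(F\)-coordinates. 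The test function
\[
  \varphi(x)=\exp\bigl(\lambda\max_{t\in T}x_t\bigr)
\]
is nonnegative and convex, being the exponential of a maximum of linear forms. Inequality \eqref{eq:harge-full-convex-domination} then gives
\[
  \EE_{Q_F}\exp\bigl(\lambda\max_TZ_t\bigr)\leq\EE\exp\bigl(\lambda\max_T(-W_t)\bigr)\leq\EE\exp\Bigl(\lambda\sup_{t\in[-1,1]}|W_t|\Bigr)<\infty.
\]
The final quantity is finite by Fernique's theorem, or by the exponential moment of \(\sup_{[0,1]}|B|\) already used for Molchan's limit, together with reflection for \([-1,0]\). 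It depends only on \(H\) and \(\lambda\), which completes the bound.

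\emph{Main obstacle.} I expect the only real difficulty to be choosing the right tool in Step 3. A dyadic chaining bound built from the edge maxima \(M_{n,k}\) of Lemma~\ref{lem:uniform-centered-modulus} gives only first moments, and summing the exponential moments of these edge maxima over levels \(k\) diverges. Using Harg\'e's inequality directly with the convex maximum functional avoids chaining entirely. In Step 2 one must also check that the exposed non-wall coordinates \(T\) are allowed: a rectangle restriction on a subset of coordinates is permitted, with the remaining factors taken as the full real line, so the closure statement of Lemma~\ref{lem:finite-wall-mtp2-closure} applies as stated.
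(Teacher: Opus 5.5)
Your proof is correct and is essentially the paper's argument. The untilted law \(Q_F\) is handled by Harg\'e's convex domination with the test \(\exp(\lambda\max_t x_t)\) (the paper uses \(\max_t|x_t|\)) together with the pin bound \eqref{eq:pin-mean-bound}, and association under the MTP\(_2\) wall law shows that the endpoint tilt \(e^{-G_e}\) can only lower the expectation of the increasing truncation; you apply these two steps in the opposite order to the paper, which is immaterial.

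One point should be made explicit: the deterministic coordinate at \(0\) has to be deleted from \(T\) before you invoke Lemma~\ref{lem:finite-wall-mtp2-closure}, because that lemma is stated for sets in \(\mathbb R\setminus\{0\}\). This costs nothing, since \(G_0=0\) and the supremum of a continuous path over a dense subset of \(I\setminus\{0\}\) still equals \(\sup_I G\).
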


\begin{proof}
Fix \(\lambda>0\) and a side \(I\), and put
\[
  T_j:=2^{-j}\mathbb Z\cap I,
  \qquad j\geq0.
\]
Write
\[
  g_F(t):=\EE_{Q_F}G_t,
  \qquad
  Z_F(t):=G_t-g_F(t).
\]
Apply Lemma~\ref{lem:harge-convex-domination} to the joint Gaussian vector on
\(F\cup T_j\) and the wall rectangle \(\{G_t\geq0:t\in F\}\).  Its singular
formulation permits the deterministic coordinate at zero.  The test
\(x\mapsto\exp(\lambda\max_{t\in T_j}|x_t|)\) is convex because the maximum
of absolute coordinates is convex and the exponential is convex and
increasing.  Hence
\[
  \EE_{Q_F}\exp\left(\lambda\max_{t\in T_j}|Z_F(t)|\right)
  \leq
  \EE\exp\left(\lambda\sup_{t\in I}|W_t|\right).
\]
Equation~\eqref{eq:positive-covariance-mixture} makes \(g_F=-m_F\)
continuous, with the empty-wall case immediate.  Thus continuity and monotone
convergence let \(j\to\infty\).  The right-hand side is finite by Fernique's
theorem
\parencite[Theorem~2.8.5]{bogachev1998gaussianMeasures} and the inequality
\(\lambda x\leq\alpha x^2+\lambda^2/(4\alpha)\), for a suitable
Fernique constant \(\alpha>0\).  Since \eqref{eq:pin-mean-bound} gives
\(\sup_{t\in I}g_F(t)\leq C_H\), time reflection now yields, uniformly in
\(F\) and the side,
\[
  \EE_{Q_F}\exp\left(\lambda\sup_{t\in I}G_t\right)
  \leq
  e^{\lambda C_H}
  \EE\exp\left(\lambda\sup_{t\in[0,1]}|W_t|\right)
  <\infty.
\]

If \(e=0\), then \(\widehat Q_{F,e}=Q_F\), so the preceding bound finishes
the proof.  Suppose \(e\in F\).  Applying
Lemma~\ref{lem:finite-wall-mtp2-closure} to the union of \(F\), \(T_j\), and
\(\{e\}\), then restricting to the wall and marginalising unused coordinates,
shows that the tested coordinates and \(G_e\) are associated; the deterministic
zero coordinate is deleted.  Put
\[
  A_{j,N}:=
  \exp\left(\lambda\max_{t\in T_j}G_t\right)\wedge N.
\]
Both \(A_{j,N}\) and \(1-e^{-(G_e)_+}\) are bounded and increasing.  For
the present \(e\), the latter equals \(1-e^{-G_e}\) because \(G_e\geq0\).
Expanding \eqref{eq:association} and rearranging gives
\[
  \EE_{Q_F}\left[
    e^{-G_e}A_{j,N}
  \right]
  \leq
  \EE_{Q_F}e^{-G_e}\,
  \EE_{Q_F}A_{j,N}.
\]
Let \(N\to\infty\), divide by the strictly positive endpoint normaliser,
and then let \(j\to\infty\).  Monotone convergence and the unweighted bound
prove \eqref{eq:endpoint-tilted-unit-arm-moment}.
\end{proof}

The next argument obtains the compact containment needed for the marked
tilted laws directly from a fixed-factor normaliser comparison.
For \(T>0\), write
\begin{equation}
  M_T:=\max_{0\leq s\leq T}B_s,
  \qquad
  \sigma_T:=\operatorname*{argmax}_{0\leq s\leq T}B_s,
  \qquad
  \mathcal Z(T):=\EE e^{-M_T}.
  \label{eq:long-horizon-notation}
\end{equation}
Self-similarity identifies
\begin{equation}
  (V_a,U_a)\deq(M_a,\sigma_a),
  \qquad
  J_a\deq\int_0^ae^{B_s-M_a}\,\mathrm ds,
  \label{eq:long-horizon-scaling}
\end{equation}
under the corresponding likelihood \(e^{-M_a}/\mathcal Z(a)\).
We denote that likelihood by
\begin{equation}
  \frac{\mathrm d\widehat{\PP}_T}{\mathrm d\PP}
  :=\frac{e^{-M_T}}{\mathcal Z(T)},
  \qquad T>0.
  \label{eq:long-horizon-tilted-law}
\end{equation}

Set
\begin{equation}
  p_*:=\PP\left(\sup_{1\leq s\leq2}B_s\leq0\right)>0.
  \label{eq:fixed-block-positive-baseline}
\end{equation}
Strict positivity follows from the sign-reversed assertion of
Lemma~\ref{lem:fbm-compact-positivity-support} on \([1,2]\).

\begin{corollary}
\label{cor:tilted-height-tightness}
Let \(m_H:=\lceil1/H\rceil\).  Then, for every \(T>0\),
\begin{equation}
  \mathcal Z(2^{-1/H}T)
  \leq p_*^{-m_H}\mathcal Z(T)
  \label{eq:normalizer-fixed-factor}
\end{equation}
and, for \(A\geq0\),
\begin{equation}
  \sup_{a>0}\widehat{\PP}_a(V_a>A)
  \leq p_*^{-m_H}e^{-A/2}.
  \label{eq:tilted-height-tail}
\end{equation}
\end{corollary}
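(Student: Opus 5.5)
The proof rests on a doubling inequality for \(\mathcal Z\), obtained from positive association of fractional Brownian motion itself. Its covariance \(K_H\) in \eqref{eq:fbm-covariance} is pointwise nonnegative, as shown in the proof of Lemma~\ref{lem:positive-boundary-flux}. By Pitt's theorem, every finite vector \((B_{t_1},\dots,B_{t_k})\) is therefore associated. Coordinatewise nonincreasing bounded functions are then positively correlated, since association is preserved by reversing all coordinates.

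\emph{Step 1: doubling.} Fix \(S>0\). Both \(e^{-\max_{t\in D}B_t}\) and \(\mathds1\{\max_{t\in D'}B_t\leq0\}\) are bounded and nonincreasing, where \(D\subset[0,S]\) and \(D'\subset[S,2S]\) are finite dyadic grids. Association gives
\[
  \EE\Bigl[e^{-\max_DB}\,\mathds1\{\max_{D'}B\leq0\}\Bigr]
  \geq\EE e^{-\max_DB}\;\PP\bigl(\max_{D'}B\leq0\bigr).
\]
We refine the grids and pass to the limit. The exponentials converge by dominated convergence, and the indicators decrease to \(\mathds1\{\sup_{[S,2S]}B\leq0\}\) by continuity. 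The limiting inequality is
\[
  \EE\bigl[e^{-M_S};\sup_{[S,2S]}B\leq0\bigr]\geq\mathcal Z(S)\,p_*.
\]
Here self-similarity identifies \(\PP(\sup_{[S,2S]}B\leq0)\) with \(p_*\) from \eqref{eq:fixed-block-positive-baseline} for every \(S\). Because \(B_0=0\), we have \(M_S\geq0\), so on this event \(M_{2S}=M_S\). Hence \(\mathcal Z(2S)\geq p_*\mathcal Z(S)\).

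\emph{Step 2: iteration.} Iterating Step~1 \(m_H\) times gives \(\mathcal Z(2^{m_H}S)\geq p_*^{m_H}\mathcal Z(S)\). Take \(S=2^{-1/H}T\). Since \(m_H\geq1/H\), we have \(2^{m_H}S\geq T\). The map \(\mathcal Z\) is nonincreasing in the horizon because \(M_T\) is nondecreasing in \(T\). Therefore
\[
  \mathcal Z(2^{-1/H}T)\leq p_*^{-m_H}\mathcal Z(2^{m_H}S)\leq p_*^{-m_H}\mathcal Z(T),
\]
which is \eqref{eq:normalizer-fixed-factor}.

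\emph{Step 3: height tail.} By \eqref{eq:long-horizon-scaling}, \(\widehat\PP_a(V_a>A)=\EE[e^{-M_a};M_a>A]/\mathcal Z(a)\). On \(\{M_a>A\}\) we bound \(e^{-M_a}\leq e^{-A/2}e^{-M_a/2}\). Self-similarity gives \(M_a/2=2^{-1}M_a\deq M_{2^{-1/H}a}\), since \(c^H=1/2\) for \(c=2^{-1/H}\). Hence \(\EE e^{-M_a/2}=\mathcal Z(2^{-1/H}a)\), and \eqref{eq:normalizer-fixed-factor} yields \(\widehat\PP_a(V_a>A)\leq p_*^{-m_H}e^{-A/2}\) uniformly in \(a\), which is \eqref{eq:tilted-height-tail}.

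The only delicate point is Step~1. One must invoke association of the \emph{unconditioned} Gaussian vector, via Pitt's theorem and \(K_H\geq0\), rather than the MTP\(_2\) closure of Lemma~\ref{lem:finite-wall-mtp2-closure}, which concerns \(B_F\) with \(0\notin F\). The finite-grid approximation is needed to transfer the inequality to continuous suprema; it is routine by monotone and dominated convergence.
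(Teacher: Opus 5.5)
Your proof is correct and follows the paper's argument: Pitt association of the unconditioned fractional Brownian vector (whose covariance is nonnegative), a finite-grid approximation, the self-similar block baseline $p_*$, iteration combined with monotonicity of $\mathcal Z$, and the scaling identity $\EE e^{-M_a/2}=\mathcal Z(2^{-1/H}a)$. The only cosmetic difference is that you apply association directly to $e^{-\max_D B}$ and the block indicator, whereas the paper proves $F_{2T}(x)\ge p_*F_T(x)$ at each level $x\ge0$ and then integrates using $\mathcal Z(T)=\int_0^\infty e^{-x}F_T(x)\,\mathrm dx$; in your version, make sure $0\in D$, so that $e^{-\max_DB}\le1$ is bounded as you claim.
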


\begin{proof}
Put \(F_T(x):=\PP(M_T\leq x)\).  The fractional Brownian level process has
entrywise nonnegative covariance on nonnegative times and is therefore
associated by \textcite{pitt1982associatedGaussian}.  Association applies to
two decreasing indicators by negating both.  For \(x\geq0\), delete the
deterministic coordinate at zero, apply finite-dimensional association on
increasing finite grids, both containing \(T\) and with dense unions in
\([0,T]\) and \([T,2T]\), and then use path continuity.  Self-similarity gives
\begin{equation*}
  F_{2T}(x)
  \geq F_T(x)
  \PP\left(\sup_{T\leq s\leq2T}B_s\leq x\right)
  \geq p_*F_T(x).
\end{equation*}
Since \(M_T\geq0\), Tonelli's theorem gives
\begin{equation*}
  \mathcal Z(T)=\int_0^\infty e^{-x}F_T(x)\,\mathrm dx.
\end{equation*}
Consequently,
\begin{equation*}
  \mathcal Z(2T)\geq p_*\mathcal Z(T),
  \qquad T>0.
\end{equation*}
Put \(S=2^{-1/H}T\).  Since
\(T=2^{1/H}S\leq2^{m_H}S\), iteration and the fact that \(\mathcal Z\) is
nonincreasing give
\begin{equation*}
  p_*^{m_H}\mathcal Z(S)
  \leq\mathcal Z(2^{m_H}S)
  \leq\mathcal Z(T),
\end{equation*}
which is \eqref{eq:normalizer-fixed-factor}.

Finally, self-similarity gives \(Z_a=\mathcal Z(a)\) and
\(\EE e^{-V_a/2}=\mathcal Z(2^{-1/H}a)\).  Therefore, for every \(a>0\),
\begin{equation*}
\begin{split}
  \widehat{\PP}_a(V_a>A)
  &=\frac{\EE[e^{-V_a};\,V_a>A]}{Z_a}\\
  &\leq e^{-A/2}
       \frac{\mathcal Z(2^{-1/H}a)}{\mathcal Z(a)}
  \leq p_*^{-m_H}e^{-A/2}.
\end{split}
\end{equation*}
Taking the supremum over \(a\) proves \eqref{eq:tilted-height-tail}.
\end{proof}

\begin{lemma}
\label{lem:exponential-barrier-disintegration}
Let \(E\sim\operatorname{Exp}(1)\) be independent of \(B\).  For \(T>0\),
let \(\mathbb Q_T\) be the product law of \((B,E)\) conditioned on
\(\{M_T\leq E\}\).  Equivalently,
\begin{equation}
  \mathrm d\mathbb Q_T(B,x)
  :=\mathcal Z(T)^{-1}
    e^{-x}\mathds1_{\{M_T\leq x\}}\,\mathrm d\PP(B)\,\mathrm dx.
  \label{eq:exponential-barrier-law}
\end{equation}
This is a probability law whose path marginal is \(\widehat{\PP}_T\).  Put
\begin{equation}
  F_T(x):=\PP(M_T\leq x),
  \qquad
  \beta_T(\mathrm dx)
  :=\frac{e^{-x}F_T(x)}{\mathcal Z(T)}\,\mathrm dx,
  \qquad x\geq0.
  \label{eq:exponential-barrier-marginal}
\end{equation}
Then \(\beta_T\) is the barrier marginal.  For every \(x>0\), a regular
conditional law of the path given \(E=x\) is
\begin{equation}
  \mathbb P_{T,x}:=\mathcal L(B\mid M_T\leq x).
  \label{eq:hard-wall-law}
\end{equation}
Under \(\mathbb Q_T\), the overshoot \(E-M_T\) is
\(\operatorname{Exp}(1)\) and is independent of \(B\).  Moreover, for
\(A\geq0\),
\begin{equation}
  \beta_T([A,\infty))=\mathbb Q_T(E\geq A)
  \leq2p_*^{-m_H}e^{-A/2}.
  \label{eq:exponential-barrier-tail}
\end{equation}
\end{lemma}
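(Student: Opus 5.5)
This is a direct computation with the product measure, followed by an application of Corollary~\ref{cor:tilted-height-tail}.

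\emph{Normalisation and path marginal.} Since \(E\) is independent of \(B\), Tonelli gives
\[
\PP(M_T\leq E)=\EE\int_{M_T}^\infty e^{-x}\,\mathrm dx=\EE e^{-M_T}=\mathcal Z(T)>0.
\]
So conditioning on \(\{M_T\leq E\}\) is legitimate, and the density in \eqref{eq:exponential-barrier-law} is correct. Integrating out \(x\) first gives path marginal \(\mathcal Z(T)^{-1}e^{-M_T}\,\mathrm d\PP\), which is \(\widehat{\PP}_T\).

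\emph{Barrier marginal and conditional law.} Integrating out \(B\) first gives the barrier density \(e^{-x}F_T(x)/\mathcal Z(T)\) on \(x\geq0\), since \(M_T\geq0\). For \(x>0\) we have \(F_T(x)>0\): this follows from Lemma~\ref{lem:fbm-compact-positivity-support} (sign-reversed) together with continuity near \(0\), or directly from the positivity of \(p_*\)-type events. The joint density then factors as
\[
\beta_T(\mathrm dx)\,\frac{\mathds1_{\{M_T\leq x\}}\,\mathrm d\PP}{F_T(x)},
\]
so \(x\mapsto \mathbb P_{T,x}\) is a regular conditional law; measurability in \(x\) is clear.

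\emph{Overshoot.} For a bounded Borel path functional \(\Phi\) and \(y\geq0\), substitute \(x=M_T+y'\):
\[
\mathbb Q_T[\Phi(B);E-M_T>y]
=\mathcal Z(T)^{-1}\EE\Bigl[\Phi(B)e^{-M_T}\int_y^\infty e^{-y'}\mathrm dy'\Bigr]
=e^{-y}\,\widehat{\EE}_T\Phi .
\]
This factorisation shows at once that \(E-M_T\sim\operatorname{Exp}(1)\) and that it is independent of \(B\).

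\emph{Tail bound.} On \(\{E\geq A\}\), either \(M_T\geq A/2\) or \(E-M_T\geq A/2\). By the overshoot step the second event has probability \(e^{-A/2}\). For the first, self-similarity identifies \(\widehat{\PP}_T(M_T\geq A/2)\) with \(\widehat{\PP}_a(V_a\geq A/2)\) for \(a=T\) via \eqref{eq:long-horizon-scaling}. Corollary~\ref{cor:tilted-height-tail} at level \(A/2\) then gives \(p_*^{-m_H}e^{-A/4}\).

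That exponent \(e^{-A/4}\) is the one delicate point, since the statement asks for \(e^{-A/2}\), and getting the right rate is where I expect the main obstacle. To sharpen it I would use the exact formula instead of the union bound. Fubini gives
\[
\beta_T([A,\infty))=\mathcal Z(T)^{-1}\EE\bigl[e^{-\max(M_T,A)}\bigr].
\]
Split on whether \(M_T\leq A\):
\begin{itemize}
\item The part with \(M_T\leq A\) contributes at most \(e^{-A}/\mathcal Z(T)\).
\item The part with \(M_T>A\) contributes \(\widehat{\PP}_T(M_T>A)\leq p_*^{-m_H}e^{-A/2}\), by \eqref{eq:tilted-height-tail}.
\end{itemize}
For the first term, use \(e^{-A}\leq e^{-A/2}e^{-M_T/2}\) on \(\{M_T\leq A\}\). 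This bounds the term by
\[
e^{-A/2}\,\EE e^{-M_T/2}/\mathcal Z(T)=e^{-A/2}\,\mathcal Z(2^{-1/H}T)/\mathcal Z(T)\leq p_*^{-m_H}e^{-A/2},
\]
by \eqref{eq:normalizer-fixed-factor}. Adding the two contributions gives \(2p_*^{-m_H}e^{-A/2}\), which is \eqref{eq:exponential-barrier-tail}. So the key is to apply the fixed-factor normaliser comparison to both pieces rather than using the crude union bound.
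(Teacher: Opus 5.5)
Your proof is correct and follows essentially the paper's route: Tonelli gives the normalisation and both marginals, the factorised joint density gives the conditional law and the overshoot, and for \eqref{eq:exponential-barrier-tail} you compare \(e^{-\max(A,M_T)}\) with \(e^{-A/2}e^{-M_T/2}\) and then apply \eqref{eq:normalizer-fixed-factor}; the paper does this last step at once via \(e^{-x}\leq e^{-A/2}e^{-x/2}\) on \(x\geq A\), and your split over \(\{M_T\leq A\}\) and \(\{M_T>A\}\) is equivalent. The only loose point is \(F_T(x)>0\) for \(x>0\), which the paper takes from the Gaussian support theorem, \(F_T(x)\geq\PP(\|B\|_{C([0,T])}<x)>0\); your sketch via the sign-reversed positivity lemma ``with continuity near \(0\)'' still needs an association step to join the two non-independent events, for example iterating \(F_{2T}(x)\geq p_*F_T(x)\) from the proof of Corollary~\ref{cor:tilted-height-tightness} to get \(F_T(x)\geq p_*^kF_{2^{-k}T}(x)>0\) for large \(k\).
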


\begin{proof}
Since \(B_0=0\), one has \(M_T\geq0\), and independence and Tonelli give
\begin{equation*}
  \PP(M_T\leq E)
  =\EE\left[\int_{M_T}^\infty e^{-x}\,\mathrm dx\right]
  =\EE e^{-M_T}
  =\mathcal Z(T)>0.
\end{equation*}
Thus \eqref{eq:exponential-barrier-law} has total mass one.  Integrating it
first in \(x\) gives the path marginal
\(\mathcal Z(T)^{-1}e^{-M_T}\,\mathrm d\PP=\mathrm d\widehat{\PP}_T\),
whereas integrating first in the path gives \(\beta_T\).

For \(x>0\), the Gaussian support theorem used in
Lemma~\ref{lem:fbm-compact-positivity-support} gives
\(F_T(x)\geq\PP(\|B\|_{C([0,T])}<x)>0\).  For every Borel path event \(C\),
the map \(x\mapsto\PP(C\cap\{M_T\leq x\})/F_T(x)\) is Borel on
\((0,\infty)\); division by the barrier marginal therefore proves
\eqref{eq:hard-wall-law}.  Its value at zero is immaterial because
\(\beta_T(\{0\})=0\).  Division instead by the path marginal gives
\begin{equation*}
  \mathbb Q_T(E\in\mathrm dx\mid B)
  =e^{-(x-M_T)}\mathds1_{\{x\geq M_T\}}\,\mathrm dx,
\end{equation*}
which proves the overshoot assertion.

Finally, for \(A\geq0\),
\begin{equation}
\begin{split}
  \beta_T([A,\infty))
  &=
  \frac{\int_A^\infty e^{-x}F_T(x)\,\mathrm dx}{\mathcal Z(T)}\\
  &\leq
  \frac{e^{-A/2}}{\mathcal Z(T)}
  \EE\left[\int_{M_T}^\infty e^{-x/2}\,\mathrm dx\right]\\
  &=2e^{-A/2}
  \frac{\mathcal Z(2^{-1/H}T)}{\mathcal Z(T)}
  \leq2p_*^{-m_H}e^{-A/2}.
\end{split}
\end{equation}
Here the inequality uses \(e^{-x/2}\leq e^{-A/2}\) for \(x\geq A\) and
Tonelli.  The first equality on the last line evaluates the exponential tail
and uses self-similarity; the final inequality is
\eqref{eq:normalizer-fixed-factor}.
\end{proof}

We repeatedly use the following form of the Gaussian supremum
anti-concentration theorem of
\textcite[Theorem~2.1]{chernozhukovChetverikovKato2014anticoncentration}.  If
\((X_u)_{u\in K}\) is a separable centred Gaussian process with unit
variances and an almost surely finite supremum, then
\begin{equation}
  \sup_{x\in\mathbb R}
  \PP\left(\left|\sup_{u\in K}X_u-x\right|\leq\varepsilon\right)
  \leq4\varepsilon\left(\EE\sup_{u\in K}X_u+1\right).
  \label{eq:gaussian-supremum-anticoncentration}
\end{equation}

\begin{proposition}
\label{prop:uniform-hard-wall-localization}
There is \(C_H<\infty\) such that, for \(T>R\geq1\) and \(x>0\),
\begin{equation}
  \mathbb P_{T,x}(\sigma_T>R)
  \leq C_HxR^{-H}.
  \label{eq:uniform-hard-wall-localization}
\end{equation}
\end{proposition}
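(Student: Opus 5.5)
The plan has three steps: reduce to unit scale, confine the late maximum to a window of width $O(x)$, and estimate that window with the Gaussian supremum anti-concentration bound \eqref{eq:gaussian-supremum-anticoncentration}. The anti-concentration step is still missing one ingredient, described in the third paragraph.

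First, self-similarity of $B$ reduces the claim to $R=1$. The map $B_t\mapsto R^{-H}B_{Rt}$ sends $\mathbb P_{T,x}$ to $\mathbb P_{T/R,\,xR^{-H}}$ and sends $\{\sigma_T>R\}$ to $\{\sigma_{T/R}>1\}$. So it suffices to show
\[
  \mathbb P_{T,y}(\sigma_T>1)\leq C_Hy,\qquad T>1,\ y>0 .
\]
Since the left side is at most one, only $y\leq1$ matters. Write
\[
  M_1=\max_{[0,1]}B,\qquad S=\max_{[1,T]}B .
\]
On $\{\sigma_T>1,\,M_T\leq y\}$ we have $0\leq M_1\leq S\leq y$. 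Hence $S-M_1\in[0,y]$, and in particular $S\in[0,y]$. The target becomes
\begin{equation*}
  \PP\bigl(M_1\leq S\leq y\bigr)\leq C_Hy\,\PP\bigl(M_1\leq y,\ S\leq y\bigr).
\end{equation*}
In words: on the hard-wall event, the late supremum must fall into a window of width $y$ above the early supremum.

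Second, I would estimate the numerator by conditioning on the early path $(B_t)_{0\leq t\leq1}$. Given this path, $(B_t)_{t\geq1}$ is its Gaussian regression plus an independent centred residual. The numerator then asks that the residual-plus-drift supremum $S$ land in the interval $[M_1,y]$, while the denominator asks only that $S\leq y$. To compare the two I would use log-concavity in the level. For a fixed conditioning path, $s\mapsto\PP(S\leq s\mid\mathcal F_1)$ is the Gaussian measure of a translated convex set, so it is log-concave by Prékopa--Leindler. Consequently
\[
  \PP(M_1\leq S\leq y\mid\mathcal F_1)\leq (y-M_1)\,\bigl(\log\PP(S\leq\cdot\mid\mathcal F_1)\bigr)'(M_1)\,\PP(S\leq y\mid\mathcal F_1).
\]
The remaining task is to bound the log-derivative at the level $M_1\geq0$ by a constant, uniformly in $T$. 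This is where \eqref{eq:gaussian-supremum-anticoncentration} enters. I would apply it to the normalised residual supremum over the block $[1,2]$ and over the dyadic blocks $[2^k,2^{k+1}]$. Each block contributes anti-concentration of order $\varepsilon(\EE\sup+1)$, and $\EE\sup$ is bounded on a unit block after rescaling. Blocks far to the right are discounted by the positive baseline $p_*$ from \eqref{eq:fixed-block-positive-baseline}, which gives every block a fixed chance of staying below level zero. Association, exactly as in Corollary~\ref{cor:tilted-height-tightness}, lets me factor the block events and sum the dyadic contributions geometrically.

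The main obstacle is the log-derivative at low levels. For small $y$ the denominator $\PP(M_T\leq y)$ is very small, so a bound on the density of $S$ alone is worthless; it must be a density bound relative to $\PP(S\leq s)$. At this point I do not have an argument for it. A hazard-rate bound does not follow from log-concavity, and \eqref{eq:gaussian-supremum-anticoncentration} controls only absolute window probabilities. My first attempt would be the exact pin-at-zero structure. I would expose $B_1$ as an anchor, as in Lemma~\ref{lem:rough-window-separated-anticoncentration}, so that the conditional law given $\{S\leq s\}$ becomes a hard-wall law of the type controlled by Lemma~\ref{lem:uniform-anchor-tail}. Raising the wall by $\varepsilon$ should then change the probability by a factor $1+O(\varepsilon)$. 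The uniform exponential tail bound for the anchor should supply the constant, independently of $T$. If this works, everything else is bookkeeping, and undoing the scaling gives \eqref{eq:uniform-hard-wall-localization} with the stated $xR^{-H}$.
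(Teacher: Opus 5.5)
There is a genuine gap, and it is the one you flag yourself. Your route needs a bound, uniform in $T$ and in the early path, on the conditional hazard $\bigl(\log\PP(S\le\cdot\mid\mathcal F_1)\bigr)'(M_1)$. No such pathwise bound exists. Indeed $S\ge B_1$, and a rough fractional Brownian motion immediately exceeds its current value, so $\PP(S\le B_1\mid\mathcal F_1)=0$ almost surely. Hence the concave function $\log\PP(S\le\cdot\mid\mathcal F_1)$ has unbounded slope at $M_1$ over early paths with $M_1-B_1$ small, and such paths are not negligible under the wall.

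Exposing $B_1$ as an anchor does not cure this. The association you cite from Corollary~\ref{cor:tilted-height-tightness} does not help either: it is Pitt's inequality for the \emph{unconditioned} process. That inequality cannot decouple a non-monotone window event such as $\{0\le\sup_IB\le x\}$ from the remaining wall constraints. So, as proposed, the argument does not close.

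The paper's proof avoids both the random window $[M_1,y]$ and any conditioning on $\mathcal F_1$. Since $B_0=0$ and the maximiser is interior, $M_T>0$ almost surely. So $\{\sigma_T>R\}$ is contained, up to a null set, in $\bigcup_j\{\sup_{I_j}B\ge0\}$, where $I_j=[r_j,\min(2r_j,T)]=r_j[1,L_j]$ with $r_j=2^jR$ and $L_j\in(1,2]$. On $\{M_T\le x\}$, each of these events puts $\sup_{I_j}B$ in the \emph{fixed} window $[0,x]$.

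Now work on finite grids $F$ and put $G=x-B$. The law of $G_F$ restricted to the block wall $\{G_t\ge0,\ t\in F\cap I_j\}$ is MTP$_2$, hence associated, by Lemma~\ref{lem:finite-wall-mtp2-closure}. The remaining walls form an increasing event, and $\{\max_{F\cap I_j}B\ge0\}$ is a decreasing one. Association followed by a continuum limit gives
\[
  \PP\Bigl(\sup_{I_j}B\ge0\,\Big|\,M_T\le x\Bigr)
  \le\frac{\PP\bigl(0\le\sup_{I_j}B\le x\bigr)}{\PP\bigl(\sup_{I_j}B\le x\bigr)}.
\]
This removes your obstacle. The denominator is no longer the tiny $\PP(M_T\le x)$ but a single-block wall probability, which is at least $p_*$ from \eqref{eq:fixed-block-positive-baseline} by self-similarity. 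The numerator is an absolute window probability, exactly what \eqref{eq:gaussian-supremum-anticoncentration} controls. Applied to $B_u/u^H$ after scaling the block to $[1,L_j]$, it gives $C_Hxr_j^{-H}$, and the geometric sum over $j$ gives $C_HxR^{-H}$.

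Your dyadic blocks, anti-concentration and baseline $p_*$ are the right ingredients; what is missing is this conditional decoupling step. Your preliminary reduction to $R=1$ is correct but not needed.
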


\begin{proof}
For \(1\leq L\leq2\), put
\begin{equation}
  S_L:=\sup_{1\leq u\leq L}B_u,
  \qquad
  X_u:=B_u/u^H.
\end{equation}
The process \(X\) is continuous and hence separable, centred, and of unit
variance.  Since \(u^H\geq1\), continuity and attainment of the two suprema
give, for \(\varepsilon>0\),
\begin{equation*}
  \{0\leq S_L\leq\varepsilon\}
  \subseteq
  \left\{0\leq\sup_{1\leq u\leq L}X_u\leq\varepsilon\right\}.
\end{equation*}
Moreover, Fernique's theorem
\parencite[Theorem~2.8.5]{bogachev1998gaussianMeasures} gives
\begin{equation*}
  \sup_{1\leq L\leq2}
  \EE\sup_{1\leq u\leq L}X_u
  \leq\EE\sup_{1\leq u\leq2}|B_u|<\infty.
\end{equation*}
The Gaussian anti-concentration estimate
\eqref{eq:gaussian-supremum-anticoncentration} therefore gives
\begin{equation}
  \PP(0\leq S_L\leq\varepsilon)
  \leq C_H\varepsilon,
  \qquad 0<\varepsilon\leq1.
  \label{eq:compact-block-anticoncentration}
\end{equation}
Moreover,
\begin{equation}
  \inf_{1\leq L\leq2}\PP(S_L\leq0)
  \geq p_*.
  \label{eq:compact-block-baseline}
\end{equation}

We next compare hard-wall events first on a finite grid.  Fix \(r>0\) and
\(1\leq L\leq2\) such that the block \(I:=r[1,L]\) is contained in
\((0,T]\).  Let \(F\subset(0,T]\) be finite, with
\(F_I:=F\cap I\neq\varnothing\), and put \(F_o:=F\setminus F_I\).  In the
shifted reflected coordinates \(G_t=x-B_t\), define
\begin{equation}
  A_I:=\{G_t\geq0:t\in F_I\},
  \qquad
  A_o:=\{G_t\geq0:t\in F_o\}.
\end{equation}
The finite Gaussian vector \(G_F\) is nondegenerate and has an everywhere
positive density, so \(\PP(A_I)>0\) and
\(\PP(A_I\cap A_o)>0\).  Lemma~\ref{lem:finite-wall-mtp2-closure} makes
\(\mathcal L(G_F\mid A_I)\) associated.  Under this law \(A_o\) is
increasing, while
\begin{equation}
  D_I:=\{\min_{t\in F_I}G_t\leq x\}
\end{equation}
is decreasing.  Association applied to the two increasing events \(D_I^c\)
and \(A_o\), followed by division by the positive probability of \(A_o\),
yields
\begin{equation}
  \PP(D_I\mid A_I\cap A_o)
  \leq\PP(D_I\mid A_I).
  \label{eq:block-wall-comparison-grid}
\end{equation}
In the original coordinates, \(A_I=\{\max_{t\in F_I}B_t\leq x\}\) and
\(D_I=\{\max_{t\in F_I}B_t\geq0\}\).

Choose increasing finite grids \(F_n\subset(0,T]\) whose union is dense in
\([0,T]\) and such that \(\bigcup_n(F_n\cap I)\) is dense in \(I\).
Apply \eqref{eq:block-wall-comparison-grid} with \(F=F_n\).  Continuity makes
the finite upper-wall events decrease to their continuum counterparts, and
the grid suprema on \(I\) increase to \(\sup_{t\in I}B_t\).  After scaling
\(I=r[1,L]\), \eqref{eq:compact-block-anticoncentration} gives
\(\PP(\sup_{t\in I}B_t=0)=0\).  This is the only boundary event needed in
the limit.  Bounded convergence, the positivity of
\(\PP(M_T\leq x)\) established in
Lemma~\ref{lem:exponential-barrier-disintegration}, and
\(\PP(\sup_{t\in I}B_t\leq x)\geq p_*\) now give
\begin{equation}
  \PP\left(\sup_{t\in I}B_t\geq0\,\middle|\,M_T\leq x\right)
  \leq
  \frac{\PP(0\leq\sup_{t\in I}B_t\leq x)}
       {\PP(\sup_{t\in I}B_t\leq x)}.
  \label{eq:block-wall-comparison-continuum}
\end{equation}

For the dyadic cover, put
\begin{equation}
\begin{aligned}
  J_T&:=\{j\in\mathbb Z_{\geq0}:r_j:=2^jR<T\},\\
  I_j&=[r_j,\min(2r_j,T)],
  &\qquad
  L_j&:=\frac{\min(2r_j,T)}{r_j}\in(1,2].
\end{aligned}
\end{equation}
Then \(J_T\) is finite, \(I_j=r_j[1,L_j]\), and the blocks cover \([R,T]\),
including the terminal truncated block.  Scaling
Lemma~\ref{lem:unique-interior-maximizer} to \([0,T]\) shows that the unique
maximiser is interior and, since \(B_0=0\), that \(M_T>0\) almost surely.
The same holds under \(\mathbb P_{T,x}\), which is absolutely continuous
with respect to \(\PP\).  Hence \(\{\sigma_T>R\}\) is contained, up to a
null set, in the union over \(j\in J_T\) of the events
\(\{\sup_{t\in I_j}B_t\geq0\}\).

Put \(\varepsilon_j:=xr_j^{-H}\).  Self-similarity,
\eqref{eq:compact-block-anticoncentration}--\eqref{eq:compact-block-baseline},
and \eqref{eq:block-wall-comparison-continuum} give, when
\(\varepsilon_j\leq1\),
\begin{equation}
  \PP\left(\sup_{t\in I_j}B_t\geq0\,\middle|\,M_T\leq x\right)
  \leq C_Hxr_j^{-H};
\end{equation}
the denominator in \eqref{eq:block-wall-comparison-continuum} is at least
\(p_*\), which has been absorbed into \(C_H\).  If
\(\varepsilon_j>1\), the same bound follows after enlarging \(C_H\), since
the left-hand side is at most one.  The union bound now gives
\begin{equation*}
\begin{split}
  \mathbb P_{T,x}(\sigma_T>R)
  &\leq C_Hx\sum_{j\in J_T}r_j^{-H}\\
  &\leq C_HxR^{-H}\sum_{j=0}^{\infty}2^{-jH}
   =\frac{C_H}{1-2^{-H}}xR^{-H}.
\end{split}
\end{equation*}
Absorbing the last fixed factor proves
\eqref{eq:uniform-hard-wall-localization}.
\end{proof}

\begin{theorem}
\label{thm:tilted-scalar-compact-containment}
For \(T>R\geq e\),
\begin{equation}
  \widehat{\PP}_T(\sigma_T>R)
  \leq C_H(1+\log R)R^{-H}.
  \label{eq:tilted-left-horizon-tail}
\end{equation}
Consequently, under \(\widehat{\PP}_a\), the families \((U_a)_{a>0}\) and
\((V_a)_{a>0}\) are tight, and, as \(a\to\infty\),
\begin{equation}
  a-U_a\longrightarrow\infty
  \quad\text{in probability}.
  \label{eq:tilted-right-horizon-divergence}
\end{equation}
\end{theorem}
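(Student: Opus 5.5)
We will derive the tail bound \eqref{eq:tilted-left-horizon-tail} by averaging the uniform hard-wall localisation of Proposition~\ref{prop:uniform-hard-wall-localization} against the barrier marginal of Lemma~\ref{lem:exponential-barrier-disintegration}. Under \(\mathbb Q_T\) the path marginal is \(\widehat{\PP}_T\), and the regular conditional path law given \(E=x\) is \(\mathbb P_{T,x}\). Hence
\[
  \widehat{\PP}_T(\sigma_T>R)
  =\int_0^\infty\mathbb P_{T,x}(\sigma_T>R)\,\beta_T(\mathrm dx)
  \leq\int_0^\infty\min\{1,C_HxR^{-H}\}\,\beta_T(\mathrm dx).
\]
We split the integral at \(x_R:=A\log R\), with a constant \(A\) fixed below.

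On \([0,x_R]\) the integrand is at most \(C_Hx_RR^{-H}=C_HA(\log R)R^{-H}\). On \([x_R,\infty)\) we bound the integrand by one and apply the tail estimate \eqref{eq:exponential-barrier-tail}. This gives \(\beta_T([x_R,\infty))\leq2p_*^{-m_H}e^{-x_R/2}=2p_*^{-m_H}R^{-A/2}\). Choosing \(A=2H\) makes this term \(O(R^{-H})\). Adding the two contributions, and using \(R\geq e\) so that \(1\leq1+\log R\), yields \eqref{eq:tilted-left-horizon-tail}. One could instead integrate \(x\) directly against \(\beta_T\), since the exponential tail gives a bounded first moment uniformly in \(T\); this would even remove the logarithm. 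We follow the stated form with the split.

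For the consequences, we use the identification \((V_a,U_a)\deq(M_a,\sigma_a)\) under the respective tilts, from \eqref{eq:long-horizon-scaling}. Tightness of \(V_a\) is Corollary~\ref{cor:tilted-height-tightness}. For tightness of \(U_a\), note that \(U_a\leq a\) always. When \(a\leq R\) the event \(\{U_a>R\}\) is empty, and when \(a>R\geq e\) the bound just proved applies. Hence \(\sup_a\widehat{\PP}_a(U_a>R)\leq C_H(1+\log R)R^{-H}\to0\).

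The right-horizon divergence \eqref{eq:tilted-right-horizon-divergence} is the step we expect to be the main obstacle, since it needs a mirror statement at the right endpoint. Fix \(K>0\); we must show \(\widehat{\PP}_a(a-U_a\leq K)\to0\). Write \(a-U_a\leq K\) as \(\sigma_a\geq a-K\). For \(a>2K\) this forces \(\sigma_a>a/2\), so
\[
  \widehat{\PP}_a(a-U_a\leq K)\leq\widehat{\PP}_a(\sigma_a>a/2)\leq C_H\bigl(1+\log(a/2)\bigr)(a/2)^{-H}\longrightarrow0,
\]
by \eqref{eq:tilted-left-horizon-tail} with \(T=a\) and \(R=a/2\), which is admissible once \(a/2\geq e\). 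So the divergence follows from the left-horizon tail already established, and no separate right-endpoint argument is needed. The only points to check are that \(R=a/2\) is admissible, which holds for large \(a\), and that the measurability of the disintegration is already supplied by Lemma~\ref{lem:exponential-barrier-disintegration}.
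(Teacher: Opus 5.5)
Your proof is correct and follows the paper's route: you disintegrate $\widehat{\PP}_T$ over the exponential barrier, split at $2H\log R$, combine Proposition~\ref{prop:uniform-hard-wall-localization} with the barrier tail \eqref{eq:exponential-barrier-tail}, and get tightness of $U_a$ from $U_a\le a$. The deviations are harmless: you deduce \eqref{eq:tilted-right-horizon-divergence} by applying the tail bound directly at $R=a/2$, whereas the paper argues via tightness with a fixed cutoff. Your side remark is also correct, since the uniform bound $\int x\,\beta_T(\mathrm dx)\le 4p_*^{-m_H}$ would remove the logarithm.
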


\begin{proof}
Disintegrate by Lemma~\ref{lem:exponential-barrier-disintegration}.  For
\(A>0\), the barrier marginal has no atom at zero, and
Proposition~\ref{prop:uniform-hard-wall-localization} gives
\begin{equation}
\begin{split}
  \widehat{\PP}_T(\sigma_T>R)
  &\leq\mathbb Q_T(E>A)
    +\int_{(0,A]}\mathbb P_{T,x}(\sigma_T>R)
       \,\beta_T(\mathrm dx)\\
  &\leq2p_*^{-m_H}e^{-A/2}
    +C_HR^{-H}\int_{(0,A]}x\,\beta_T(\mathrm dx)\\
  &\leq2p_*^{-m_H}e^{-A/2}+C_HAR^{-H}.
\end{split}
  \label{eq:soft-location-from-hard-wall}
\end{equation}
Since \(R\geq e\), the choice \(A=2H\log R\) is positive and satisfies
\(e^{-A/2}=R^{-H}\).  Thus \eqref{eq:soft-location-from-hard-wall} is at
most
\begin{equation*}
  \bigl(2p_*^{-m_H}+2HC_H\log R\bigr)R^{-H},
\end{equation*}
which proves \eqref{eq:tilted-left-horizon-tail}.  This is exactly the
normalised left-horizon estimate; no unnormalised estimate of order
\(T^{-(1-H)}\) is asserted or needed.

By \eqref{eq:long-horizon-scaling}, if \(a>R\geq e\), then
\begin{equation*}
  \widehat{\PP}_a(U_a>R)
  =\widehat{\PP}_a(\sigma_a>R)
  \leq C_H(1+\log R)R^{-H}.
\end{equation*}
If \(a\leq R\), then \(U_a\leq a\leq R\).  Hence the preceding bound holds
after taking the supremum over every \(a>0\), and its right-hand side tends
to zero as \(R\to\infty\).  This proves tightness of \((U_a)_{a>0}\).
Tightness of \((V_a)_{a>0}\) is
Corollary~\ref{cor:tilted-height-tightness}.

Finally, fix \(0\leq K<\infty\) and \(\varepsilon>0\).  Tightness gives
\(M<\infty\) such that
\(\sup_{a>0}\widehat{\PP}_a(U_a>M)<\varepsilon\).  For
\(a>M+K\),
\begin{equation*}
  \{a-U_a\leq K\}\subseteq\{U_a\geq a-K>M\}.
\end{equation*}
Therefore
\(\limsup_{a\to\infty}\widehat{\PP}_a(a-U_a\leq K)\leq\varepsilon\).
Letting \(\varepsilon\downarrow0\), and then allowing arbitrary fixed
\(K\), proves \eqref{eq:tilted-right-horizon-divergence}.
\end{proof}

\subsection{The tilted finite-left-horizon mixture}
\label{subsec:rough-boundary-entrance-mixture}

We first isolate the zero-germ estimate that controls a left wall collapsing
into the deterministic pin.
\label{subsubsec:zero-germ}

Let \(P\subset(0,\infty)\cap\mathbb D\) and
\(N\subset[-\delta,0)\cap\mathbb D\) be finite, and write
\begin{equation}
  A_P:=\{G_s\geq0:s\in P\},
  \qquad
  A_N:=\{G_s\geq0:s\in N\}.
  \label{eq:zero-germ-wall-events}
\end{equation}

\begin{lemma}
\label{lem:zero-germ-scalar-harnack}
Fix \(t\in(0,\infty)\cap\mathbb D\).  There is \(\delta_t>0\) such that the
following holds.  If \(t\in P\) and \(0<\delta\leq\delta_t\), define
\begin{equation}
  w_{P,N,t}(x)
  :=\PP(A_N\mid A_{P\setminus\{t\}},G_t=x),
  \qquad x\geq0,
  \label{eq:zero-germ-continuation-weight}
\end{equation}
where conditioning at the prescribed value is the normalised Gaussian
density section.  Then, for every \(0\leq A<\infty\),
\begin{equation}
  1\leq\frac{w_{P,N,t}(y)}{w_{P,N,t}(x)}
  \leq\exp\{C_{H,t,A}\delta^H\},
  \qquad0\leq x\leq y\leq A,
  \label{eq:zero-germ-harnack}
\end{equation}
uniformly in \(P,N\), and their cardinalities.
\end{lemma}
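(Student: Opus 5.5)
The plan is to run the scalar shell-telescope argument from the proof of Lemma~\ref{lem:rough-selected-germ-factorization} with a single collapsing pin: the deterministic zero replaces the exposed pin \(\rho\).

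\emph{Setup.} Expose \(G_t=x\). Let \(\nu\) be the centred Gaussian residual law of \(\xi_s:=G_s-\mu_x(s)\), where
\[
  \mu_x(s)=\frac{K_H(s,t)}{t^\alpha}\,x .
\]
This residual law does not depend on \(x\). Every wall \(G_s\geq0\) becomes the translated wall \(\xi_s\geq-\mu_x(s)\). Since \(K_H(s,t)\geq0\) by subadditivity, \(x\leq y\) gives nested constraint sets \(N_x\subset N_y\) and \(P_x\subset P_y\). Hence \(w_{P,N,t}(x)=\nu(N_x\mid P_x)\).

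\emph{Lower bound.} Lemma~\ref{lem:finite-wall-mtp2-closure} (restrict by the outer walls, then marginalise) shows that this probability is nondecreasing in \(x\), which gives \(1\leq w(y)/w(x)\).

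\emph{Reduction of the upper bound.} Association under \(\nu(\,\cdot\mid P_y)\), as in the scalar argument, gives
\[
  \frac{w(y)}{w(x)}\leq\frac{\nu(N_y\mid P_x)}{\nu(N_x\mid P_x)} .
\]
The two events differ only on the germ walls in \(N\subset[-\delta,0)\), and there the threshold shift is \((y-x)K_H(s,t)/t^\alpha\leq C_{t}A|s|^\alpha\). Take \(\delta_t\) small. For \(s\in[-\delta,0)\), the residual variance \(\operatorname{Var}(G_s\mid G_t)=|s|^\alpha-K_H(s,t)^2/t^\alpha\) is comparable to \(|s|^\alpha\), because \(K_H(s,t)=O(|s|^\alpha)\) and so the correction is \(O(|s|^{2\alpha})\). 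The standardised threshold shift is therefore at most \(C_{t,A}|s|^H\).

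\emph{Whitney decomposition.} Partition \([-\delta,0)\) into dyadic shells \(J_j=[-2^{-j}\delta,-2^{-j-1}\delta)\), \(j\geq0\), with scale \(a_j=2^{-j}\delta\). On each shell the standardised residual satisfies the variance and normalised-metric bounds of \eqref{eq:selected-germ-whitney-metric}, uniformly after rescaling by \(a_j\). By compactness of the rescaled one-pin bridge family and the Cameron--Martin argument of Lemma~\ref{lem:fbm-compact-positivity-support}, the positive-level baseline \eqref{eq:selected-germ-whitney-baseline} holds with a uniform \(p>0\). Since \(x\geq0\) gives nonpositive thresholds, for \(\delta\) small the positive-level event lies inside both the strong and the weak wall.

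\emph{Telescoping.} Relax the walls one shell at a time, keeping all other walls, including \(P_x\), among the increasing constraints. Use the associated shell-ratio inequality \eqref{eq:selected-germ-associated-shell-ratio} to bound each factor by \(\nu(C_2^{(j)})/\nu(C_1^{(j)})\), the ratio of the weak and strong wall probabilities on the single shell \(J_j\).

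\emph{Main obstacle: bounding the single-shell ratio.} Using Nazarov's inequality as in the earlier proof costs \(\eta_j\sqrt{\log(e/\eta_j)}\) per shell, with \(\eta_j=C_{t,A}a_j^H\). Summed over \(j\), this gives only \(\delta^H\sqrt{\log(e/\delta)}\), not the claimed \(\delta^H\). To remove the logarithm I will instead use a Cameron--Martin shift within each shell. Choose \(h_j\) equal to \(\eta_j\) times a fixed rescaled bump that covers \(J_j\), is supported away from \(0\) and from the exposed site \(t\), and lies in the one-pin bridge Cameron--Martin space. Its Cameron--Martin norm is \(O(\eta_j)\), uniformly in \(j\), by scale invariance. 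Shifting the strong-wall event by \(h_j\) contains the weak-wall event, and the Gaussian shift inequality gives
\[
  \nu(E+h)\geq\nu(E)\exp\Bigl\{-\tfrac12|h|^2-|h|\sqrt{2\log(1/\nu(E))}\Bigr\}.
\]
Because \(\nu(E)\geq p\) by the baseline, this yields a factor \(\exp(C\eta_j)\) per shell.

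\emph{Conclusion and uniformity.} Summing \(\sum_j C_{t,A}(2^{-j}\delta)^H\leq C_{H,t,A}\delta^H\) proves \eqref{eq:zero-germ-harnack}. All constants come from the rescaled compact Gaussian family, so they are independent of \(P\), \(N\) and their cardinalities.
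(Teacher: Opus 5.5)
Your proposal is correct, and its skeleton matches the paper's proof. The shared steps are: regression on \(G_t\); nested translated walls \(N_x\subset N_y\), \(P_x\subset P_y\); MTP\(_2\) monotonicity for the lower bound; association to pass from \(\nu(N_y\mid P_y)\) to \(\nu(N_y\mid P_x)\); and a shell-by-shell telescope through the associated shell-ratio inequality, summed via \(\sum_j a_j^H\leq C\delta^H\).

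The genuine difference is the single-shell estimate. The paper notes that, since \(0\leq x\leq y\leq A\), every standardised threshold on a shell lies in \([-\eta_j,0]\). Writing \(M_j\) for the centred standardised minimum on the shell, this gives \(N_x^j\supseteq\{M_j\geq0\}\) and \(N_y^j\subseteq\{M_j\geq-\eta_j\}\). The shell ratio is therefore at most \(1+\PP(-\eta_j\leq M_j<0)/\PP(M_j\geq0)\). The Chernozhukov--Chetverikov--Kato anti-concentration bound, for a centred unit-variance supremum with uniformly bounded expected supremum, then gives \(C\eta_j\) directly.

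So the logarithm you set out to remove never arises here. In the selected-germ lemma it came from thresholds not confined to a thin band around a constant level, which forced nets and Nazarov's inequality.

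Your Cameron--Martin shift is nonetheless a valid alternative. It needs only two inputs:
\begin{enumerate}
\item the baseline mass;
\item scale invariance of the Cameron--Martin norm. The map \(f\mapsto a^Hf(\cdot/a)\) is an isometry by self-similarity, and a bump supported away from \(t\) has the same norm in the one-pin bridge space as in the fBm space.
\end{enumerate}
It thus dispenses with the entropy bound and the anti-concentration theorem, at the cost of identifying the bridge Cameron--Martin space.

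When you write it out, fix the direction of the shift explicitly. Take \(E\) to be the weak-wall event on the shell and choose \(h_j\geq m_t(s)(y-x)\) there, so that \(E+h_j\) lies in the strong-wall event. Then \(\nu(E)\geq\nu(C_1^{(j)})\geq p\), and the shift inequality gives the factor \(e^{C\eta_j}\). Finally, the positive-level baseline is more than you need: the level-zero event already lies inside both walls, because all thresholds are nonpositive.
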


\begin{proof}
If \(N=\varnothing\), then \(w_{P,N,t}\equiv1\), so assume that \(N\) is
nonempty.  Lemma~\ref{lem:finite-wall-mtp2-closure} makes \(G_{P\cup N}\)
nondegenerate.  Its Gaussian density is strictly positive, as are the
normalisers obtained by fixing \(G_t=x\) and imposing the finite orthant
\(A_{P\setminus\{t\}}\).  Thus \eqref{eq:zero-germ-continuation-weight}
means the corresponding normalised density section for every \(x\geq0\),
and
\begin{equation*}
  0<w_{P,N,t}(x)\leq1.
\end{equation*}

Condition on \(G_t=x\) and write
\begin{equation}
  G_s=m_t(s)x+\xi_s,
  \qquad
  m_t(s):=\frac{K_H(s,t)}{t^{2H}},
  \qquad
  d_t(s)^2:=\operatorname{Var}(\xi_s).
  \label{eq:zero-germ-regression}
\end{equation}
The centred residual law \(\nu\) is independent of \(x\).  On the coordinates
\((P\setminus\{t\})\cup N\), it is the density section of \(G_{P\cup N}\) at
\(G_t=0\).  Lemma~\ref{lem:finite-wall-mtp2-closure} therefore makes this
residual vector MTP\(_2\) and associated.  Its covariance is the positive-
definite Schur complement of the \(G_t\) entry, so \(d_t(s)>0\) at every
remaining coordinate.  Put
\begin{equation}
\begin{aligned}
  P_x&:=\{\xi_s\geq-m_t(s)x:s\in P\setminus\{t\}\},\\
  N_x&:=\{\xi_s\geq-m_t(s)x:s\in N\}.
\end{aligned}
  \label{eq:zero-germ-moving-walls}
\end{equation}
Then \(w_{P,N,t}(x)=\nu(N_x\mid P_x)\).

We prove the required monotone regression directly.  Restrict the joint law
to \(A_{P\setminus\{t\}}\), marginalise to \((G_t,G_N)\), and denote its
strictly positive MTP\(_2\) density by \(f(a,z)\).  If \(x\leq y\) and
\(z\leq z'\) coordinatewise, the lattice inequality gives
\begin{equation*}
  f(x,z)f(y,z')\geq f(x,z')f(y,z).
\end{equation*}
Hence \(r_{x,y}(z):=f(y,z)/f(x,z)\) is coordinatewise nondecreasing.
The density section at \(G_t=x\) is again MTP\(_2\) and associated.
Writing \(\EE_x\) for expectation under that section, apply association to
the increasing orthant indicator
\(\mathbf1_{A_N}\) and \(r_{x,y}\wedge M\), then letting \(M\to\infty\),
gives
\begin{equation*}
  \frac{\EE_x[\mathbf1_{A_N}r_{x,y}]}
       {\EE_x[r_{x,y}]}
  \geq\EE_x\mathbf1_{A_N}.
\end{equation*}
The two sides are \(w_{P,N,t}(y)\) and \(w_{P,N,t}(x)\), respectively.
This proves the first inequality in \eqref{eq:zero-germ-harnack} for every
prescribed \(0\leq x\leq y\).

Take \(\delta_t\leq t/2\), put \(\beta=2H\), and, for
\(-\delta\leq s<0\), write \(v=|s|/t\leq1/2\).  Then
\[
  0\leq m_t(s)
  =\frac12\bigl(v^\beta+1-(1+v)^\beta\bigr)
  \leq\frac12v^\beta,
\]
by subadditivity and monotonicity of \(x\mapsto x^\beta\).  Moreover,
\[
  d_t(s)^2=t^\beta\bigl(v^\beta-m_t(s)^2\bigr)
  \geq\frac34t^\beta v^\beta.
\]
Consequently,
\begin{equation}
  0\leq\frac{m_t(s)}{d_t(s)}
  \leq C_H\frac{|s|^H}{t^{2H}},
  \qquad -\delta\leq s<0.
  \label{eq:zero-germ-standardized-shift}
\end{equation}
Partition \(N\) into its nonempty multiplicative shells, where
\(\rho_j:=2^jt\):
\begin{equation}
  S_j:=\{s\in N:\rho_j\leq|s|<2\rho_j\},
  \qquad
  \eta_j:=C_HA\rho_j^Ht^{-2H}.
  \label{eq:zero-germ-shells}
\end{equation}
For \(Z_s:=\xi_s/d_t(s)\) and \(M_j:=\min_{s\in S_j}Z_s\), apply the uniform
shell estimate of \textcite[Lemma~4.9]{moench2026horizonProblem}.  The
interval stipulated there has relative radius smaller than \(1/2\), so it
lies in \((0,\infty)\), whereas \(S_j\subset(-\infty,0)\); hence every
\(S_j\) lies in the required shell domain.  The lemma gives
constants independent of the shell and its cardinality such that
\begin{equation}
  \PP(M_j\geq0)\geq p_t>0,
  \qquad
  \EE\sup_{s\in S_j}(-Z_s)\leq B_t.
  \label{eq:zero-germ-shell-baseline}
\end{equation}
The centred anti-concentration estimate
\eqref{eq:gaussian-supremum-anticoncentration} therefore gives
\begin{equation}
  \PP(-\eta_j\leq M_j<0)\leq C_t\eta_j.
  \label{eq:zero-germ-shell-anticoncentration}
\end{equation}
Only centred variables occur here: for \(0\leq x\leq y\leq A\), every
normalised threshold in \eqref{eq:zero-germ-moving-walls} lies between zero
and \(-\eta_j\).  If \(N_x^j,N_y^j\) are the corresponding hard-wall events on
\(S_j\), then
\begin{equation}
  \frac{\nu(N_y^j)}{\nu(N_x^j)}
  \leq\frac{\PP(M_j\geq-\eta_j)}{\PP(M_j\geq0)}
  =1+\frac{\PP(-\eta_j\leq M_j<0)}{\PP(M_j\geq0)}
  \leq e^{C_t\eta_j}.
  \label{eq:zero-germ-unconditional-shell-ratio}
\end{equation}

The same ratio holds in the presence of any fixed increasing event \(B\) on
the other residual coordinates.  Indeed, \(N_x^j\subset N_y^j\), and
association under \(\nu(\,\cdot\mid N_y^j)\) yields
\begin{equation}
  \frac{\nu(N_y^j\cap B)}{\nu(N_x^j\cap B)}
  \leq\frac{\nu(N_y^j)}{\nu(N_x^j)}.
  \label{eq:zero-germ-conditioned-shell-ratio}
\end{equation}
All denominators here are positive because every finite residual vector is
nondegenerate and has an everywhere positive Gaussian density.
Telescope \eqref{eq:zero-germ-conditioned-shell-ratio} over all negative
shells, keeping \(P_x\) and every other threshold fixed.  There are only
finitely many nonempty shells, and since
\begin{equation}
  \sum_{\rho_j\leq\delta}\rho_j^H\leq C_H\delta^H,
  \label{eq:zero-germ-shell-sum}
\end{equation}
we obtain
\begin{equation}
  \frac{\nu(N_y\mid P_x)}{\nu(N_x\mid P_x)}
  \leq e^{C_{H,t,A}\delta^H}.
  \label{eq:zero-germ-fixed-positive-wall-ratio}
\end{equation}
Finally \(K_H\geq0\) in the rough range, by subadditivity of
\(x\mapsto x^{2H}\), so \(P_x\subset P_y\).  Under the associated law
\(\nu(\,\cdot\mid P_y)\), imposing the additional increasing event \(P_x\)
raises the probability of \(N_y\).  Hence
\begin{equation}
  w_{P,N,t}(y)=\nu(N_y\mid P_y)
  \leq\nu(N_y\mid P_x).
\end{equation}
Together with
\eqref{eq:zero-germ-fixed-positive-wall-ratio}, this proves the upper bound
in \eqref{eq:zero-germ-harnack}.
\end{proof}

\begin{lemma}
\label{lem:zero-germ-scalar-invisibility}
For every fixed \(t\in(0,\infty)\cap\mathbb D\),
\begin{equation}
  \sup_{P,N}d_{\mathrm{TV}}\left(
    \mathcal L(G_t\mid A_P\cap A_N),
    \mathcal L(G_t\mid A_P)
  \right)\longrightarrow0
  \label{eq:zero-germ-scalar-invisibility}
\end{equation}
as \(\delta\downarrow0\), where the supremum is over finite \(P\) containing
\(t\) and finite \(N\subset[-\delta,0)\cap\mathbb D\).
\end{lemma}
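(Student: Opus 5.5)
The plan is to express the density of \(G_t\) under \(A_P\cap A_N\) as the density under \(A_P\) reweighted by the continuation weight \(w_{P,N,t}\). Lemma~\ref{lem:zero-germ-scalar-harnack} shows that this weight is nearly constant on compacts. Tightness of \(G_t\) under \(A_P\), uniformly in \(P\), then handles the part outside a compact.

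\textbf{Density identity.} Let \(f_P\) be the density of \(G_t\) under \(\mathcal L(\cdot\mid A_P)\), supported on \([0,\infty)\). Writing \(A_P=A_{P\setminus\{t\}}\cap\{G_t\geq0\}\) and disintegrating at \(G_t=x\) gives
\[
  \mathcal L(G_t\mid A_P\cap A_N)(\mathrm dx)
  =\frac{w_{P,N,t}(x)f_P(x)\,\mathrm dx}{\int_0^\infty w_{P,N,t}(y)f_P(y)\,\mathrm dy}.
\]
Both normalisers are strictly positive by nondegeneracy (Lemma~\ref{lem:finite-wall-mtp2-closure}).

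\textbf{Uniform tightness of the reference law.} Lemma~\ref{lem:uniform-anchor-tail} gives \(\PP(G_t>A\mid A_P)\leq e^{-c_tA}\) for \(A\geq M_t\), uniformly in \(P\ni t\). I also need this tail to stay small under the perturbed law, and monotonicity supplies it. By Lemma~\ref{lem:zero-germ-scalar-harnack}, \(w_{P,N,t}\) is nondecreasing in \(x\), so the reweighting pushes mass upward and cannot be controlled directly that way. Instead I use Proposition~\ref{prop:wall-monotonicity} with \(F=P\subseteq F'=P\cup N\), both contained in \(\mathbb D\). This gives \(\mathcal L(G_t\mid A_P)\leq_{\mathrm{st}}\mathcal L(G_t\mid A_{P\cup N})\). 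Lemma~\ref{lem:uniform-anchor-tail} applied to \(F'=P\cup N\) then bounds the tail of the perturbed law by the same \(e^{-c_tA}\).

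\textbf{Comparison on a compact.} Fix \(A\geq M_t\). On \([0,A]\), Lemma~\ref{lem:zero-germ-scalar-harnack} gives \(w(x)\in[w(0),w(0)e^{\epsilon}]\) with \(\epsilon=C_{H,t,A}\delta^H\).

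For \(x\leq A\), the perturbed density is \(w(x)f_P(x)/Z_w\), where \(Z_w=\int_0^\infty w f_P\). The tail bound gives \(\int_0^A f_P\geq 1-e^{-c_tA}\). Combining this with the Harnack bound,
\[
  Z_w\geq(1-e^{-c_tA})\,w(0).
\]
Let \(\pi\) be the perturbed law and \(\pi_0\) the reference law. Restricted to \([0,A]\), the ratio \(\mathrm d\pi/\mathrm d\pi_0=w/Z_w\) is then bounded above by \(e^{\epsilon}/(1-e^{-c_tA})\).

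On \([0,A]\), \(\pi\) has mass at least \(1-e^{-c_tA}\) by the stochastic-domination tail. Together with the upper ratio bound, the ratio is also bounded below there by approximately \(e^{-\epsilon}(1-2e^{-c_tA})\).

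Then \(d_{\mathrm{TV}}(\pi,\pi_0)\) is at most the sum of the masses outside \([0,A]\), namely \(2e^{-c_tA}\), plus \(\int_0^A|\mathrm d\pi/\mathrm d\pi_0-1|\,\mathrm d\pi_0\leq O(\epsilon+e^{-c_tA})\). Letting \(\delta\downarrow0\) and then \(A\to\infty\) proves \eqref{eq:zero-germ-scalar-invisibility}.

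\textbf{Main obstacle.} The delicate point is that the Harnack bound from Lemma~\ref{lem:zero-germ-scalar-harnack} holds only on compacts, and the weight is increasing. Mass could therefore be transported to large \(x\), where no comparison is available. Uniform tightness of the perturbed law is exactly what rules this out, and it must be checked that Lemma~\ref{lem:uniform-anchor-tail} applies to \(P\cup N\). It does, because that lemma is uniform over all finite \(F\subset\mathbb D\) containing \(t\).
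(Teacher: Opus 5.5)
Your proposal is correct and follows the paper's route: reweight $\mathcal L(G_t\mid A_P)$ by $w_{P,N,t}$, use the Harnack bound of Lemma~\ref{lem:zero-germ-scalar-harnack} on $[0,A]$, and control both tails uniformly. The paper gets that tail control from the full-family tightness of Theorem~\ref{thm:finite-dimensional-entrance-law} and normalises the two restrictions to $[0,A]$ so the global normaliser cancels, whereas you apply Lemma~\ref{lem:uniform-anchor-tail} to $P$ and $P\cup N$ directly and bound the normaliser by hand; the stochastic-domination step via Proposition~\ref{prop:wall-monotonicity} is superfluous, since it points the wrong way for an upper-tail bound and the direct application of Lemma~\ref{lem:uniform-anchor-tail} to $P\cup N$ is what actually does the work.
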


\begin{proof}
If \(N=\varnothing\), then the two laws agree, so assume that \(N\) is
nonempty.  Put
\(\mu_0=\mathcal L(G_t\mid A_P)\) and
\(\mu_1=\mathcal L(G_t\mid A_P\cap A_N)\).  Disintegration over \(G_t\)
under the first law and the definition
\eqref{eq:zero-germ-continuation-weight} give
\begin{equation*}
  z_{P,N,t}
  :=\int w_{P,N,t}\,\mathrm d\mu_0
  =\PP(A_N\mid A_P)>0.
\end{equation*}
Positivity follows also directly from the everywhere positive Gaussian
density of \(G_{P\cup N}\).  Bayes' formula therefore gives
\begin{equation}
  \frac{\mathrm d\mu_1}{\mathrm d\mu_0}(x)
  =\frac{w_{P,N,t}(x)}{z_{P,N,t}},
  \qquad x\geq0.
  \label{eq:zero-germ-scalar-density}
\end{equation}

The two measures are the one-coordinate members of the full hard-wall
family in Theorem~\ref{thm:finite-dimensional-entrance-law}, with constraint
sets \(P\) and \(P\cup N\), respectively.  Fix
\(0<\varepsilon<1\).  Full-family tightness supplies \(A<\infty\), independent
of \(P,N,\delta\), such that
\begin{equation}
  \mu_0([0,A]^c)+\mu_1([0,A]^c)\leq\varepsilon.
  \label{eq:zero-germ-compact-truncation}
\end{equation}
Set \(C_A=[0,A]\), \(p_i=\mu_i(C_A)>0\), and
\(\bar\mu_i=\mu_i(\,\cdot\mid C_A)\).  For
\(0<\delta\leq\delta_t\), put
\begin{equation*}
  R_\delta:=e^{C_{H,t,A}\delta^H}.
\end{equation*}
Lemma~\ref{lem:zero-germ-scalar-harnack} gives
\begin{equation*}
  w_{P,N,t}(0)
  \leq w_{P,N,t}(x)
  \leq R_\delta w_{P,N,t}(0),
  \qquad x\in C_A.
\end{equation*}
The global normaliser in \eqref{eq:zero-germ-scalar-density} cancels after
the two restrictions are normalised:
\begin{equation*}
  \frac{\mathrm d\bar\mu_1}{\mathrm d\bar\mu_0}(x)
  =\frac{w_{P,N,t}(x)}
         {\int w_{P,N,t}\,\mathrm d\bar\mu_0}
  \in[R_\delta^{-1},R_\delta].
\end{equation*}
Consequently
\begin{equation*}
  d_{\mathrm{TV}}(\bar\mu_1,\bar\mu_0)
  \leq R_\delta-1.
\end{equation*}
Since
\(d_{\mathrm{TV}}(\mu_i,\bar\mu_i)=\mu_i(C_A^c)\), the triangle inequality
and \eqref{eq:zero-germ-compact-truncation} yield the uniform bound
\begin{equation*}
  d_{\mathrm{TV}}(\mu_1,\mu_0)
  \leq\varepsilon+e^{C_{H,t,A}\delta^H}-1.
\end{equation*}
For the fixed \(A\), let \(\delta\downarrow0\), and then let
\(\varepsilon\downarrow0\).  This proves
\eqref{eq:zero-germ-scalar-invisibility}.  The possibly small quantity
\(\PP(A_N\mid A_P)=z_{P,N,t}\) cancels; no uniform lower bound for it is
used.
\end{proof}

\begin{theorem}
\label{thm:zero-germ}
For every finite \(T\subset(0,\infty)\cap\mathbb D\), there is a function
\(\omega_T(\delta)\downarrow0\) such that, uniformly over finite positive
constraint sets \(P\supset T\) and finite
\(N\subset[-\delta,0)\cap\mathbb D\),
\begin{equation}
  d_{\mathrm{BL}}\left(
    \mathcal L(G_T\mid A_P\cap A_N),
    \mathcal L(G_T\mid A_P)
  \right)
  \leq\omega_T(\delta).
  \label{eq:zero-germ-vector-invisibility}
\end{equation}
Here \(d_{\mathrm{BL}}\) is formed from the bounded metric
\begin{equation*}
  d_T(x,y):=1\wedge\sum_{t\in T}|x_t-y_t|
\end{equation*}
on \(\mathbb R^T\).
\end{theorem}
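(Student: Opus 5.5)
The plan is to lift the scalar argument of Lemmas~\ref{lem:zero-germ-scalar-harnack} and~\ref{lem:zero-germ-scalar-invisibility} to the finite vector $T=\{t_1<\dots<t_m\}$ through a vector Harnack estimate, followed by the same truncation and cancellation. Put $\mu_0=\mathcal L(G_T\mid A_P)$ and $\mu_1=\mathcal L(G_T\mid A_P\cap A_N)$, and define the vector continuation weight
\[
  w_{P,N,T}(\mathbf x):=\PP\bigl(A_N\mid A_{P\setminus T},G_T=\mathbf x\bigr),\qquad \mathbf x\in[0,\infty)^T ,
\]
as a normalised Gaussian density section. This section is nondegenerate and strictly positive by Lemma~\ref{lem:finite-wall-mtp2-closure}. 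Bayes' formula gives $\mathrm d\mu_1/\mathrm d\mu_0=w_{P,N,T}/\PP(A_N\mid A_P)$. The normaliser cancels once both measures are restricted to a cube $[0,A]^T$ and renormalised, so no lower bound on $\PP(A_N\mid A_P)$ is needed.

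\textbf{Step 1: vector Harnack.} The main step is to show that for $\mathbf 0\le\mathbf x\le\mathbf y\le A\mathbf 1$,
\[
  1\le \frac{w(\mathbf y)}{w(\mathbf x)}\le \exp\{C_{H,T,A}\delta^H\},
\]
uniformly in $P$, $N$ and their cardinalities. I would get this by raising one coordinate at a time. The step from $(\mathbf x_{<i},x_i,\mathbf y_{>i})$ to $(\mathbf x_{<i},y_i,\mathbf y_{>i})$ is the scalar argument of Lemma~\ref{lem:zero-germ-scalar-harnack}, carried out after also exposing the other coordinates of $T$.

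\emph{Monotonicity.} The lower bound $1\le w(\mathbf y)/w(\mathbf x)$ comes from the MTP$_2$ likelihood-ratio argument with the increasing ratio $r_{x,y}$, exactly as in that lemma.

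\emph{Regression and residual.} For the upper bound, regress $G_N$ on $G_T$. The regression matrix is entrywise nonnegative by Lemma~\ref{lem:finite-wall-mtp2-closure}, and its entries satisfy $|m_{t_i}(s)|\le C|s|^{2H}$ for $s\in[-\delta,0)$. The residual variance stays comparable to $|s|^{2H}$: conditioning on one point gives $\ge\frac34 t^{2H}v^{2H}$, and the finite-rank correction from the remaining exposed coordinates is $O(|s|^{4H})=o(|s|^{2H})$. Hence the standardised threshold shifts are at most $C_{T,A}|s|^H$.

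\emph{Shell estimate.} The residual after exposing all of $G_T$ is again MTP$_2$ and associated. Applying \cite[Lemma~4.9]{moench2026horizonProblem} to its normalised coordinates on the multiplicative shells $S_j$ gives a uniform positive baseline and a bounded expected supremum. The anti-concentration bound \eqref{eq:gaussian-supremum-anticoncentration} then yields shell ratios $e^{C\eta_j}$ with $\eta_j\asymp\rho_j^H$.

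\emph{Telescoping.} The association comparison \eqref{eq:zero-germ-conditioned-shell-ratio} lets these ratios telescope over the shells, and \eqref{eq:zero-germ-shell-sum} sums them to $C\delta^H$. Finally, the inclusion $P_{\mathbf x}\subset P_{\mathbf y}$ together with association removes the outer walls, exactly as at the end of Lemma~\ref{lem:zero-germ-scalar-harnack}.

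\textbf{Step 2: the main obstacle.} I expect the hardest part to be keeping the shell baseline uniform once several coordinates of $T$ are exposed rather than one. The covariance of the normalised residual now depends on all of $T$, and I must confirm that it still lies in the domain covered by \cite[Lemma~4.9]{moench2026horizonProblem}. My first attempt would be to take $\delta_T\le\min T/2$ and argue that the extra rank-$(m-1)$ correction perturbs the normalised covariance kernel by $O(\delta^{2H})$ in the canonical metric. The entropy bound and the Cameron--Martin positivity should then survive with constants depending on $T$. If that does not work, the fallback is to expose the coordinates sequentially and apply the scalar lemma to the conditional law. For that I would need a version of Lemma~4.9 valid under finitely many pinned exterior points, which is the same compactness argument used in the proof of Lemma~\ref{lem:rough-selected-germ-factorization}.

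\textbf{Step 3: truncation and the distance bound.} Theorem~\ref{thm:finite-dimensional-entrance-law} gives tightness of the full family. Choose $A$ with $\mu_0([0,A]^{T,c})+\mu_1([0,A]^{T,c})\le\varepsilon$ uniformly; this is done coordinatewise by a union bound over $t\in T$. On the cube the renormalised density ratio lies in $[R_\delta^{-1},R_\delta]$ with $R_\delta=e^{C_{H,T,A}\delta^H}$, so the restricted laws are within total variation $R_\delta-1$. Since $d_{\mathrm{BL}}\le 2\,d_{\mathrm{TV}}$ for the bounded metric $d_T$, we get $d_{\mathrm{BL}}\le 2(\varepsilon+e^{C_{H,T,A}\delta^H}-1)$.

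Finally, set $\omega_T(\delta):=\inf_{\varepsilon\in(0,1)}2(\varepsilon+e^{C_{H,T,A(\varepsilon)}\delta^H}-1)$ for $\delta\le\delta_T$ and $\omega_T:=2$ otherwise. This is nondecreasing in $\delta$, and letting $\delta\downarrow0$ with $\varepsilon$ fixed and then $\varepsilon\downarrow0$ shows $\omega_T(\delta)\to0$.
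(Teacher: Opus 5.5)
Your route is sound, but it differs from the paper's.

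\textbf{Your route.} You form a vector continuation weight \(w_{P,N,T}\), prove a vector Harnack inequality for it, and then repeat the truncation-and-cancellation argument of Lemma~\ref{lem:zero-germ-scalar-invisibility}.

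\textbf{The paper's route.} The paper never forms a vector weight. Proposition~\ref{prop:wall-monotonicity} gives \(\mathcal L(G_T\mid A_P)\leq_{\mathrm{st}}\mathcal L(G_T\mid A_P\cap A_N)\). A Strassen monotone coupling \(X\leq Y\) then gives \(d_{\mathrm{BL}}\leq\EE d_T(X,Y)\leq\sum_{t\in T}(\EE Y_t-\EE X_t)\). Each one-coordinate mean gap vanishes uniformly. This follows from the scalar total-variation statement of Lemma~\ref{lem:zero-germ-scalar-invisibility} together with the wall-uniform bound \(\EE(e^{\lambda G_t}\mid A_F)\leq\exp\{\lambda C_Ht^H+\lambda^2t^{2H}/2\}\), which comes from \eqref{eq:pin-mean-bound} and Lemma~\ref{lem:harge-convex-domination}.

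\textbf{What each buys.} The paper obtains the vector statement from the scalar one with no new Gaussian estimates. The price is that it concludes only bounded-Lipschitz closeness, in fact closeness in an \(L^1\)-coupling sense. Your argument gives the stronger total-variation closeness, with the explicit rate \(e^{C_{H,T,A}\delta^H}-1\) on compact cubes. The cost is redoing the shell machinery with \(|T|\) exposed pins.

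\textbf{The obstacle you flag.} It is milder than you fear, and no multi-pin version of the companion Lemma~4.9 is needed. Write \(\xi_s=G_s-\beta(s)\cdot G_T\) with \(0\leq\beta_t(s)\leq C_T|s|^{2H}\).
\begin{itemize}
\item The expected-supremum bound on a shell of scale \(\rho\) follows directly from Dudley's entropy bound. Projection contracts increments and \(d(s)\asymp\rho^H\), so the normalised canonical metric is at most \(C(|s-s'|/\rho)^H\).
\item For the positive baseline, use the inclusion
\[
  \{G_s>\epsilon\rho^H,\ s\in[-2\rho,-\rho]\}
  \cap\Bigl\{C_T\rho^{2H}\textstyle\sum_{t\in T}|G_t|\leq\epsilon\rho^H\Bigr\}
  \subseteq\{\xi_s>0,\ s\in[-2\rho,-\rho]\}.
\]
By self-similarity and Lemma~\ref{lem:fbm-compact-positivity-support}, the first event has probability \(\PP(\min_{[-2,-1]}G>\epsilon)\). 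This is at least half of \(\PP(\min_{[-2,-1]}G>0)>0\) once \(\epsilon\) is small. The second event has probability tending to one as \(\rho\to0\). Hence there is a uniform baseline for all shells once \(\delta_T\) is small.
\end{itemize}

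\textbf{A small simplification.} Raising one coordinate at a time is unnecessary. For \(\mathbf 0\leq\mathbf x\leq\mathbf y\leq A\mathbf 1\), every standardised threshold already lies in \([-\eta_j,0]\) because \(\beta\geq0\), so a single shell telescope suffices.
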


\begin{proof}
If \(T=\varnothing\), the claim holds with \(\omega_T\equiv0\), so assume
that \(T\) is nonempty.  Since \(P\subset P\cup N\),
Proposition~\ref{prop:wall-monotonicity} gives
\begin{equation*}
  \mathcal L(G_T\mid A_P)
  \leq_{\mathrm{st}}
  \mathcal L(G_T\mid A_P\cap A_N).
\end{equation*}
The coordinatewise order is closed on the Polish space \(\mathbb R^T\).
The monotone-coupling theorem of \textcite{strassen1965marginals} therefore
gives a coupling \((X,Y)\) with
\begin{equation*}
  X\sim\mathcal L(G_T\mid A_P),
  \qquad
  Y\sim\mathcal L(G_T\mid A_P\cap A_N),
\end{equation*}
such that
\begin{equation}
  X_t\leq Y_t,
  \qquad t\in T,
  \quad\text{almost surely}.
  \label{eq:zero-germ-monotone-coupling}
\end{equation}

We make the uniform-integrability step explicit.  Fix \(t\in T\) and
\(\lambda>0\).  For every finite dyadic wall \(F\) containing \(t\),
\eqref{eq:pin-mean-bound} and
\eqref{eq:harge-mgf-bound}, applied to the \(t\)-coordinate of the convex
hard-wall restriction, give
\begin{equation}
\begin{split}
  \EE(e^{\lambda G_t}\mid A_F)
  &\leq
  \exp\left\{\lambda C_Ht^H
              +\frac{\lambda^2t^{2H}}2\right\}
  =:M_{\lambda,t}<\infty.
\end{split}
  \label{eq:zero-germ-coordinate-exponential-moment}
\end{equation}
The constant is independent of \(F\), and therefore applies to the
\(t\)-marginals of both \(X\) and \(Y\).

Let
\begin{equation*}
  \eta_t(\delta):=
  \sup_{P,N}d_{\mathrm{TV}}\left(
    \mathcal L(G_t\mid A_P\cap A_N),
    \mathcal L(G_t\mid A_P)
  \right),
\end{equation*}
with the same admissible walls as in the theorem.  Lemma
\ref{lem:zero-germ-scalar-invisibility} gives
\(\eta_t(\delta)\to0\).  Since \(X_t,Y_t\geq0\), truncation at any
\(R<\infty\), followed by
\eqref{eq:zero-germ-coordinate-exponential-moment}, yields
\begin{equation}
\begin{split}
  0\leq\EE Y_t-\EE X_t
  &\leq 2R\eta_t(\delta)
    +2M_{\lambda,t}\sup_{x>R}xe^{-\lambda x}.
\end{split}
  \label{eq:zero-germ-coordinate-mean-truncation}
\end{equation}
First let \(\delta\downarrow0\) with \(R\) fixed, and then let
\(R\to\infty\).  Thus, with
\begin{equation*}
  \rho_t(\delta):=
  \sup_{P,N}\bigl(
    \EE(G_t\mid A_P\cap A_N)-\EE(G_t\mid A_P)
  \bigr),
\end{equation*}
stochastic order makes every term nonnegative, and the admissible wall
classes are nested in \(\delta\).  Hence
\(\rho_t(\delta)\downarrow0\), and in particular
\begin{equation}
  \sup_{P,N}\bigl(\EE Y_t-\EE X_t\bigr)\longrightarrow0,
  \qquad t\in T.
  \label{eq:zero-germ-coordinate-mean-gap}
\end{equation}
By \eqref{eq:zero-germ-monotone-coupling},
\begin{equation}
  \EE\sum_{t\in T}|Y_t-X_t|
  =\sum_{t\in T}(\EE Y_t-\EE X_t)
  \leq\sum_{t\in T}\rho_t(\delta).
  \label{eq:zero-germ-vector-coupling-cost}
\end{equation}
The bounded-Lipschitz distance formed from \(d_T\) is bounded by
\(\EE d_T(X,Y)\), and hence by the left side of
\eqref{eq:zero-germ-vector-coupling-cost}.  Therefore
\eqref{eq:zero-germ-vector-invisibility} holds with
\begin{equation*}
  \omega_T(\delta):=\sum_{t\in T}\rho_t(\delta)\downarrow0.
\end{equation*}
\end{proof}

We next construct the normalised finite-left-horizon components.  Recall that
\(G=-W\).  For \(u>0\), put
\begin{equation}
  \mathbb D_u:=\{t\in\mathbb D:t>-u\}.
  \label{eq:finite-left-directed-set}
\end{equation}
For \(u,v>0\) and \(f\in C(\mathbb R)\), define the constant extension from
the moving left endpoint by
\begin{equation}
  (\mathsf E_{v\to u}f)(t):=f(t\vee(-v)),
  \qquad t\in[-u,\infty).
  \label{eq:moving-left-extension}
\end{equation}
If \(v\geq u\), this is just restriction to \([-u,\infty)\).  For
\(f\in C([-u,\infty))\), also put
\begin{equation}
  \mathscr A_u(f):=
  \left(u,-f(-u),
    \bigl(-f(-(s\wedge u))\bigr)_{s\geq0},
    \bigl(-f(s)\bigr)_{s\geq0}\right)\in\mathcal Y.
  \label{eq:finite-left-arm-map}
\end{equation}
Thus the left arm is extended constantly after its lifetime \(u\).

\begin{proposition}
\label{prop:finite-left-entrance-law}
For every \(u>0\) there is a unique probability law \(Q^u\) on
\(C([-u,\infty))\) with the following property.  Suppose
\begin{equation}
\begin{gathered}
  h_n=2^{-r_n}\downarrow0,\qquad
  u_n\in h_n\mathbb N,\qquad u_n\to u,\\
  L_n\in h_n\mathbb N,\qquad L_n\to\infty,
\end{gathered}
  \label{eq:finite-left-entrance-parameters}
\end{equation}
and let
\begin{equation}
  Q_n:=\mathcal L\bigl(
    W\,\big|\,W_t\leq0,
    \ t\in h_n\mathbb Z\cap[-u_n,L_n]
  \bigr).
  \label{eq:finite-left-wall-law}
\end{equation}
Then
\begin{equation}
  (\mathsf E_{u_n\to u})_*Q_n\Longrightarrow Q^u
  \quad\text{in }C_{\mathrm{loc}}([-u,\infty)).
  \label{eq:finite-left-moving-limit}
\end{equation}
Moreover,
\begin{equation}
  Q^u(W_t\leq0\text{ for every }t\geq-u)=1,
  \qquad Q^u(W_{-u}<0)=1.
  \label{eq:finite-left-wall-and-repulsion}
\end{equation}
The map
\begin{equation}
  u\longmapsto(\mathscr A_u)_*Q^u
  \quad\text{from }(0,\infty)\text{ to }\mathcal P(\mathcal Y)
  \label{eq:positive-entrance-arm-continuity}
\end{equation}
is weakly continuous.

There is also a unique directed one-sided hard-wall law \(Q^0\) on
\(C([0,\infty))\), obtained by using only positive dyadic constraint sites.
It satisfies
\begin{equation}
  Q^0(W_0=0,\ W_t\leq0\text{ for every }t\geq0)=1.
  \label{eq:one-sided-wall-support}
\end{equation}
\end{proposition}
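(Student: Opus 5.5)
The plan is to follow the two-sided construction of Theorems~\ref{thm:finite-dimensional-entrance-law} and~\ref{thm:path-space-entrance-law}. The new ingredient is a uniform estimate saying that constraints in a small window around the moving left endpoint \(-u\) are asymptotically invisible, which absorbs the nondyadic limit point \(u\).

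\emph{Step 1: dyadic left horizon.} First fix a dyadic \(u>0\) and consider the directed family of finite \(F\subset\mathbb D_u\cup\{-u\}\) with \(-u\in F\). Proposition~\ref{prop:wall-monotonicity} makes \(\nu_F^T\) increase with \(F\), and Lemma~\ref{lem:uniform-anchor-tail} gives tightness of a canonical cofinal sequence, for example \(E_N\cap[-u,\infty)\). The argument of Lemma~\ref{lem:cofinal-tightness} then yields, for every finite \(T\subset\mathbb D_u\), a unique directed limit that is projectively consistent.

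For path-space tightness I would reuse \(B=m_F+Z_F\).
\begin{itemize}
\item The centred modulus (Lemma~\ref{lem:uniform-centered-modulus}) holds verbatim, since it needs only a finite convex constraint.
\item The mean modulus (Proposition~\ref{prop:conditional-mean-modulus}) is already stated uniformly over all finite \(F\subset\mathbb D\), so it covers one-sided and left-truncated walls.
\end{itemize}
Identification through dyadic marginals and Portmanteau then give a unique \(Q^u\) and the nonpositivity part of \eqref{eq:finite-left-wall-and-repulsion}. The law \(Q^0\) is obtained the same way from positive dyadic sites only; \eqref{eq:one-sided-wall-support} follows from \(W_0=0\) and closed-set Portmanteau.

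\emph{Step 2: endpoint germ invisibility (main obstacle).} The hard case is a general \(u\) approached by grid-aligned \(u_n\), together with continuity of \(u\mapsto Q^u\). I would prove the following claim, uniformly over finite walls \(F\) containing a site \(s\) near \(-u\): deleting all sites of \(F\) in \((s-r,s+r)\setminus\{s\}\) changes the conditional path law by little in total variation, namely by at most \(Ca+\omega(a,r)\). The argument copies the deletion step in Proposition~\ref{prop:rough-ordinary-fixed-point-germs}.
\begin{itemize}
\item Under the single-site wall at \(s\), \(G_s\) is half-normal. Adding walls only raises it (Proposition~\ref{prop:wall-monotonicity}), so \(\PP(G_s\le a\mid A_F)\le Ca\) uniformly; this uses \(K_H(s,s)=|s|^{2H}\), which is bounded below near \(-u\).
\item The oscillation \(\omega(a,r)\) of \(G\) on \([s-r,s+r]\) is uniformly small, by the annular mean modulus \eqref{eq:annular-mean-modulus} and centred chaining.
\item If \(G_s>a\) and the oscillation is at most \(a\), the deleted walls hold automatically, which gives the nested-event identity.
\end{itemize}
The same estimate also covers walls lying beyond the endpoint, so it handles \(u_n>u\) as well as \(u_n<u\). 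I expect the bookkeeping here to be delicate. For \(u_n>u\), the extension \(\mathsf E_{u_n\to u}\) is mere restriction, but the walls in \([-u_n,-u)\) must be shown to be harmless. For \(u_n<u\), the constant extension on \([-u,-u_n]\) must be controlled by the path modulus.

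\emph{Step 3: assembly.} Sandwich each \(Q_n\) between dyadic-endpoint laws \(Q^{u'}\) with \(u'\) a dyadic value close to \(u\), and let \(u'\to u\). Step~2 supplies bounded-Lipschitz errors that are uniform in the outer walls, and these errors pass to the limit by continuity of \(\mathsf E\). Tightness of the limits as \(u'\to u\) follows from the uniform moduli. This gives existence and uniqueness of \(Q^u\) and the convergence \eqref{eq:finite-left-moving-limit}.

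The repulsion statement \(Q^u(W_{-u}<0)=1\) follows from the uniform bound \(\PP(G_{-u}\le a)\le Ca\) and open-set Portmanteau. Weak continuity of \eqref{eq:positive-entrance-arm-continuity} follows from the same uniform germ estimate, combined with joint continuity of \((u,f)\mapsto\mathscr A_u(f)\) on paths with uniform modulus.
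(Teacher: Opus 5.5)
Your argument is correct. Its key estimate is the same one the paper uses in its moving-boundary step. You condition the shorter wall on the extra strip near the endpoint. You then bound the failure probability by \(\PP(G_s\le a\mid\cdot)\le Ca\), from the half-normal lower tail and wall monotonicity, plus the uniform oscillation of \(G\) near \(s\).

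The difference is in how the construction is organised.

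\emph{The paper's route.} It never restricts to dyadic \(u\). It builds \(Q^u\) for every \(u>0\) as the directed monotone limit over finite subsets of the open-endpoint set \(\mathbb D_u=\{t\in\mathbb D:t>-u\}\), which makes sense for arbitrary real \(u\). It obtains \(W_{-u}\le0\) by right-continuity. It obtains repulsion at \(-u\) from the half-normal bound at dyadic \(e_j\downarrow-u\) together with Fatou. The moving boundary is then handled by comparing \(u_n\) with the grid-aligned reference horizon \(v_n=h_n(\lceil u/h_n\rceil-1)<u\), whose walls are cofinal in \(\mathbb D_u\). Continuity of \(u\mapsto(\mathscr A_u)_*Q^u\) comes from a diagonal argument: choose grid laws approximating \(Q^{u_j}\) whose horizons \(w_j\to u\), and reapply the moving-boundary convergence just proved.

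\emph{Your route.} Your base construction uses a closed endpoint and dyadic \(u\) only. This forces the extra interpolation in Step~3, namely tightness/Cauchy of \((\mathsf E_{u'\to u})_*Q^{u'}\) as dyadic \(u'\to u\). The open index set \(\mathbb D_u\) makes that step unnecessary. It also means that, for nondyadic \(u\), the description of \(Q^u\) as a directed limit over arbitrary finite subsets of \(\mathbb D_u\) must be recovered separately. The paper relies on that description later, for cofinal tails containing prescribed dyadic points. The recovery is easy: \(v_n\) is an admissible horizon sequence, and its walls are cofinal in \(\mathbb D_u\).

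\emph{What yours buys.} Your deletion bound is uniform in the retained site \(s\), so it gives a direct quantitative modulus for \(u\mapsto Q^u\). Weak continuity of the arm laws then follows without the paper's diagonal argument.
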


\begin{proof}
Fix \(u>0\) and a finite nonempty set \(T\subset\mathbb D_u\).  Let
\begin{equation*}
  \mathcal I_{u,T}:=
  \{F\subset\mathbb D_u:F\text{ finite and }T\subseteq F\}.
\end{equation*}
The family \((\nu_F^T)_{F\in\mathcal I_{u,T}}\) is a subfamily of the full
tight family in Theorem~\ref{thm:finite-dimensional-entrance-law}.  For every
bounded continuous increasing \(\varphi\), the net
\(\int\varphi\,\mathrm d\nu_F^T\) is increasing by
Proposition~\ref{prop:wall-monotonicity}.  The increasing determining-class
argument in Lemma~\ref{lem:cofinal-tightness} therefore gives a unique limit
\(\nu_u^T\).  Projection along a common cofinal tail proves that the family
\((\nu_u^T)_T\) is projectively consistent.  In particular, every sequence
of finite subsets of \(\mathbb D_u\) which eventually contains each such
finite subset has these same limiting marginals.

For a finite \(F\subset\mathbb D_u\), write
\(\mathbf Q_F:=\mathcal L(W\mid W_t\leq0,\ t\in F)\).  The proof of
\eqref{eq:centered-modulus} is uniform over every positive-probability finite
convex conditioning, and
\eqref{eq:conditional-mean-modulus}--\eqref{eq:pin-mean-bound} are uniform
over every finite dyadic wall.  Since \(W_0=0\), these estimates give
tightness of \((\mathbf Q_F)\) on every \(C([-u,R])\), \(R>0\).  Take the
nested cofinal sets \(E_N\cap\mathbb D_u\).  An exhaustion in \(R\) gives
subsequential limits in \(C_{\mathrm{loc}}([-u,\infty))\), and every such
limit has the marginals \((-\operatorname{id})_*\nu_u^T\) at finite
\(T\subset\mathbb D_u\).  Continuous paths are determined by their values on
the dense set \(\mathbb D_u\).  Hence the limit is unique; call it \(Q^u\).
The same tightness and marginal argument shows convergence to \(Q^u\) along
every cofinal sequence in \(\mathbb D_u\).

For each \(t\in\mathbb D_u\), the closed constraint \(W_t\leq0\) is present
eventually.  A countable intersection and continuity therefore give
\begin{equation*}
  Q^u(W_t\leq0\text{ for every }t>-u)=1.
\end{equation*}
Right continuity at \(-u\) includes the endpoint and proves the first
assertion of \eqref{eq:finite-left-wall-and-repulsion} without yet using
endpoint repulsion.

We record the estimate that prevents degeneracy at that point.  If a finite
constraint set contains \(e<0\), then the restriction to \(\{G_e\geq0\}\) is
associated by Lemma~\ref{lem:finite-wall-mtp2-closure}.  All remaining
constraints and \(\{G_e>y\}\) are increasing events.  Association applied to
these two events, followed by complementation in the first coordinate,
implies, for \(y\geq0\),
\begin{equation}
\begin{split}
  \PP(G_e\leq y\mid G_t\geq0,\ t\in F)
  &\leq\PP(0\leq G_e\leq y\mid G_e\geq0)\\
  &\leq \sqrt{\frac2\pi}\,\frac{y}{|e|^H}.
\end{split}
  \label{eq:finite-left-endpoint-repulsion}
\end{equation}
At every continuity point of the limiting one-coordinate distribution, the
same estimate passes along a cofinal wall sequence.  Approximating an
arbitrary \(y\) from above by continuity points proves, for every negative
\(e\in\mathbb D_u\),
\begin{equation*}
  Q^u(0\leq G_e\leq y)
  \leq \sqrt{\frac2\pi}\,\frac{y}{|e|^H}.
\end{equation*}
Choose negative \(e_j\in\mathbb D_u\) with \(e_j\downarrow-u\).  Continuity
of the coordinate path and then \(j\to\infty\) and \(\eta\downarrow0\) give
\begin{equation}
  Q^u(0\leq G_{-u}\leq y)
  \leq \sqrt{\frac2\pi}\,yu^{-H}.
  \label{eq:entrance-endpoint-repulsion}
\end{equation}
Indeed,
\(\{0\leq G_{-u}\leq y\}\subseteq
\liminf_j\{0\leq G_{e_j}\leq y+\eta\}\).
Fatou's lemma for the indicators gives the asserted bound.
Taking \(y=0\) proves the strict inequality in
\eqref{eq:finite-left-wall-and-repulsion}.

We now prove the moving-boundary assertion.  Discarding finitely many terms,
define the grid-aligned reference horizon
\begin{equation*}
  v_n:=h_n\left(\left\lceil\frac{u}{h_n}\right\rceil-1\right),
  \qquad 0<v_n<u,\qquad 0<u-v_n\leq h_n,
\end{equation*}
and put
\begin{equation*}
  F_n^\circ:=
  \bigl(h_n\mathbb Z\cap[-v_n,L_n]\bigr)\setminus\{0\}.
\end{equation*}
These sets lie in \(\mathbb D_u\).  If \(t=k2^{-q}\in\mathbb D_u\), then
\(r_n\geq q\), \(-v_n<t\), and \(L_n>|t|\) eventually; hence
\(t\in F_n^\circ\) eventually.  Thus \((F_n^\circ)\) is cofinal in
\(\mathbb D_u\), and its path laws converge to \(Q^u\).  Replacing a path by
\(\mathsf E_{v_n\to u}W\) changes it on a fixed compact by at most its
oscillation over \([-u,-v_n]\), which tends to zero in probability by the
uniform centred and mean moduli.  Thus the constant-extended reference laws
also converge to \(Q^u\).

It remains to compare the given horizon \(u_n\) with \(v_n\).  Set
\begin{equation*}
  s_n:=u_n\wedge v_n,\qquad
  \ell_n:=u_n\vee v_n,\qquad
  e_n:=-s_n,\qquad
  \delta_n:=\ell_n-s_n.
\end{equation*}
Then \(s_n\to u\) and \(\delta_n\to0\).  The law with left horizon
\(\ell_n\) is the law with left horizon \(s_n\), conditioned additionally on
the new strip constraints.  Under the shorter-wall law, failure of those
constraints is contained, for every \(\varepsilon>0\), in
\begin{equation}
  \{G_{e_n}\leq2\varepsilon\}
  \cup
  \left\{
    \operatorname{osc}_{[e_n-\delta_n,e_n]}G>\varepsilon
  \right\}.
  \label{eq:moving-boundary-failure}
\end{equation}
The finite-grid estimate \eqref{eq:finite-left-endpoint-repulsion} bounds the
first probability by \(C_H\varepsilon s_n^{-H}\).  For each fixed
\(\varepsilon\), the second tends to zero by
\eqref{eq:centered-modulus} and
\eqref{eq:conditional-mean-modulus}; the constants are uniform because
\((s_n)\) remains in a compact subset of \((0,\infty)\).  Consequently the
limsup of the failure probability is at most \(C_H\varepsilon u^{-H}\), and
letting \(\varepsilon\downarrow0\) makes it vanish.

At the finite-grid level, conditioning the shorter-wall law on the new strip
gives the longer-wall law.  Their total-variation distance is therefore
bounded by this failure probability.  The two extension maps
\(\mathsf E_{u_n\to u}\) and \(\mathsf E_{v_n\to u}\) differ on a fixed
compact by at most the oscillation over the same vanishing strip.  Total
variation contracts under a common pushforward, and the uniform modulus
controls the change of extension map.  More explicitly, if \(\Phi\) is
bounded by one and one-Lipschitz for the uniform norm on \([-u,R]\), then the
difference between its expectations under the two extended laws is at most
twice the preceding total-variation distance plus
\begin{equation*}
  \EE\left[
    1\wedge\operatorname{osc}_{[-\ell_n,-s_n]}W
  \right]
\end{equation*}
under either one of the two finite-wall laws.  Both terms tend to zero.
Comparison with the reference laws, followed by an exhaustion in \(R\), now
proves \eqref{eq:finite-left-moving-limit}.

Let \(u_j\to u>0\).  For each \(j\), the convergence just proved allows a
grid law \(P_j\), with its dyadic meshes chosen strictly decreasing and at
most \(j^{-1}\), right horizon at least \(j\), and grid-aligned left horizon
\(w_j\) satisfying
\(|w_j-u_j|\leq j^{-1}\), such that
\begin{equation*}
  d_{\mathrm{BL}}\left(
    (\mathscr A_{u_j})_*(\mathsf E_{w_j\to u_j})_*P_j,
    (\mathscr A_{u_j})_*Q^{u_j}
  \right)\leq j^{-1}.
\end{equation*}
Here one may choose the term far enough along a defining sequence for
\(Q^{u_j}\) to meet all four bounds.  Since \(w_j\to u\), another application
of \eqref{eq:finite-left-moving-limit} gives
\((\mathsf E_{w_j\to u})_*P_j\Rightarrow Q^u\).  Local uniform convergence,
\(u_j\to u\), and the uniform modulus show that replacing
\(\mathscr A_{u_j}\circ\mathsf E_{w_j\to u_j}\) by
\(\mathscr A_u\circ\mathsf E_{w_j\to u}\) is asymptotically negligible.
The continuous mapping theorem and the triangle inequality prove
\eqref{eq:positive-entrance-arm-continuity}.

Finally put \(\mathbb D_+:=\mathbb D\cap(0,\infty)\).  Restricting the same
finite-dimensional directed construction to finite subsets of
\(\mathbb D_+\) again inherits tightness from the full family and retains
monotonicity.  The uniform moduli and the deterministic pin \(W_0=0\)
reconstruct a unique law \(Q^0\) on \(C([0,\infty))\), and density of the
  positive dyadics gives \eqref{eq:one-sided-wall-support}.
\end{proof}

Under \(Q^u\), define
\begin{equation}
\begin{aligned}
  V^u&:=G_{-u},&
  L^u(s)&:=G_{-s}, &&0\leq s\leq u,\\
  R^u(s)&:=G_s,&&&s\geq0,
\end{aligned}
  \label{eq:entrance-arms}
\end{equation}
and extend \(L^u\) constantly after \(u\).  For \(u=0\), set
\(V^0=0\), \(L^0\equiv0\), and use \(Q^0\) for \(R^0\).  Put
\begin{equation}
  z(u):=\EE_{Q^u}e^{-V^u}
  \label{eq:entrance-endpoint-normalizer}
\end{equation}
so in particular \(z(0)=1\), and define the endpoint-tilted kernel \(K_u\)
on \(\mathcal Y\), for bounded Borel test functions \(F\), by
\begin{equation}
  K_u(F)
  :=\frac{1}{z(u)}\EE_{Q^u}\left[
    e^{-V^u}F(u,V^u,L^u,R^u)
  \right].
  \label{eq:entrance-kernel}
\end{equation}

\begin{lemma}
\label{lem:entrance-endpoint-tilt}
For every \(0\leq R<\infty\),
\begin{equation}
  \inf_{0\leq u\leq R}z(u)
  \geq e^{-C_HR^H}>0.
  \label{eq:entrance-normalizer-lower}
\end{equation}
The map \(u\mapsto K_u\) is weakly continuous on \((0,\infty)\).
\end{lemma}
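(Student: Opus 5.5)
Two parts: the lower bound on $z(u)$, and weak continuity of $u\mapsto K_u$.

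\emph{Lower bound.} By Jensen, $z(u)=\EE_{Q^u}e^{-V^u}\geq\exp(-\EE_{Q^u}V^u)$. So it suffices to show $\EE_{Q^u}V^u\leq C_Hu^H$, uniformly in $u\leq R$; then $z(u)\geq e^{-C_Hu^H}\geq e^{-C_HR^H}$. For $u=0$ we have $z(0)=1$ and nothing to prove. For $u>0$, recall that $V^u=G_{-u}=-W_{-u}$. At the finite-grid level the pin-mean bound \eqref{eq:pin-mean-bound} gives
\[
  \EE_{Q_n}(-W_{-u_n})=-m_F(-u_n)\leq C_H u_n^H .
\]
This holds uniformly over finite walls. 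Dyadic approximation covers a nondyadic endpoint: $m_F$ is continuous by \eqref{eq:positive-covariance-mixture}, and \eqref{eq:pin-mean-bound} holds for all real $t$. To pass the bound to the limit, use the convergence \eqref{eq:finite-left-moving-limit}, under which the endpoint evaluation converges in law. By Fatou/Portmanteau applied to the nonnegative continuous functional $f\mapsto -f(-u)$ (truncated at level $N$, then $N\to\infty$), we get $\EE_{Q^u}V^u\leq\liminf_n C_Hu_n^H=C_Hu^H$. Alternatively, the uniform exponential moment \eqref{eq:zero-germ-coordinate-exponential-moment} gives uniform integrability, and hence equality in the limit. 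Either route gives \eqref{eq:entrance-normalizer-lower}.

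\emph{Continuity.} Let $u_j\to u>0$ and let $F$ be bounded and continuous on $\mathcal Y$. Write
\[
  K_{u_j}(F)=\frac{\EE_{(\mathscr A_{u_j})_*Q^{u_j}}\bigl[e^{-y_2}F(y)\bigr]}{\EE_{(\mathscr A_{u_j})_*Q^{u_j}}\bigl[e^{-y_2}\bigr]},
\]
where $y_2$ is the height coordinate of $y\in\mathcal Y$. The function $y\mapsto e^{-y_2}$ is bounded by one on the support, since $y_2=V\geq0$ by \eqref{eq:finite-left-wall-and-repulsion}. On $[0,\infty)$ it is continuous; to get a globally continuous bounded test, replace it by $e^{-(y_2)_+}$, which agrees on the support. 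So $e^{-(y_2)_+}F(y)$ is bounded and continuous. The weak continuity \eqref{eq:positive-entrance-arm-continuity} then makes the numerator and denominator converge to the corresponding quantities at $u$. The limit denominator is $z(u)>0$ by the lower bound just proved, with $R\geq\sup_j u_j$. Hence the ratio converges to $K_u(F)$, which is weak continuity.

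\emph{Main obstacle.} The only delicate point is the first step: transferring the mean bound through the moving-boundary limit to the endpoint coordinate exactly at $-u$, where the left horizon is itself varying. I would handle it via the continuity of the evaluation $f\mapsto f(-u)$ on $C_{\mathrm{loc}}([-u,\infty))$, together with the fact that $(\mathsf E_{u_n\to u})_*Q_n$ evaluated at $-u$ equals $W_{-(u\wedge u_n)}$. This is controlled by \eqref{eq:pin-mean-bound} at the point $u\wedge u_n\leq u_n$, so the lower-semicontinuity argument applies verbatim.
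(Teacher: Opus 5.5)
Your proof is correct and follows the paper's route. For $z(u)$ you use Jensen together with the uniform pin-mean bound \eqref{eq:pin-mean-bound} passed to the limit, and for continuity you write $K_u(F)$ as the ratio $\int Fe^{-v}\,\mathrm dP_u\big/\int e^{-v}\,\mathrm dP_u$ and apply \eqref{eq:positive-entrance-arm-continuity}.

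The only difference is how the mean bound reaches the endpoint $-u$. You evaluate the moving-boundary limit \eqref{eq:finite-left-moving-limit} directly at $-u$, whereas the paper uses interior dyadic wall points $e_j\downarrow-u$ and then Fatou along path continuity. For your truncation and Fatou step, take $u_n\le u$, so that $-u_n$ is a wall site and the prelimit value $G_{-u_n}$ is nonnegative. If $u_n>u$, the off-grid value $G_{-u}$ need not be nonnegative, so the functional you call nonnegative is not nonnegative under those prelimit laws.
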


\begin{proof}
Fix \(u>0\), and choose negative points
\(e_j\in\mathbb D_u\) with \(e_j\downarrow-u\).  For every member \(F\) of a
cofinal tail in \(\mathbb D_u\) containing \(e_j\), that coordinate is
constrained and hence nonnegative under the corresponding \(G=-W\) law.
Moreover, the natural-scale conditional-mean estimate
\eqref{eq:pin-mean-bound} gives
\begin{equation*}
  0\leq\EE_{Q_F}G_{e_j}=-m_F(e_j)\leq C_H|e_j|^H.
\end{equation*}
For \(M<\infty\), the function
\(x\mapsto (x^+)\wedge M\) is bounded and continuous.  The finite-dimensional convergence used to construct \(Q^u\) therefore gives
\begin{equation*}
  \EE_{Q^u}\bigl[(G_{e_j}^+)\wedge M\bigr]
  \leq C_H|e_j|^H.
\end{equation*}
Letting \(M\to\infty\), using the wall support, and then applying Fatou's
lemma to \(G_{e_j}\to G_{-u}\) under the continuous-path law \(Q^u\), gives
\begin{equation}
  \EE_{Q^u}V^u
  \leq\liminf_{j\to\infty}\EE_{Q^u}G_{e_j}
  \leq C_Hu^H.
  \label{eq:entrance-endpoint-mean}
\end{equation}
At \(u=0\), the same bound is the identity \(V^0=0\).  Since
\(x\mapsto e^{-x}\) is convex, Jensen's inequality now gives, for
\(0\leq u\leq R\),
\begin{equation*}
  z(u)\geq\exp\{-\EE_{Q^u}V^u\}
  \geq e^{-C_Hu^H}\geq e^{-C_HR^H},
\end{equation*}
which proves \eqref{eq:entrance-normalizer-lower}.

For \(u>0\), write \(P_u=(\mathscr A_u)_*Q^u\).  By
\eqref{eq:positive-entrance-arm-continuity}, the map \(u\mapsto P_u\) is
weakly continuous.  If \(y=(a,v,\ell,r)\in\mathcal Y\), then
\(w(y):=e^{-v}\) belongs to \(C_b(\mathcal Y)\), and
\begin{equation*}
  \int_{\mathcal Y}F\,\mathrm dK_u
  =\frac{\int_{\mathcal Y}Fw\,\mathrm dP_u}
         {\int_{\mathcal Y}w\,\mathrm dP_u},
  \qquad F\in C_b(\mathcal Y).
\end{equation*}
Both integrals vary continuously with \(u\), and the denominator is positive
by \eqref{eq:entrance-normalizer-lower}.  This proves weak continuity on
  \((0,\infty)\).
\end{proof}

For a dyadic mesh \(h=2^{-r}\) and \(u,L\in h\mathbb Z_{\geq0}\), write
\begin{equation}
  Q_{h,u,L}:=\mathcal L\bigl(
    W\,\big|\,W_t\leq0,
    \ t\in h\mathbb Z\cap[-u,L]
  \bigr).
  \label{eq:finite-entrance-component}
\end{equation}
Set
\begin{equation}
\begin{split}
  z_{h,u,L}&:=\EE_{Q_{h,u,L}}e^{-G_{-u}},\\
  \frac{\mathrm d\widehat Q_{h,u,L}}{\mathrm dQ_{h,u,L}}
  &:=\frac{e^{-G_{-u}}}{z_{h,u,L}},
  \qquad
  K_{h,u,L}:=(\mathscr M_{u,L})_*\widehat Q_{h,u,L},
\end{split}
  \label{eq:finite-entrance-endpoint-tilt}
\end{equation}
where \(\mathscr M_{u,L}\) is the constant-extension map in
\eqref{eq:tilted-grid-mark-map}.  Equivalently, for every bounded Borel
function \(F\) on \(\mathcal Y\),
\begin{equation*}
  K_{h,u,L}(F)
  =\frac{\EE_{Q_{h,u,L}}\!\left[
      e^{-G_{-u}}F(\mathscr M_{u,L}(W))
    \right]}{z_{h,u,L}}.
\end{equation*}

\begin{corollary}
\label{cor:zero-germ-identification}
Let
\begin{equation*}
  h_n=2^{-r_n}\downarrow0,
  \qquad
  u_n,L_n\in h_n\mathbb Z_{\geq0},
  \qquad
  u_n\to0,
  \qquad
  L_n\to\infty.
\end{equation*}
Put
\begin{equation*}
  P_{h,u,L}:=(\mathscr M_{u,L})_*Q_{h,u,L},
  \qquad
  P_u:=(\mathscr A_u)_*Q^u\quad(u>0),
\end{equation*}
and let \(P_0\) be the law on \(\mathcal Y\) with zero lifetime, height, and
left arm, and right arm \((G_s)_{s\geq0}\) under \(Q^0\).  Thus
\(K_0=P_0\).  Then
\begin{equation}
\begin{gathered}
  \mathcal L_{Q_{h_n,u_n,L_n}}
    \bigl(W|_{[0,\infty)}\bigr)
  \Longrightarrow Q^0
  \quad\text{in }C_{\mathrm{loc}}([0,\infty)),\\
  P_{h_n,u_n,L_n}\Longrightarrow P_0,
  \qquad
  K_{h_n,u_n,L_n}\Longrightarrow K_0
  \quad\text{in }\mathcal Y.
\end{gathered}
  \label{eq:zero-germ-finite-component-limit}
\end{equation}
Consequently, the arm-valued map \(u\mapsto P_u\) and the endpoint-tilted map
\(u\mapsto K_u\) are weakly continuous on \([0,\infty)\).
\end{corollary}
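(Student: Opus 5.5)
The proof has three parts: finite-dimensional convergence of the right arm via the zero-germ theorem, path tightness from the uniform moduli, and then the marked and tilted laws, where the left-arm data collapse.

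\emph{Right-arm finite-dimensional laws.} Fix a finite $T\subset(0,\infty)\cap\mathbb D$. The constraint set of $Q_{h_n,u_n,L_n}$ splits as $P_n\cup N_n$, with
\[
P_n=h_n\mathbb Z\cap(0,L_n],\qquad N_n=h_n\mathbb Z\cap[-u_n,0).
\]
Since $u_n\to0$, for every $\delta>0$ we eventually have $N_n\subset[-\delta,0)\cap\mathbb D$. Theorem~\ref{thm:zero-germ} then gives
\[
d_{\mathrm{BL}}\bigl(\mathcal L(G_T\mid A_{P_n}\cap A_{N_n}),\,\mathcal L(G_T\mid A_{P_n})\bigr)\le\omega_T(\delta).
\]
The sets $P_n$ are eventually cofinal in $\mathbb D_+$ because $h_n\downarrow0$ and $L_n\to\infty$. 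By the directed one-sided construction in Proposition~\ref{prop:finite-left-entrance-law}, $\mathcal L(G_T\mid A_{P_n})$ converges to the $T$-marginal of $Q^0$. Letting $n\to\infty$ and then $\delta\downarrow0$ yields convergence of all positive dyadic finite-dimensional laws to those of $Q^0$.

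\emph{Path tightness on $[0,\infty)$.} Lemma~\ref{lem:uniform-centered-modulus} covers any finite convex conditioning, and \eqref{eq:conditional-mean-modulus} and \eqref{eq:pin-mean-bound} are uniform over finite dyadic walls. Together with $W_0=0$, these make the restrictions tight in $C_{\mathrm{loc}}([0,\infty))$. Paths are determined by their values on the dense set of positive dyadics, so every subsequential limit equals $Q^0$. This proves the first claim.

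\emph{Marked laws.} Write
\[
P_{h_n,u_n,L_n}=\bigl(u_n,\,G_{-u_n},\,(G_{-(s\wedge u_n)})_s,\,(G_{s\wedge L_n})_s\bigr).
\]
The first coordinate $u_n$ tends to $0$. The height and the left arm are bounded by
\[
|G_{-u_n}|+\sup_{s\le u_n}|G_{-s}|\le 2\sup_{[-u_n,0]}|W|.
\]
This tends to zero in probability by the same uniform moduli, since $W_0=0$. The stopping of the right arm at $L_n$ is invisible on any fixed compact once $L_n$ exceeds it. So the marked law converges to $P_0$, using the continuous mapping theorem together with Slutsky's theorem in the product topology.

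\emph{Tilted laws.} $K_{h_n,u_n,L_n}$ is $P_{h_n,u_n,L_n}$ tilted by the bounded continuous weight $w(y)=e^{-v}\in(0,1]$. Its normaliser is
\[
z_{h_n,u_n,L_n}=\int w\,\mathrm dP_{h_n,u_n,L_n}\to\int w\,\mathrm dP_0=1.
\]
The ratio formula used in Lemma~\ref{lem:entrance-endpoint-tilt} then gives $K_{h_n,u_n,L_n}\Rightarrow K_0=P_0$.

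\emph{Continuity at $u=0$.} This is the hardest step. Let $u_j\downarrow0$ with $u_j>0$. As in the end of the proof of Proposition~\ref{prop:finite-left-entrance-law}, I would use \eqref{eq:finite-left-moving-limit} to choose for each $j$ a grid triple $(h_j,w_j,L_j)$ satisfying:
\begin{itemize}
\item dyadic mesh $h_j\le j^{-1}$, strictly decreasing in $j$;
\item grid-aligned left horizon $w_j$ with $|w_j-u_j|\le j^{-1}$;
\item right horizon $L_j\ge j$;
\item $d_{\mathrm{BL}}\bigl((\mathscr A_{u_j})_*(\mathsf E_{w_j\to u_j})_*Q_{h_j,w_j,L_j},\,P_{u_j}\bigr)\le j^{-1}$.
\end{itemize}
The maps $\mathscr A_{u_j}\circ\mathsf E_{w_j\to u_j}$ and $\mathscr M_{w_j,L_j}$ differ on any compact only through:
\begin{itemize}
\item the lifetime coordinates $u_j$ versus $w_j$, which differ by at most $j^{-1}$;
\item the oscillation of the path over the strip between $-u_j$ and $-w_j$;
\item the right-arm stopping at $L_j$.
\end{itemize}
All three are negligible by the uniform moduli. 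By the triangle inequality, together with the convergence already proved for the sequence $w_j\to0$, we get $P_{u_j}\Rightarrow P_0$. The tilt again passes through the ratio formula, giving $K_{u_j}\Rightarrow K_0$. Combined with the continuity on $(0,\infty)$ from \eqref{eq:positive-entrance-arm-continuity} and Lemma~\ref{lem:entrance-endpoint-tilt}, this gives continuity on $[0,\infty)$.

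The main obstacle is this diagonal comparison of the two arm maps. It requires the modulus estimates to be uniform over the moving small left horizon, which holds because \eqref{eq:pin-mean-bound} and the centred chaining bound are uniform over all finite walls.
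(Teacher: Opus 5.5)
Your proposal is correct and follows the paper's proof step for step. It uses Theorem~\ref{thm:zero-germ} plus cofinality for the positive dyadic marginals, the uniform centred and mean moduli for tightness and for the collapse of lifetime, height and left arm, the bounded weight $e^{-v}$ for the tilt, and a diagonal choice of grid triples for continuity at $u=0$. The only cosmetic differences are these: you get $z_{h_n,u_n,L_n}\to1$ from weak convergence rather than from the explicit bound $0\leq1-z_{h_n,u_n,L_n}\leq C_Hu_n^H$, and you compare $\mathscr A_{u_j}\circ\mathsf E_{w_j\to u_j}$ with $\mathscr M_{w_j,L_j}$ along the diagonal instead of first upgrading the moving-horizon convergence to marked laws at fixed $u_j$.
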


\begin{proof}
Write
\begin{equation*}
  F_n^+:=h_n\mathbb Z\cap(0,L_n],
  \qquad
  F_n^-:=h_n\mathbb Z\cap[-u_n,0),
  \qquad
  Q_n:=Q_{h_n,u_n,L_n}.
\end{equation*}
Fix a finite positive dyadic set \(T\).  Eventually \(T\subset F_n^+\), and
Theorem~\ref{thm:zero-germ}, with \(\delta=u_n\), gives
\begin{equation*}
  d_{\mathrm{BL}}\left(
    \mathcal L_{Q_n}(G_T),
    \mathcal L(G_T\mid A_{F_n^+})
  \right)\longrightarrow0.
\end{equation*}
The walls \(F_n^+\) are cofinal in the positive dyadics, so the second law
converges to the \(T\)-marginal of \(Q^0\) by
Proposition~\ref{prop:finite-left-entrance-law}.

We now upgrade these marginals to paths.  The full-family form of
\eqref{eq:centered-modulus}, together with
\eqref{eq:conditional-mean-modulus} and the deterministic pin at zero, makes
the restrictions of \((Q_n)\) tight on every \(C([0,R])\).  Every
subsequential limit has the \(Q^0\) marginals at every finite positive dyadic
set.  Since continuous paths are determined by their values on that dense
set and at the pin, the limit is \(Q^0|_{[0,R]}\).  Exhaustion in \(R\)
proves the first convergence in
\eqref{eq:zero-germ-finite-component-limit}.

We next control the collapsing marks before tilting.  Put
\begin{equation*}
  \mathcal F_n:=
  \bigl(h_n\mathbb Z\cap[-u_n,L_n]\bigr)\setminus\{0\},
  \qquad
  m_n:=m_{\mathcal F_n},
  \qquad
  Z_n:=W-m_n.
\end{equation*}
If \(u_n>0\), the endpoint is constrained and
\eqref{eq:pin-mean-bound} gives
\begin{equation}
  0\leq\EE_{Q_n}G_{-u_n}
  =-m_n(-u_n)\leq C_Hu_n^H;
  \label{eq:zero-germ-finite-endpoint-collapse}
\end{equation}
the same display is the identity zero when \(u_n=0\).  Moreover,
\begin{equation}
\begin{split}
  \sup_{s\geq0}|G_{-(s\wedge u_n)}|
  &=\sup_{t\in[-u_n,0]}|G_t|\\
  &\leq
  \sup_{t\in[-u_n,0]}|Z_n(t)-Z_n(0)|+C_Hu_n^H
  \longrightarrow0
\end{split}
  \label{eq:zero-germ-finite-left-arm-collapse}
\end{equation}
in probability, by \eqref{eq:centered-modulus}.  The lifetime tends to zero,
and the right-arm stopping at \(L_n\) is absent on every fixed compact for
large \(n\).  The positive-path convergence just proved and Slutsky's lemma
therefore give
\begin{equation*}
  P_{h_n,u_n,L_n}\Longrightarrow P_0.
\end{equation*}

The finite and continuum normalisers must be kept separate.  From
\eqref{eq:zero-germ-finite-endpoint-collapse},
\begin{equation}
  0\leq1-z_{h_n,u_n,L_n}
  \leq\EE_{Q_n}G_{-u_n}
  \leq C_Hu_n^H,
  \label{eq:zero-germ-finite-normalizer-limit}
\end{equation}
so \(z_{h_n,u_n,L_n}\to1\).  If
\(w(a,v,\ell,r)=e^{-v}\), then, for \(\|F\|_\infty\leq1\),
\begin{equation*}
\begin{split}
  \left|K_{h_n,u_n,L_n}(F)-P_{h_n,u_n,L_n}(F)\right|
  &\leq
  \frac{\int|w-z_{h_n,u_n,L_n}|\,
        \mathrm dP_{h_n,u_n,L_n}}
       {z_{h_n,u_n,L_n}}\\
  &\leq
  \frac{2(1-z_{h_n,u_n,L_n})}
       {z_{h_n,u_n,L_n}}
  \longrightarrow0.
\end{split}
\end{equation*}
This proves all of \eqref{eq:zero-germ-finite-component-limit}.

We transfer the finite statement to the continuum family by a diagonal.  Let
\(u_j\to0\), and discard the terms equal to zero.  For each remaining \(j\),
the moving-horizon convergence in
Proposition~\ref{prop:finite-left-entrance-law} also gives convergence of the
unweighted marks: the lifetime converges, the moving-endpoint discrepancy of
the constant left arm is controlled by the same uniform modulus, and the
right-arm truncation is eventually absent on every fixed compact.  We may
therefore choose
\begin{equation*}
  h_j=2^{-r_j}\leq j^{-1},
  \qquad
  v_j,L_j\in h_j\mathbb Z_{\geq0},
  \qquad
  |v_j-u_j|\leq j^{-1},
  \qquad
  L_j\geq j,
\end{equation*}
such that
\begin{equation*}
  d_{\mathrm{BL}}(P_{h_j,v_j,L_j},P_{u_j})\leq j^{-1}.
\end{equation*}
Here the terms are chosen recursively so that \((r_j)\) is strictly
increasing; a term in a defining sequence for \(P_{u_j}\) is simply taken far
enough to meet this requirement and all four displayed bounds.  Since
\(v_j\to0\), the finite statement and
the triangle inequality give \(P_{u_j}\Rightarrow P_0\).  Together with
\eqref{eq:positive-entrance-arm-continuity}, this proves continuity of
\(u\mapsto P_u\) on \([0,\infty)\).

The bounded continuous weight \(w\) now gives
\begin{equation*}
  z(u_j)=\int w\,\mathrm dP_{u_j}\longrightarrow
  \int w\,\mathrm dP_0=1,
\end{equation*}
and, for every \(F\in C_b(\mathcal Y)\),
\begin{equation*}
  K_{u_j}(F)
  =\frac{\int Fw\,\mathrm dP_{u_j}}
         {\int w\,\mathrm dP_{u_j}}
  \longrightarrow\int F\,\mathrm dP_0=K_0(F).
\end{equation*}
The positive-horizon continuity in
Lemma~\ref{lem:entrance-endpoint-tilt} proves continuity of
\(u\mapsto K_u\) on \([0,\infty)\).
\end{proof}

\begin{proposition}
\label{prop:uniform-entrance-component-compiler}
For every \(R<\infty\),
\begin{equation}
\begin{split}
  \lim_{r,S\to\infty}\ \sup\bigl\{
  &d_{\mathrm{BL}}(K_{2^{-r},u,L},K_u):
  0\leq u\leq R,\\
  &u,L\in2^{-r}\mathbb Z_{\geq0}, L\geq S
  \bigr\}=0.
\end{split}
  \label{eq:uniform-entrance-compiler-closed}
\end{equation}
\end{proposition}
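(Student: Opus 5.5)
We argue by contradiction with a sequential compactness argument, reducing uniformity to the sequential convergence statements already proved. Suppose the limit in \eqref{eq:uniform-entrance-compiler-closed} fails. Then there are \(\varepsilon>0\), dyadic meshes \(h_n=2^{-r_n}\) with \(r_n\to\infty\) (which we may take strictly increasing), horizons \(L_n\in h_n\mathbb Z_{\geq0}\) with \(L_n\to\infty\), and left horizons \(u_n\in h_n\mathbb Z_{\geq0}\cap[0,R]\) such that
\[
  d_{\mathrm{BL}}(K_{h_n,u_n,L_n},K_{u_n})\geq\varepsilon
  \qquad\text{for every }n.
\]
By compactness of \([0,R]\), pass to a subsequence with \(u_n\to u\in[0,R]\). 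Corollary~\ref{cor:zero-germ-identification} and Lemma~\ref{lem:entrance-endpoint-tilt} show that \(v\mapsto K_v\) is weakly continuous on \([0,\infty)\), so \(K_{u_n}\Rightarrow K_u\). It therefore suffices to show \(K_{h_n,u_n,L_n}\Rightarrow K_u\), which gives the contradiction.

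Consider first the case \(u=0\). This is exactly \eqref{eq:zero-germ-finite-component-limit} of Corollary~\ref{cor:zero-germ-identification}; the hypotheses \(u_n\to0\) and \(L_n\to\infty\) are the only ones needed, and the corollary allows \(u_n=0\) along the sequence.

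Now suppose \(u>0\). Discard the finitely many terms with \(u_n\le u/2\). Proposition~\ref{prop:finite-left-entrance-law} gives \((\mathsf E_{u_n\to u})_*Q_{h_n,u_n,L_n}\Rightarrow Q^u\) in \(C_{\mathrm{loc}}([-u,\infty))\). We then must pass from \(\mathscr M_{u_n,L_n}\) applied to the unextended law to \(\mathscr A_u\) applied to the extended one. The two marks differ in four places, each handled separately:
\begin{enumerate}
\item The lifetime coordinate changes by \(|u_n-u|\to0\).
\item The height and left arm change by at most the oscillation of \(W\) over the strip between \(-u_n\) and \(-u\). This tends to zero in probability uniformly, by the centred modulus \eqref{eq:centered-modulus} and the mean modulus \eqref{eq:conditional-mean-modulus}.
\item The right-arm stopping at \(L_n\) is invisible on any fixed compact once \(L_n\) exceeds it, which is the same argument as in the proof of Corollary~\ref{cor:zero-germ-identification}.
\end{enumerate}
By Slutsky's lemma and the continuous mapping theorem, \(P_{h_n,u_n,L_n}:=(\mathscr M_{u_n,L_n})_*Q_{h_n,u_n,L_n}\Rightarrow P_u\).

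Finally, pass to the tilted kernels. The weight \(w(a,v,\ell,r)=e^{-v}\) is bounded and continuous on \(\mathcal Y\), so \(z_{h_n,u_n,L_n}=\int w\,\mathrm dP_{h_n,u_n,L_n}\to\int w\,\mathrm dP_u=z(u)\), and \(z(u)>0\) by \eqref{eq:entrance-normalizer-lower}. Hence, for every \(F\in C_b(\mathcal Y)\),
\[
  K_{h_n,u_n,L_n}(F)
  =\frac{\int Fw\,\mathrm dP_{h_n,u_n,L_n}}
        {\int w\,\mathrm dP_{h_n,u_n,L_n}}
  \longrightarrow K_u(F).
\]

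The main obstacle is the moving-strip step when \(u_n\) approaches \(u\) from either side. Proposition~\ref{prop:finite-left-entrance-law} already handles this inside its proof, but we must check that its hypotheses allow \(u_n\) in a general grid-aligned sequence rather than a fixed one. They do, since the parameters \eqref{eq:finite-left-entrance-parameters} only require \(u_n\to u\). A second point to watch is mixed subsequences in which some \(u_n=0\) while \(u>0\); these are excluded by discarding terms with \(u_n\le u/2\).
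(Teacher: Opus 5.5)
Your proposal is correct and follows the paper's proof essentially step for step. It argues by contradiction with a subsequence $u_n\to u\in[0,R]$, invokes Corollary~\ref{cor:zero-germ-identification} when $u=0$, and for $u>0$ uses the moving-horizon convergence of Proposition~\ref{prop:finite-left-entrance-law}, oscillation control of the mark discrepancy on the strip between $-u_n$ and $-u$, and passage to the tilted kernels via the bounded continuous weight $e^{-v}$ with $z(u)>0$. The differences are cosmetic: the paper also records the uniform normaliser bound $z_{h,u,L}\geq e^{-C_HR^H}$, which this argument does not need, and your list announces four discrepancies but names three.
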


\begin{proof}
We first record the finite-component normaliser estimate.  Suppose
\(h=2^{-r}\) and \(0\leq u\leq R\).  If \(u=0\), then \(G_{-u}=G_0=0\) and
\(z_{h,0,L}=1\).  If \(u>0\), put
\begin{equation*}
  \mathcal F_{h,u,L}
  :=\bigl(h\mathbb Z\cap[-u,L]\bigr)\setminus\{0\}.
\end{equation*}
Since \(W_0=0\), this finite-wall law is \(Q_{\mathcal F_{h,u,L}}\), and the
endpoint \(-u\) belongs to its dyadic wall.  Hence
\eqref{eq:pin-mean-bound} and the wall constraint give
\begin{equation*}
  0\leq\EE_{Q_{h,u,L}}G_{-u}
  =-m_{\mathcal F_{h,u,L}}(-u)\leq C_Hu^H.
\end{equation*}
Jensen's inequality therefore yields, in both cases,
\begin{equation}
  z_{h,u,L}\geq e^{-C_Hu^H}\geq e^{-C_HR^H}>0.
  \label{eq:finite-entrance-normalizer-lower}
\end{equation}
In particular, on \(u\leq R\) the density in
\eqref{eq:finite-entrance-endpoint-tilt} is bounded above by
\(e^{C_HR^H}\).

We next isolate the sequential statement.  Let
\begin{equation*}
  h_n=2^{-r_n}\downarrow0,\qquad
  u_n\in h_n\mathbb Z_{\geq0},\quad u_n\to u>0,\qquad
  L_n\in h_n\mathbb Z_{\geq0},\quad L_n\to\infty,
\end{equation*}
and set
\begin{equation*}
  Q_n:=Q_{h_n,u_n,L_n},\qquad
  P_n:=(\mathscr M_{u_n,L_n})_*Q_n,
  \qquad P_u:=(\mathscr A_u)_*Q^u.
\end{equation*}
Under \(Q_n\), put \(\widetilde W_n=\mathsf E_{u_n\to u}W\).
Proposition~\ref{prop:finite-left-entrance-law} gives
\(\mathcal L_{Q_n}(\widetilde W_n)\Rightarrow Q^u\) in
\(C_{\mathrm{loc}}([-u,\infty))\), and hence
\(\mathscr A_u(\widetilde W_n)\Rightarrow P_u\).  If \(u_n\leq u\), its
endpoint and constant-extended left arm agree with those in
\(\mathscr M_{u_n,L_n}(W)\); if \(u_n>u\), their discrepancy is bounded by
the oscillation of \(W\) on \([-u_n,-u]\).  This oscillation tends to zero in
probability by the uniform centred and mean moduli used in the moving-endpoint
proof of Proposition~\ref{prop:finite-left-entrance-law}.  The lifetime marks
satisfy \(u_n\to u\), and on every fixed right-arm compact the truncation at
\(L_n\) is absent eventually.  It follows that \(P_n\Rightarrow P_u\) in
\(\mathcal Y\).

For \(y=(a,v,\ell,r)\in\mathcal Y\), put \(w(y)=e^{-v}\).  This function is
bounded and continuous.  In particular,
\begin{equation*}
  z_{h_n,u_n,L_n}=\int w\,\mathrm dP_n
  \longrightarrow\int w\,\mathrm dP_u=z(u)>0,
\end{equation*}
where positivity is \eqref{eq:entrance-normalizer-lower}.  Thus, for every
\(F\in C_b(\mathcal Y)\),
\begin{equation*}
  \int F\,\mathrm dK_{h_n,u_n,L_n}
  =\frac{\int Fw\,\mathrm dP_n}{\int w\,\mathrm dP_n}
  \longrightarrow
  \frac{\int Fw\,\mathrm dP_u}{\int w\,\mathrm dP_u}
  =\int F\,\mathrm dK_u.
\end{equation*}
Consequently \(K_{h_n,u_n,L_n}\Rightarrow K_u\), and the fixed
bounded-Lipschitz metric
metrises this convergence.  Lemma~\ref{lem:entrance-endpoint-tilt} also gives
\(K_{u_n}\Rightarrow K_u\), so the triangle inequality proves
\begin{equation}
  d_{\mathrm{BL}}(K_{h_n,u_n,L_n},K_{u_n})\longrightarrow0.
  \label{eq:sequential-entrance-compiler-positive}
\end{equation}

Suppose now that \eqref{eq:uniform-entrance-compiler-closed} failed.  Then
there would be \(\eta>0\), sequences \(r_n,S_n\to\infty\), and admissible
\(u_n,L_n\), with \(0\leq u_n\leq R\) and \(L_n\geq S_n\), for which the
displayed distance is at least \(\eta\).  Passing to a subsequence, assume
that \(r_n\) is strictly increasing and \(u_n\to u\in[0,R]\).  Then
\(2^{-r_n}\downarrow0\) and \(L_n\to\infty\).  If \(u>0\),
\eqref{eq:sequential-entrance-compiler-positive} is a contradiction.  If
\(u=0\), Corollary~\ref{cor:zero-germ-identification} gives
\begin{equation*}
  K_{2^{-r_n},u_n,L_n}\Longrightarrow K_0,
  \qquad
  K_{u_n}\Longrightarrow K_0,
\end{equation*}
so their bounded-Lipschitz distance again tends to zero.  This proves the
joint limit in \(r\) and \(S\).

\end{proof}

For the meshes \(h_n=2^{-r_n}\) chosen in
Proposition~\ref{prop:relative-tilted-grid-diagonal}, so that the tilted
grid laws approximate the corresponding tilted laws of the
continuous-maximiser variables, write
\begin{equation}
\begin{split}
  \mu_n
  &:=\mathcal L_{\widehat{\PP}_n^{\mathrm{grid}}}
       (Y_n^{\mathrm{grid}})\\
  &=\sum_{k=0}^{m_n}q_{n,k}
    K_{h_n,L_{n,k}^-,L_{n,k}^+},
\end{split}
  \label{eq:tilted-marked-component-mixture}
\end{equation}
and
\begin{equation}
  \pi_n:=\sum_{k=0}^{m_n}q_{n,k}\delta_{L_{n,k}^-}
  \label{eq:tilted-left-horizon-mixture}
\end{equation}
for its left-horizon marginal.  Also set
\begin{equation}
\begin{aligned}
  U_n^{\mathrm{grid}}&:=L_{n,K_n}^-,
  &V_n^{\mathrm{grid}}&:=a_n^HM_n^*,\\
  D_n^{\mathrm{grid}}&:=L_{n,K_n}^+
  =h_nm_n-U_n^{\mathrm{grid}}.
\end{aligned}
  \label{eq:grid-left-height-right-horizons}
\end{equation}
The fixed-row convergence and bounded-likelihood calculation in the proof of
Proposition~\ref{prop:relative-tilted-grid-diagonal} allow its recursive
diagonal to be taken far enough that, in addition,
\begin{equation}
  d_{\mathrm{BL}}^{(2)}\left(
    \mathcal L_{\widehat{\PP}_n^{\mathrm{grid}}}
      (U_n^{\mathrm{grid}},V_n^{\mathrm{grid}}),
    \mathcal L_{\widehat{\PP}_{a_n}}(U_{a_n},V_{a_n})
  \right)\leq n^{-1},
  \label{eq:scalar-tilted-grid-diagonal}
\end{equation}
where \(d_{\mathrm{BL}}^{(2)}\) is a bounded-Lipschitz metric for the usual
topology of \(\mathbb R_+^2\).  We make this harmless additional choice.

\begin{theorem}
\label{thm:subsequential-boundary-entrance-mixture}
The measures \((\pi_n)\) and the corresponding marked tilted grid laws are
tight.  From every subsequence one may extract a further subsequence, a
probability measure \(\pi\) on \(\mathbb R_+\), and a marked law
\(\Lambda\) such that
\begin{equation}
  \pi_n\Longrightarrow\pi
  \label{eq:subsequential-mixing-limit}
\end{equation}
and the marked grid laws converge to \(\Lambda\), where
\begin{equation}
  \Lambda=\int_{[0,\infty)}K_u\,\pi(\mathrm du).
  \label{eq:full-subsequential-entrance-mixture}
\end{equation}
In particular,
\begin{equation}
  \Lambda(\mathrm dy;U>0)
  =\int_{(0,\infty)}K_u(\mathrm dy)\,\pi(\mathrm du).
  \label{eq:positive-horizon-mixture}
\end{equation}
Along the same further subsequence,
\begin{equation}
  \mathcal L_{\widehat{\PP}_{a_n}}(Y_{a_n})
  \Longrightarrow\Lambda,
  \label{eq:continuous-subsequential-marked-limit}
\end{equation}
and its \(U\)-marginal converges to \(\pi\).  The grid right horizons
\(D_n^{\mathrm{grid}}\) and the continuous right horizons \(a_n-U_{a_n}\)
both tend to infinity in probability.
\end{theorem}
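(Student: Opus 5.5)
Tightness of the left-horizon marginals comes first. By \eqref{eq:scalar-tilted-grid-diagonal}, the law of \(U_n^{\mathrm{grid}}\) under \(\widehat{\PP}_n^{\mathrm{grid}}\), which is \(\pi_n\), lies within \(n^{-1}\) in \(d_{\mathrm{BL}}^{(2)}\) of the \(U_{a_n}\)-marginal under \(\widehat{\PP}_{a_n}\). Theorem~\ref{thm:tilted-scalar-compact-containment} makes the latter tight, so \((\pi_n)\) is tight. Prokhorov's theorem then gives, from any subsequence, a further subsequence and a probability measure \(\pi\) on \(\mathbb R_+\) with \(\pi_n\Rightarrow\pi\). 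The right horizons behave as follows. Since \(D_n^{\mathrm{grid}}=h_nm_n-U_n^{\mathrm{grid}}\) and \(h_nm_n\geq a_n-h_n\to\infty\), tightness of \(U_n^{\mathrm{grid}}\) gives \(D_n^{\mathrm{grid}}\to\infty\) in probability. For the continuous horizons, \(a_n-U_{a_n}\to\infty\) is \eqref{eq:tilted-right-horizon-divergence}.

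Next I identify the marked limit. For a test function \(F\in C_b(\mathcal Y)\) that is \(1\)-Lipschitz and bounded by one, \eqref{eq:tilted-marked-component-mixture} gives
\[
\int F\,\mathrm d\mu_n-\int\!\!\int F\,\mathrm dK_u\,\pi_n(\mathrm du)
=\sum_k q_{n,k}\bigl(K_{h_n,L_{n,k}^-,L_{n,k}^+}(F)-K_{L_{n,k}^-}(F)\bigr).
\]
Fix \(R\) and split the indices \(k\) into those with \(L_{n,k}^-\leq R\) and \(L_{n,k}^+\geq S_n\), and the rest, choosing \(S_n\to\infty\) slowly enough that \(\PP\)-mass considerations apply. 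The indices with \(L_{n,k}^->R\) have total \(q\)-mass \(\pi_n((R,\infty))\), which is small uniformly in \(n\) by tightness. The indices with \(L_{n,k}^+<S_n\) have mass \(\widehat{\PP}_n^{\mathrm{grid}}(D_n^{\mathrm{grid}}<S_n)\to0\) when \(S_n\) diverges slowly, by the right-horizon divergence shown above. On the remaining indices, Proposition~\ref{prop:uniform-entrance-component-compiler} makes the differences uniformly small, because \(h_n=2^{-r_n}\to0\). This yields \(d_{\mathrm{BL}}(\mu_n,\int K_u\,\pi_n(\mathrm du))\to0\).

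Corollary~\ref{cor:zero-germ-identification} makes \(u\mapsto K_u\) weakly continuous on \([0,\infty)\). Hence \(u\mapsto K_u(F)\) is bounded and continuous, and \(\pi_n\Rightarrow\pi\) gives \(\int K_u(F)\,\pi_n(\mathrm du)\to\int K_u(F)\,\pi(\mathrm du)\). It follows that \(\mu_n\Rightarrow\Lambda:=\int K_u\,\pi(\mathrm du)\), and this convergence also proves tightness of the marked grid laws along the subsequence. To see that the full sequence of marked laws is tight, I repeat the argument on arbitrary subsequences and use relative compactness of \((\pi_n)\), which makes every subsequence have a convergent further subsequence in \(\mathcal P(\mathcal Y)\). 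Restricting to the event \(\{U>0\}\) gives \eqref{eq:positive-horizon-mixture}, because \(K_u\) is concentrated on \(\{U=u\}\): the first coordinate of \(\mathscr A_u\) is \(u\), and \(K_0\) has zero lifetime.

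Finally, Proposition~\ref{prop:relative-tilted-grid-diagonal} transfers the limit to the continuous variables: the triangle inequality in \(d_{\mathrm{BL}}\) gives \eqref{eq:continuous-subsequential-marked-limit}. Projecting to the first coordinate, which is continuous, shows that the \(U\)-marginal converges to the \(U\)-marginal of \(\Lambda\), and that marginal is \(\pi\). I expect the main obstacle to be the uniformity step, specifically coordinating \(R\), \(S_n\) and \(r_n\) so that Proposition~\ref{prop:uniform-entrance-component-compiler} applies simultaneously to all retained components, including the boundary components with \(k=0\) or short right horizon. The choice \(S_n\to\infty\) slowly handles this, since the joint limit in \((r,S)\) in that proposition permits any diverging \(S_n\).
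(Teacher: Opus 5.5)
Your proof is correct. The identification step is the paper's: the exact mixture, the right-horizon cutoff \(S_n\), Proposition~\ref{prop:uniform-entrance-component-compiler}, continuity of \(u\mapsto K_u\) from Corollary~\ref{cor:zero-germ-identification}, and the transfer via Proposition~\ref{prop:relative-tilted-grid-diagonal}.

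Where you genuinely differ is in how you obtain tightness of the marked grid laws \(\mu_n\).

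The paper proves that tightness directly, before identifying any limit. On components with left horizon at most \(R\), the endpoint-tilted density is bounded by \(e^{C_HR^H}\). The uniform moduli \eqref{eq:centered-modulus} and \eqref{eq:conditional-mean-modulus}--\eqref{eq:pin-mean-bound}, together with Arzel\`a--Ascoli, then give compact containment of the two arms. The paper next extracts joint subsequences with \(\pi_n\Rightarrow\pi\) and \(\mu_n\Rightarrow\Lambda\). It identifies \(\Lambda\) using cutoff tests \(\chi(U)F\), \(\chi\in C_c(\mathbb R_+)\), followed by dominated convergence as \(\chi_j\to1\).

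You instead show \(d_{\mathrm{BL}}(\mu_n,\int K_u\,\pi_n(\mathrm du))\to0\) along the whole sequence. You bound the components with \(L^-_{n,k}>R\) crudely by their mass \(2\pi_n((R,\infty))\), which is simpler than the \(\chi\)-cutoff. You then get relative compactness of \((\mu_n)\) for free from tightness of \((\pi_n)\) and weak continuity of \(u\mapsto K_u\) on \([0,\infty)\).

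This saves redoing the modulus and Arzel\`a--Ascoli estimates. The price is that tightness now rests on the compiler proposition and on continuity of \(K_u\) at \(u=0\), although the paper needs both for the identification anyway.

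One step should be stated explicitly. Passing from ``every subsequence has a weakly convergent further subsequence'' to tightness is the converse half of Prokhorov's theorem. It applies here because \(\mathcal Y\) is Polish.
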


\begin{proof}
Theorem~\ref{thm:tilted-scalar-compact-containment} and
\eqref{eq:scalar-tilted-grid-diagonal} give tightness of \((\pi_n)\) and of
the grid height coordinates.  Moreover,
\begin{equation*}
  0\leq a_n-h_nm_n<h_n,
\end{equation*}
so \(h_nm_n\to\infty\).  Since
\(D_n^{\mathrm{grid}}=h_nm_n-U_n^{\mathrm{grid}}\) and the grid left
horizons are tight,
\begin{equation}
  D_n^{\mathrm{grid}}\longrightarrow\infty
  \quad\text{in probability}.
  \label{eq:grid-right-horizon-divergence}
\end{equation}
The continuous right-horizon assertion is
\eqref{eq:tilted-right-horizon-divergence}.

We next prove tightness of \((\mu_n)\).  Fix \(\eta>0\), and use tightness of
\((\pi_n)\) to choose \(R<\infty\) with
\begin{equation*}
  \sup_n\pi_n((R,\infty))<\eta.
\end{equation*}
On every component with left horizon \(u\leq R\),
\eqref{eq:finite-entrance-normalizer-lower} bounds the endpoint-tilted
density by \(e^{C_HR^H}\).  The uniform centred modulus
\eqref{eq:centered-modulus} and conditional-mean bounds
\eqref{eq:conditional-mean-modulus}--\eqref{eq:pin-mean-bound}, applied on
\([-R,0]\) and on each fixed positive compact, therefore give uniform
compact containment and equicontinuity for the two stopped, constant-extended
arms; the paths are anchored by \(W_0=m_F(0)=0\).  The stopping maps do not
increase their moduli.  Arzelà--Ascoli on
each compact, followed by a diagonal exhaustion, gives tightness of the arm
pair outside mass at most \(\eta\).  Together with tightness of the lifetime
and height coordinates, this proves tightness of \((\mu_n)\).

Starting from an arbitrary subsequence, take a common further subsequence,
not relabelled, such that
\begin{equation*}
  \pi_n\Longrightarrow\pi,
  \qquad
  \mu_n\Longrightarrow\Lambda.
\end{equation*}
The first-coordinate marginal of \(\mu_n\) is \(\pi_n\) by
\eqref{eq:tilted-marked-component-mixture}.  Since projection onto \(U\) is
continuous, the \(U\)-marginal of \(\Lambda\) is \(\pi\).

We now construct the right-horizon cutoff.  By
\eqref{eq:grid-right-horizon-divergence}, choose strictly increasing integers
\(N_j\) such that
\begin{equation*}
  \widehat{\PP}_n^{\mathrm{grid}}
    (D_n^{\mathrm{grid}}<j)\leq2^{-j},
  \qquad n\geq N_j.
\end{equation*}
After changing a finite prefix, define \(S_n=j\) for
\(N_j\leq n<N_{j+1}\).  Then \(S_n\to\infty\), and
\begin{equation}
  b_n:=\sum_{k=0}^{m_n}q_{n,k}
       \mathbf1_{\{L_{n,k}^+<S_n\}}
  =\widehat{\PP}_n^{\mathrm{grid}}
       (D_n^{\mathrm{grid}}<S_n)
  \longrightarrow0.
  \label{eq:short-right-component-mass}
\end{equation}

Let \(F\) be bounded and Lipschitz on \(\mathcal Y\), and let
\(\chi\in C_c(\mathbb R_+)\), with support contained in \([0,R]\).  Put
\begin{equation*}
  \Delta_n:=
  \sup\left\{
    d_{\mathrm{BL}}(K_{h_n,u,L},K_u):
    0\leq u\leq R,\
    u,L\in h_n\mathbb Z_{\geq0},\ L\geq S_n
  \right\}.
\end{equation*}
Proposition~\ref{prop:uniform-entrance-component-compiler} gives
\(\Delta_n\to0\).  The exact mixture
\eqref{eq:tilted-marked-component-mixture} and
\eqref{eq:short-right-component-mass} imply
\begin{equation}
\begin{split}
  \left|
    \int_{\mathcal Y}\chi(U)F\,\mathrm d\mu_n
    -\int_{\mathbb R_+}\chi(u)K_u(F)\,\pi_n(\mathrm du)
  \right|
  \leq{}&
  \|\chi\|_\infty\|F\|_{\mathrm{BL}}\Delta_n\\
  &+2\|\chi\|_\infty\|F\|_\infty b_n
  \longrightarrow0.
\end{split}
  \label{eq:component-kernel-approximation}
\end{equation}

Corollary~\ref{cor:zero-germ-identification} makes
\(u\mapsto K_u(F)\) continuous on \(\mathbb R_+\).  Hence
\(\chi(u)K_u(F)\) belongs to \(C_b(\mathbb R_+)\).
Passing to the two weak limits in
\eqref{eq:component-kernel-approximation} gives
\begin{equation}
  \int_{\mathcal Y}\chi(U)F\,\mathrm d\Lambda
  =\int_{\mathbb R_+}\chi(u)K_u(F)\,\pi(\mathrm du).
  \label{eq:positive-cutoff-mixture}
\end{equation}
Choose \(0\leq\chi_j\leq1\) in \(C_c(\mathbb R_+)\) with
\(\chi_j(u)\to1\) for every \(u\geq0\).  Dominated convergence in
\eqref{eq:positive-cutoff-mixture}, followed by the fact that bounded
Lipschitz functions determine finite Borel measures on \(\mathcal Y\),
proves \eqref{eq:full-subsequential-entrance-mixture}; restriction to
\(\{U>0\}\) gives \eqref{eq:positive-horizon-mixture}.

Equation~\eqref{eq:relative-tilted-diagonal} and the triangle inequality now
give \eqref{eq:continuous-subsequential-marked-limit}; continuous projection
gives convergence of its \(U\)-marginal to \(\pi\).
\end{proof}

It remains to prove that the mixing measure in
\eqref{eq:full-subsequential-entrance-mixture} is independent of the chosen
subsequence.

\begin{lemma}
\label{lem:submarkov-fbm-precision}
Let \(F\subset\mathbb R\setminus\{0\}\) be nonempty and finite, let \(C_F\) be the
covariance matrix of \(B_F\), and put \(\mathsf P_F=C_F^{-1}\).  Then
\begin{equation}
  (\mathsf P_F)_{s,t}\leq0\quad(s\neq t),
  \qquad
  \mathsf P_F\mathbf1_F\geq0
  \quad\text{coordinatewise}.
  \label{eq:submarkov-precision-signs}
\end{equation}
For every nonempty \(I\subseteq F\), the conditional precision
\((\mathsf P_F)_{II}\) and the marginal precision \(C_{II}^{-1}\) have the
same two properties.  If \(J:=F\setminus I\) is nonempty, the latter is the
Schur complement of \((\mathsf P_F)_{JJ}\) in \(\mathsf P_F\).
\end{lemma}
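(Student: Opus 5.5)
The first property, nonpositive off-diagonal entries of \(\mathsf P_F\), is the Stieltjes assertion of Lemma~\ref{lem:finite-wall-mtp2-closure}, so I only need to prove \(\mathsf P_F\mathbf1_F\geq0\). I plan to obtain it from the stable-process Green identity \eqref{eq:fbm-stable-green-identification}. By Lemma~\ref{lem:stable-green-killed-zero} with \(\alpha=1+2H\), the matrix \(2c_\alpha C_F\) is the restriction to \(F\) of the Green kernel \(g_D\) of \(Z\) killed at zero.

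The key step is a trace-chain (first-hit) representation. Let \(Y\) be the discrete chain of successive visits of \(Z\) to the finite set \(F\). Since points are polar neither for \(Z\) started anywhere nor for its return, this should be the chain of successive hitting points, killed when \(Z\) hits \(0\) before returning to \(F\). I would then check that the inverse of the Green matrix on \(F\) is proportional to \(I-\Pi\). Here \(\Pi\) is the substochastic transition matrix of \(Y\), and the proportionality is up to a positive diagonal scaling by the holding "local time per visit" at each point, so that
\[
  2c_\alpha C_F=(I-\Pi)^{-1}\Delta,
\]
with \(\Delta\) positive diagonal. This is the standard identity relating the Green kernel of a killed symmetric process restricted to a finite set with the Dirichlet form of its trace.

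From this identity, \(\mathsf P_F=2c_\alpha\Delta^{-1}(I-\Pi)\). Symmetry of \(g_D\) makes this matrix symmetric. Its off-diagonal entries are \(-\Pi_{st}/\Delta_s\leq0\), and its row sums are \((1-\sum_t\Pi_{st})/\Delta_s\geq0\) by substochasticity, so \(\mathsf P_F\mathbf1\geq0\). I expect this step to be the main obstacle. The cleanest route is probably to avoid the trace chain altogether and argue directly with the quadratic form. A Green kernel restricted to a finite set is the inverse of a matrix that is the Dirichlet form of the trace process, and for a symmetric process killed at zero this trace Dirichlet form is sub-Markovian. A sub-Markovian form \(\mathcal E\) satisfies exactly
\[
  \mathcal E(f)=\sum_{s<t}J_{st}(f_s-f_t)^2+\sum_s\kappa_sf_s^2,
  \qquad J,\kappa\geq0,
\]
which is the matrix statement \(P_{st}=-J_{st}\leq0\) and \(P\mathbf1=\kappa\geq0\). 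I would cite Fukushima's trace theorem, or Eisenbaum's framework already used in Lemma~\ref{lem:stable-green-killed-zero}, for the fact that the trace of a sub-Markovian symmetric form is again sub-Markovian. Failing that, I would verify the nonnegative row sums directly: \((\mathsf P_F\mathbf1)_s\) is proportional to the probability that \(Z\) started at \(s\) hits \(0\) before returning to \(F\).

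For the statements on subsets, I would treat the two precisions separately. The conditional precision \((\mathsf P_F)_{II}\) is a principal submatrix. Its off-diagonal signs are inherited, and its row sums are \((\mathsf P_F\mathbf1)_I-(\mathsf P_F)_{IJ}\mathbf1_J\geq0\), since \((\mathsf P_F)_{IJ}\leq0\). The marginal precision \(C_{II}^{-1}\) is the Schur complement \(\mathsf P_{II}-\mathsf P_{IJ}\mathsf P_{JJ}^{-1}\mathsf P_{JI}\), which is standard block inversion. Alternatively, it is just the lemma applied to the set \(I\) in place of \(F\), which is the simplest route. For the Schur complement identity itself, I would simply record block inversion.
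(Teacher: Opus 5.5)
Your proposal is correct and follows the paper's route. The paper combines the Green-kernel identity \eqref{eq:fbm-stable-green-identification} with Eisenbaum--Kaspi (Definition~3.5 and Theorem~3.6(ii)) to conclude at once that \(\mathsf P_F\) is a diagonally dominant \(M\)-matrix, which is exactly the trace-chain/sub-Markovian-form fact you sketch, and it handles the conditional precision by the same principal-block row-sum computation. Your shortcut for the marginal precision (apply the result to \(I\) directly) is valid and simpler than the paper's explicit Schur-complement sign check. One correction: points are regular for themselves here, so your chain must be the jump chain of the local-time-changed trace process, and the killing probability in the row sum is \(\mathbb P_s(T_0<T_{F\setminus\{s\}})\), not the probability of hitting \(0\) ``before returning to \(F\)''.
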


\begin{proof}
The Green-kernel identity \eqref{eq:fbm-stable-green-identification} and
\textcite[Definition~3.5 and
Theorem~3.6(ii)]{eisenbaumKaspi2006squaredGaussian} show directly that
\(\mathsf P_F\) is a diagonally dominant \(M\)-matrix.  This is exactly the
two assertions in \eqref{eq:submarkov-precision-signs}: diagonal dominance
is defined there as nonnegativity of the row sums.

Fix a nonempty \(I\subseteq F\), put \(J:=F\setminus I\), and write
\begin{equation}
  \mathsf P=
  \begin{pmatrix}
    \mathsf P_{II}&\mathsf P_{IJ}\\
    \mathsf P_{JI}&\mathsf P_{JJ}
  \end{pmatrix},
  \qquad
  k:=\mathsf P\mathbf1_F\geq0.
\end{equation}
The conditional precision on \(I\) is \(\mathsf P_{II}\).  It is positive
definite, retains the off-diagonal signs, and
\begin{equation}
  \mathsf P_{II}\mathbf1_I
  =k_I-\mathsf P_{IJ}\mathbf1_J
  \geq k_I\geq0.
  \label{eq:principal-block-row-sums}
\end{equation}
If \(J=\varnothing\), this is the original precision and there is nothing
more to prove, so assume that \(J\) is nonempty.

The block \(\mathsf P_{JJ}\) is positive definite with nonpositive
off-diagonal entries, and hence its inverse is entrywise nonnegative.  For
completeness, choose \(a>\lambda_{\max}(\mathsf P_{JJ})\) and put
\(N=a\,\mathrm{Id}_J-\mathsf P_{JJ}\), which is entrywise nonnegative.
Its spectral radius satisfies
\(\rho(N)=a-\lambda_{\min}(\mathsf P_{JJ})<a\), and therefore
\begin{equation*}
  \mathsf P_{JJ}^{-1}
  =a^{-1}\sum_{m=0}^{\infty}(N/a)^m\geq0
  \quad\text{entrywise}.
\end{equation*}

The block inverse formula gives the marginal precision on \(I\):
\begin{equation}
  \mathsf S_I
  :=\mathsf P_{II}
    -\mathsf P_{IJ}\mathsf P_{JJ}^{-1}\mathsf P_{JI}
  =C_{II}^{-1},
  \label{eq:marginal-precision-schur}
\end{equation}
Since \(\mathsf P_{JJ}^{-1}\geq0\), \(\mathsf P_{IJ}\leq0\), and
\(\mathsf P_{JI}\mathbf1_I+\mathsf P_{JJ}\mathbf1_J=k_J\),
\begin{equation}
  \mathsf S_I\mathbf1_I
  =k_I-\mathsf P_{IJ}\mathsf P_{JJ}^{-1}k_J
  \geq0.
  \label{eq:schur-row-sums}
\end{equation}
Since \(\mathsf P\) is positive definite, its Schur complement
\(\mathsf S_I\) is positive definite.  Moreover, for distinct \(r,s\in I\),
\begin{equation}
  (\mathsf S_I)_{r,s}
  =\mathsf P_{r,s}
   -\mathsf P_{r,J}\mathsf P_{JJ}^{-1}\mathsf P_{J,s}
  \leq0,
  \label{eq:schur-off-diagonal-signs}
\end{equation}
because \(\mathsf P_{r,s}\leq0\), the entries of
\(\mathsf P_{r,J}\) and \(\mathsf P_{J,s}\) are nonpositive, and
\(\mathsf P_{JJ}^{-1}\geq0\) entrywise.  Thus \(\mathsf S_I\) is Stieltjes,
completing the proof.
\end{proof}

\begin{lemma}
\label{lem:nested-diagonal-orthant-ratios}
Let \(I,J\subset\mathbb R\setminus\{0\}\) be disjoint and finite, and define
\begin{equation}
  F_I(x):=\PP(B_t\leq x:t\in I),
  \qquad
  F_{I\cup J}(x):=\PP(B_t\leq x:t\in I\cup J).
  \label{eq:nested-diagonal-orthants}
\end{equation}
We use the convention \(F_\varnothing(x)=1\).  Then
\begin{equation}
  x\longmapsto\frac{F_{I\cup J}(x)}{F_I(x)}
  \label{eq:nested-orthant-ratio}
\end{equation}
is nondecreasing wherever its denominator is positive.
\end{lemma}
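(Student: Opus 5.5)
The plan is to rewrite the ratio in \eqref{eq:nested-orthant-ratio} as a conditional orthant probability under one fixed Gaussian law, and then to compare two levels \(x<x'\) through an explicit exponential likelihood ratio whose monotonicity comes from Lemma~\ref{lem:submarkov-fbm-precision}. The degenerate cases are immediate. If \(J=\varnothing\) the ratio is identically one. If \(I=\varnothing\) the ratio is \(F_J(x)\), a distribution function, hence nondecreasing. So I will assume both sets are nonempty and put \(F:=I\cup J\), \(C:=C_F\), \(\mathsf P:=\mathsf P_F\) and \(k:=\mathsf P\mathbf 1_F\). Lemma~\ref{lem:submarkov-fbm-precision} gives \(k\geq0\) coordinatewise and shows that \(\mathsf P\) is Stieltjes.

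\emph{Step 1: likelihood ratio between levels.} Let \(\varphi\) be the centred Gaussian density of \(B_F\), which is nondegenerate by Lemma~\ref{lem:finite-wall-mtp2-closure}. For \(x\in\mathbb R\), substituting \(y=b-x\mathbf 1_F\) gives
\[
  \PP(B_t\leq x:t\in S)=\int_{\{y_S\leq0\}}\varphi(y+x\mathbf 1_F)\,\mathrm dy,
  \qquad S\in\{I,F\}.
\]
For the full set \(S=F\) this is exactly the displayed integral. For \(S=I\) I first write \(F_I(x)=\PP(B_I\leq x,\ B_J\in\mathbb R)\) and then substitute on all of \(F\), so the \(J\)-coordinates are left unconstrained. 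Expanding the quadratic form gives
\[
  \varphi(y+x'\mathbf 1_F)=\varphi(y+x\mathbf 1_F)\,c_{x,x'}\,e^{-(x'-x)k^{\mathsf T}y},
\]
where \(c_{x,x'}>0\) does not depend on \(y\). Let \(\gamma_x\) denote the Gaussian law with density \(\varphi(\cdot+x\mathbf 1_F)\), that is, the law of \(B_F-x\mathbf 1_F\). Write \(A_I=\{y_I\leq0\}\), \(A_J=\{y_J\leq0\}\), \(\Delta=x'-x>0\) and \(g(y)=e^{-\Delta k^{\mathsf T}y}\). The constant \(c_{x,x'}\) cancels in the ratio, and whenever \(F_I(x)>0\) we obtain
\[
  \frac{F_F(x')/F_I(x')}{F_F(x)/F_I(x)}
  =\frac{\EE_{\gamma_x}[g\mathbf 1_{A_J}\mid A_I]}
        {\EE_{\gamma_x}[g\mid A_I]\;\gamma_x(A_J\mid A_I)}.
\]
The density is everywhere positive, so \(F_I(x')>0\) as well and all quantities are well defined.

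\emph{Step 2: association.} Because \(k\geq0\), the weight \(g\) is a coordinatewise nonincreasing function of \(y\). The indicator \(\mathbf 1_{A_J}\) is also nonincreasing. It therefore suffices to show that \(\gamma_x(\cdot\mid A_I)\) is associated, since two nonincreasing functions are positively correlated under an associated law, and this makes the displayed quotient at least one. The law \(\gamma_x\) is a translate of \(B_F\). Lemma~\ref{lem:finite-wall-mtp2-closure} gives that the law obtained from \(a+B_F\), with \(a=-x\mathbf 1_F\), by the coordinate-rectangle restriction \(\{y_I\leq0\}\) is MTP\(_2\) and hence associated. The normaliser of this restriction is \(F_I(x)>0\), as required. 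Since \(g\) is unbounded, I will apply \eqref{eq:association} to the bounded truncations \(g\wedge N\), which are still nonincreasing, and let \(N\to\infty\) by monotone convergence. Gaussian moments make \(\EE_{\gamma_x}g<\infty\), so the limit is finite.

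The only real content is the sign information \(\mathsf P\mathbf 1_F\geq0\). This is exactly the property that makes raising the level a decreasing exponential tilt, and it is supplied by the Eisenbaum--Kaspi diagonal dominance in Lemma~\ref{lem:submarkov-fbm-precision}. Without it the tilt would have no monotonicity and association could not be applied. The one point I would check carefully is that the exponent in \(g\) uses the precision of the full set \(F\), and not a marginal precision on \(I\). This holds because every probability in the ratio is computed under the same joint law on \(F\), with the \(J\)-coordinates left unconstrained in \(F_I\).
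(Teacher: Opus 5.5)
Your proof is correct, and it takes a genuinely different and shorter route than the paper.

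\textbf{The paper's route.} It splits the comparison into two stages.
\begin{enumerate}
\item It shows that the truncated \(I\)-marginals \(\mu_x=\mathcal L(X_I^{(x)}\mid X_I^{(x)}\geq0)\) increase stochastically in the level. This comes from an exponential likelihood ratio driven by the row sums of the \emph{marginal} precision, \(\mathsf S_I\mathbf 1_I\geq0\), which is the Schur-complement part of Lemma~\ref{lem:submarkov-fbm-precision}.
\item It writes the conditional \(J\)-orthant probability as \(g_x(z)=\PP(W+Rz+xh\geq0)\), using \(R=-\mathsf P_{JJ}^{-1}\mathsf P_{JI}\geq0\) and \(h=\mathsf P_{JJ}^{-1}k_J\geq0\).
\end{enumerate}
It then chains \(\int g_y\,\mathrm d\mu_y\geq\int g_x\,\mathrm d\mu_y\geq\int g_x\,\mathrm d\mu_x\).

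\textbf{Your route.} You tilt the joint law on all of \(F=I\cup J\) at once, so the only sign input is the full row-sum vector \(k=\mathsf P\mathbf 1_F\geq0\). A single covariance inequality then finishes the argument. It is applied between the two nonincreasing functions \(g\) and \(\mathbf 1_{A_J}\), under the associated law \(\gamma_x(\cdot\mid A_I)\) with the \(J\)-coordinates left free. You need neither the Schur complement, nor inverse positivity of \(\mathsf P_{JJ}\), nor the regression representation. Your truncation \(g\wedge N\), together with finiteness of \(\EE_{\gamma_x}g\), correctly handles the unbounded weight. You are also right to flag that the exponent must use the full precision because \(F_I\) is computed under the joint law on \(F\); that is exactly what makes the constant \(c_{x,x'}\) cancel.

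\textbf{What each buys.} The paper's decomposition yields extra intermediate structure: stochastic order of the truncated \(I\)-marginals, and explicit monotonicity of the conditional orthant probability in both \(z\) and \(x\). For the lemma as stated, your one-step argument suffices. It is essentially the same full-set tilting mechanism the paper itself uses later, in the proof of Proposition~\ref{prop:infinite-barrier-deficit-law}, where the level comparison rests on the likelihood ratio \(\exp\{(y-x)(\mathsf P_n\mathbf 1_{F_n})^{\mathsf T}z\}\).
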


\begin{proof}
If \(J=\varnothing\), the ratio is identically one.  If \(I=\varnothing\)
and \(J\neq\varnothing\), it equals \(F_J(x)\), which is nondecreasing by
event inclusion.  Hence assume that \(I\) and \(J\) are both nonempty.  The
nondegenerate Gaussian vector \(B_I\) has an everywhere positive density, so
\(F_I(x)>0\) for every \(x\in\mathbb R\).

At barrier level \(x\), put
\begin{equation}
  X^{(x)}:=x\mathbf1_{I\cup J}-B_{I\cup J}.
\end{equation}
This Gaussian vector has mean \(x\mathbf1_{I\cup J}\), covariance
\(C_{I\cup J}\),
and a Stieltjes precision \(\mathsf P\) satisfying
\(k:=\mathsf P\mathbf1_{I\cup J}\geq0\) by
Lemma~\ref{lem:submarkov-fbm-precision}.  The marginal precision
\(\mathsf S_I=C_{II}^{-1}\) satisfies
\(\mathsf S_I\mathbf1_I\geq0\) by
\eqref{eq:schur-row-sums}.  Write
\begin{equation*}
  \mu_x:=\mathcal L\big(X_I^{(x)}\mid X_I^{(x)}\geq0\big).
\end{equation*}
This law is MTP\(_2\), hence associated, by
Lemma~\ref{lem:finite-wall-mtp2-closure}.  If \(y>x\), the
Radon--Nikodym derivative of \(\mu_y\) with respect to \(\mu_x\) is
\begin{equation}
  r_{y,x}(z):=\frac{\mathrm d\mu_y}{\mathrm d\mu_x}(z)
  =c_{y,x}\exp\left\{
    (y-x)(\mathsf S_I\mathbf1_I)^{\mathsf T}z
  \right\},
  \qquad z\geq0,
  \label{eq:common-shift-likelihood-ratio}
\end{equation}
for a constant \(c_{y,x}>0\).  Thus \(r_{y,x}\) is coordinatewise
nondecreasing.  For every bounded nonnegative coordinatewise nondecreasing
test \(\phi\), association applied under \(\mu_x\) to \(\phi\) and
\(r_{y,x}\wedge m\) gives
\begin{equation*}
  \int\phi(r_{y,x}\wedge m)\,\mathrm d\mu_x
  \geq
  \int\phi\,\mathrm d\mu_x
  \int(r_{y,x}\wedge m)\,\mathrm d\mu_x.
\end{equation*}
Letting \(m\to\infty\) and using \(\int r_{y,x}\,\mathrm d\mu_x=1\)
shows that \(\int\phi\,\mathrm d\mu_y\geq\int\phi\,\mathrm d\mu_x\).
Adding a constant covers every bounded increasing test.  Consequently
\begin{equation}
  \mu_x\leq_{\mathrm{st}}\mu_y.
  \label{eq:truncated-common-shift-order}
\end{equation}

The inverse-positivity argument in
Lemma~\ref{lem:submarkov-fbm-precision} gives
\(\mathsf P_{JJ}^{-1}\geq0\) entrywise.  Conditional on
\(X_I^{(x)}=z\), the vector \(X_J^{(x)}\) has covariance
\(\mathsf P_{JJ}^{-1}\) and mean
\begin{equation}
  Rz+x h,
  \qquad
  R:=-\mathsf P_{JJ}^{-1}\mathsf P_{JI}\geq0,
  \qquad
  h:=\mathbf1_J-R\mathbf1_I
     =\mathsf P_{JJ}^{-1}k_J\geq0.
  \label{eq:submarkov-conditional-mean}
\end{equation}
Let \(W\sim N(0,\mathsf P_{JJ}^{-1})\).  The canonical conditional version
\begin{equation}
  g_x(z):=\PP(W+Rz+xh\geq0)
  \label{eq:nested-orthant-conditional}
\end{equation}
is coordinatewise nondecreasing in \(z\) and nondecreasing in \(x\), by a
same-\(W\) coupling.  Therefore, if \(y>x\),
\begin{align*}
  \frac{F_{I\cup J}(y)}{F_I(y)}
  =\int g_y\,\mathrm d\mu_y
  &\geq\int g_x\,\mathrm d\mu_y \\
  &\geq\int g_x\,\mathrm d\mu_x
   =\frac{F_{I\cup J}(x)}{F_I(x)},
\end{align*}
where the second inequality is \eqref{eq:truncated-common-shift-order}.
Equivalently,
\begin{equation}
  \PP(X_J^{(x)}\geq0\mid X_I^{(x)}\geq0)
  =\frac{F_{I\cup J}(x)}{F_I(x)}
\end{equation}
is nondecreasing in \(x\), as claimed.
\end{proof}

For \(T>0\), recall the distribution function \(F_T\) and barrier law
\(\beta_T\) from \eqref{eq:exponential-barrier-marginal}.

\begin{corollary}
\label{cor:monotone-exponential-barriers}
If \(T'>T\), then \(\beta_T\leq_{\mathrm{st}}\beta_{T'}\).  The family is
tight and has a unique weak limit \(\beta_\infty\) as \(T\to\infty\).
\end{corollary}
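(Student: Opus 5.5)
Since \(\beta_T\) has density proportional to \(e^{-x}F_T(x)\) on \([0,\infty)\), I would compare \(\beta_T\) and \(\beta_{T'}\) through a monotone likelihood ratio. The density ratio is
\[
  \frac{\mathrm d\beta_{T'}}{\mathrm d\beta_T}(x)
  =\frac{\mathcal Z(T)}{\mathcal Z(T')}\cdot\frac{F_{T'}(x)}{F_T(x)},
  \qquad x>0,
\]
and \(F_T(x)>0\) for \(x>0\) by Lemma~\ref{lem:exponential-barrier-disintegration}. If \(x\mapsto F_{T'}(x)/F_T(x)\) is nondecreasing, then a nondecreasing likelihood ratio gives \(\beta_T\leq_{\mathrm{st}}\beta_{T'}\) by the standard one-dimensional argument. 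For instance, for bounded increasing \(\phi\), the Chebyshev (association) inequality on \(\mathbb R\) applied to \(\phi\) and the ratio under \(\beta_T\) does it, exactly as in the proof of Lemma~\ref{lem:nested-diagonal-orthant-ratios}.

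To obtain the monotone ratio, I would discretise. I choose increasing finite grids \(I_n\subset(0,T]\) and \(I_n\cup J_n\subset(0,T']\), with \(J_n\subset(T,T']\) disjoint from \(I_n\), whose unions are dense in \([0,T]\) and \([0,T']\) respectively; the pinned coordinate at time zero is deleted. Lemma~\ref{lem:nested-diagonal-orthant-ratios} says that \(x\mapsto F_{I_n\cup J_n}(x)/F_{I_n}(x)\) is nondecreasing. Path continuity then gives \(F_{I_n}(x)\downarrow F_T(x)\) and \(F_{I_n\cup J_n}(x)\downarrow F_{T'}(x)\) for each \(x\). For \(x>0\) the limit \(F_T(x)\) is positive, so the ratios converge pointwise, and a pointwise limit of nondecreasing functions is nondecreasing. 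This settles monotonicity on \((0,\infty)\), and the atom at zero is absent for both laws.

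Tightness is immediate from \eqref{eq:exponential-barrier-tail}, which gives \(\beta_T([A,\infty))\leq2p_*^{-m_H}e^{-A/2}\) uniformly in \(T\). For the limit, take any bounded continuous nondecreasing \(\phi\). Then \(T\mapsto\int\phi\,\mathrm d\beta_T\) is nondecreasing and bounded, so it converges as \(T\to\infty\). Tightness and Prokhorov give subsequential weak limits, and every subsequential limit must integrate each such \(\phi\) to the same value. As in Lemma~\ref{lem:cofinal-tightness}, the functions \(1\wedge m(x-a)_+\) increase to indicators of upper half-lines, so they determine the law on \(\mathbb R_+\). Hence the limit is unique, and the usual subsequence argument gives convergence of the whole net to \(\beta_\infty\).

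The main obstacle is the limit passage in the second paragraph, specifically that \(F_{I_n}\to F_T\) pointwise. I would handle it by dominated (monotone) convergence of indicators, using continuity of \(B\) and the fact that \(\{M_T\leq x\}=\bigcap_n\{\max_{I_n}B\leq x\}\) up to null sets; this uses density and the closed inequality, so no boundary issue arises.
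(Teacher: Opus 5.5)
Your proof is correct and follows the paper's argument. Discretise with nested grids, apply Lemma~\ref{lem:nested-diagonal-orthant-ratios}, and pass to the limit using path continuity and \(F_T(x)>0\) for \(x>0\); the nondecreasing likelihood ratio then gives \(\beta_T\leq_{\mathrm{st}}\beta_{T'}\), and \eqref{eq:exponential-barrier-tail} gives tightness. The only difference is the final convergence step: you use the monotone limits of \(\int\phi\,\mathrm d\beta_T\) over increasing tests as a determining class, whereas the paper couples the laws through their quantile functions, which are nondecreasing in \(T\) and bounded, and so converge almost surely; both arguments work.
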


\begin{proof}
For \(n\geq1\), put
\begin{equation*}
  D_n:=2^{-n}\mathbb Z\cap(0,T],
  \qquad
  D_n':=2^{-n}\mathbb Z\cap(0,T'].
\end{equation*}
These are increasing finite sets with \(D_n\subset D_n'\), and their unions
are dense in the respective intervals.  Put
\begin{equation*}
  F_{T,n}(x):=\PP(B_t\leq x:t\in D_n),
  \qquad
  F_{T',n}(x):=\PP(B_t\leq x:t\in D_n').
\end{equation*}
Lemma~\ref{lem:nested-diagonal-orthant-ratios} makes
\(F_{T',n}(x)/F_{T,n}(x)\) nondecreasing in \(x\) for every \(n\).
By path continuity, the finite-wall events decrease to the corresponding
continuum-wall events, and hence
\begin{equation*}
  F_{T,n}(x)\downarrow F_T(x),
  \qquad
  F_{T',n}(x)\downarrow F_{T'}(x).
\end{equation*}
For \(x>0\), the Gaussian support argument used in
Lemma~\ref{lem:fbm-compact-positivity-support} gives
\begin{equation*}
  F_T(x)
  \geq\PP\bigl(\lVert B\rVert_{C([0,T])}<x\bigr)>0.
\end{equation*}
Thus the finite-grid ratios converge to the quotient of the two limits.
Their pointwise limit is again nondecreasing, so
\begin{equation}
  x\longmapsto\frac{F_{T'}(x)}{F_T(x)}
  \quad\text{is nondecreasing on }(0,\infty).
  \label{eq:continuous-nested-wall-ratio}
\end{equation}
Both barrier laws are atomless at zero, and \(F_T(x)>0\) for \(x>0\).
Consequently \(\beta_{T'}\ll\beta_T\), and, \(\beta_T\)-almost everywhere,
\begin{equation}
  r_{T',T}(x)
  :=\frac{\mathrm d\beta_{T'}}{\mathrm d\beta_T}(x)
  =\frac{\mathcal Z(T)}{\mathcal Z(T')}
    \frac{F_{T'}(x)}{F_T(x)}
  \label{eq:barrier-monotone-likelihood-ratio}
\end{equation}
is nondecreasing.  To check the orientation locally, let \(X,\widetilde X\)
be independent with law \(\beta_T\).  For every bounded nondecreasing Borel
function \(\varphi\), integrability of \(r_{T',T}\) and
\(\EE r_{T',T}(X)=1\) give
\begin{equation*}
\begin{split}
  \int\varphi\,\mathrm d\beta_{T'}
  -\int\varphi\,\mathrm d\beta_T
  &=\operatorname{Cov}\bigl(\varphi(X),r_{T',T}(X)\bigr)\\
  &=\frac12\EE\left[
      \bigl(\varphi(X)-\varphi(\widetilde X)\bigr)
      \bigl(r_{T',T}(X)-r_{T',T}(\widetilde X)\bigr)
    \right]\geq0.
\end{split}
\end{equation*}
This proves \(\beta_T\leq_{\mathrm{st}}\beta_{T'}\).

The law \(\beta_T\) is the barrier marginal in
Lemma~\ref{lem:exponential-barrier-disintegration}, and
\eqref{eq:exponential-barrier-tail} gives
\begin{equation}
  \sup_{T>0}\beta_T([A,\infty))
  \leq C_He^{-A/2}.
  \label{eq:uniform-exponential-barrier-tightness}
\end{equation}
Thus the family is tight.  To verify convergence of the whole family, let
\begin{equation*}
  G_T(a):=\beta_T([0,a]),
  \qquad
  q_T(u):=\inf\{a\geq0:G_T(a)\geq u\},
  \quad 0<u<1.
\end{equation*}
The stochastic order just proved gives \(q_T(u)\leq q_{T'}(u)\) whenever
\(T<T'\).  For each fixed \(u<1\), the uniform tail bound supplies
\(A_u<\infty\) such that \(q_T(u)\leq A_u\) for every \(T\).  Hence
\begin{equation*}
  q_\infty(u):=\sup_{m\geq1}q_m(u)<\infty.
\end{equation*}
If \(U\) is uniform on \((0,1)\), then \(q_T(U)\) has law \(\beta_T\).
Monotonicity, with the integer horizons bracketing every sufficiently large
\(T\), gives
\begin{equation*}
  q_T(U)\longrightarrow q_\infty(U)
  \quad\text{almost surely as }T\to\infty.
\end{equation*}
Therefore \(\beta_T\) converges weakly to the unique probability law
\(\beta_\infty:=\mathcal L(q_\infty(U))\).
\end{proof}

\begin{lemma}
\label{lem:shifted-hard-wall-gate2}
Let \(F\subset(0,\infty)\cap\mathbb D\) be finite and nonempty, let
\(x\geq0\), and put
\begin{equation}
  \begin{aligned}
    Q_{F,x}
    &:=\mathcal L(G\mid G_s\geq-x,\ s\in F),\\
    g_{F,x}(t)&:=\EE_{Q_{F,x}}G_t,
    &\qquad
    Z_{F,x}(t)&:=G_t-g_{F,x}(t).
  \end{aligned}
  \label{eq:shifted-finite-wall-law}
\end{equation}
There are coefficients \(\lambda_{F,x}(s)\geq0\), \(s\in F\), such that
\begin{equation}
  g_{F,x}(t)
  =\sum_{s\in F}\lambda_{F,x}(s)K_H(t,s).
  \label{eq:shifted-wall-positive-mixture}
\end{equation}
Uniformly in \(F\) and \(x\geq0\),
\begin{equation}
  0\leq g_{F,x}(t)\leq g_{F,0}(t)\leq C_H|t|^H,
  \qquad t\in\mathbb R.
  \label{eq:shifted-wall-pin-bound}
\end{equation}
For every \(0<a<R<\infty\), there is \(C_{H,a,R}<\infty\) such that
\begin{equation}
  |g_{F,x}(t)-g_{F,x}(s)|
  \leq C_{H,a,R}|t-s|^{2H},
  \qquad s,t\in[a,R].
  \label{eq:shifted-wall-annular-mean-modulus}
\end{equation}
Consequently, for every \(R<\infty\),
\begin{equation}
  \lim_{\delta\downarrow0}\sup_{F,x}
  \sup_{\substack{s,t\in[0,R]\\|s-t|\leq\delta}}
  |g_{F,x}(t)-g_{F,x}(s)|=0,
  \label{eq:shifted-wall-full-mean-modulus}
\end{equation}
and, for every \(\varepsilon>0\),
\begin{equation}
  \lim_{\delta\downarrow0}\sup_{F,x}
  Q_{F,x}\left(
    \sup_{\substack{s,t\in[0,R]\\|s-t|\leq\delta}}
    |Z_{F,x}(t)-Z_{F,x}(s)|>\varepsilon
  \right)=0.
  \label{eq:shifted-wall-centered-modulus}
\end{equation}

For \(T>0\) and \(x>0\), these bounds pass to
\begin{equation}
  \mathbb P_{T,x}
  =\mathcal L(B\mid M_T\leq x)
  \label{eq:shifted-wall-continuum-law}
\end{equation}
by dense finite-grid hard-wall approximation.  In particular, for every
\(A,R<\infty\), the laws of \(x-B\) on \(C([0,R])\), under
\(\mathbb P_{T,x}\) with \(T\geq R\) and \(0<x\leq A\), are tight uniformly
in \(T\) and \(x\).
\end{lemma}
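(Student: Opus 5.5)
The plan is to repeat the zero-wall arguments of Section~\ref{sec:path-tightness} for the shifted orthant. Each step there used only two facts: the wall is a coordinate lower rectangle, and the precision of \(B_F\) is Stieltjes.

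\emph{Mixture representation.} Put \(X=G_F=-B_F\) with precision \(P\), and integrate \(\partial_ip=-(Px)_ip\) over \(\{x\geq-x\mathbf 1\}\), exactly as in Lemma~\ref{lem:positive-boundary-flux}. This gives
\[
  \lambda_{F,x}:=P\,\EE_{Q_{F,x}}X\geq0,
\]
since each component is a normalised face integral. Gaussian regression then yields \eqref{eq:shifted-wall-positive-mixture}, and \(K_H\geq0\) gives \(g_{F,x}\geq0\).

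\emph{Pin bound.} For the upper bound \(g_{F,x}\leq g_{F,0}\), view \(Q_{F,x}\) as the law of \(x\mathbf 1-B\) shifted back by \(x\), restricted to the rectangle \(\{x\mathbf1-B\geq0\}\) and adjoined with the test coordinate \(t\).
\begin{itemize}
\item Under this law, \(x-B_t\) has conditional mean \(x-\EE(B_t\mid M_F\leq x)\).
\item I would instead compare \(\mathcal L(x\mathbf 1-B_F\mid\cdot)\) at level \(x\) and level \(0\) by the common-shift likelihood ratio of Lemma~\ref{lem:nested-diagonal-orthant-ratios}. By Lemma~\ref{lem:submarkov-fbm-precision} this ratio is increasing in the coordinates, because \(\mathsf S\mathbf 1\geq0\).
\item The consequence is monotonicity of the depth profile \(\EE(x-B_t\mid\cdot)\), i.e.\ that \(g_{F,x}(t)\) is nonincreasing in \(x\).
\end{itemize}
This step is the main obstacle. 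The difficulty is that the test coordinate \(t\) is not in \(F\), so one must regress \(B_t\) on \(B_F\) using the Stieltjes row-sum structure of the enlarged precision on \(F\cup\{t\}\). I would try first to run the argument of Lemma~\ref{lem:nested-diagonal-orthant-ratios} with \(I=F\) and \(J=\{t\}\); the identity \(h=\mathsf P_{JJ}^{-1}k_J\geq0\) there is exactly the needed sign of the regression intercept. If this fails, I would instead derive the derivative in \(x\) of \(g_{F,x}\) from the flux identity and show it is nonpositive using \(k=P\mathbf1\geq0\).

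Once \(g_{F,x}\leq g_{F,0}\) holds, the zero-wall bound \eqref{eq:pin-mean-bound} finishes \eqref{eq:shifted-wall-pin-bound}.

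\emph{Moduli.} For the annular modulus \eqref{eq:shifted-wall-annular-mean-modulus}, copy the proof of Proposition~\ref{prop:conditional-mean-modulus}. Lemma~\ref{lem:covariance-kernel-ratio} applied to the nonnegative mixture gives
\[
  |g(t)-g(s)|\leq C|t-s|^{2H}\,g(r),
\]
and \(g(r)\leq C_Hr^H\) by the pin bound. The full modulus \eqref{eq:shifted-wall-full-mean-modulus} then follows from the same two-region argument near the pin, now on \([0,R]\). The centred bound \eqref{eq:shifted-wall-centered-modulus} is Lemma~\ref{lem:uniform-centered-modulus} verbatim, since the proof there noted it only needs a positive-probability convex constraint on finitely many coordinates.

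\emph{Continuum law.} To pass to \(\mathbb P_{T,x}\), take dyadic grids \(F_n\uparrow\) dense in \((0,T]\). Then \(\{G_s\geq-x,\ s\in F_n\}\) decreases to \(\{M_T\leq x\}\), which has positive probability. Hence the conditional laws converge in total variation, and therefore weakly. The mean and modulus bounds pass to the limit by uniform integrability from \eqref{eq:harge-mgf-bound} and by Portmanteau for the continuous modulus functional. Tightness of \(x-B=G+x\) on \(C([0,R])\) then follows from these moduli, together with the anchoring value \(x-B_0=x\leq A\), via Arzelà--Ascoli as in Theorem~\ref{thm:path-space-entrance-law}.
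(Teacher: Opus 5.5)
\textbf{Gap in the pin bound.} Apart from one step, your outline is the paper's proof: the face-integral flux, the kernel-ratio modulus, chaining, and the total-variation passage to \(\mathbb P_{T,x}\). The exception is the comparison \(g_{F,x}\leq g_{F,0}\), which you flagged as the obstacle, and there your argument proves the wrong monotonicity. The common-shift likelihood ratio \(\exp\{(y-x)(\mathsf S\mathbf 1)^{\mathsf T}z\}\) of Lemma~\ref{lem:nested-diagonal-orthant-ratios} shows that the \emph{depth} vector \(x\mathbf 1-B_F\), conditioned to be nonnegative, is stochastically increasing in \(x\). Regressing the depth at \(t\) as \(w^{\mathsf T}z+xh\), with \(w=PK_H(F,t)\geq0\) and \(h=1-w^{\mathsf T}\mathbf 1\geq0\), then gives only that \(x+g_{F,x}(t)=\EE(x-B_t\mid M_F\leq x)\) is nondecreasing. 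That yields \(g_{F,x}(t)\geq g_{F,0}(t)-x\), a bound in the opposite direction. Indeed \(g_{F,x}(t)=w^{\mathsf T}(\EE_{\mu_x}z-x\mathbf 1)\) is a difference of two nondecreasing functions of \(x\), so your ``i.e.'' is a non sequitur; the sign \(h\geq0\) pushes the depth up, not \(g\) down. Your fallback also does not close the gap. The derivative is \(\partial_xg_{F,x}(t)=w^{\mathsf T}(\Sigma_x-P^{-1})k\), where \(\Sigma_x\) is the covariance of the truncated vector, so \(k=P\mathbf 1\geq0\) decides nothing without the entrywise contraction \(\Sigma_x\leq P^{-1}\), which you have not identified.

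\textbf{The missing idea: nested walls plus association.} Adjoin the test coordinate \(t\) to \(F\) and restrict to \(\{G_F\geq-x\mathbf 1\}\). This is a coordinate-rectangle restriction, so Lemma~\ref{lem:finite-wall-mtp2-closure} makes the law MTP\(_2\) and associated. The zero-wall event \(C=\{G_F\geq0\}\) is increasing and has conditional probability \(\PP(G_F\geq0)/\PP(G_F\geq-x\mathbf 1)>0\); conditioning further on \(C\) produces \(Q_{F,0}\). Association applied to \(\mathds1_C\) and a clipped \(G_t\), followed by dominated convergence, gives \(g_{F,x}(t)\leq g_{F,0}(t)\) directly. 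No regression on \(F\cup\{t\}\) and no row-sum structure is needed; this is the paper's argument.

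\textbf{A minor point in the continuum step.} Before the centred modulus transfers, you must show that the centrings converge: \(g_{F_n,x}\to g_{T,x}\) pointwise, using second-moment uniform integrability from \eqref{eq:harge-mgf-bound} together with total variation, then uniformly on compacts via the common deterministic modulus. Portmanteau for the modulus functional alone does not handle the changing centring.
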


\begin{proof}
Let \(X=G_F\), let \(C_F\) and \(P_F=C_F^{-1}\) be its covariance and
precision, and put
\begin{equation}
  \bar x:=\EE(X\mid X\geq-x\mathbf1).
\end{equation}
Nondegeneracy makes the centred Gaussian density \(p_F\) strictly positive,
so the shifted-orthant normaliser is finite and strictly positive.  Integration
in coordinate \(i\) over \([-x,\infty)\), with the empty complementary
integral interpreted in the usual way when \(|F|=1\), and using
\(\partial_i p_F(y)=-(P_Fy)_ip_F(y)\), gives
\begin{equation}
\begin{split}
  (P_F\bar x)_i
  &=
  \frac{1}{\PP(X\geq-x\mathbf1)}
  \int_{[-x,\infty)^F}(P_Fy)_ip_F(y)\,\mathrm dy\\
  &=
  \frac{1}{\PP(X\geq-x\mathbf1)}
  \int_{[-x,\infty)^{F\setminus\{i\}}}
    p_F(-x,y_{-i})\,\mathrm dy_{-i}\geq0.
\end{split}
  \label{eq:shifted-wall-boundary-flux}
\end{equation}
Set \(\lambda_{F,x}=P_F\bar x\).  Gaussian regression gives
\(\EE(G_t\mid X)=K_H(t,F)P_FX\); averaging over the shifted orthant proves
\eqref{eq:shifted-wall-positive-mixture}.  If \(s>0\), the triangle
inequality and subadditivity of the power \(2H\in(0,1)\) give
\[
  |t-s|^{2H}\leq(|t|+s)^{2H}\leq|t|^{2H}+s^{2H},
\]
so \(K_H(t,s)\geq0\).  Thus \eqref{eq:shifted-wall-positive-mixture} also
proves \(g_{F,x}\geq0\).

The comparison with the zero wall is immediate at \(t=0\).  If \(t\neq0\),
form the vector on the set \(F\cup\{t\}\), so a target already in \(F\) is
not duplicated, and restrict it to \(\{G_F\geq-x\mathbf1\}\).  Its law is
MTP\(_2\) and associated by
Lemma~\ref{lem:finite-wall-mtp2-closure}.  The increasing event
\(C=\{G_F\geq0\}\) has conditional probability
\[
  \frac{\PP(G_F\geq0)}{\PP(G_F\geq-x\mathbf1)}>0,
\]
and conditioning further on \(C\) gives the zero-wall law.  Apply association
to \(\mathds1_C\) and the bounded increasing clipping
\(y\mapsto\max\{-N,\min\{y_t,N\}\}\).  Conditional Gaussian first-moment
integrability under both shifted walls permits dominated convergence as
\(N\to\infty\), and gives
\begin{equation}
  g_{F,x}(t)\leq g_{F,0}(t).
  \label{eq:shifted-wall-zero-wall-comparison}
\end{equation}
In the zero-wall notation of Section~\ref{sec:path-tightness},
\(g_{F,0}=-m_F\).  The last bound in
\eqref{eq:shifted-wall-pin-bound} is therefore
\eqref{eq:pin-mean-bound}.

Fix \(0<a<R\) and choose a positive dyadic \(r<a\).  From
\eqref{eq:shifted-wall-positive-mixture} and
Lemma~\ref{lem:covariance-kernel-ratio},
\begin{equation}
\begin{split}
  |g_{F,x}(t)-g_{F,x}(s)|
  &\leq
  C|t-s|^{2H}
  \sum_{u\in F}\lambda_{F,x}(u)K_H(r,u)\\
  &=C|t-s|^{2H}g_{F,x}(r)
  \leq C_{H,a,R}|t-s|^{2H},
\end{split}
  \label{eq:shifted-wall-mean-ratio-proof}
\end{equation}
This first proves the bound when \(|t-s|\leq1\).  Partitioning the segment
joining \(s\) and \(t\) into a bounded number of unit-length subintervals,
as in the proof of Proposition~\ref{prop:conditional-mean-modulus}, and
enlarging the constant proves
\eqref{eq:shifted-wall-annular-mean-modulus} for every \(s,t\in[a,R]\).
To obtain the full modulus, the case \(R=0\) is immediate.  Otherwise fix
\(\varepsilon>0\), choose \(a\in(0,\min\{R,1\})\) so that
\(2C_H(2a)^H<\varepsilon/2\), and then choose \(\delta_0<a\) so that
\(C_{H,a,R}\delta_0^{2H}<\varepsilon/2\).  If
\(s,t\in[0,R]\), \(|s-t|\leq\delta_0\), and
\(\min\{s,t\}\leq a\), then both points lie in \([0,2a]\), and the pin
bound makes the mean difference smaller than \(\varepsilon/2\).  Otherwise
both points lie in \([a,R]\), and the annular bound gives the same conclusion.
This proves \eqref{eq:shifted-wall-full-mean-modulus} with every choice
independent of \(F,x\).

For fixed \(s,t\), form the possibly singular Gaussian vector
\((G_u)_{u\in F\cup\{s,t\}}\), again without repeated coordinates.  The
coordinate set imposing \(G_u\geq-x\) for \(u\in F\) is convex and has
positive probability, and its conditional barycentre under the increment
functional is \(g_{F,x}(t)-g_{F,x}(s)\).  Hence
Lemma~\ref{lem:harge-convex-domination} gives
\begin{equation}
  \EE_{Q_{F,x}}
  e^{\lambda(Z_{F,x}(t)-Z_{F,x}(s))}
  \leq e^{\lambda^2|t-s|^{2H}/2}.
  \label{eq:shifted-wall-centered-increment-mgf}
\end{equation}
The dyadic chaining proof of Lemma~\ref{lem:uniform-centered-modulus} is
dimension-free and uses only this bound and continuity.  The latter holds
because \eqref{eq:shifted-wall-positive-mixture} is a finite sum of continuous
covariance kernels.  The same chaining proof therefore proves
\eqref{eq:shifted-wall-centered-modulus}, uniformly in \(F,x\).

For the continuum passage, take
\(F_n=2^{-n}\mathbb Z\cap(0,T]\), after discarding the finitely many empty
initial sets.  These sets are nested and their union is dense in \((0,T]\).
By continuity and \(G_0=0\),
\begin{equation}
  \{G_s\geq-x:s\in F_n\}\downarrow
  \{G_s\geq-x:0\leq s\leq T\}.
  \label{eq:shifted-wall-dense-event-limit}
\end{equation}
For \(x>0\), the limiting event has positive probability: the uniform ball
of radius \(x\) around the zero path has positive Gaussian measure by the
support theorem.  Write \(Q_{T,x}\) for the conditional law of \(G\) on this
limiting event.  Decreasing-event convergence and positivity of the limit
show directly that \(Q_{F_n,x}\to Q_{T,x}\) in total variation.

Total variation alone does not pass unbounded means, so fix \(t\).  Taking
\(s=0\) in \eqref{eq:shifted-wall-centered-increment-mgf}, using \(G_0=0\),
and integrating the resulting sub-Gaussian tail gives
\[
  \sup_n\EE_{Q_{F_n,x}}|Z_{F_n,x}(t)|^2\leq C|t|^{2H}.
\]
Together with \eqref{eq:shifted-wall-pin-bound}, this gives
\(\sup_n\EE_{Q_{F_n,x}}|G_t|^2<\infty\).  Thus the coordinate family is
uniformly integrable, and total variation yields
\[
  g_{F_n,x}(t)\longrightarrow
  g_{T,x}(t):=\EE_{Q_{T,x}}G_t.
\]
The common deterministic modulus
\eqref{eq:shifted-wall-full-mean-modulus}, first passed pointwise to
\(g_{T,x}\) and then applied on a finite net, upgrades this to uniform
convergence on every compact interval.

Put \(Z_{T,x}=G-g_{T,x}\), and set
\[
  w_{R,\delta}(f)
  :=\sup\{|f(t)-f(s)|:s,t\in[0,R],\ |t-s|\leq\delta\}.
\]
For each fixed \(T,x,R\), compact-uniform mean convergence
and total variation give
\[
\begin{split}
  Q_{T,x}\bigl(w_{R,\delta}(Z_{T,x})>\varepsilon\bigr)
  &=\lim_{n\to\infty}
    Q_{F_n,x}\bigl(w_{R,\delta}(Z_{T,x})>\varepsilon\bigr)\\
  &\leq\limsup_{n\to\infty}
    Q_{F_n,x}\bigl(w_{R,\delta}(Z_{F_n,x})>\varepsilon/2\bigr).
\end{split}
\]
The finite bound on the right is uniform in \(F_n,x\).  Consequently its
limit as \(\delta\downarrow0\) is zero uniformly in all continuum pairs
\(T>0,x>0\), and the same pointwise passage gives the full mean modulus.

Under \(\mathbb P_{T,x}\), the reflected process \(G=-B\) has law
\(Q_{T,x}\), and \(x-B=x+G\).  Its deterministic mean is \(x+g_{T,x}\),
the added constant does not affect a modulus, and its centred process is
\(Z_{T,x}\).
Moreover \((x-B_0)=x\leq A\).  The two uniform moduli and this one-point
bound give the asserted uniform \(C([0,R])\)-tightness by
Arzelà--Ascoli.
\end{proof}

On an extension carrying \(D\sim\operatorname{Exp}(1)\), independent of
\(B\), define under \(\widehat{\PP}_T\)
\begin{equation}
  E_T:=M_T+D,
  \qquad
  X_T(t):=E_T-B_t,\quad0\leq t\leq T,
  \label{eq:barrier-deficit-process}
\end{equation}
and extend \(X_T\) constantly after \(T\).

\begin{proposition}
\label{prop:infinite-barrier-deficit-law}
There is a unique probability law \(\Gamma_H\) on
\(\mathbb R_+\times C_{\mathrm{loc}}(\mathbb R_+)\) such that
\begin{equation}
  \mathcal L_{\widehat{\PP}_T}(E_T,X_T)
  \Longrightarrow\Gamma_H,
  \qquad T\to\infty.
  \label{eq:barrier-deficit-limit}
\end{equation}
\end{proposition}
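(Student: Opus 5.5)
The plan is to disintegrate \((E_T,X_T)\) under \(\widehat{\PP}_T\) through the exponential barrier of Lemma~\ref{lem:exponential-barrier-disintegration}. By that lemma, the pair \((B,E_T)\) under \(\widehat{\PP}_T\otimes\mathcal L(D)\) has exactly the law \(\mathbb Q_T\): the overshoot \(D=E_T-M_T\) is \(\operatorname{Exp}(1)\) and independent of \(B\). Consequently \(E_T\) has law \(\beta_T\), and given \(E_T=x\) the path has law \(\mathbb P_{T,x}=\mathcal L(B\mid M_T\leq x)\). Writing \(\kappa_{T,x}\) for the law of \(x-B\) (constantly extended after \(T\)) under \(\mathbb P_{T,x}\), we obtain
\[
  \mathcal L_{\widehat{\PP}_T}(E_T,X_T)
  =\int\delta_x\otimes\kappa_{T,x}\,\beta_T(\mathrm dx).
\]
Corollary~\ref{cor:monotone-exponential-barriers} already gives \(\beta_T\Rightarrow\beta_\infty\). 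The remaining task is to show that the kernel \(\kappa_{T,x}\) converges, as \(T\to\infty\), to a limit \(\kappa_{\infty,x}\) that is jointly continuous in \(x\in(0,\infty)\). The law \(\Gamma_H\) will then be \(\int\delta_x\otimes\kappa_{\infty,x}\,\beta_\infty(\mathrm dx)\).

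Tightness of \(\kappa_{T,x}\) on \(C([0,R])\), uniformly in \(T\geq R\) and \(0<x\leq A\), is the last assertion of Lemma~\ref{lem:shifted-hard-wall-gate2}. For identification I would use the monotone finite-wall machinery. In the coordinates \(G=-B\), the law \(\kappa_{T,x}\) is the law of \(x+G\) conditioned on the shifted orthant \(\{G_s\geq-x,\ 0<s\leq T\}\). By the MTP\(_2\) closure (Lemma~\ref{lem:finite-wall-mtp2-closure}) and the association argument of Proposition~\ref{prop:wall-monotonicity}, which applies verbatim to translated lower walls because the added constraints are increasing events, the finite-dimensional laws of \(G_{T_0}\) for fixed finite \(T_0\subset(0,\infty)\) are stochastically nondecreasing in the wall set. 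They are therefore nondecreasing in \(T\) after dense-grid approximation. Their tails are controlled uniformly by the mgf bound \eqref{eq:harge-mgf-bound} together with \eqref{eq:shifted-wall-pin-bound}. The increasing determining-class argument of Lemma~\ref{lem:cofinal-tightness} then gives unique finite-dimensional limits as \(T\to\infty\), and tightness lifts these to path convergence \(\kappa_{T,x}\Rightarrow\kappa_{\infty,x}\) for each fixed \(x>0\).

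To combine the kernel with the mixing measure, I need the convergence to hold uniformly in \(x\) on compacts of \((0,\infty)\), together with continuity of \(x\mapsto\kappa_{\infty,x}\). Then for bounded Lipschitz \(\Phi\) the map \(x\mapsto\int\Phi(x,\cdot)\,\mathrm d\kappa_{T,x}\) converges uniformly on \([\epsilon,A]\) to a continuous limit. The mass of \(\beta_T\) near \(0\) and beyond \(A\) is small uniformly in \(T\): the upper tail follows from \eqref{eq:exponential-barrier-tail}, and the lower part from \(\beta_T([0,\epsilon])\leq\epsilon/\mathcal Z(T)\cdot\sup_T\ldots\). More carefully, \(\beta_T\leq_{\mathrm{st}}\beta_{T'}\) shows that the small-\(x\) mass is nonincreasing in \(T\), and \(\beta_\infty\) has no atom at zero, which I would deduce from the density bound \(e^{-x}F_T(x)/\mathcal Z(T)\) together with weak convergence. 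Uniqueness of the limit follows because every subsequential limit is forced to equal this explicit mixture.

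I expect the main obstacle to be equicontinuity of \(x\mapsto\kappa_{T,x}\), uniform in \(T\), since the shifted-wall laws depend on \(x\) both through the shift and through the conditioning event. The approach I would try first is a monotone sandwich. For \(x\leq y\), the law of \(G\) under the \(-y\) wall is stochastically dominated by the law under the \(-x\) wall, by the association comparison \eqref{eq:shifted-wall-zero-wall-comparison}. Scaling the other way is also available: self-similarity maps the \(-y\) wall on \([0,T]\) to the \(-x\) wall on \([0,(x/y)^{1/H}T]\) with the path rescaled by \((y/x)\) in time \((y/x)^{1/H}\). As \(T\to\infty\) the horizon change is irrelevant by the monotone limit. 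This gives an exact scaling relation \(\kappa_{\infty,y}=\) the push-forward of \(\kappa_{\infty,x}\) under \(f\mapsto (y/x)f((x/y)^{1/H}\cdot)\), which is manifestly continuous in \(y\) for the local uniform topology. Uniform-in-\(x\) convergence would then follow from pointwise convergence, via the sandwich and a Dini-type argument on monotone finite-dimensional expectations of increasing test functions.
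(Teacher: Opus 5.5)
Your proposal is correct, but it follows a different route from the paper. The paper never identifies a limiting kernel. Instead it shows that the joint finite-dimensional laws of $(E_T,X_T(S))$ are stochastically nondecreasing in $T$, by combining three monotonicity facts:
\begin{itemize}
\item the horizon order at fixed barrier, which you also use;
\item a \emph{barrier order}, $\mathcal L(X_T(S)\mid E_T=x)\leq_{\mathrm{st}}\mathcal L(X_T(S)\mid E_T=y)$ for $x<y$, which comes from the increasing likelihood ratio $\exp\{(y-x)(\mathsf P_n\mathbf 1_{F_n})^{\mathsf T}z\}$ supplied by the nonnegative precision row sums of Lemma~\ref{lem:submarkov-fbm-precision};
\item the order $\beta_T\leq_{\mathrm{st}}\beta_{T'}$ for $T<T'$.
\end{itemize}
Monotone Laplace transforms then fix the finite-dimensional limits, and Lemma~\ref{lem:shifted-hard-wall-gate2} lifts them to paths.

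You replace the barrier order by exact self-similarity, and your scaling identity already holds at finite horizon. With $(\mathcal T_yf)(t):=y\,f(y^{-1/H}t)$ one has $\kappa_{T,y}=(\mathcal T_y)_*\kappa_{y^{-1/H}T,1}$. Hence for $T_n\to\infty$ and $y_n\to y>0$, joint continuity of $(y,f)\mapsto\mathcal T_yf$ gives $\kappa_{T_n,y_n}\Rightarrow\kappa_{\infty,y}$ directly from the single horizon limit at barrier level one. The sandwich-and-Dini step is therefore unnecessary. Note also that the order you quote is for $G$, so for $X=x+G$ it only yields $X^{(y)}\leq_{\mathrm{st}}X^{(x)}+(y-x)$, not the paper's barrier order.

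To finish, combine this joint convergence with $\beta_T\Rightarrow\beta_\infty$ and with $\beta_T([0,\epsilon])\leq\beta_{T_0}([0,\epsilon])$ for $T\geq T_0$. That last bound is your stochastic-order remark, and it makes the garbled density bound superfluous. The extended continuous-mapping theorem then completes the proof.

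What each route buys: yours gives the explicit scaling form $\Gamma_H=\int\delta_x\otimes(\mathcal T_x)_*\kappa_{\infty,1}\,\beta_\infty(\mathrm dx)$. The paper's purely order-theoretic argument needs no link between barrier levels and horizons, and it handles the kernel and the mixing measure in one monotone step.
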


\begin{proof}
The path-marginal and overshoot assertions of
Lemma~\ref{lem:exponential-barrier-disintegration} identify the joint law of
\((B,E_T)\) in the present construction with \(\mathbb Q_T\), and hence
\begin{equation}
  \mathrm d\mathbb Q_T(B,x)
  =\frac{e^{-x}\mathds1_{\{M_T\leq x\}}}
         {\mathcal Z(T)}\,\PP(\mathrm dB)\,\mathrm dx.
  \label{eq:barrier-deficit-disintegration}
\end{equation}
Thus \(E_T\) has law \(\beta_T\), and, conditional on \(E_T=x\),
\begin{equation}
  X_T=x-B
  \label{eq:conditional-barrier-deficit}
\end{equation}
is Gaussian with mean \(x\mathbf1\), conditioned to be nonnegative on
\([0,T]\).

We first make the two conditional-kernel orders precise.  Fix a finite
\(S\subset\mathbb R_+\), put \(S_+:=S\setminus\{0\}\) and
\(r_S:=\max(S\cup\{0\})\), and consider \(T'>T\geq r_S\).  For all
sufficiently large \(n\), set
\[
  D_n(T):=\bigl(2^{-n}\mathbb Z\cap(0,T]\bigr)\cup S_+,
  \qquad
  D_n(T'):=\bigl(2^{-n}\mathbb Z\cap(0,T']\bigr)\cup S_+.
\]
These finite nonzero sets are nested in both \(n\) and the horizon.  Fix
\(x>0\), put \(Y^{(x)}_t=x-B_t\), and condition the vector on
\(D_n(T')\) first on \(Y^{(x)}\geq0\) at the sites of \(D_n(T)\).  The
old and enlarged orthant normalisers are strictly positive by finite Gaussian
nondegeneracy.  The resulting old-wall law is MTP\(_2\) and associated by
Lemma~\ref{lem:finite-wall-mtp2-closure}.  Both a bounded increasing test of
\(Y^{(x)}_{S_+}\) and the indicator of nonnegativity on the added sites are
increasing.  Association, followed by division by the positive added-event
normaliser, shows that conditioning on the added sites raises the test
expectation.  If there are no added sites the two expectations agree.

As \(n\to\infty\), the two finite wall events decrease to the hard-wall
events on \([0,T]\) and \([0,T']\).  Their probabilities are positive because
\(x>0\), and the corresponding conditional path laws converge in total
variation.  The coordinate at zero is the deterministic value \(x\).  Hence
\[
  \mathcal L\bigl(X_T(S)\mid E_T=x\bigr)
  \leq_{\mathrm{st}}
  \mathcal L\bigl(X_{T'}(S)\mid E_{T'}=x\bigr).
\]

Now fix \(T\geq r_S\) and compare \(0<x<y\).  On the finite set
\(F_n:=D_n(T)\), let \(\mu_{n,x}\) be the law of
\(Y^{(x)}_{F_n}\) restricted to the positive orthant.  If
\(\mathsf P_n\) is the full precision, then
\(\mathsf P_n\mathbf1_{F_n}\geq0\) by
Lemma~\ref{lem:submarkov-fbm-precision}, and
\[
  \frac{\mathrm d\mu_{n,y}}{\mathrm d\mu_{n,x}}(z)
  =c_{n,y,x}\exp\left\{
    (y-x)(\mathsf P_n\mathbf1_{F_n})^{\mathsf T}z
  \right\},
  \qquad z\geq0,
\]
for a positive constant.  This ratio is coordinatewise increasing.  The
law \(\mu_{n,x}\) is associated by
Lemma~\ref{lem:finite-wall-mtp2-closure}; applying association to a bounded
increasing test and the ratio truncated at level \(m\), and then using
monotone convergence exactly as in
\eqref{eq:truncated-common-shift-order}, proves
\(\mu_{n,x}\leq_{\mathrm{st}}\mu_{n,y}\).  Projection to \(S_+\), insertion
of the deterministic coordinates \(Y^{(x)}_0=x\leq y=Y^{(y)}_0\), and the
same total-variation continuum passage give
\[
  \mathcal L\bigl(X_T(S)\mid E_T=x\bigr)
  \leq_{\mathrm{st}}
  \mathcal L\bigl(X_T(S)\mid E_T=y\bigr).
\]

Let \(f\) be a bounded coordinatewise increasing Borel function on
\(\mathbb R_+\times\mathbb R_+^S\), and, for \(x>0\), write
\[
  K_{T,S}(x,\cdot)
  :=\mathcal L\bigl(X_T(S)\mid E_T=x\bigr)
\]
and put
\begin{equation}
  k_{T,S}(x)
  :=\EE\bigl[f(x,X_T(S))\mid E_T=x\bigr].
  \label{eq:barrier-kernel-monotonicity}
\end{equation}
If \(0<x<y\), the barrier order and monotonicity of \(f\) in its first
coordinate give
\[
  \int f(y,z)\,K_{T,S}(y,\mathrm dz)
  \geq \int f(x,z)\,K_{T,S}(y,\mathrm dz)
  \geq \int f(x,z)\,K_{T,S}(x,\mathrm dz),
\]
Hence \(k_{T,S}\) is nondecreasing on \((0,\infty)\).  Define
\(k_{T,S}(0):=\inf_{x>0}k_{T,S}(x)\).  This is a bounded nondecreasing Borel
extension, and its value at zero does not change an integral because
\(\beta_T(\{0\})=0\).  The horizon order also gives
\(k_{T',S}\geq k_{T,S}\) pointwise, including at zero.  Consequently,
Corollary~\ref{cor:monotone-exponential-barriers} gives the explicit
two-stage comparison
\[
\begin{split}
  \EE f(E_{T'},X_{T'}(S))
  &=\int k_{T',S}\,\mathrm d\beta_{T'}
   \geq\int k_{T,S}\,\mathrm d\beta_{T'}\\
  &\geq\int k_{T,S}\,\mathrm d\beta_T
   =\EE f(E_T,X_T(S)).
\end{split}
\]
Thus the finite-dimensional laws are stochastically increasing once the
horizon contains \(S\).

We next prove joint path tightness.  The barriers are tight by
\eqref{eq:uniform-exponential-barrier-tightness}.  Fix \(R<\infty\) and
\(\eta>0\).  Choose \(A\) so that
\(\sup_T\beta_T((A,\infty))<\eta\).  By
Lemma~\ref{lem:shifted-hard-wall-gate2}, there is a compact
\(K_{R,\eta,A}\subset C([0,R])\) such that
\[
  \sup_{T\geq R}\sup_{0<x\leq A}
  \PP_{T,x}\bigl(x-B\notin K_{R,\eta,A}\bigr)<\eta.
\]
The barrier disintegration and \(\beta_T(\{0\})=0\) therefore give
\[
  \sup_{T\geq R}
  \widehat{\PP}_T\bigl(X_T|_{[0,R]}\notin K_{R,\eta,A}\bigr)
  <2\eta.
\]
Thus the restrictions are asymptotically tight on every compact interval.
To make the local-to-global passage precise, fix an arbitrary sequence
\(T_j\to\infty\) and \(\varepsilon>0\).  For each integer \(m\geq1\), the
displayed estimate makes the tail with \(T_j\geq m\) tight on \(C([0,m])\).
The remaining indices are finite in number and their laws are individually
tight, so there is a compact \(K_m\subset C([0,m])\) such that
\[
  \sup_j\widehat{\PP}_{T_j}
  \bigl(X_{T_j}|_{[0,m]}\notin K_m\bigr)
  <\varepsilon 2^{-m-1}.
\]
If \(\pi_m\) denotes restriction to \([0,m]\), then
\[
  K:=\bigcap_{m\geq1}\pi_m^{-1}(K_m)
\]
is compact in \(C_{\mathrm{loc}}(\mathbb R_+)\): every sequence in \(K\)
has, by successive compact subsequences and a diagonal extraction, a limit
uniformly on every \([0,m]\), and \(K\) is closed.  The union bound gives
\(\sup_j\widehat{\PP}_{T_j}(X_{T_j}\notin K)<\varepsilon/2\).  The constant
extension is irrelevant to each tail estimate once \(T_j\geq m\).
Finally, \eqref{eq:uniform-exponential-barrier-tightness} supplies
\(A<\infty\) with \(\sup_j\beta_{T_j}((A,\infty))<\varepsilon/2\).
Thus \([0,A]\times K\) proves tightness of the joint laws along every
sequence \(T_j\to\infty\).

It remains to identify one limit.  For every finite \(S\) and every
nonnegative vector \(\theta=(\theta_0,(\theta_s)_{s\in S})\), stochastic
monotonicity and nonnegativity of every coordinate make
\[
  \EE\exp\left\{-\theta_0E_T-
    \sum_{s\in S}\theta_sX_T(s)\right\}
\]
nonincreasing for all sufficiently large \(T\), so it has a limit.  Tightness
shows that every sequence tending to infinity has a weakly convergent
subsequence, and every such subsequential finite-dimensional limit has these
same Laplace transforms.  Laplace transforms determine probability laws on
\(\mathbb R_+^{1+|S|}\), so the finite-dimensional limit is unique.

Finally, take \(S\) through the finite subsets of
\(\mathbb Q_+\).  Evaluation at each fixed time is continuous on
\(C_{\mathrm{loc}}\), so every subsequential path limit has the unique joint
laws of \((E,(X_s)_{s\in S})\).  The Borel sigma-field of
\(C_{\mathrm{loc}}(\mathbb R_+)\) is generated by evaluations on
\(\mathbb Q_+\), and a continuous path is determined by those values.
Therefore all subsequential limits agree.  Tightness and the usual
contradiction-sequence argument prove the full convergence
\eqref{eq:barrier-deficit-limit}; its common limit is \(\Gamma_H\).
\end{proof}

\begin{lemma}
\label{lem:entrance-unique-zero}
For every \(u\geq0\), under \(K_u\) the centred deficit path
\begin{equation}
  G^{(u)}(t):=-W_t,\qquad -u\leq t<\infty,
  \label{eq:entrance-centered-deficit}
\end{equation}
satisfies
\begin{equation}
  G^{(u)}(0)=0,
  \qquad
  G^{(u)}(t)>0\quad\text{for every }t\neq0
  \quad\text{almost surely}.
  \label{eq:entrance-unique-zero}
\end{equation}
\end{lemma}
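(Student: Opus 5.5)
Since \(K_u\) is obtained from \(Q^u\) by the bounded positive density \(e^{-V^u}/z(u)\) (with \(z(u)>0\) by Lemma~\ref{lem:entrance-endpoint-tilt}), the tilted and untilted laws are mutually absolutely continuous. It therefore suffices to prove \eqref{eq:entrance-unique-zero} under \(Q^u\) for \(u>0\), and under \(Q^0\) for \(u=0\). The pin \(G^{(u)}(0)=0\) holds deterministically, and \(G^{(u)}\geq0\) is \eqref{eq:finite-left-wall-and-repulsion}, resp.\ \eqref{eq:one-sided-wall-support}. The left endpoint is covered by the strict inequality \(Q^u(W_{-u}<0)=1\). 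What remains is to exclude zeros of \(G^{(u)}\) at interior times \(t\in(-u,0)\cup(0,\infty)\).

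For this I would copy the argument proving \eqref{eq:limit-support}, using the anticontact estimate of Proposition~\ref{prop:rough-annular-anticontact-import}.

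\textbf{Step 1: compact block away from zero and from the endpoint.} Fix a compact interval \(I'\subset(0,\infty)\) or \(I'\subset(-u,0)\) with dyadic endpoints. Rescale the base statement: for a block \(2^k I_0\) and anchor \(2^kp_0\), self-similarity of fBm maps finite dyadic walls to finite dyadic walls (as in \eqref{eq:conditional-mean-scaling}). This gives, uniformly over finite dyadic \(F\ni 2^kp_0\),
\[
\PP\bigl(\min_{s\in 2^kI_0}G_s\le x\mid A_F\bigr)\le C\exp\{-c\sqrt{\log(2^{kH}/x)}\}
\]
for small \(x\).

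\textbf{Step 2: pass to the limit.} The cofinal walls defining \(Q^u\) lie in \(\mathbb D_u\). Under \(Q^0\) they are positive dyadics only, and they contain \(2^kp_0\) eventually, as long as \(2^kp_0>-u\) on the negative side, or trivially on the positive side. The open-set Portmanteau inequality then transfers the bound to \(Q^u\), and letting \(x\downarrow0\) gives \(\min_{2^kI_0}G^{(u)}>0\) almost surely.

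There is one obstacle on the negative side. When the block \(-2^kI_0\) is near \(-u\), its anchor \(-2^{k+1}\) lies to the left of \(-u\) and is not an admissible wall site. There are two ways around this. The first is to use only blocks whose anchor \(-2^{k+1}\ge -u\), which covers \((-u/2,0)\). For the remainder \([-u+\varepsilon,-u/2]\), I would instead invoke the anticontact estimate of the companion paper in its general form, with anchor equal to the constrained endpoint \(-u_n\). Alternatively I would rerun the two-region argument: by the repulsion bound \eqref{eq:finite-left-endpoint-repulsion}, \(G_{e}\) is uniformly not small, and the oscillation moduli \eqref{eq:centered-modulus}, \eqref{eq:conditional-mean-modulus} control nearby values. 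This alone does not exclude interior zeros, however.

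The more robust plan is a second route. Take a block \(I'\subset(-u,0)\) and any anchor \(p\in\mathbb D_u\) with \(p<\inf I'\) lying in the wall, and reprove the scalar anticontact from scratch. The projection decomposition \(G_t=Y_t+r_p(t)Z\) of Lemma~\ref{lem:rough-window-separated-anticoncentration} shows that \(\min_{I'}G\) is strictly monotone in the truncated anchor \(Z\), with slope at least \(\inf_{I'}r_p>0\). Bounding the truncated anchor density by \(\varepsilon_M\) plus a constant, as in that proof, gives
\[
\PP(\min_{I'}G\le x\mid A_F)\le C_Mx+\varepsilon_M,
\]
uniformly in \(F\). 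This is all that is needed. I expect this uniform-in-\(F\) anticoncentration near the moving endpoint to be the main point requiring care; the monotone-slope argument handles it without any sharp rate.

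\textbf{Step 3: conclude.} Cover \((-u,0)\cup(0,\infty)\) by countably many such compact blocks and intersect the probability-one events. Together with the endpoint repulsion and the pin, this proves \eqref{eq:entrance-unique-zero} under \(Q^u\), and hence under \(K_u\).
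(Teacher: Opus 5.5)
Your argument is correct in substance, but your decisive step differs from the paper's. The paper proves a linear bound for grid minima on any compact $I\subset[-u,\infty)\setminus\{0\}$, uniform in $F$, using four ingredients:
\begin{itemize}
\item it standardises to $Z_t=G_t/|t|^H$ and gets a positivity baseline $\PP(M_S\ge0)\ge p_I$ from Lemma~\ref{lem:fbm-compact-positivity-support};
\item a uniform Dudley bound on $\EE\sup_S(-Z)$ and Gaussian supremum anticoncentration give $\PP(0\le M_S\le\varepsilon)\le C_I\varepsilon$;
\item association under $\PP(\cdot\mid A_S)$ (Lemma~\ref{lem:finite-wall-mtp2-closure}) shows that the remaining walls can only lower $\PP(M_S\le\varepsilon\mid A_F)$;
\item portmanteau and the cover by $[-u,-u/m]$, $[1/m,m]$ finish the proof, with the endpoint $-u$ included directly in the intervals.
\end{itemize}

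Your decisive step instead conditions on a single anchor coordinate $G_p$. Positivity of $r_p$ gives $A_F=\{G_p\ge\ell_F(Y)\}$. You then use the slope $\inf_{I'}r_p$ of $z\mapsto\min_{I'}(Y+r_pz)$ together with the uniform anchor tail of Lemma~\ref{lem:uniform-anchor-tail}.

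What each buys: your route is more elementary, needing no entropy bound, no supremum anticoncentration and no association. It does depend on the imported anchor tail, and it yields only $C_Mx+\varepsilon_M$ rather than a linear rate, which is harmless here.

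Your route can also be simplified. $K_H(t,p)>0$ for all $t,p\neq0$ in the rough range, so one fixed anchor, say $p=1\in\mathbb D_u\cap\mathbb D_+$, serves every block on both sides and for every $u\ge0$. The rescaled use of Proposition~\ref{prop:rough-annular-anticontact-import}, the case split near $-u/2$, and the requirement $p<\inf I'$ can therefore all be dropped.

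One correction. Your displayed bound $\PP(\min_{I'}G\le x\mid A_F)\le C_Mx+\varepsilon_M$ is false for the continuum minimum uniformly in $F$. A finite wall does not prevent $G<0$ between its sites; for instance, take $F\cap I'=\varnothing$. The slope argument actually proves one of two correct statements:
\begin{itemize}
\item the two-sided bound $\PP(|\min_{I'}G|\le x\mid A_F)\le C_Mx+\varepsilon_M$, because the preimage of $[-x,x]$ in the anchor variable has length at most $2x/\inf_{I'}r_p$;
\item the one-sided bound for minima over finite $S\subset F\cap I'$.
\end{itemize}
Either version passes to $Q^u$ exactly as in the proof of \eqref{eq:limit-support}, using $G\ge0$ under $Q^u$. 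The same absolute value is missing from your rescaled display in Step~1.
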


\begin{proof}
It is enough to work under \(Q^u\).  Indeed, \(K_u\) is the pushforward of
the endpoint-tilted law with density \(e^{-V^u}/z(u)\) relative to \(Q^u\);
this density is bounded by \(1/z(u)<\infty\) and hence sends \(Q^u\)-null
events to \(K_u\)-null events.  At \(u=0\) the density is identically one.

Let \(I\subset[-u,\infty)\setminus\{0\}\) be a compact interval of positive
length.  Write
\begin{equation*}
  \mathbb D_u^\circ:=
  \begin{cases}
    \mathbb D_u,&u>0,\\
    \mathbb D\cap(0,\infty),&u=0.
  \end{cases}
\end{equation*}
For the unconditioned reflected process \(G=-W\), put
\(Z_t:=G_t/|t|^H\).  This is a centred unit-variance Gaussian process.  If
\(s,t\in I\), then
\begin{equation*}
\begin{split}
  \|Z_s-Z_t\|_2
  &\leq \frac{\|G_s-G_t\|_2}{|s|^H}
    +\|G_t\|_2\left|\frac1{|s|^H}-\frac1{|t|^H}\right|\\
  &\leq C_I|s-t|^H.
\end{split}
\end{equation*}
Here \(I\) is bounded away from zero, so \(x\mapsto |x|^{-H}\) is
Lipschitz on \(I\), and the last bound absorbs the resulting
\(|s-t|\)-term.  The corresponding one-dimensional entropy integral is
finite.  Dudley's Gaussian entropy bound
\parencite[Theorem~1.3.3]{adlerTaylor2007randomFields} therefore gives
\begin{equation}
  \sup_{\substack{S\subset \mathbb D_u^\circ\cap I\\
                  0<|S|<\infty}}
  \EE\sup_{t\in S}(-Z_t)<\infty.
  \label{eq:unique-zero-entropy}
\end{equation}

Fix now a nonempty finite set \(S\subset\mathbb D_u^\circ\cap I\), and put
\begin{equation}
  M_S:=\min_{t\in S}Z_t.
  \label{eq:unique-zero-standardization}
\end{equation}
Lemma~\ref{lem:fbm-compact-positivity-support} gives positive probability
to the continuous strict-positivity event on \(I\).  Therefore, uniformly
in such \(S\),
\begin{equation}
  \PP(M_S\geq0)\geq p_I>0.
  \label{eq:unique-zero-positive-baseline}
\end{equation}
For \(\varepsilon>0\), since \(-M_S=\sup_{t\in S}(-Z_t)\),
\eqref{eq:unique-zero-entropy} and
\eqref{eq:gaussian-supremum-anticoncentration} give
\begin{equation}
  \PP(0\leq M_S\leq\varepsilon)\leq C_I\varepsilon.
  \label{eq:unique-zero-anticoncentration}
\end{equation}

Let \(F\subset\mathbb D_u^\circ\) be any finite set containing \(S\), and
write \(A_T:=\{G_t\geq0:t\in T\}\).  The vector \(G_F\) is nondegenerate,
so both \(A_S\) and \(A_F\) have positive probability.  By
Lemma~\ref{lem:finite-wall-mtp2-closure}, the law of \(G_F\) conditioned on
\(A_S\) is associated.  Under this law, \(A_{F\setminus S}\) and
\(\{M_S>\varepsilon\}\) are increasing events.  Association and
complementation therefore give
\begin{equation}
\begin{split}
  \PP(M_S\leq\varepsilon\mid A_F)
  &\leq \PP(M_S\leq\varepsilon\mid A_S)\\
  &=\frac{\PP(0\leq M_S\leq\varepsilon)}
          {\PP(M_S\geq0)}
  \leq C_I'\varepsilon.
\end{split}
  \label{eq:unique-zero-conditioned-bound}
\end{equation}
The deterministic coordinate at zero is absent throughout this finite
Gaussian calculation.

Keep \(S\) fixed and let \(F\) increase along a cofinal sequence in
\(\mathbb D_u^\circ\) containing \(S\).  The cofinal constructions of
\(Q^u\) and \(Q^0\), respectively, give weak convergence of these finite-wall
path laws to \(Q^u\).  Since \(\{M_S<\varepsilon\}\) is open under the fixed
finite evaluation map, the portmanteau theorem and
\eqref{eq:unique-zero-conditioned-bound} yield
\begin{equation}
  Q^u(M_S<\varepsilon)\leq C_I'\varepsilon.
  \label{eq:unique-zero-fixed-grid-limit}
\end{equation}
Now choose increasing finite sets
\(S_m\subset\mathbb D_u^\circ\cap I\) whose union is dense in \(I\).
Continuity of \(Z\), the wall support of \(Q^u\), and continuity from below
for the events in \eqref{eq:unique-zero-fixed-grid-limit} give
\begin{equation}
  Q^u\left(\min_{t\in I}Z_t=0\right)=0,
  \qquad
  Q^u\left(\min_{t\in I}G^{(u)}(t)=0\right)=0.
  \label{eq:unique-zero-compact}
\end{equation}
Indeed, the probability of \(\{\min_I Z<\varepsilon\}\) is at most
\(C_I'\varepsilon\), and \(|t|^H\) is bounded above and away from zero on
\(I\).

If \(u>0\), the intervals \([-u,-u/m]\), \(m\geq2\), cover the negative
part of the domain, while \([1/m,m]\), \(m\geq1\), cover its positive part.
For \(u=0\), only the latter family is needed.  Applying
\eqref{eq:unique-zero-compact} to this countable cover, and using the
deterministic pin \(G^{(u)}(0)=0\), proves
\eqref{eq:entrance-unique-zero}.
\end{proof}

\begin{theorem}
\label{thm:unique-entrance-mixture}
There is a unique probability measure \(\pi_H\) on \(\mathbb R_+\) such
that, with
\begin{equation}
  \mu_H:=\int_{[0,\infty)}K_u\,\pi_H(\mathrm du),
  \label{eq:unique-entrance-mixture}
\end{equation}
one has
\begin{equation}
  \mathcal L_{\widehat{\PP}_a}
  (U_a,V_a,\overline L_a,\overline R_a)
  \Longrightarrow\mu_H,
  \qquad a\to\infty.
  \label{eq:pb1-marked-convergence}
\end{equation}
Moreover, let \(a_n\to\infty\) be arbitrary and choose the strictly
increasing integers \(r_n\), with \(h_n:=2^{-r_n}\leq2^{-n}\), far enough
along the fixed-row approximations that both
\eqref{eq:relative-tilted-diagonal} and
\eqref{eq:scalar-tilted-grid-diagonal} hold, as in
Theorem~\ref{thm:subsequential-boundary-entrance-mixture}.  Then the
associated left-horizon mixing measures satisfy
\begin{equation}
  \pi_n\Longrightarrow\pi_H.
  \label{eq:unique-grid-mixing-limit}
\end{equation}
\end{theorem}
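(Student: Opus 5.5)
The plan is to pin down the mixing measure \(\pi\) of Theorem~\ref{thm:subsequential-boundary-entrance-mixture} through the barrier-deficit limit \(\Gamma_H\) of Proposition~\ref{prop:infinite-barrier-deficit-law}, whose uniqueness is already established. Fix \(a_n\to\infty\) and meshes as in the statement. By Theorem~\ref{thm:subsequential-boundary-entrance-mixture}, every subsequence has a further subsequence along which
\[
\mathcal L_{\widehat{\PP}_{a_n}}(Y_{a_n})\Rightarrow\Lambda=\int K_u\,\pi(\mathrm du),
\qquad
\pi_n\Rightarrow\pi,
\]
and the \(U\)-marginal of \(\Lambda\) is \(\pi\). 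It therefore suffices to show that \(\pi\) equals a law that is defined independently of the subsequence. Once that is done, \(\mu_H:=\int K_u\,\pi_H(\mathrm du)\) is the only possible subsequential limit. Tightness together with the usual contradiction-sequence argument then gives \eqref{eq:pb1-marked-convergence} for arbitrary \(a\to\infty\), by sequential characterisation, and also gives \eqref{eq:unique-grid-mixing-limit}.

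\textbf{Identification step.} By \eqref{eq:long-horizon-scaling}, \(U_a\) has the law of \(\sigma_a\) under \(\widehat{\PP}_a\). Moreover \(\sigma_a\) is the unique minimiser on \([0,a]\) of the deficit path \(X_a(t)=E_a-B_t=D+(M_a-B_t)\) from \eqref{eq:barrier-deficit-process}, where \(D\sim\operatorname{Exp}(1)\) is independent. In tangent coordinates this path is
\[
X_a(t)=D+\overline L_a(U_a-t)\ \ (t\leq U_a),\qquad X_a(t)=D+\overline R_a(t-U_a)\ \ (t\geq U_a),
\]
so it is a continuous function of \((D,Y_{a})\), up to the harmless constant extension past \(a\). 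Along the subsequence, \((D,Y_{a_n})\) converges to \(\operatorname{Exp}(1)\otimes\Lambda\). Hence \(\mathcal L(X_{a_n})\) converges to the law of the reassembled path
\[
X(t)=D+L(U-t)\ \ (t\leq U),\qquad X(t)=D+R(t-U)\ \ (t\geq U).
\]
By Proposition~\ref{prop:infinite-barrier-deficit-law}, this limit is the path marginal of \(\Gamma_H\), whatever the subsequence. Lemma~\ref{lem:entrance-unique-zero}, applied to each component \(K_u\) and integrated over \(\pi\), shows that almost surely \(L\) and \(R\) vanish only at \(0\). Consequently \(X\) has the unique minimiser \(U\) on \(\mathbb R_+\), and \(U\) is the unique minimiser on every \([0,m]\) with \(m>U\).

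\textbf{Passing the argmin to the limit.} Let \(S_m(f)\) denote the leftmost minimiser of \(f\) on \([0,m]\). As in the continuity check of Proposition~\ref{prop:rough-unique-window-maximizer}, \(S_m\) is continuous at paths having a unique minimiser on \([0,m]\). On \(\{U_{a_n}<m\}\) we have \(S_m(X_{a_n})=U_{a_n}=\sigma_{a_n}\). The probability of the complementary event is small uniformly in \(n\) by the tail bound \eqref{eq:tilted-left-horizon-tail}, and \(\pi([m,\infty))\to0\). The extended continuous mapping theorem then shows that \(\pi\) is the law of \(\operatorname{argmin}_{\mathbb R_+}X\) under \(\Gamma_H\); this argmin is a.s. unique and finite by the preceding step. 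We define \(\pi_H\) to be this law, which does not depend on the subsequence, and conclude \(\pi=\pi_H\).

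\textbf{Main obstacle.} The delicate point is the almost-sure uniqueness and interiority of the argmin of the \(\Gamma_H\)-path, which is exactly what makes \(S_m\) continuous at almost every limit path. It is supplied only through the mixture representation and Lemma~\ref{lem:entrance-unique-zero}. A second point needs care: the limit path is a function of \((D,Y)\) only after the right-arm horizon has diverged. This divergence is provided by \eqref{eq:tilted-right-horizon-divergence} and by Theorem~\ref{thm:subsequential-boundary-entrance-mixture}, and it ensures that the constant extension beyond \(a_n-U_{a_n}\) does not affect \([0,m]\) asymptotically.
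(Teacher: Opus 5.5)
Your proposal is correct and is essentially the paper's argument. You reassemble the mark and an independent $\operatorname{Exp}(1)$ variable into the barrier-deficit path, identify its law with $\Gamma_H$ through Proposition~\ref{prop:infinite-barrier-deficit-law}, and invert using the unique zero supplied by Lemma~\ref{lem:entrance-unique-zero}.

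The paper instead recovers the whole mark with a Borel inverse $\Psi$, which gives $\mu_\pi\otimes\operatorname{Exp}(1)=\Psi_*\Gamma_H$ directly; you recover only $U$ and rely on the kernels $K_u$ being fixed. Your window-by-window continuity step for the argmin is harmless but unnecessary: a Borel first-argmin map composed with the reassembly already returns $u$ almost surely under $\Lambda\otimes\operatorname{Exp}(1)$.
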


\begin{proof}
Let \(\eta\) denote the unit exponential law.  We first make the
reconstruction and its topology precise.  Define the closed compatible
subset \(\mathcal C\subset\mathcal Y\) by
\begin{equation*}
\begin{split}
  \mathcal C:=\{(u,v,\ell,r):{}&
    \ell(0)=r(0)=0,\quad v=\ell(u),\\
    &\ell(s)=\ell(s\wedge u)\text{ for every }s\geq0\}.
\end{split}
\end{equation*}
Every finite-horizon marked law and every \(K_u\) is supported on
\(\mathcal C\).  The set is closed because evaluation at a moving time and
the map that stops a path at a moving time are continuous in the product
local-uniform topology.  We therefore regard all these laws as laws on the
closed Polish subspace \(\mathcal C\).  For
\(((u,v,\ell,r),d)\in\mathcal C\times\mathbb R_+\), set
\begin{equation}
\begin{aligned}
  E&:=v+d,\\
  X(t)&:=d+\ell(u-t),&&0\leq t\leq u,\\
  X(t)&:=d+r(t-u),&&t\geq u,
\end{aligned}
\qquad
  \Phi((u,v,\ell,r),d):=(E,X).
  \label{eq:marked-to-barrier-reconstruction}
\end{equation}
Because both arms vanish at zero, the path formula can equivalently be
written
\begin{equation}
  X(t)=d+\ell((u-t)^+)+r((t-u)^+).
  \label{eq:continuous-reconstruction-formula}
\end{equation}
This also proves continuity of \(\Phi\) for the product local-uniform
topologies.  Indeed, suppose
\begin{equation*}
  (u_n,v_n,\ell_n,r_n,d_n)\longrightarrow(u,v,\ell,r,d).
\end{equation*}
On each fixed \([0,A]\), the two stopped arguments in
\eqref{eq:continuous-reconstruction-formula} differ from their limits by at
most \(|u_n-u|\).  Local-uniform convergence of the arms and uniform
continuity of \(\ell,r\) on the resulting compact argument intervals give
\(\sup_{t\leq A}|X_n(t)-X(t)|\to0\), while \(E_n\to E\).

Now take an arbitrary sequence \(a_n\to\infty\), choose the compatible grid
diagonal of Proposition~\ref{prop:relative-tilted-grid-diagonal}, and then
take any further subsequence supplied by
Theorem~\ref{thm:subsequential-boundary-entrance-mixture}.  Along it the
continuous marked laws converge to
\begin{equation}
  \mu_\pi=\int_{[0,\infty)}K_u\,\pi(\mathrm du)
  \label{eq:generic-subsequential-mixture}
\end{equation}
for some probability measure \(\pi\).  Adjoining an independent variable
with law \(\eta\) gives convergence of the product laws to
\(\mu_\pi\otimes\eta\).  In the long-horizon representation, applying
\(\Phi\) to the finite-horizon mark and that independent variable gives
exactly \((E_{a_n},X_{a_n})\) from
\eqref{eq:barrier-deficit-process}.  Indeed, there
\(u=\sigma_{a_n}\), \(v=M_{a_n}\), and
\begin{equation*}
  \ell(u-t)=M_{a_n}-B_t\quad(0\leq t\leq u),
  \qquad
  r(t-u)=M_{a_n}-B_t\quad(u\leq t\leq a_n).
\end{equation*}
The constant extension of \(r\) gives the constant extension of \(X_{a_n}\)
after \(a_n\).  Hence the continuous mapping theorem and
Proposition~\ref{prop:infinite-barrier-deficit-law} give
\begin{equation}
  \Phi_*(\mu_\pi\otimes\eta)=\Gamma_H.
  \label{eq:common-reconstructed-image}
\end{equation}

We next construct a single measurable inverse.  For
\(f\in C_{\mathrm{loc}}(\mathbb R_+)\), put
\begin{equation}
  m(f):=\inf_{n\geq1}\min_{0\leq t\leq n}f(t).
  \label{eq:measurable-global-infimum}
\end{equation}
Each compact minimum is continuous in the local-uniform topology, so
\(m\), with values allowed in \([ -\infty,\infty)\), is Borel.  Also put
\begin{equation}
  \alpha(f)
  :=\inf\left\{
    q\in\mathbb Q_+:
    \min_{0\leq t\leq q}f(t)=m(f)
  \right\},
  \qquad \inf\varnothing:=\infty.
  \label{eq:measurable-unique-argmin}
\end{equation}
This is an extended-valued Borel map because the infimum is over a countable
set of Borel equality events.  Whenever \(f\) has a finite attained global
minimum at first time \(t_*\), every rational \(q<t_*\) has
\(\min_{[0,q]}f>m(f)\), whereas every rational \(q\geq t_*\) has
\(\min_{[0,q]}f=m(f)\).  Thus \(\alpha(f)=t_*\); in particular it is the
minimiser when that minimiser is unique.

On the Borel set where
\(m(f)\in\mathbb R\), \(\alpha(f)<\infty\), and
\(0\leq m(f)\leq e\), define
\begin{equation}
\begin{aligned}
  d_f&:=m(f),& u_f&:=\alpha(f),& v_f&:=e-m(f),\\
  \ell_f(s)&:=f((u_f-s)^+)-m(f),& &s\geq0,\\
  r_f(s)&:=f(u_f+s)-m(f),& &s\geq0,
\end{aligned}
  \label{eq:measurable-arm-recovery}
\end{equation}
and let
\begin{equation*}
  \Psi(e,f):=((u_f,v_f,\ell_f,r_f),d_f).
\end{equation*}
Give \(\Psi\) any fixed value off this set.  The maps
\begin{equation*}
\begin{split}
  (f,u,d)&\longmapsto
    \bigl(f((u-s)^+)-d\bigr)_{s\geq0},\\
  (f,u,d)&\longmapsto
    \bigl(f(u+s)-d\bigr)_{s\geq0}
\end{split}
\end{equation*}
are continuous into \(C_{\mathrm{loc}}(\mathbb R_+)\): on a fixed
\(s\)-compact, local-uniform convergence of \(f\) and its uniform
continuity on one larger compact control the moving arguments.  Thus
\(\Psi\) is Borel.

Lemma~\ref{lem:entrance-unique-zero} now supplies the essential inverse
property.  Under \(K_u\), the arm definitions and
\eqref{eq:marked-to-barrier-reconstruction} give
\begin{equation*}
  X(t)-d=G^{(u)}(t-u),
  \qquad t\geq0.
\end{equation*}
Consequently Lemma~\ref{lem:entrance-unique-zero} gives
\begin{equation}
  X(u)=d,
  \qquad X(t)>d\quad(t\neq u).
  \label{eq:barrier-to-marked-inverse}
\end{equation}
No behaviour at infinity is needed: the value \(d\) is attained at \(u\)
and nowhere else.  Hence \(m(X)=d\) and \(\alpha(X)=u\).  Moreover,
\begin{equation*}
  E-d=v,
  \qquad
  X((u-s)^+)-d=\ell(s),
  \qquad
  X(u+s)-d=r(s).
\end{equation*}
For \(s>u\), the middle identity uses both \(X(0)-d=\ell(u)\) and the
constant extension of \(\ell\); it also covers \(u=0\), where
\(\ell\equiv0\).  All identities in this recovery statement are Borel
events by the maps just constructed.  Lemma~\ref{lem:entrance-unique-zero}
gives them \(K_u\)-almost surely for every \(u\geq0\); integrating against
\(\pi\) and then \(\eta\) therefore gives
\begin{equation}
  \Psi\circ\Phi=\operatorname{id}
  \quad
  (\mu_\pi\otimes\eta)\text{-almost surely}.
  \label{eq:reconstruction-left-inverse}
\end{equation}
Combining \eqref{eq:common-reconstructed-image} and
\eqref{eq:reconstruction-left-inverse} yields
\begin{equation}
  \mu_\pi\otimes\eta
  =\Psi_*\Gamma_H.
  \label{eq:marked-law-from-barrier-law}
\end{equation}
Thus every possible subsequential marked law \(\mu_\pi\) is the same.
Since \(K_u\) has deterministic first coordinate \(u\), the first marginal
of \(\mu_\pi\) is exactly \(\pi\); consequently every possible mixing law
is the same probability measure, which we denote by \(\pi_H\), and the
common marked law is
\(\mu_H=\int K_u\,\pi_H(\mathrm du)\).

It remains to compile the two full convergence statements.  Existence of a
subsequential pair \((\pi_H,\mu_H)\) follows by applying
Theorem~\ref{thm:subsequential-boundary-entrance-mixture}, for example, to
\(a_n=n\).
For any sequence \(a_n\to\infty\), that theorem makes its continuous marked
laws relatively compact, through a compatible grid diagonal, and every
subsequence has a further subsequence whose limit is the just identified
\(\mu_H\).  The subsequence criterion therefore proves
\eqref{eq:pb1-marked-convergence}.

Finally fix an arbitrary sequence and selected diagonal from the second
part of the statement.  The laws \((\pi_n)\) are tight by
Theorem~\ref{thm:subsequential-boundary-entrance-mixture}.  From any
subsequence, that theorem supplies a further subsequence such that
\(\pi_n\Rightarrow\pi\) and the continuous marked laws converge
to \(\int K_u\,\pi(\mathrm du)\).  The convergence already proved forces
this marked limit to be \(\mu_H\), and comparison of first marginals gives
\(\pi=\pi_H\).  Thus every subsequence of \((\pi_n)\) has a further
subsequence converging to \(\pi_H\), which proves
\eqref{eq:unique-grid-mixing-limit}.
\end{proof}

\begin{corollary}
\label{cor:entrance-mixing-scale-fixed-point}
Put \(\theta:=1-H\).  Then
\begin{equation}
  z(u)=\EE_{Q^1}e^{-u^HV^1},
  \label{eq:entrance-normalizer-scaling}
\end{equation}
and, on \((0,\infty)\),
\begin{equation}
  \pi_H(\mathrm du)=c_H^{\mathrm{mix}}z(u)u^{\theta-1}\,\mathrm du,
  \qquad
  (c_H^{\mathrm{mix}})^{-1}
  =\int_0^\infty z(u)u^{\theta-1}\,\mathrm du.
  \label{eq:explicit-entrance-mixing-law}
\end{equation}
Moreover, for every \(c>0\),
\begin{equation}
  \frac{\mathcal Z(ca)}{\mathcal Z(a)}
  \longrightarrow c^{-\theta}.
  \label{eq:normalizer-regular-variation}
\end{equation}
\end{corollary}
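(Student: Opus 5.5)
We prove the three claims in order: the scaling identity \eqref{eq:entrance-normalizer-scaling}, then a disintegration of \(\pi_H\) against the density of \(\tau\), then the exponent \(\theta\) and \eqref{eq:normalizer-regular-variation}.

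\emph{Scaling of the entrance laws.} Fix \(u>0\) and let \((\mathcal S_cf)(t):=c^Hf(t/c)\). Fractional-Brownian self-similarity maps the finite-wall law \eqref{eq:finite-left-wall-law} with parameters \((h_n,1,L_n)\) onto the one with parameters \((uh_n,u,uL_n)\). When \(u\) is not dyadic, the mesh \(uh_n\) is not dyadic, so we reduce exactly as in the assembly of the second zoom. We choose dyadic \(\widehat h_n\in[uh_n,2uh_n)\) and grid-aligned horizons \(u_n\to u\), and use the moving-horizon statement \eqref{eq:finite-left-moving-limit} of Proposition~\ref{prop:finite-left-entrance-law}, which allows the left horizon to converge rather than be fixed. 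The scale factor lies in a compact range, so a subsequence argument with continuity of \((c,f)\mapsto\mathcal S_cf\) gives \(Q^u=(\mathcal S_u)_*Q^1\) on \(C([-u,\infty))\). In particular \(V^u\deq u^HV^1\), which is \eqref{eq:entrance-normalizer-scaling}.

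\emph{Disintegration of \(\pi_H\).} We compute \(\pi_H\) from the untilted law of \(\tau\). By Lemma~\ref{lem:unique-interior-maximizer} and the grid mixture, \(\PP(K_n=k)=p_{n,k}\). Conditional on \(K_n=k\), the tilted weight is \(z_{n,k}\). By Proposition~\ref{prop:uniform-entrance-component-compiler}, \(z_{n,k}=z(kh_n)+o(1)\) uniformly for \(kh_n\le R\). For a continuous compactly supported \(\chi\ge0\) on \((0,\infty)\), these facts give
\[
  Z_a\,\widehat{\EE}_a\chi(U_a)=\EE\bigl[e^{-a^HM}\chi(a\tau)\bigr]
  =\int\chi(u)\,\bigl(z(u)+o(1)\bigr)\,\PP(a\tau\in\mathrm du).
\]
By Theorem~\ref{thm:unique-entrance-mixture} the left side divided by \(Z_a\) tends to \(\int\chi\,\mathrm d\pi_H\). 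Hence the measures \(Z_a^{-1}\PP(a\tau\in\mathrm du)\) converge vaguely on \((0,\infty)\) to \(z(u)^{-1}\pi_H(\mathrm du)\). By Lemma~\ref{lem:entrance-endpoint-tilt}, \(z\) is continuous and strictly positive, so this division is legitimate.

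Write \(G(x):=\PP(\tau\le x)\). Vague convergence of \(\PP(a\tau\in\cdot)/Z_a\) on \((0,\infty)\) implies that \(G(u/a)-G(1/a)\), divided by \(Z_a\), converges for every \(u>0\). This is a statement about a monotone function, so the standard characterisation of regular variation applies: \(G\) is regularly varying at zero with some index \(\rho\ge0\), and the limit measure is \(c\,u^{\rho-1}\mathrm du\), or an atomless degenerate limit when \(\rho=0\). The case \(\rho=0\) must be excluded. If \(\rho=0\), the limit would put no mass on \((0,\infty)\), so \(\pi_H\) would be concentrated at \(0\). To rule this out, we plan to show \(\pi_H((0,\infty))>0\) by comparing the \(k=0\) component weight \(q_{n,0}=p_{n,0}/Z_n^{\mathrm{grid}}\) with the total, using \(p_{n,0}\le\PP(B_t\le0 \text{ on a grid})\). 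This step is not settled; the alternative is to read it off the index identification below.

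Given \(\rho>0\), we get \(\pi_H(\mathrm du)\propto z(u)u^{\rho-1}\mathrm du\) on \((0,\infty)\), with no atom at zero. Integrability at infinity follows from \(z(u)=\EE e^{-u^HV^1}\) and \(Q^1(V^1>0)=1\) by \eqref{eq:finite-left-wall-and-repulsion}. Moreover \(Z_a=\int z\,\mathrm d\PP(a\tau\in\cdot)\,(1+o(1))\) is regularly varying in \(a\) with index \(-\rho\); the dominated tails come from Theorem~\ref{thm:tilted-scalar-compact-containment}. This gives \(\mathcal Z(ca)/\mathcal Z(a)\to c^{-\rho}\).

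\emph{Identification \(\rho=\theta\).} This is the main obstacle, since it requires an external exponent. By Karamata's Tauberian theorem, \(\PP(M\le\varepsilon)\) is regularly varying with index \(\rho/H\). Molchan's bounds \(p_H(T)=T^{-(1-H)+o(1)}\), quoted in the introduction, together with \eqref{eq:persistence-maximum-lower-tail}, force \(\rho/H=(1-H)/H\), that is \(\rho=\theta\). This yields \eqref{eq:explicit-entrance-mixing-law} and \eqref{eq:normalizer-regular-variation}.

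The delicate technical point is the uniform local comparison \(z_{n,k}\approx z(kh_n)\) when the untilted weights \(p_{n,k}\) have no a priori density. We handle it by testing only against continuous \(\chi\) supported in compact subsets of \((0,\infty)\), and by invoking the uniform compiler estimate \eqref{eq:uniform-entrance-compiler-closed} together with \(d_{\mathrm{BL}}\)-closeness of the tilted weights.
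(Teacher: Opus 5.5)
\emph{The scaling identity.} Your argument for \(Q^u=(\mathcal S_u)_*Q^1\) is circular. The rescaling trick from the second-zoom assembly removes a non-dyadic mesh only because self-similarity of the limit \(Q\) is already known there. In your setting \(h_n=2^{-r_n}\), so the factor \(c_n=\widehat h_n/(uh_n)\) is the constant \(c=2^{\lceil\log_2u\rceil}/u\). Hence \((\mathcal S_c)_*(\mathcal S_u)_*Q_{h_n,1,L_n}=(\mathcal S_{2^k})_*Q_{h_n,1,L_n}\) with \(k=\lceil\log_2u\rceil\). This only reproduces the trivial dyadic identity \(Q^{2^k}=(\mathcal S_{2^k})_*Q^1\). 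Returning to horizon \(u\) would require \((\mathcal S_{1/c})_*Q^{cu}=Q^u\), which is the claim itself. The moving-horizon statement \eqref{eq:finite-left-moving-limit} does not help either, because the grids \(uh_n\mathbb Z\cap[-u,uL_n]\) are not dyadic grids with nearby aligned horizons. A direct proof would need new input, for example that finite-wall laws on non-dyadic grids have the same directed limit, via continuity in the wall sites plus wall monotonicity. The paper instead obtains the scaling a posteriori. The limit of the exact identity \eqref{eq:finite-scale-tilt-fixed-point} gives \(\mu_H=(\mathscr S_c)_*(w_c\mu_H/d_c)\). Uniqueness of the disintegration in \(U\) then gives \(K_{cu}=(\mathscr S_c)_*(w_cK_u/h_c(u))\) for \(\pi_H\)-a.e.\ \(u\), and testing against \(e^V\) gives \(h_c(u)=z(cu)/z(u)\). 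Once the density of \(\pi_H\) is known to be positive, continuity in \(u\) upgrades this to \(P_{cu}=(\mathscr S_c)_*P_u\) for every \(u>0\).

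\emph{The degenerate case.} Excluding \(\pi_H=\delta_0\) is indispensable, and neither of your suggestions closes it. The fact \(q_{n,0}\to0\) does not stop \(\pi_n\) from piling up mass at small positive horizons. ``Reading it off the index'' presupposes regular variation of \(\mathcal Z\), which your vague-limit argument yields only when the limit on \((0,\infty)\) is nonzero. The missing observation is the exact identity \(\mathcal Z(ca)/\mathcal Z(a)=\widehat{\EE}_a e^{-(c^H-1)V_a}\) for \(c\geq1\). By \eqref{eq:pb1-marked-convergence} it gives the limit \(d_c=\EE_{\mu_H}e^{-(c^H-1)V}\) unconditionally. Then \(d_{cd}=d_cd_d\) and continuity give \(d_c=c^{-\alpha}\). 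Molchan's exponent gives \(\alpha=\theta>0\), either through your Karamata step (which covers index zero) or through the paper's Ces\`aro comparison. Consequently \(\pi_H(\{0\})\leq\mu_H(V=0)\leq d_c=c^{-\theta}\to0\), which also supplies the ``no atom at zero'' that you assert without proof.

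With these repairs, your untilted route is a legitimate alternative to the paper's disintegration argument for the density. That route combines vague convergence of \(\mathcal Z(a)^{-1}\PP(a\tau\in\cdot)\) to \(z^{-1}\pi_H\) with the exact dilation structure of \(\PP(a\tau\in\cdot)\). It needs the comparison \(z_{n,k}\approx z(kh_n)\) to come from the unweighted convergence \(P_n\Rightarrow P_u\) in the proof of Proposition~\ref{prop:uniform-entrance-component-compiler}. The \(d_{\mathrm{BL}}\) statement alone does not control the unbounded \(1/z=\EE_Ke^V\). Finally, finiteness of \(\int_0^\infty z(u)u^{\theta-1}\,\mathrm du\) does not follow from \(Q^1(V^1>0)=1\). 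It is, however, automatic once \(\pi_H\) is a probability measure.
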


\begin{proof}
For \(c>0\), define the marked dilation
\begin{equation}
  \mathscr S_c(u,v,\ell,r)
  :=\left(cu,c^Hv,
     \bigl(c^H\ell(s/c)\bigr)_{s\geq0},
     \bigl(c^Hr(s/c)\bigr)_{s\geq0}\right).
  \label{eq:marked-scale-dilation}
\end{equation}
This map is continuous for the product topology of \(\mathcal Y\).  Write
\(Y_a:=(U_a,V_a,\overline L_a,\overline R_a)\).  For the bounded weak-limit
passage, now fix \(c\geq1\) and put
\begin{equation*}
  w_c(u,v,\ell,r):=e^{-(c^H-1)v}.
\end{equation*}
Thus \(w_c\in C_b(\mathcal Y)\) and \(0<w_c\leq1\).  Self-similarity and
the definition of the tilted law give the exact finite-scale identity
\begin{equation}
  \mathcal L_{\widehat{\PP}_{ca}}(Y_{ca})
  =
  (\mathscr S_c)_*
  \left[
    \frac{w_c\,\mathcal L_{\widehat{\PP}_a}(Y_a)}
         {\widehat{\EE}_a w_c(Y_a)}
  \right].
  \label{eq:finite-scale-tilt-fixed-point}
\end{equation}
Its denominator is exactly
\begin{equation}
  \widehat{\EE}_a w_c(Y_a)
  =\frac{\EE e^{-c^HM_a}}{\mathcal Z(a)}
  =\frac{\mathcal Z(ca)}{\mathcal Z(a)}.
  \label{eq:finite-scale-tilt-denominator}
\end{equation}
By Theorem~\ref{thm:unique-entrance-mixture}, bounded weak convergence in
\eqref{eq:finite-scale-tilt-fixed-point} gives
\begin{equation}
\begin{split}
  h_c(u)&:=\int_{\mathcal Y}w_c\,\mathrm dK_u,\\
  d_c
  &:=\int_{\mathcal Y}w_c\,\mathrm d\mu_H\\
  &=\int_{[0,\infty)}h_c(u)\,\pi_H(\mathrm du)
   =\lim_{a\to\infty}\frac{\mathcal Z(ca)}{\mathcal Z(a)},
  \qquad c\geq1.
\end{split}
  \label{eq:normalizer-ratio-limit-ge-one}
\end{equation}
The function \(h_c\) is continuous by the weak continuity of
\(u\mapsto K_u\), and \(0<h_c\leq1\).  The number \(d_c\) is strictly
positive, so division in the limit is legitimate.  The limiting marked law
satisfies
\begin{equation}
  \mu_H=(\mathscr S_c)_*\left(\frac{w_c\mu_H}{d_c}\right),
  \qquad c\geq1.
  \label{eq:limiting-marked-scale-fixed-point}
\end{equation}
Taking the lifetime marginal shows, for every
\(f\in C_b([0,\infty))\), that
\begin{equation}
  \int f(u)\,\pi_H(\mathrm du)
  =d_c^{-1}\int f(cu)h_c(u)\,\pi_H(\mathrm du),
  \qquad c\geq1.
  \label{eq:mixing-scale-map}
\end{equation}

We next identify \(h_c\) without assuming scale invariance of the dyadic
entrance construction.  The right-hand side of
\eqref{eq:limiting-marked-scale-fixed-point} is the mixture with source
measure
\begin{equation*}
  \widehat\pi_c(\mathrm du):=d_c^{-1}h_c(u)\pi_H(\mathrm du)
\end{equation*}
and component at \(u\)
\begin{equation*}
  \widetilde K_{c,u}
  :=(\mathscr S_c)_*\left(\frac{w_cK_u}{h_c(u)}\right).
\end{equation*}
That component has deterministic lifetime \(cu\), and
\eqref{eq:mixing-scale-map} says that
\((D_c)_*\widehat\pi_c=\pi_H\), where \(D_c(u):=cu\).  Uniqueness of regular
conditional laws given the lifetime coordinate in the two disintegrations
of \(\mu_H\) therefore gives
\begin{equation}
  K_{cu}=(\mathscr S_c)_*\left(\frac{w_cK_u}{h_c(u)}\right)
  \quad\text{for \(\pi_H\)-almost every \(u\)}.
  \label{eq:entrance-kernel-scale-cocycle-ae}
\end{equation}
Here \(\widehat\pi_c\) and \(\pi_H\) have the same null sets because
\(h_c>0\).

The endpoint tilt itself gives the finite moment
\begin{equation*}
  \EE_{K_u}e^V=\frac1{z(u)},
  \qquad u\geq0.
\end{equation*}
Apply this nonnegative test to
\eqref{eq:entrance-kernel-scale-cocycle-ae}.  Since the output height is
\(c^HV\), for \(\pi_H\)-almost every \(u\),
\begin{equation*}
  \frac1{z(cu)}
  =\frac{\EE_{K_u}\!\left[e^{c^HV}e^{-(c^H-1)V}\right]}{h_c(u)}
  =\frac1{h_c(u)z(u)}.
\end{equation*}
Consequently,
\begin{equation}
  h_c(u)=\frac{z(cu)}{z(u)}
  \quad\text{for \(\pi_H\)-almost every \(u\)},
  \qquad c\geq1.
  \label{eq:entrance-tilt-scaling-ae}
\end{equation}

On \((0,\infty)\), define the Borel measure
\begin{equation*}
  \rho(\mathrm du):=z(u)^{-1}\pi_H(\mathrm du).
\end{equation*}
It is Radon because \(z\) is continuous and has a positive minimum on every
compact subset of \((0,\infty)\).  If
\(g\in C_c((0,\infty))\), then \(g/z\), extended by zero at the origin,
belongs to \(C_b([0,\infty))\).  Substituting this function in
\eqref{eq:mixing-scale-map} and using
\eqref{eq:entrance-tilt-scaling-ae} gives
\begin{equation*}
  \int g(cu)\,\rho(\mathrm du)=d_c\int g(u)\,\rho(\mathrm du).
\end{equation*}
Consequently, as an equality of Radon measures,
\begin{equation}
  (D_c)_*\rho=d_c\rho,
  \qquad D_c(u):=cu,\qquad c\geq1.
  \label{eq:homogeneous-mixing-measure}
\end{equation}

Formula \eqref{eq:normalizer-ratio-limit-ge-one} and dominated convergence
applied to \(w_c\) under \(\mu_H\) give
\begin{equation*}
  0<d_c\leq1,
  \qquad c\longmapsto d_c\ \text{continuous on }[1,\infty).
\end{equation*}
For \(c,d\geq1\), factor the finite ratios as
\begin{equation*}
  \frac{\mathcal Z(cda)}{\mathcal Z(a)}
  =\frac{\mathcal Z(cd a)}{\mathcal Z(da)}
   \frac{\mathcal Z(da)}{\mathcal Z(a)}.
\end{equation*}
Letting \(a\to\infty\) proves \(d_{cd}=d_cd_d\).  Therefore the continuous
additive function
\begin{equation*}
  t\longmapsto-\log d_{e^t},
  \qquad t\geq0,
\end{equation*}
equals \(\alpha t\) for some \(\alpha\geq0\).  Define
\(d_c:=d_{1/c}^{-1}\) when \(0<c<1\).  We have proved
\begin{equation}
  d_c=c^{-\alpha},
  \qquad c>0.
  \label{eq:normalizer-power-cocycle}
\end{equation}
Applying \((D_c)_*\) to \eqref{eq:homogeneous-mixing-measure} with
\(1/c\geq1\) extends that measure identity to \(0<c<1\).  The actual
normaliser limit at such a scale follows without an unbounded change of
measure: with \(b=ca\),
\begin{equation*}
  \frac{\mathcal Z(ca)}{\mathcal Z(a)}
  =\left(
      \frac{\mathcal Z((1/c)b)}{\mathcal Z(b)}
    \right)^{-1}
  \longrightarrow d_{1/c}^{-1}=d_c.
\end{equation*}
Thus
\begin{equation}
  \lim_{a\to\infty}\frac{\mathcal Z(ca)}{\mathcal Z(a)}
  =d_c=c^{-\alpha},
  \qquad c>0.
  \label{eq:normalizer-ratio-limit}
\end{equation}

To identify \(\rho\), let \(\nu:=(\log)_*\rho\).  This Radon measure is
locally finite because logarithms of compact intervals are compact.  The now
all-scale identity \eqref{eq:homogeneous-mixing-measure}, with \(c=e^s\), is
equivalent to
\begin{equation}
  \nu(A+s)=e^{\alpha s}\nu(A),
  \qquad s\in\mathbb R.
  \label{eq:log-homogeneous-measure}
\end{equation}
It follows directly that the locally finite Radon measure
\begin{equation*}
  \widetilde\nu(\mathrm dy):=e^{-\alpha y}\nu(\mathrm dy)
\end{equation*}
is translation invariant.  If \(\widetilde\nu=0\), set \(C=0\).  Otherwise,
the uniqueness of Haar measure
\parencite[Theorem~2.20]{folland2016abstractHarmonic} gives
\(\widetilde\nu=C\,\mathrm dy\) for some \(C>0\).  Thus in either case
\(C\geq0\), and \(C=0\) remains possible at this stage.  Since \(u=e^y\)
and \(\mathrm dy=\mathrm du/u\),
transforming back yields
\begin{equation}
  \rho(\mathrm du)=C u^{\alpha-1}\,\mathrm du.
  \label{eq:homogeneous-mixing-density}
\end{equation}

The logarithmic persistence theorem of \textcite{molchan1999maximum} gives
\begin{equation*}
  \frac{\log F_a(1)}{\log a}\longrightarrow-\theta.
\end{equation*}
We transfer this exponent to the Laplace normaliser.  Since \(M_a\geq0\),
Tonelli's theorem gives
\begin{equation}
  \mathcal Z(a)
  =\int_0^\infty e^{-x}F_a(x)\,\mathrm dx.
  \label{eq:normalizer-distribution-integral}
\end{equation}
Therefore
\begin{equation}
  \mathcal Z(a)
  \geq\int_1^2e^{-x}F_a(x)\,\mathrm dx
  \geq(e^{-1}-e^{-2})F_a(1).
  \label{eq:normalizer-log-lower-comparison}
\end{equation}
For the reverse bound, fix \(\kappa>\theta\) and put
\(A_a=\kappa\log a\).  Splitting
\eqref{eq:normalizer-distribution-integral} at \(A_a\) gives
\begin{equation}
  \mathcal Z(a)\leq F_a(A_a)+e^{-A_a}.
  \label{eq:normalizer-log-upper-comparison}
\end{equation}
By self-similarity, if \(b_a:=a/A_a^{1/H}\), then
\begin{equation*}
  F_a(A_a)
  =F_{b_a}(1),
\end{equation*}
and
\begin{equation*}
  \frac{\log b_a}{\log a}
  =1-\frac{\log(\kappa\log a)}{H\log a}
  \longrightarrow1.
\end{equation*}
Applying Molchan's logarithmic theorem at time \(b_a\) yields
\begin{equation*}
  \frac{\log F_a(A_a)}{\log a}
  =
  \frac{\log F_{b_a}(1)}{\log b_a}
  \frac{\log b_a}{\log a}
  \longrightarrow-\theta.
\end{equation*}
Moreover, \(e^{-A_a}=a^{-\kappa}\).  Since \(\kappa>\theta\), combining
\eqref{eq:normalizer-log-lower-comparison} and
\eqref{eq:normalizer-log-upper-comparison} proves
\begin{equation}
  \frac{\log\mathcal Z(a)}{\log a}\longrightarrow-\theta.
  \label{eq:normalizer-log-exponent}
\end{equation}
We next compare this exponent with the cocycle exponent without invoking a
Tauberian theorem.  Fix \(q>1\).  By
\eqref{eq:normalizer-ratio-limit},
\begin{equation*}
  \log\frac{\mathcal Z(q^{k+1})}{\mathcal Z(q^k)}
  \longrightarrow-\alpha\log q.
\end{equation*}
Summing and applying Ces\`aro convergence gives
\begin{equation*}
  \frac{\log\mathcal Z(q^n)}{n\log q}\longrightarrow-\alpha.
\end{equation*}
The function \(\mathcal Z\) is nonincreasing.  If
\(q^n\leq a<q^{n+1}\), its logarithm is therefore trapped between the two
adjacent displayed values, while \(\log a=n\log q+O(1)\).  Hence
\begin{equation*}
  \frac{\log\mathcal Z(a)}{\log a}\longrightarrow-\alpha.
\end{equation*}
Comparison with \eqref{eq:normalizer-log-exponent} proves
\(\alpha=\theta\).

Let \(p_0:=\pi_H(\{0\})\).  Evaluating the lifetime measure identity
\eqref{eq:mixing-scale-map} at the invariant set \(\{0\}\), or equivalently
using its extension from continuous tests to Borel sets, gives
\begin{equation*}
  p_0=d_c^{-1}p_0,
  \qquad c>1.
\end{equation*}
Here \(h_c(0)=1\), because the height under \(K_0\) is zero.  But
\(d_c=c^{-\theta}<1\), so \(p_0=0\).  Consequently \(C>0\) in
\eqref{eq:homogeneous-mixing-density}, because \(\pi_H\) is a probability
measure and now has full mass on \((0,\infty)\).  Finally,
\begin{equation*}
  1=C\int_0^\infty z(u)u^{\theta-1}\,\mathrm du.
\end{equation*}
Thus the integral is finite and positive, and taking
\(c_H^{\mathrm{mix}}=C\) proves
\eqref{eq:explicit-entrance-mixing-law}.  Equations
\eqref{eq:normalizer-ratio-limit} and
\eqref{eq:normalizer-power-cocycle}, with \(\alpha=\theta\), prove
\eqref{eq:normalizer-regular-variation}.

It remains to recover \eqref{eq:entrance-normalizer-scaling}.  The density
just proved is strictly positive on every nonempty open subset of
\((0,\infty)\).  For each fixed \(c\geq1\), both sides of
\eqref{eq:entrance-tilt-scaling-ae} are continuous in \(u>0\), so that
identity holds for every \(u>0\).  Likewise, both sides of
\eqref{eq:entrance-kernel-scale-cocycle-ae} depend weakly continuously on
\(u>0\): on the right this uses the bounded function \(w_c\), continuity of
\(u\mapsto K_u\), and positivity of \(h_c\).  Hence
\begin{equation}
  K_{cu}=(\mathscr S_c)_*\left(
    \frac{e^{-(c^H-1)V}K_u}{h_c(u)}
  \right),
  \qquad u>0,\quad c\geq1.
  \label{eq:entrance-kernel-scale-cocycle}
\end{equation}
Let \(P_u:=(\mathscr A_u)_*Q^u\) be the unweighted marked arm law.  The
definition of \(K_u\) is equivalently
\begin{equation*}
  P_u(F)=z(u)\EE_{K_u}[e^VF],
  \qquad F\geq0\ \text{Borel}.
\end{equation*}
Using \eqref{eq:entrance-kernel-scale-cocycle} and
\(h_c(u)=z(cu)/z(u)\), we obtain
\begin{equation*}
\begin{split}
  P_{cu}(F)
  &=\frac{z(cu)}{h_c(u)}
    \EE_{K_u}\!\left[
      e^{c^HV}e^{-(c^H-1)V}F(\mathscr S_cY)
    \right]\\
  &=z(u)\EE_{K_u}[e^VF(\mathscr S_cY)]
   =P_u(F\circ\mathscr S_c).
\end{split}
\end{equation*}
Thus \(P_{cu}=(\mathscr S_c)_*P_u\) for \(u>0\) and \(c\geq1\).  If
\(u\geq1\), take the base horizon to be one and \(c=u\).  If \(0<u<1\),
apply the same identity at base horizon \(u\) with \(c=1/u\), and then apply
the inverse dilation \(\mathscr S_u\).  In either case,
\begin{equation*}
  P_u=(\mathscr S_u)_*P_1,
  \qquad u>0.
\end{equation*}
The height coordinate therefore satisfies
\(V^u\deq u^HV^1\) under the unweighted laws.  Taking \(\EE e^{-V}\)
proves \eqref{eq:entrance-normalizer-scaling} for \(u>0\); at
\(u=0\), both sides equal one.
\end{proof}

\subsection{The growing exponential functional}
\label{subsec:rough-growing-functional}

\begin{lemma}
\label{lem:uniform-block-minimum-repulsion}
Fix \(K<\infty\).  There is \(C_K<\infty\) with the following property.
Let \(\widehat Q_{h,u,L}\) be the underlying endpoint-tilted path law of a
component \(K_{h,u,L}\) from
Proposition~\ref{prop:uniform-entrance-component-compiler}, where
\begin{equation}
  h=2^{-m}\leq1,\quad m\in\mathbb Z_{\geq0},\qquad
  u,L\in h\mathbb Z_{\geq0},\qquad
  0\leq u\leq K.
  \label{eq:block-repulsion-component-range}
\end{equation}
Thus, if \(Q_{h,u,L}\) denotes the unweighted law in
\eqref{eq:finite-entrance-component}, then
\begin{equation}
  \frac{\mathrm d\widehat Q_{h,u,L}}{\mathrm dQ_{h,u,L}}
  =\frac{e^{-G_{-u}}}{\EE_{Q_{h,u,L}}e^{-G_{-u}}}.
  \label{eq:remote-block-endpoint-tilt}
\end{equation}
Write \(G=-W\) on its right arm.  If \(r\geq2\) and
\begin{equation}
  S:=h\mathbb Z\cap[r,\min(2r,L)]
  \label{eq:remote-right-block-grid}
\end{equation}
is nonempty, then, for \(0<\varepsilon\leq1\),
\begin{equation}
  \widehat Q_{h,u,L}\left(
    \min_{s\in S}\frac{G_s}{s^H}\leq\varepsilon
  \right)
  \leq C_K\varepsilon.
  \label{eq:uniform-block-minimum-repulsion}
\end{equation}
\end{lemma}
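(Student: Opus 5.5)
We first remove the endpoint tilt and then run the association argument already used in Lemma~\ref{lem:entrance-unique-zero} and Proposition~\ref{prop:uniform-hard-wall-localization}.

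\emph{Removing the tilt.} By \eqref{eq:finite-entrance-normalizer-lower}, the normaliser satisfies $z_{h,u,L}\geq e^{-C_HK^H}$ for $0\leq u\leq K$. The density \eqref{eq:remote-block-endpoint-tilt} is therefore bounded by $e^{C_HK^H}$. It suffices to prove \eqref{eq:uniform-block-minimum-repulsion} under the unweighted law $Q_{h,u,L}$, with a constant independent of $K$, and then to multiply by $e^{C_HK^H}$.

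\emph{Comparison with the block alone.} Under $Q_{h,u,L}$ the constraint set is $F=(h\mathbb Z\cap[-u,L])\setminus\{0\}$, and it contains $S$ because $S\subset[r,L]$. By Lemma~\ref{lem:finite-wall-mtp2-closure}, the law of $G_F$ conditioned on $A_S=\{G_s\geq0:s\in S\}$ is associated. The events $A_{F\setminus S}$ and $\{\min_S G_s/s^H>\varepsilon\}$ are both increasing under this law. Association, followed by complementation exactly as in \eqref{eq:unique-zero-conditioned-bound}, gives
\[
  Q_{h,u,L}\Bigl(\min_{s\in S}\tfrac{G_s}{s^H}\leq\varepsilon\Bigr)
  \leq\frac{\PP(0\leq M_S\leq\varepsilon)}{\PP(M_S\geq0)},
  \qquad M_S:=\min_{s\in S}G_s/s^H .
\]
This step requires no lower bound on $\PP(A_F)$.

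\emph{Scale-free bounds on the block.} By self-similarity, $(G_{rs'}/(rs')^H)_{s'}\deq(G_{s'}/s'^H)_{s'}$. Hence $M_S$ has the law of the minimum of the unit-variance process $Z_t=G_t/t^H$ over the finite set $S/r\subset[1,2]$, and that set is arbitrary. For the denominator, $\PP(M_S\geq0)\geq\PP(G_t>0\ \forall t\in[1,2])=:p_*>0$ by Lemma~\ref{lem:fbm-compact-positivity-support}, uniformly in $S$. For the numerator, Dudley's bound, as in \eqref{eq:unique-zero-entropy}, gives $\sup_{S'}\EE\sup_{t\in S'}(-Z_t)\leq\EE\sup_{[1,2]}|Z|<\infty$, where the supremum runs over finite $S'\subset[1,2]$. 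The anti-concentration estimate \eqref{eq:gaussian-supremum-anticoncentration} then gives $\PP(0\leq M_S\leq\varepsilon)\leq C_H\varepsilon$. Combining these bounds gives \eqref{eq:uniform-block-minimum-repulsion} with $C_K=e^{C_HK^H}C_H/p_*$.

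The main obstacle is uniformity. The grid sets $S/r$ are non-dyadic finite subsets of $[1,2]$ with unbounded cardinality, and the constant must be independent of $h$, $r$, $L$ and $|S|$. The two ingredients above handle this: continuum positivity on $[1,2]$ for the denominator, and the entropy-based supremum bound for the numerator.

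The hypothesis $r\geq2$ is used only to keep the block away from the pin and from the left arm, so that every site of $S$ is a genuine positive-side constraint. The left-arm sites lie in $F\setminus S$ and are absorbed by association.
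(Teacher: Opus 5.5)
Your proposal is correct and follows the paper's proof essentially step for step. The ingredients are the same: association under the block-only wall $A_S$ to discard the remaining constraints; the continuum positivity baseline on $[1,2]$ for the denominator; Dudley's bound together with the Gaussian supremum anti-concentration estimate for the numerator; and the normaliser lower bound $e^{-C_HK^H}$, combined with $e^{-G_{-u}}\leq1$ (which you use implicitly), to pass to the tilted law. The only differences are cosmetic: you remove the tilt first rather than last, and the hypothesis $r\geq2$ is not actually needed here, since any $r>0$ already puts $S$ inside the positive part of the constraint set.
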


\begin{proof}
First work under the unweighted hard-wall law \(Q_{h,u,L}\).  Under the original
centred Gaussian law put
\begin{equation}
  Z_s:=\frac{G_s}{s^H},\qquad
  M_S:=\min_{s\in S}Z_s.
\end{equation}
Self-similarity and Lemma~\ref{lem:fbm-compact-positivity-support} give
\begin{equation}
  \PP(M_S\geq0)
  \geq
  \PP(G_t\geq0\text{ for every }t\in[1,2])
  =p_*>0.
  \label{eq:block-positive-baseline}
\end{equation}
The equality uses \(G=-W\), the fact that \(W\) has the law of \(B\), and
\eqref{eq:fixed-block-positive-baseline}.
For the metric estimate, put \(T:=r^{-1}S\subset[1,2]\) and
\(X_t:=-Z_{rt}\), \(t\in T\).  Self-similarity and symmetry identify this
centred unit-variance vector with \((B_t/t^H)_{t\in T}\).  For
\(s,t\in[1,2]\), Minkowski's inequality and the mean value theorem give
\begin{equation*}
\begin{split}
  \left\|
    \frac{B_s}{s^H}-\frac{B_t}{t^H}
  \right\|_2
  &\leq s^{-H}\|B_s-B_t\|_2
    +\|B_t\|_2\,|s^{-H}-t^{-H}|\\
  &\leq C_H\bigl(|s-t|^H+|s-t|\bigr)
   \leq C_H'|s-t|^H.
\end{split}
\end{equation*}
Thus the canonical covering numbers of every such \(T\) are bounded by
those of \([1,2]\) under \(C_H'|s-t|^H\), whose Dudley entropy integral
is finite.  Dudley's bound
\parencite[Theorem~1.3.3]{adlerTaylor2007randomFields} therefore gives
\begin{equation*}
  \sup_T\EE\sup_{t\in T}X_t<\infty.
\end{equation*}
Since \(-M_S=\sup_{t\in T}X_t\), the event
\(\{0\leq M_S\leq\varepsilon\}\) is
\begin{equation*}
  \left\{-\varepsilon\leq\sup_{t\in T}X_t\leq0\right\}
  \subseteq
  \left\{
    \left|\sup_{t\in T}X_t+\frac{\varepsilon}{2}\right|
    \leq\frac{\varepsilon}{2}
  \right\}.
\end{equation*}
The centred unit-variance Gaussian anti-concentration estimate
\eqref{eq:gaussian-supremum-anticoncentration} now yields
\begin{equation}
  \PP(0\leq M_S\leq\varepsilon)\leq C_H\varepsilon.
  \label{eq:block-minimum-anticoncentration}
\end{equation}
The argument includes a proper final subinterval and a singleton \(S\):
the continuum positivity event still implies the finite one, and the same
continuum metric bound controls every restriction.

We make the finite conditioning exact.  Delete the deterministic zero
coordinate and put
\begin{equation*}
  F:=\bigl(h\mathbb Z\cap[-u,L]\bigr)\setminus\{0\},
  \qquad O:=F\setminus S,
\end{equation*}
with
\begin{equation*}
  A_S:=\{G_s\geq0:s\in S\},
  \qquad A_O:=\{G_t\geq0:t\in O\}.
\end{equation*}
The sites in \(F\) are distinct and nonzero, so \(G_F\) is nondegenerate
and all wall probabilities used below are positive.  Since
\(F=S\mathbin{\dot\cup}O\) and \(G_0=0\) deterministically,
\begin{equation*}
  Q_{h,u,L}=\PP(\,\cdot\mid A_S\cap A_O).
\end{equation*}
Under
\(\PP(\,\cdot\mid A_S)\),
Lemma~\ref{lem:finite-wall-mtp2-closure} gives association.  Both
\(A_O\) and \(\{M_S>\varepsilon\}\) are increasing.  Association,
followed by division by \(\PP(A_O\mid A_S)>0\), gives
\begin{equation*}
  \PP(M_S>\varepsilon\mid A_S\cap A_O)
  \geq \PP(M_S>\varepsilon\mid A_S).
\end{equation*}
Complementing and using \(M_S\geq0\) on \(A_S\) therefore gives
\begin{equation}
\begin{split}
  Q_{h,u,L}(M_S\leq\varepsilon)
  &\leq \PP(M_S\leq\varepsilon\mid A_S)\\
  &=\frac{\PP(0\leq M_S\leq\varepsilon)}
          {\PP(M_S\geq0)}
   \leq C_H\varepsilon.
\end{split}
  \label{eq:unweighted-block-repulsion}
\end{equation}
If \(u=0\), then \(G_{-u}=0\).  If \(u>0\), the endpoint \(-u\)
belongs to \(F\), and the wall constraint gives \(G_{-u}\geq0\)
under \(Q_{h,u,L}\).  Thus in both cases \(e^{-G_{-u}}\leq1\), while
\eqref{eq:pin-mean-bound} gives
\begin{equation*}
  0\leq\EE_{Q_{h,u,L}}G_{-u}
  \leq C_Hu^H\leq C_HK^H.
\end{equation*}
Jensen's inequality therefore gives
\begin{equation}
  \EE_{Q_{h,u,L}}e^{-G_{-u}}
  \geq
  \exp\left\{-\EE_{Q_{h,u,L}}G_{-u}\right\}
  \geq e^{-C_HK^H}.
  \label{eq:block-endpoint-normalizer}
\end{equation}
For the event in \eqref{eq:uniform-block-minimum-repulsion}, its tilted
probability is therefore at most its unweighted probability divided by the
last display.  Combining with
\eqref{eq:unweighted-block-repulsion} proves the claim after enlarging
\(C_K\).  We may also take \(C_K\geq1\); then the same bound is
automatically valid for \(\varepsilon>1\), a form used below.
\end{proof}

\begin{lemma}
\label{lem:continuous-block-minimum}
Fix \(K<\infty\) and \(\kappa\in(0,H)\).  There is
\(C_{K,\kappa}<\infty\) with the following property.  For parameters
\(h,u,L\) satisfying \eqref{eq:block-repulsion-component-range}, under
\(\widehat Q_{h,u,L}\), and for every dyadic \(r=2^j\geq2\) whose
displayed interval is nonempty,
\begin{equation}
\begin{split}
  &\widehat Q_{h,u,L}\left(
    \inf_{r\leq s\leq\min(2r,L)}G_s<r^{H-\kappa}
  \right)\\
  &\qquad\leq
  C_{K,\kappa}\left[
    r^{-\kappa}
    +(1+\sqrt{\log r})r^{-(H-\kappa)}
  \right].
\end{split}
  \label{eq:continuous-block-minimum}
\end{equation}
\end{lemma}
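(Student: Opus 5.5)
The plan is to pass from the grid minimum of Lemma~\ref{lem:uniform-block-minimum-repulsion} to the continuum infimum over the block by a union of two events: either the grid values are too small, or the path oscillates too much between neighbouring grid points. Put \(I_r:=[r,\min(2r,L)]\) and \(S:=h\mathbb Z\cap I_r\), which is nonempty with mesh \(h\leq1\). Since \(s^H\leq(2r)^H\) on \(I_r\),
\[
  \Bigl\{\inf_{s\in I_r}G_s<r^{H-\kappa}\Bigr\}
  \subseteq
  \Bigl\{\min_{s\in S}\frac{G_s}{s^H}\leq 2^{1-H}r^{-\kappa}\Bigr\}
  \cup
  \Bigl\{\sup_{s\in I_r}\bigl|G_s-G_{\pi(s)}\bigr|\geq r^{H-\kappa}\Bigr\},
\]
where \(\pi(s)\in S\) is a nearest grid point, so \(|s-\pi(s)|\leq h\leq1\). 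The first event has \(\widehat Q_{h,u,L}\)-probability at most \(C_K2^{1-H}r^{-\kappa}\) by Lemma~\ref{lem:uniform-block-minimum-repulsion}; as noted at the end of its proof, that bound remains valid for \(\varepsilon>1\), so no smallness condition is needed. This produces the term \(r^{-\kappa}\).

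For the oscillation event, first drop the endpoint tilt at the cost of a constant. The density \eqref{eq:remote-block-endpoint-tilt} is bounded by \(e^{C_HK^H}\) via \eqref{eq:block-endpoint-normalizer}, so it suffices to bound the event under \(Q_{h,u,L}\). Under that law write \(G=-m_F+Z_F\) with the wall set \(F\) as in the preceding proof.

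The mean part is handled by the modulus estimate. The mean-modulus bound \eqref{eq:annular-mean-modulus} at unit scale, transported to the annulus \([r,2r]\) through the scaling identity \eqref{eq:conditional-mean-scaling}, gives
\[
  |m_F(s)-m_F(t)|\leq C r^{H}\Bigl(\frac{|s-t|}{r}\Bigr)^{2H}\leq C r^{-H}
  \qquad\text{for }|s-t|\leq1.
\]
Since \(r^{-H}\) is negligible compared with \(r^{H-\kappa}\), the mean part is harmless once \(r\) exceeds a constant, and small \(r\) is absorbed into \(C_{K,\kappa}\).

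The centred part carries the main difficulty. Cover \(I_r\) by at most \(r+1\) unit intervals. On each one, Lemma~\ref{lem:harge-convex-domination} gives the sub-Gaussian increment bound \eqref{eq:centered-increment-tail} with variance \(|s-t|^{2H}\), and the dyadic chaining of Lemma~\ref{lem:uniform-centered-modulus} then yields a uniform Gaussian tail for the unit-scale oscillation. Here I would use a Borell--TIS-type concentration, or equivalently an exponential moment through \eqref{eq:harge-full-convex-domination} applied to the convex functional \(\max|x_s-x_t|\) over a fine grid, to get
\[
  Q\bigl(\operatorname{osc}_{[k,k+1]}Z_F\geq C+x\bigr)\leq 2e^{-x^2/2}.
\]
A union bound over the \(O(r)\) unit intervals at level \(x\asymp r^{H-\kappa}\) gives the bound \(r e^{-cr^{2(H-\kappa)}}\), which is far smaller than needed. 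Getting this uniform exponential oscillation tail, rather than the merely Markov-type bound proved in Lemma~\ref{lem:uniform-centered-modulus}, is the step I expect to be the main obstacle.

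If only first-moment chaining is available, I would fall back on \(\EE\max_k\operatorname{osc}\leq C\sqrt{\log r}\) from the max of \(O(r)\) sub-Gaussian variables together with Markov's inequality at level \(r^{H-\kappa}\). This gives exactly the displayed term \((1+\sqrt{\log r})r^{-(H-\kappa)}\), which suggests this cruder route is the one intended. Combining the two events and enlarging constants proves \eqref{eq:continuous-block-minimum}.
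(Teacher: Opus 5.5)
Your argument follows the paper's proof. Both use grid repulsion from Lemma~\ref{lem:uniform-block-minimum-repulsion}, the annular mean modulus rescaled from $[1,2]$ to the block (the paper gets $C_Hh^{2H}r^{-H}$), a first-moment centred chaining bound with Markov's inequality at level $\frac12 r^{H-\kappa}$, and transfer through the endpoint density, which is bounded by $e^{C_HK^H}$. Your fallback is exactly how the paper produces the term $(1+\sqrt{\log r})\,r^{-(H-\kappa)}$.

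There is one small slip in your opening inclusion. You invoked $s^H\le(2r)^H$, but the bound you need goes the other way: on the grid $q^H\ge r^H$. To guarantee $G_q>2r^{H-\kappa}$ you therefore need $\min_{q\in S}G_q/q^H>2r^{-\kappa}$, which is the paper's threshold. With your threshold $2^{1-H}r^{-\kappa}$ and oscillation below $r^{H-\kappa}$, you only get $G_s>(2^{1-H}-1)\,r^{H-\kappa}$, which falls short of $r^{H-\kappa}$. The fix costs nothing, because the repulsion bound is linear in $\varepsilon$ and holds for all $\varepsilon>0$ once $C_K\ge1$.

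The step you flag as the main obstacle is not one. Apply Lemma~\ref{lem:harge-convex-domination} to the convex test $\exp(\lambda\max_{s,t}|x_s-x_t|)$ on a fine grid, then use Gaussian concentration for the unconditioned fBm. This is the same device the paper uses in Lemma~\ref{lem:endpoint-tilted-unit-arm-moment}. It gives a uniform sub-Gaussian tail for the oscillation, so your primary route works and makes the oscillation term stretched-exponentially small.

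Conversely, your fallback as worded presupposes that tail: bounding a maximum of $O(r)$ oscillations by $C\sqrt{\log r}$ needs their sub-Gaussianity. The first-moment version should instead chain over the whole block, taking a union over the $O(r2^k)$ level-$k$ edges. That uses only the increment tails \eqref{eq:centered-increment-tail}, and it is what the paper does after rescaling to $[1,2]$ and starting the chaining at scale $h/r$.
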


\begin{proof}
At every grid site in the block, require
\begin{equation}
  \frac{G_s}{s^H}>2r^{-\kappa}.
  \label{eq:block-grid-height}
\end{equation}
Apply Lemma~\ref{lem:uniform-block-minimum-repulsion} with
\(\varepsilon=2r^{-\kappa}\).  If this number exceeds one, use the
all-\(\varepsilon\) observation at the end of its proof.  After absorbing
the factor two, the probability of failure is at most
\(C_Kr^{-\kappa}\).  On \eqref{eq:block-grid-height}, every grid value
is strictly larger than
\begin{equation*}
  2r^{-\kappa}r^H=2r^{H-\kappa}.
\end{equation*}

To control the gaps between grid sites, put
\begin{equation*}
  F:=\bigl(h\mathbb Z\cap[-u,L]\bigr)\setminus\{0\}.
\end{equation*}
Under the unweighted hard-wall law, decompose
\begin{equation}
  G=g_F+Z_F,
  \qquad
  g_F:=\EE_{Q_{h,u,L}}G
\end{equation}
into conditional mean and centred fluctuation.  Since \(r\) and \(h\) are
dyadic, \(h':=h/r\) is dyadic, and
\(F':=r^{-1}F\) is another admissible finite dyadic constraint set, with
the rescaled endpoints \(u/r,L/r\) aligned to \(h'\).  Self-similarity
gives
\begin{equation*}
  g_F(rs)=r^Hg_{F'}(s).
\end{equation*}
Applying \eqref{eq:annular-mean-modulus} on \([1,2]\) at distance
\(h/r\), then scaling back, gives
\begin{equation}
  \sup_{\substack{s,t\in[r,2r]\\|s-t|\leq h}}
  |g_F(s)-g_F(t)|
  \leq C_Hh^{2H}r^{-H}.
  \label{eq:remote-block-mean-modulus}
\end{equation}
Likewise, the rescaled centred field satisfies
\((Z_F(rs))_{s\in\mathbb R}\deq(r^HZ_{F'}(s))_{s\in\mathbb R}\)
under the corresponding conditional laws.  The centred chaining bound
\eqref{eq:centered-modulus}, begun at scale \(h/r\), therefore gives
\begin{equation}
  \EE_{Q_{h,u,L}}
  \sup_{\substack{s,t\in[r,2r]\\|s-t|\leq h}}
  |Z_F(s)-Z_F(t)|
  \leq C_Hh^H\bigl(1+\sqrt{\log(r/h)}\bigr).
  \label{eq:remote-block-centered-modulus}
\end{equation}
Both estimates are uniform in the constraint set.  If \(D_F\) denotes the
nonnegative supremum in
\eqref{eq:remote-block-centered-modulus}, then
\eqref{eq:block-endpoint-normalizer} and \(e^{-G_{-u}}\leq1\) give
\begin{equation}
  \EE_{\widehat Q_{h,u,L}}D_F
  =\frac{\EE_{Q_{h,u,L}}[e^{-G_{-u}}D_F]}
          {\EE_{Q_{h,u,L}}e^{-G_{-u}}}
  \leq e^{C_HK^H}\EE_{Q_{h,u,L}}D_F.
  \label{eq:remote-block-tilted-modulus-transfer}
\end{equation}

For all sufficiently large \(r\), the deterministic bound
\eqref{eq:remote-block-mean-modulus} is at most
\(\frac12r^{H-\kappa}\), uniformly in \(h\leq1\), because
\(2H-\kappa\geq H>0\).  To treat the centred bound uniformly in the
mesh, put \(x:=\log(1/h)\geq0\).  Then
\begin{equation}
\begin{split}
  h^H\bigl(1+\sqrt{\log(r/h)}\bigr)
  &\leq e^{-Hx}
    \bigl(1+\sqrt{\log r}+\sqrt{x}\bigr)\\
  &\leq C_H(1+\sqrt{\log r}),
\end{split}
  \label{eq:remote-block-mesh-optimization}
\end{equation}
because \(\sup_{x\geq0}e^{-Hx}\sqrt{x}<\infty\).
Equations \eqref{eq:remote-block-centered-modulus},
\eqref{eq:remote-block-tilted-modulus-transfer}, and
\eqref{eq:remote-block-mesh-optimization}, followed by Markov's inequality
at level \(\frac12r^{H-\kappa}\), give the second term on the right of
\eqref{eq:continuous-block-minimum}.  For the finitely many smaller dyadic
values of \(r\), enlarge \(C_{K,\kappa}\) so that the right-hand side is at least
one.

Finally, both endpoints \(r\) and \(\min(2r,L)\) belong to
\(h\mathbb Z\).  Every point \(s\) of the displayed interval therefore
has a site \(q\in S\) with \(|s-q|\leq h\), including when the interval
is the singleton \(\{r\}\).  Outside the grid-failure event and the
centred-modulus event, the mean and centred estimates give
\begin{equation*}
  |G_s-G_q|\leq r^{H-\kappa}.
\end{equation*}
Since \(G_q>2r^{H-\kappa}\), it follows that
\(G_s>r^{H-\kappa}\) throughout the interval.  The target event is thus
contained in the two estimated failure events, proving the claim.
\end{proof}

\begin{theorem}
\label{thm:uniform-growing-right-tail}
For every \(\eta>0\),
\begin{equation}
  \lim_{R\to\infty}\sup_{T\geq2}
  \widehat{\PP}_T\left(
    \int_R^{T-\sigma_T}
      e^{-(M_T-B_{\sigma_T+s})}\,\mathrm ds>\eta
  \right)=0,
  \label{eq:uniform-growing-right-tail-long}
\end{equation}
where the integral is zero when its lower endpoint exceeds its upper
endpoint.  Equivalently,
\begin{equation}
  \lim_{R\to\infty}\sup_{a\geq2}
  \widehat{\PP}_a\left(
    \int_R^{a-U_a}e^{-R_a(s)}\,\mathrm ds>\eta
  \right)=0.
  \label{eq:uniform-growing-right-tail}
\end{equation}
Under \(\mu_H\),
\begin{equation}
  \int_0^\infty e^{-R(s)}\,\mathrm ds<\infty
  \quad\text{almost surely},
  \qquad
  \int_R^\infty e^{-R(s)}\,\mathrm ds\longrightarrow0
  \label{eq:limit-right-functional-tail}
\end{equation}
almost surely and in probability.
The equivalence of the two finite-horizon formulations is the
self-similar identification \eqref{eq:long-horizon-scaling}.
\end{theorem}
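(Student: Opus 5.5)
We prove \eqref{eq:uniform-growing-right-tail} at finite scales by passing to the grid mixtures, then deduce the limit statement by Portmanteau. By \eqref{eq:long-horizon-scaling} the two finite-horizon formulations coincide, so we work with \(\widehat{\PP}_a\). Fix \(\eta,\varepsilon>0\). First use tightness of \(U_a\) (Theorem~\ref{thm:tilted-scalar-compact-containment}) to choose \(K\) with \(\sup_a\widehat{\PP}_a(U_a>K)<\varepsilon\). On \(\{U_a\leq K\}\) we cover \([R,a-U_a]\) by dyadic blocks \([r,2r]\), \(r=2^j\geq R\). On the good event
\[
  \inf_{r\leq s\leq 2r}R_a(s)\geq r^{H-\kappa}
  \quad\text{for every such block},
\]
with a fixed \(\kappa\in(0,H)\), the integral is at most
\[
  \sum_{2^j\geq R}2^{j}\exp\{-2^{j(H-\kappa)}\},
\]
which is summable and tends to zero as \(R\to\infty\). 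It is therefore below \(\eta\) for large \(R\), uniformly in \(a\).

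The failure probability is bounded by summing Lemma~\ref{lem:continuous-block-minimum} over \(r\geq R\). The sum \(\sum_{j}\bigl[2^{-j\kappa}+(1+\sqrt{j})2^{-j(H-\kappa)}\bigr]\) over \(2^j\geq R\) tends to zero. The lemma is stated for the component laws \(\widehat Q_{h,u,L}\) with \(u\leq K\), uniformly in \(h,u,L\), so the bound first has to be transferred to the tilted law. We do this through the exact mixture of Proposition~\ref{prop:exact-tilted-wall-mixture}. Write \(\mathcal L_{\widehat{\PP}_n^{\mathrm{grid}}}(\Xi_n^{\mathrm{grid}})=\sum_kq_{n,k}\widehat Q_{n,k}\); the components with \(L_{n,k}^-\leq K\) are exactly the admissible ones, and the remaining mass is \(\pi_n((K,\infty))\). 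Averaging the componentwise bound with weights \(q_{n,k}\) then gives the block estimate for the grid-tilted law, uniformly in the grid.

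The transfer from grid maximiser to continuous maximiser is the main obstacle. Proposition~\ref{prop:relative-tilted-grid-diagonal} only gives weak closeness along a diagonal. That is not enough for a \(\sup_a\) over an unbounded-horizon integral unless we use a Portmanteau argument with a closed event. We plan to avoid this by working at a fixed \(a\). As \(r\to\infty\) along the fixed-row approximations, the grid marks converge almost surely (proof of Proposition~\ref{prop:relative-tilted-grid-diagonal}) with bounded likelihood ratios. The event \(\{\inf_{[r',2r']}\overline R\geq c\}\) with a slightly reduced level \(c\) is closed under the limit. Fatou and the dominated likelihood convergence then pass the block bound, with the level lowered to \(\tfrac12r^{H-\kappa}\) (harmless), to \(\widehat{\PP}_a\) for every \(a\), uniformly. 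The sets \(\{U_a\leq K\}\) also need care, since the grid left horizon tends to \(U_a\); enlarging \(K\) by one handles this. Finite, uniform-in-\(a\) block counts are fine because the bound sums.

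For the limit statement \eqref{eq:limit-right-functional-tail}, let \(a\to\infty\) in \eqref{eq:pb1-marked-convergence}. The functional \(r\mapsto\int_R^{R'}e^{-r(s)}\,\mathrm ds\) is continuous on \(C_{\mathrm{loc}}\), and \(a-U_a\to\infty\) in probability by \eqref{eq:tilted-right-horizon-divergence}. Portmanteau on open sets gives
\[
  \mu_H\Bigl(\int_R^{R'}e^{-R(s)}\,\mathrm ds>\eta\Bigr)\leq\limsup_a\widehat{\PP}_a(\cdots),
\]
uniformly in \(R'\). Letting \(R'\uparrow\infty\) by monotone convergence, the tail integral tends to zero in probability as \(R\to\infty\). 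It is monotone in \(R\), so the convergence is almost sure. Finiteness of the whole integral follows because \(\int_0^R\) is trivially at most \(R\).
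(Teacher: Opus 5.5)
Your proposal is correct and follows essentially the paper's proof. You truncate the left horizon at \(K\) using tightness of \(U_a\), apply Lemma~\ref{lem:continuous-block-minimum} componentwise through the exact tilted grid mixture at a fixed scale with refining grids, pass to the continuous maximiser by almost-sure convergence and Fatou, and obtain the \(\mu_H\) statement by Portmanteau on truncated integrals with monotonicity in \(R\). The only cosmetic difference is that the paper lowers the threshold on the tail integral itself (\(\eta'<\eta\)) in the grid-to-continuum step, where you lower the block level; the two devices are equivalent.
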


\begin{proof}
Fix \(\eta>0\) and \(\varepsilon>0\).  The normalised left-horizon estimate
\eqref{eq:tilted-left-horizon-tail} lets us choose \(K\geq1\) so that
\begin{equation}
  \sup_{T\geq2}\widehat{\PP}_T(\sigma_T>K-1)
  \leq\varepsilon.
  \label{eq:growing-tail-left-truncation}
\end{equation}
For fixed \(T\), refine the dyadic grids
\(h\mathbb Z\cap[0,T]\).  Write \(T_h\) for the final grid point,
\(M_{T,h}\) and \(\sigma_{T,h}\) for the grid maximum and maximiser, and
\begin{equation}
  \frac{\mathrm d\widehat{\PP}_{T,h}}{\mathrm d\PP}
  :=\frac{e^{-M_{T,h}}}{\EE e^{-M_{T,h}}}.
  \label{eq:growing-tail-grid-tilt}
\end{equation}
The grid maximum, maximiser, and \(T_h\) converge almost surely to their
continuous counterparts and \(T\).  The unnormalised tilt weights therefore
converge almost surely, and bounded convergence gives convergence of their
normalisers.  More explicitly, with
\(w_h:=e^{-M_{T,h}}\) and \(w:=e^{-M_T}\), almost surely
\begin{equation*}
  \limsup_{h\downarrow0}
    w_h\mathds1_{\{\sigma_{T,h}>K\}}
  \leq
    w\mathds1_{\{\sigma_T>K-1\}}.
\end{equation*}
The reverse-Fatou inequality for these variables in \([0,1]\), convergence
of the normalisers, and \eqref{eq:growing-tail-left-truncation} show that,
on all sufficiently fine grids, the total tilted mass of components with
\(u>K\) is at most \(2\varepsilon\).  The required fineness may depend
on \(T\); the estimates below do not.  Put
\begin{equation}
  X_{T,h}^{(R)}
  :=\int_R^{T_h-\sigma_{T,h}}
       e^{-(M_{T,h}-B_{\sigma_{T,h}+s})}\,\mathrm ds,
  \label{eq:growing-tail-grid-functional}
\end{equation}
with the integral interpreted as zero if its lower endpoint exceeds its
upper endpoint.

By the exact rowwise mixture
\eqref{eq:exact-tilted-mixture}, conditional on the grid maximiser
\(\sigma_{T,h}=u\), the centred deficit path has law
\(\widehat Q_{h,u,L}\), where \(L:=T_h-u\).  Thus every component with
\(u\leq K\) satisfies the parameter range of
Lemma~\ref{lem:continuous-block-minimum}.

It suffices by monotonicity to prove the estimate for
\(R=2^J\), \(J\geq1\): for arbitrary \(R\), use the largest such power
not exceeding it.  If \(L<R\), the component tail is zero.  Otherwise,
the nonempty intervals
\begin{equation*}
  [2^j,\min(2^{j+1},L)],
  \qquad J\leq j\leq\lfloor\log_2L\rfloor,
\end{equation*}
cover \([R,L]\), with only the last block possibly truncated or a
singleton.  Fix \(\kappa\in(0,H)\).
Lemma~\ref{lem:continuous-block-minimum} and the union bound show that the
probability of a failure in some occupied block is at most
\begin{equation}
  C_{K,\kappa}\sum_{2^j\geq R}
  \left[
    2^{-\kappa j}
    +\sqrt{j+1}\,2^{-(H-\kappa)j}
  \right],
  \label{eq:growing-tail-bad-block-sum}
\end{equation}
which tends to zero with \(R\).  Off this event,
\begin{equation}
  G_s\geq2^{j(H-\kappa)}
\end{equation}
throughout every occupied block, and hence
\begin{equation}
  \int_R^Le^{-G_s}\,\mathrm ds
  \leq
  \sum_{2^j\geq R}
    2^j e^{-2^{j(H-\kappa)}}.
  \label{eq:growing-tail-deterministic-series}
\end{equation}
Let \(b_{K,\kappa}(R)\) and \(d_\kappa(R)\) denote the right-hand sides of
\eqref{eq:growing-tail-bad-block-sum} and
\eqref{eq:growing-tail-deterministic-series}, respectively.  Both tend to
zero as \(R\to\infty\).  Fix \(0<\eta'<\eta\) and take \(R\) sufficiently
large that \(d_\kappa(R)<\eta'\).  On components with \(u\leq K\), the event
\(\{X_{T,h}^{(R)}>\eta'\}\) can then occur only if some occupied block
fails.  After adding the tilted mass of the components with \(u>K\), we
obtain, for every fixed \(T\) and every sufficiently fine grid,
\begin{equation}
  \widehat{\PP}_{T,h}\bigl(X_{T,h}^{(R)}>\eta'\bigr)
  \leq2\varepsilon+b_{K,\kappa}(R).
  \label{eq:growing-tail-uniform-grid-bound}
\end{equation}

The passage to the continuous maximiser is made at each fixed \(T\), not by
using the local topology of
Proposition~\ref{prop:relative-tilted-grid-diagonal}.  On the full finite
domain, the grid maximum and maximiser converge almost surely, and uniform
continuity gives convergence of the complete centred arms and of the tail
integral.  Thus \(X_{T,h}^{(R)}\to X_T^{(R)}\) almost surely, where
\begin{equation}
  X_T^{(R)}
  :=\int_R^{T-\sigma_T}
       e^{-(M_T-B_{\sigma_T+s})}\,\mathrm ds.
  \label{eq:growing-tail-continuum-functional}
\end{equation}
The unnormalised tilt weights lie in \([0,1]\) and converge almost surely,
while their normalisers converge.  The lowered threshold
\(\eta'<\eta\) and the almost-sure convergence give
\begin{equation*}
  e^{-M_T}\mathds1_{\{X_T^{(R)}>\eta\}}
  \leq
  \liminf_{h\downarrow0}
  e^{-M_{T,h}}\mathds1_{\{X_{T,h}^{(R)}>\eta'\}}
  \quad\text{almost surely}.
\end{equation*}
Fatou's lemma and
\eqref{eq:growing-tail-uniform-grid-bound} give
\begin{equation}
  \widehat{\PP}_T\bigl(X_T^{(R)}>\eta\bigr)
  \leq\liminf_{h\downarrow0}
        \widehat{\PP}_{T,h}\bigl(X_{T,h}^{(R)}>\eta'\bigr)
  \leq2\varepsilon+b_{K,\kappa}(R).
  \label{eq:growing-tail-fixed-horizon-bound}
\end{equation}
Although the required grid fineness may depend on \(T\), the right-hand
side does not.  Hence
\begin{equation}
  \sup_{T\geq2}
  \widehat{\PP}_T\bigl(X_T^{(R)}>\eta\bigr)
  \leq2\varepsilon+b_{K,\kappa}(R).
  \label{eq:growing-tail-uniform-continuum-bound}
\end{equation}
First let \(R\to\infty\) and then \(\varepsilon\downarrow0\).  This proves
\eqref{eq:uniform-growing-right-tail-long}--%
\eqref{eq:uniform-growing-right-tail}.

For the limiting law, first fix \(S>R\) and define
\begin{equation*}
  I_a^{R,S}
  :=\int_R^{\min(S,a-U_a)}e^{-R_a(s)}\,\mathrm ds,
\end{equation*}
with value zero when the upper endpoint is below \(R\).  The marked
convergence \eqref{eq:pb1-marked-convergence}, the right-horizon divergence,
and continuity of
\(r\mapsto\int_R^S e^{-r(s)}\,\mathrm ds\) on
\(C([R,S])\) give
\begin{equation*}
  I_a^{R,S}\Longrightarrow
  I^{R,S}:=\int_R^S e^{-R(s)}\,\mathrm ds.
\end{equation*}
Indeed, the truncated finite integral agrees with the continuous marked
functional whenever \(a-U_a\geq S\), an event whose probability tends to
one.  Also \(I_a^{R,S}\) is bounded above by the complete finite tail.
Thus, for \(0<\eta'<\eta\), the Portmanteau theorem gives
\begin{equation}
\begin{split}
  &\PP_{\mu_H}\left(\int_R^S e^{-R(s)}\,\mathrm ds>\eta\right)\\
  &\qquad\leq
  \liminf_{a\to\infty}
  \widehat{\PP}_a\left(
    \int_R^{a-U_a}e^{-R_a(s)}\,\mathrm ds>\eta'
  \right).
\end{split}
  \label{eq:limit-tail-portmanteau}
\end{equation}
Use \eqref{eq:uniform-growing-right-tail} on the right and then let
\(S\to\infty\).  The integrals and their exceedance events increase
monotonically in \(S\), so this gives convergence to zero in probability
of the extended tail integrals
\begin{equation*}
  Y_R:=\int_R^\infty e^{-R(s)}\,\mathrm ds
\end{equation*}
as \(R\to\infty\).  The variables \(Y_R\) decrease almost surely in
\(R\), so they have an extended almost-sure limit.  Convergence in
probability to zero forces that limit to be zero almost surely.  In
particular, \(Y_R<\infty\) for all sufficiently large \(R\) almost
surely; adding the finite continuous-path integral on \([0,R]\) proves
\eqref{eq:limit-right-functional-tail}.
\end{proof}

\begin{proposition}
\label{prop:reciprocal-uniform-integrability}
For every \(p>1\),
\begin{equation}
  \sup_{a\geq2}\widehat{\EE}_a[J_a^{-p}]<\infty.
  \label{eq:reciprocal-uniform-lp}
\end{equation}
In particular, \((J_a^{-1})_{a\geq2}\) is uniformly integrable.
\end{proposition}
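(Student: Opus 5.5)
The plan is to bound $J_a$ from below by the right-arm contribution on a short initial interval, $J_a\geq\int_0^{1\wedge(a-U_a)}e^{-R_a(s)}\,\mathrm ds$. Via the self-similar identification \eqref{eq:long-horizon-scaling}, I would work under $\widehat{\PP}_T$ with $T=a\geq2$. The left horizon $U_a$ may be short, but Theorem~\ref{thm:tilted-scalar-compact-containment} only controls tails of $U_a$, so I use whichever arm is long. Since $U_a+(a-U_a)=a\geq2$, at least one arm has length at least one. On the left-arm side the same argument applies by time reversal of the component laws: the endpoint tilt only lowers mass, with density at most $e^{C_HK^H}$ once $U_a\leq K$.

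On the event $\{\sup_{0\leq s\leq1}R_a(s)\leq x\}$ we have $J_a\geq e^{-x}$. Hence
\[
  \widehat{\EE}_a[J_a^{-p}]
  \leq\int_0^\infty pe^{px}\,\widehat{\PP}_a\Bigl(\sup_{[0,1]}R_a>x\Bigr)\,\mathrm dx+1,
\]
and it suffices to prove a uniform exponential moment
\[
  \sup_a\widehat{\EE}_a\exp\Bigl(\lambda\sup_{s\in[0,1]}\min\{R_a(s),L_a(s)\}\Bigr)<\infty
\]
for $\lambda>p$, where each arm is taken on the side where it is defined for $s\in[0,1]$. The key input is Lemma~\ref{lem:endpoint-tilted-unit-arm-moment}: for every finite dyadic wall $F$ and endpoint $e\in F\cup\{0\}$, $\EE_{\widehat Q_{F,e}}\exp(\lambda\sup_{t\in I}G_t)\leq C_{H,\lambda}$ for $I=[0,1]$ or $[-1,0]$. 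By the exact grid mixture in Proposition~\ref{prop:exact-tilted-wall-mixture}, the grid-tilted law of $\Xi_n^{\mathrm{grid}}$ is a mixture of the $\widehat Q_{n,k}$. For each component $k$, choose the side $I$ on which the grid extends at least distance one. The resulting bound for $\sup_I G$ then holds uniformly in $k$, so it also holds for the mixture. The deficit arm on the chosen side on $[0,1]$ is exactly $\sup_I G$ of the component path, so this covers the right arm when $L^+_{n,k}\geq1$ and the left arm otherwise. The grid steps need care: the grid's horizons are $h_nk$ and $h_n(m_n-k)$, which sum to at least $a_n-h_n\geq1$ for large $n$, so one of them is at least $1/2$. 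I would therefore use the interval of length $1/2$, which costs only a rescaling constant by self-similarity.

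The passage from grid to continuum proceeds as in Proposition~\ref{prop:relative-tilted-grid-diagonal} and the proof of Theorem~\ref{thm:uniform-growing-right-tail}. For fixed $a$, let the mesh $h\downarrow0$. The grid maximiser, maximum and arms converge almost surely, and the tilt weights in $[0,1]$ converge together with their normalisers. Fatou's lemma then transfers the bound on $\widehat{\EE}[\exp(\lambda\cdot)]$ of the arm supremum over $[0,1/2]$ on the long side, with a constant independent of $a$. Finally, $J_a\geq\frac12\exp\bigl(-\sup_{[0,1/2]}\text{arm}\bigr)$ gives $J_a^{-p}\leq2^p\exp(p\sup)$, and $\sup_{a\geq2}\widehat{\EE}_aJ_a^{-p}<\infty$ follows. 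Uniform integrability is then immediate from the bounded $p$-th moment with $p>1$.

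The main obstacle I expect is the side selection being random (which arm is long depends on $U_a$). I would handle it componentwise inside the exact mixture, where the side is deterministic given $k$. The one point needing care is that Lemma~\ref{lem:endpoint-tilted-unit-arm-moment} is stated for dyadic walls $F\subset\mathbb D$, whereas the component walls $\Lambda_{n,k}$ have dyadic mesh $h_n$ and are therefore dyadic, so it applies directly, including at the endpoint components $k=0,m_n$.
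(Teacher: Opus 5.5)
Your proposal is correct and is essentially the paper's proof: within the exact endpoint-tilted grid mixture you select, component by component, an arm of length at least \(1/2\) (possible since \(u+L\geq a-h\geq1\)), bound \(J^{-p}\leq2^{p}\exp(p\sup G)\) on that arm, and apply Lemma~\ref{lem:endpoint-tilted-unit-arm-moment} uniformly over all components, including the endpoint ones; you then pass to the continuum at fixed \(a\) by Fatou, using converging tilt weights and normalisers. Three small tidy-ups: the intermediate quantity \(\sup_s\min\{R_a(s),L_a(s)\}\) does not control \(J_a\) (what you need, and what you actually use afterwards, is the supremum of the single chosen long arm); it is cleanest to apply Fatou directly to \(e^{-M_{a,h}}J_{a,h}^{-p}\), using \(J_{a,h}\to J_a\) almost surely, rather than to the side-selected arm supremum, whose side indicator need not converge; and no rescaling is needed for the length-\(1/2\) interval, since it lies inside \([0,1]\) or \([-1,0]\).
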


\begin{proof}
Fix \(p>1\).  Lemma~\ref{lem:endpoint-tilted-unit-arm-moment} with
\(\lambda=p\), together with
\([0,\delta]\subseteq[0,1]\) and
\([-\delta,0]\subseteq[-1,0]\), gives
\begin{equation}
  \sup_{\substack{0<\delta\leq1,\ F\subset\mathbb D\text{ finite},\
                   e\in F\cup\{0\}\\
                   I=[0,\delta]\text{ or }[-\delta,0]}}
  \EE_{\widehat Q_{F,e}}
  \exp\left(p\sup_{t\in I}G_t\right)
  \leq C_{H,p}<\infty,
  \label{eq:reciprocal-component-unit-arm-moment}
\end{equation}
over intervals contained in the relevant arm.

Use the long-horizon representation \eqref{eq:long-horizon-scaling}, so that
under \(\widehat{\PP}_a\) we may realise
\(J_a=\int_0^a e^{B_t-M_a}\,\mathrm dt\).  First work on
\(h\mathbb Z\cap[0,a]\), where \(h=2^{-r}\leq1\), and let
\(a_h:=h\lfloor a/h\rfloor\) be its final point.  Denote the grid maximum
and its leftmost maximiser by \(M_{a,h}\) and \(\sigma_{a,h}\), and set
\begin{equation*}
  Z_{a,h}:=\EE e^{-M_{a,h}},
  \qquad
  \frac{\mathrm d\widehat{\PP}_{a,h}}{\mathrm d\PP}
  :=\frac{e^{-M_{a,h}}}{Z_{a,h}}.
\end{equation*}
Write \(u:=\sigma_{a,h}\), \(L:=a_h-u\), and define the constant-extended
deficit arms by
\begin{equation*}
\begin{aligned}
  L_{a,h}(s)&:=M_{a,h}-B_{(u-s)\vee0},\\
  R_{a,h}(s)&:=M_{a,h}-B_{(u+s)\wedge a_h}.
\end{aligned}
\end{equation*}
Their exponential functional is
\begin{equation*}
  J_{a,h}
  :=\int_0^u e^{-L_{a,h}(s)}\,\mathrm ds
    +\int_0^L e^{-R_{a,h}(s)}\,\mathrm ds
  =\int_0^{a_h}e^{B_t-M_{a,h}}\,\mathrm dt.
\end{equation*}
The component horizons satisfy
\begin{equation}
  u+L=a_h\geq a-h\geq1,
\end{equation}
so at least one of \(u,L\) is at least
\(\delta:=\frac12\).  If \(u\geq\delta\), then
\begin{equation}
  J_{a,h}
  \geq\int_0^\delta e^{-L_{a,h}(s)}\,\mathrm ds
  \geq\delta
       \exp\left(-\sup_{0\leq s\leq\delta}L_{a,h}(s)\right);
  \label{eq:reciprocal-left-arm-lower}
\end{equation}
and the analogous inequality holds with the right arm whenever
\(L\geq\delta\).  Consequently, whether one or both arms are long,
\begin{equation}
\begin{split}
  J_{a,h}^{-p}\leq\delta^{-p}\biggl[
  &\mathds1_{\{u\geq\delta\}}
    \exp\left(p\sup_{0\leq s\leq\delta}L_{a,h}(s)\right)\\
  &+\mathds1_{\{L\geq\delta\}}
    \exp\left(p\sup_{0\leq s\leq\delta}R_{a,h}(s)\right)
  \biggr].
\end{split}
  \label{eq:reciprocal-unit-arm-upper}
\end{equation}
Under the rowwise exact mixture \eqref{eq:exact-tilted-mixture}, conditional on
\(\sigma_{a,h}=u\), the centred path
\(W_s:=B_{u+s}-M_{a,h}\) has the endpoint-tilted law
\(\widehat Q_{F,e}\), where
\begin{equation*}
  F:=h\mathbb Z\cap[-u,L],
  \qquad e:=-u.
\end{equation*}
The set \(F\) is finite and contained in \(\mathbb D\).  Here \(e\in F\)
when \(u>0\), while \(e=0\) in the left-endpoint component.  On
\([-\delta,0]\), respectively \([0,\delta]\), the process
\(G=-W\) is exactly the nonconstant part of \(L_{a,h}\), respectively
\(R_{a,h}\).  Thus
\eqref{eq:reciprocal-component-unit-arm-moment} bounds each nonzero term in
\eqref{eq:reciprocal-unit-arm-upper} by the same constant \(C_{H,p}\),
including both endpoint components.  Averaging with the probability weights
of the exact mixture gives
\begin{equation}
  \sup_{\substack{a\geq2,\ h=2^{-r}\leq1}}
  \widehat{\EE}_{a,h}[J_{a,h}^{-p}]
  \leq 2\delta^{-p}C_{H,p}<\infty.
  \label{eq:reciprocal-uniform-grid-lp}
\end{equation}

For fixed \(a\), refine the physical grid.  The grid maximum, maximiser,
and final point converge almost surely to \(M_a\), \(\sigma_a\), and \(a\).
Uniform continuity of \(B\) on \([0,a]\), or directly the last integral
formula above, gives \(J_{a,h}\to J_a\) almost surely.  Put
\(w_{a,h}:=e^{-M_{a,h}}\) and \(w_a:=e^{-M_a}\).  Then
\(w_{a,h}\to w_a\) almost surely, both weights lie in \([0,1]\), and bounded
convergence gives \(Z_{a,h}\to Z_a:=\EE w_a>0\).  Fatou's lemma applied to
the nonnegative weighted reciprocals now yields the complete
changing-likelihood calculation
\begin{equation*}
\begin{aligned}
  \widehat{\EE}_a[J_a^{-p}]
  &=\frac{\EE[w_aJ_a^{-p}]}{Z_a}\\
  &\leq\frac{\liminf_{h\downarrow0}
                  \EE[w_{a,h}J_{a,h}^{-p}]}{Z_a}
   =\liminf_{h\downarrow0}
      \widehat{\EE}_{a,h}[J_{a,h}^{-p}]
  \leq2\delta^{-p}C_{H,p}.
\end{aligned}
\end{equation*}
The right-hand side is independent of \(a\), so taking the supremum proves
\eqref{eq:reciprocal-uniform-lp}.  Finally, for \(K>0\),
\begin{equation}
  \sup_{a\geq2}
  \widehat{\EE}_a\left[
    J_a^{-1}\mathds1_{\{J_a^{-1}>K\}}
  \right]
  \leq
  K^{1-p}\sup_{a\geq2}\widehat{\EE}_a[J_a^{-p}]
  \longrightarrow0
  \qquad(K\to\infty).
  \label{eq:reciprocal-uniform-integrability-tail}
\end{equation}
This proves uniform integrability directly.
\end{proof}

\begin{theorem}
\label{thm:reciprocal-functional-convergence}
Under the law \(\mu_H\) of
\eqref{eq:unique-entrance-mixture}, define
\begin{equation}
  J_\infty
  :=\int_0^Ue^{-L(s)}\,\mathrm ds
    +\int_0^\infty e^{-R(s)}\,\mathrm ds.
  \label{eq:proved-limit-functional}
\end{equation}
Then
\begin{equation}
  0<J_\infty<\infty
  \quad\text{almost surely},
  \label{eq:limit-functional-positive-finite}
\end{equation}
and
\begin{equation}
  J_a\Longrightarrow J_\infty,
  \qquad
  \widehat{\EE}_a[J_a^{-1}]
  \longrightarrow
  D_H:=\EE_{\mu_H}[J_\infty^{-1}]
  \in(0,\infty).
  \label{eq:proved-reciprocal-convergence}
\end{equation}
\end{theorem}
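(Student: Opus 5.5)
The plan is to split \(J_a\) into a compact part and a right tail, and then combine the marked convergence of Theorem~\ref{thm:unique-entrance-mixture} with the tail control of Theorem~\ref{thm:uniform-growing-right-tail} and the uniform integrability of Proposition~\ref{prop:reciprocal-uniform-integrability}.

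\emph{Finiteness and positivity of \(J_\infty\).} Under \(\mu_H\), \(U<\infty\) almost surely, and the left integral is at most \(U\). Finiteness of the right integral is \eqref{eq:limit-right-functional-tail}. For positivity, the right arm \(R\) is continuous with \(R(0)=0\), so \(\int_0^1e^{-R(s)}\,\mathrm ds>0\).

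\emph{Convergence in law.} For fixed \(R\), set
\[
  \Phi_R(u,v,\ell,r):=\int_0^u e^{-\ell(s)}\,\mathrm ds+\int_0^R e^{-r(s)}\,\mathrm ds .
\]
This is continuous on \(\mathcal Y\): the first term is jointly continuous in \((u,\ell)\) under local-uniform convergence, and the second is continuous on \(C([0,R])\). On the event \(\{a-U_a\geq R\}\), whose \(\widehat\PP_a\)-probability tends to one by \eqref{eq:tilted-right-horizon-divergence}, we have
\[
  J_a=\Phi_R(U_a,V_a,\overline L_a,\overline R_a)+\int_R^{a-U_a}e^{-R_a(s)}\,\mathrm ds .
\]
By \eqref{eq:pb1-marked-convergence} and the continuous mapping theorem, \(\Phi_R(\cdot)\Rightarrow\Phi_R\) under \(\mu_H\). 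The remainder is small in probability uniformly in \(a\) by \eqref{eq:uniform-growing-right-tail}. The limit \(\Phi_R\) increases to \(J_\infty\) with error \(\int_R^\infty e^{-R(s)}\,\mathrm ds\to0\) in probability. A standard approximation lemma (the Billingsley triangular-array argument, tested with bounded Lipschitz functions) then yields \(J_a\Rightarrow J_\infty\).

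\emph{Reciprocal means.} Since \(J_\infty\in(0,\infty)\), the map \(x\mapsto x^{-1}\) is continuous at every point of the support of the limit, so \(J_a^{-1}\Rightarrow J_\infty^{-1}\). Uniform integrability from Proposition~\ref{prop:reciprocal-uniform-integrability} then upgrades this to \(\widehat\EE_a[J_a^{-1}]\to\EE_{\mu_H}[J_\infty^{-1}]\). The limit is finite because the uniform \(L^p\) bound passes to the limit by Fatou, and it is strictly positive because \(J_\infty<\infty\) almost surely.

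The main obstacle is the tail comparison. The approximation lemma needs
\[
  \lim_{R\to\infty}\limsup_{a\to\infty}\widehat\PP_a\Bigl(\int_R^{a-U_a}e^{-R_a(s)}\,\mathrm ds>\eta\Bigr)=0,
\]
which is supplied by Theorem~\ref{thm:uniform-growing-right-tail}. The remaining points to check carefully are that the left arm needs no tail control, because \(U_a\) is tight, and that \(\Phi_R\) is continuous at the moving lifetime \(u\).
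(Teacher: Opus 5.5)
Your proposal is correct and follows the paper's route. You truncate at \(R\) via the continuous functional \(\Phi_R\), obtain convergence of the truncated part from the marked convergence together with right-horizon divergence, control the remainder with Theorem~\ref{thm:uniform-growing-right-tail} and the converging-together lemma, and pass to reciprocal means using the uniform integrability of Proposition~\ref{prop:reciprocal-uniform-integrability}. The only cosmetic difference is that you absorb the event \(\{a-U_a<R\}\) into the approximation step, whereas the paper first applies Slutsky to the truncated functional \(J_a^{(R)}\).
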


\begin{proof}
For fixed \(R<\infty\), put
\begin{equation}
\begin{aligned}
  J_a^{(R)}
  &:=
    \int_0^{U_a}e^{-L_a(s)}\,\mathrm ds
    +\int_0^{\min(R,a-U_a)}e^{-R_a(s)}\,\mathrm ds,\\
  J_\infty^{(R)}
  &:=
    \int_0^Ue^{-L(s)}\,\mathrm ds
    +\int_0^Re^{-R(s)}\,\mathrm ds.
\end{aligned}
  \label{eq:truncated-persistence-functionals}
\end{equation}
Define on the marked space \(\mathcal Y\)
\begin{equation*}
  \Phi_R(u,v,\ell,r)
  :=\int_0^u e^{-\ell(s)}\,\mathrm ds
    +\int_0^R e^{-r(s)}\,\mathrm ds.
\end{equation*}
This map is continuous.  Indeed, if \(u_n\to u<\infty\), choose
\(K>u+1\).  Eventually \(u_n\leq K\); local uniform convergence of
\(\ell_n\) to \(\ell\) makes the exponentials converge uniformly and remain
uniformly bounded on \([0,K]\), which, together with \(u_n\to u\), proves
convergence of the left integrals.  The right integral is continuous by
uniform convergence on the fixed interval \([0,R]\).

Put
\begin{equation*}
  \widetilde J_a^{(R)}
  :=\Phi_R(U_a,V_a,\overline L_a,\overline R_a).
\end{equation*}
The marked convergence \eqref{eq:pb1-marked-convergence} and the continuous
mapping theorem give
\(\widetilde J_a^{(R)}\Longrightarrow J_\infty^{(R)}\).  On the event
\(\{a-U_a\geq R\}\), one has
\(\widetilde J_a^{(R)}=J_a^{(R)}\).  The prelimit right horizons diverge in
probability, so the probability of the complementary event tends to zero.
Consequently, Slutsky's theorem gives
\begin{equation}
  J_a^{(R)}\Longrightarrow J_\infty^{(R)}.
  \label{eq:truncated-functional-convergence}
\end{equation}

Theorem~\ref{thm:uniform-growing-right-tail} says explicitly that, for every
\(\eta>0\),
\begin{equation*}
  \lim_{R\to\infty}\limsup_{a\to\infty}
  \widehat{\PP}_a\left(
    \lvert J_a-J_a^{(R)}\rvert>\eta
  \right)=0.
\end{equation*}
Under \(\mu_H\), \(U<\infty\) almost surely and the left arm is continuous
on \([0,U]\), so its integral is finite.  The limiting-law conclusion of
the same theorem gives finiteness of the right integral.  Hence
\begin{equation*}
  J_\infty^{(R)}\uparrow J_\infty<\infty
  \quad\text{almost surely},
\end{equation*}
and therefore \(J_\infty^{(R)}\Longrightarrow J_\infty\) as
\(R\to\infty\).  These two limit statements and the fixed-\(R\) convergence
\eqref{eq:truncated-functional-convergence} are precisely the three
hypotheses of the converging-together theorem.  Thus
\begin{equation}
  J_a\Longrightarrow J_\infty.
  \label{eq:full-functional-convergence}
\end{equation}
Moreover, continuity and finiteness of \(R\) on \([0,1]\) give
\begin{equation*}
  J_\infty\geq\int_0^1e^{-R(s)}\,\mathrm ds>0
  \quad\text{almost surely}.
\end{equation*}
Together with the finiteness above, this proves
\eqref{eq:limit-functional-positive-finite}.

Every \(J_a\) is strictly positive, and the reciprocal map is continuous on
\((0,\infty)\).  The continuous mapping theorem therefore gives
\begin{equation}
  J_a^{-1}\Longrightarrow J_\infty^{-1}.
\end{equation}
Proposition~\ref{prop:reciprocal-uniform-integrability} makes the variables
on the left uniformly integrable.  To spell out the expectation passage,
for every \(K<\infty\) the bounded continuous truncation gives
\begin{equation*}
  \widehat{\EE}_a[J_a^{-1}\wedge K]
  \longrightarrow
  \EE_{\mu_H}[J_\infty^{-1}\wedge K].
\end{equation*}
The uniform tail estimate
\eqref{eq:reciprocal-uniform-integrability-tail}, followed by
\(K\to\infty\), removes the truncation on the prelimit side.  Portmanteau
applied first to bounded truncations of
\(x\mapsto(x-K)_+\) transfers the same tail bound to the limit, so it also
removes the limiting truncation.  This proves convergence of expectations
and \(D_H<\infty\).  Since \(J_\infty^{-1}>0\) almost surely, one also has
\(D_H>0\), completing \eqref{eq:proved-reciprocal-convergence}.
\end{proof}

\begin{proof}[Proof of Theorem~\ref{thm:rough-sharp-persistence}]
Theorem~\ref{thm:unique-entrance-mixture} gives the probability law
\(\mu_H=\int K_u\,\pi_H(\mathrm du)\) and the marked convergence
\eqref{eq:rough-tilted-marked-convergence}.  By the definition of \(K_u\),
its first coordinate is the finite value \(u\), its left arm is constant
after \(u\), and its right arm is a continuous path on the whole half-line;
integration against \(\pi_H\) gives the asserted support properties.
Theorem~\ref{thm:reciprocal-functional-convergence}, for this same law and
the same arm coordinates, gives the definition, positivity, and finiteness
of \(J_\infty\), the convergence \(J_a\Longrightarrow J_\infty\), and
exactly \eqref{eq:rough-reciprocal-limit} with \(D_H\in(0,\infty)\).
This is the hypothesis of
Proposition~\ref{prop:exact-persistence-compiler}, which then gives
\eqref{eq:rough-sharp-persistence-asymptotic} and
\eqref{eq:rough-persistence-constant}.
\end{proof}

\section*{Acknowledgements}

The author thanks Frank Aurzada for helpful feedback on an earlier draft.

\begingroup

\raggedright

\printbibliography

\endgroup

\end{document}